\newtheorem{theorem}{Theorem}[section]
\newtheorem{lemma}[theorem]{Lemma}
\newtheorem{corollary}[theorem]{Corollary}
\newtheorem{proposition}[theorem]{Proposition}
\newtheorem{question}[theorem]{Question}
\theoremstyle{definition}
\newtheorem{example}[theorem]{Example}
\newtheorem{definition}[theorem]{Definition}
\newtheorem{remark}[theorem]{Remark}
\newtheorem{condition*}[theorem]{Condition}
\newtheorem*{acknowledgement}{Acknowledgements}       
\numberwithin{equation}{section}
\begin{document}

\title[An intrinsic characterization of five points]{An intrinsic characterization of \\five points in a $\mathrm{CAT}(0)$ space}
\author[T. Toyoda]{Tetsu Toyoda}
\thanks{This work is supported in part by JSPS KAKENHI Grant Number JP16K17602}
\email[Tetsu Toyoda]{toyoda@cc.kogakuin.ac.jp}
\address[Tetsu Toyoda]
{\endgraf Kogakuin University, \endgraf 2665-1, Nakano, Hachioji, Tokyo, 192-0015 Japan}

\keywords{$\mathrm{CAT}(0)$ space, the $\boxtimes$-inequalities, the weighted quadruple inequalities, 
quadratic metric inequality, the $\mathrm{Cycl}_k (0)$ condition}
\subjclass[2010]{Primary 53C23; Secondary 51F99}

\begin{abstract}
Gromov (2001) and Sturm (2003) proved that 
any four points in a $\mathrm{CAT}(0)$ space satisfy a certain family of inequalities. 
We call those inequalities the $\boxtimes$-inequalities, following the notation used by Gromov. 
In this paper, we prove that a metric space $X$ containing at most five points admits an isometric embedding into a 
$\mathrm{CAT}(0)$ space if and only if any four points in $X$ satisfy the $\boxtimes$-inequalities. 
To prove this, we introduce a new family of necessary conditions 
for a metric space to admit an isometric embedding into a $\mathrm{CAT}(0)$ space 
by modifying and generalizing Gromov's cycle conditions. 
Furthermore, we prove that if a metric space satisfies all those necessary conditions, 
then it admits an isometric embedding into a $\mathrm{CAT}(0)$ space. 
This work presents a new approach to characterizing those metric spaces that admit an isometric embedding into a $\mathrm{CAT}(0)$ space. 
\end{abstract}

\maketitle

\section{Introduction}\label{intro-sec}
Under the assumption that a metric space $X$ is geodesic, 
many simple conditions for $X$ that are equivalent to the condition that $X$ is a $\mathrm{CAT}(0)$ space have been known. 
For example, Berg and Nikolaev \cite{BN} proved that a 
metric space $(X,d_X )$ is $\mathrm{CAT}(0)$ if and only if 
$X$ is geodesic, and any $x ,y ,z ,w \in X$ satisfy 
\begin{equation}\label{sdi}
0
\leq
d_X (x,y)^2 +d_X (y,z)^2 +d_X (z,w)^2 +d_X (w,x)^2
-d_X (x,z)^2 -d_X (y,w)^2 
\end{equation}
(see also Sato \cite{Sa}). 
The inequality \eqref{sdi} was called the quadrilateral inequality in \cite{BN}, and 
the roundness $2$ inequality by Enflo \cite{E} in connection with the geometry of Banach spaces. 

On the other hand, 
when we characterize those metric spaces that admit an isometric embedding into a $\mathrm{CAT}(0)$ space, 
we have to omit such a non-intrinsic assumption that the ambient space is geodesic. 
Omitting the assumption that a metric space $X$ is geodesic changes the situation drastically. 
To see this, we recall the following family of inequalities. 
\begin{definition}
We say that a metric space $(X,d_X )$ satisfies 
the {\em $\boxtimes$-inequalities} if
for any $t, s \in\lbrack0,1\rbrack$ and any $x,y,z,w\in X$, we have 
\begin{multline*}
0
\leq
(1-t)(1-s) d_X (x,y)^2 +t(1-s) d_X (y,z)^2 +ts d_X (z,w)^2 
+(1-t)s d_X (w,x)^2 \\
-t(1-t) d_X (x,z)^2 -s(1-s) d_X (y,w)^2.
\end{multline*}
\end{definition}
Gromov \cite{Gr2} and Sturm \cite{St} introduced these inequalities independently, and proved that every $\mathrm{CAT}(0)$ space satisfies them. 
The name ``$\boxtimes$-inequalities" is based on a notation used in \cite{Gr2}, and was used in \cite{KTU} and \cite{toyoda-cycl}.  
In \cite{St}, they were called the {\em weighted quadruple inequalities}. 
When $s=t=1/2$, the $\boxtimes$-inequality becomes the quadrilateral inequality \eqref{sdi}, 
and therefore a geodesic space satisfies the $\boxtimes$-inequalities if and only if it is $\mathrm{CAT}(0)$. 
The following example shows that there exists even a four-point metric space that satisfies the quadrilateral inequality \eqref{sdi} but does not 
admit an isometric embedding into any $\mathrm{CAT}(0)$ space. 
\begin{example}\label{four-point-counter-ex}
Let $X=\{ x_1 ,x_2 ,x_3 ,x_4 \}$. Define $d_X :X\times X\to\lbrack 0,\infty )$ by 
\begin{align*}
&d_X (x_1 ,x_2 )=d_X (x_2 ,x_3 )=d_X (x_3 ,x_4 )=1,\\
&d_X (x_4 ,x_1 )=d_X (x_1 ,x_3 )=d_X (x_2 ,x_4 )=\sqrt{3}.
\end{align*}
Then it is easily observed that $(X,d_X )$ is a metric space, 
and satisfies the quadrilateral inequality \eqref{sdi}. 
However, 
$(X,d_X )$ does not satisfy the $\boxtimes$-inequality for $s=t=1/(1+\sqrt{3})$, 
and therefore does not admit an isometric embedding into any $\mathrm{CAT}(0)$ space because 
every $\mathrm{CAT}(0)$ space satisfies the $\boxtimes$-inequalities. 
\end{example}
To find a characterization of those metric spaces that admit 
an isometric embedding into a $\mathrm{CAT(0)}$ space is a longstanding open problem stated by 
Gromov in \cite[\S15]{Gr2} and \cite[Section 1.19+]{Gr1} 
(see also \cite[Section 1.4]{ANN}). 
Every metric space containing at most three points admits an isometric embedding into 
a $\mathrm{CAT}(0)$ space 
because it admits an isometric embedding into the Euclidean plane. 
Gromov stated in \cite[\S7]{Gr2} that 
a four-point metric space admits an isometric embedding into a $\mathrm{CAT}(0)$ space 
if and only if it satisfies the $\boxtimes$-inequalities (see Theorem \ref{four-point-th} below). 
In this paper, we find, for the first time, a characterization of those five-point metric spaces that admit 
an isometric embedding into a $\mathrm{CAT(0)}$ space. 
The following theorem is our main result. 
\begin{theorem}\label{main-th}
A metric space that contains at most five points admits an isometric embedding into a $\mathrm{CAT}(0)$ space 
if and only if it satisfies the $\boxtimes$-inequalities. 
\end{theorem}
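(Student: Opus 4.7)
The $(\Rightarrow)$ direction is immediate: Gromov and Sturm showed that every $\mathrm{CAT}(0)$ space satisfies the $\boxtimes$-inequalities, and these inequalities pass to metric subspaces. For $(\Leftarrow)$, let $X=\{x_1,\ldots,x_5\}$ satisfy all $\boxtimes$-inequalities. Each of the five 4-point subspaces inherits them and, by Gromov's four-point theorem (Theorem \ref{four-point-th}), admits an isometric embedding into a $\mathrm{CAT}(0)$ space; the proof of the four-point theorem should even realize each such embedding in a concrete two-dimensional $\mathrm{CAT}(0)$ polyhedral complex. My plan is to combine these five individual four-point embeddings into a single five-point embedding, concretely: fix a realization of $\{x_1,x_2,x_3,x_4\}$ in a polyhedral $\mathrm{CAT}(0)$ space $Y_0$, then attach $x_5$ to $Y_0$ at the prescribed distances to produce a larger ambient $\mathrm{CAT}(0)$ space $Y_1\supset Y_0$, with the $\mathrm{CAT}(0)$ character of $Y_1$ verified via Reshetnyak gluing applied to a suitable attached Euclidean sector or tripod carrying the fifth point.

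To organize the attachment step I would follow the two-layer strategy hinted at in the abstract. First, isolate an auxiliary intrinsic condition on five-point metric spaces: a family of quadratic inequalities indexed by ordered 5-tuples and weight parameters, in the spirit of Gromov's cycle properties, designed so that a five-point configuration exactly matches the distances between five points on a geodesic 5-cycle in a $\mathrm{CAT}(0)$ space. Then prove separately that (a) this condition is necessary for $\mathrm{CAT}(0)$ embeddability, by a direct convexity argument in the model space, and (b) it is sufficient to carry out the attachment in the previous paragraph, yielding a $\mathrm{CAT}(0)$ embedding of the full five-point space. The theorem then reduces to showing that the $\boxtimes$-inequalities on $X$ already imply the new cycle condition.

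The genuinely difficult step is this last one: extracting a constraint coupling all ten pairwise distances from a family of constraints that each couple only six of them. I would treat the quantity defining the cycle inequality as a smooth function of a small number of parameters (for instance, one of the weight parameters or one of the four distances $d(x_5,x_i)$), search for its extremum, and argue that the extremum is attained on a boundary locus where at least one 4-point $\boxtimes$-inequality saturates. At such saturation the corresponding four-point subspace becomes a flat Euclidean quadrilateral, reducing the 5-point cycle inequality to a planar computation. The main obstacle I anticipate is the combinatorial explosion of cases caused by simultaneous saturation of several among the $\binom{5}{4}=5$ four-point $\boxtimes$-inequalities, together with the fact that saturations for different $s,t$ parameters pick out different geometric collapses; finding a unified reduction, perhaps via a continuous deformation that successively saturates the inequalities in a controlled order, or via an a priori identification of the correct extremal configurations, is what I would expect to require the most care.
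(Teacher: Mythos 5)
Your forward direction is fine, and you correctly identify that the four-point theorem controls all $4$-point subspaces. But the core of your plan --- fix a $\mathrm{CAT}(0)$ realization $Y_0$ of $\{x_1,\dots,x_4\}$ and then \emph{attach} $x_5$ by Reshetnyak gluing so that all four distances $d(x_5,x_i)$ come out exactly right --- does not work as stated. Reshetnyak gluing only applies along isometric convex subsets, and in the glued space the distance from the new point to any point outside the gluing locus is forced by the quotient metric (an infimum over broken paths through the gluing locus); you have no freedom to prescribe it. Generically you can arrange at most one or two of the four distances exactly, and the others only as one-sided inequalities. This is precisely why the paper does not attempt a direct isometric embedding at all: it first proves a duality reduction (Lemma \ref{key-lemma} together with Theorem \ref{ANN-th}, yielding Proposition \ref{G(0)-prop}) showing that it suffices to verify, for \emph{each} graph $G$ on five vertices, the one-sided $G(0)$-comparison property, where gluing constructions are exactly the right tool because only inequalities $\leq$ on edges and $\geq$ on non-edges are required. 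That separation-theorem reduction is the missing idea in your proposal.

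Your fallback, an auxiliary ``cycle condition'' on five points that is both necessary for embeddability and sufficient for the attachment, is not formulated, and there is concrete reason to doubt that any cycle-type family suffices. By the Andoni--Naor--Neiman duality, what must be verified is the family of all quadratic metric inequalities valid in $\mathrm{CAT}(0)$ spaces, and the associated graphs on five vertices include non-cycle graphs; the two that carry essentially all of the difficulty in the paper are $G^{(5)}_7$ and $G^{(5)}_9$, handled in Sections \ref{5-7-sec} and \ref{5-9-sec} via the under-distance/over-distance trichotomy of Proposition \ref{embeddable-TSD-TLD-prop}, doubled Euclidean triangles, and the Lang--Schroeder extension theorem. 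The genuinely cyclic part of the problem is already covered by Theorem \ref{Cycl-th} (the $\mathrm{Cycl}_5(0)$-property), which only disposes of the $5$-cycle graph $G^{(5)}_1$. So even granting your saturation/extremal analysis of a cycle functional, you would not reach the configurations that actually obstruct embeddability; the proposal defers, rather than resolves, the main difficulty.
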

Our proof of Theorem \ref{main-th} also gives another proof of Gromov's characterization of those four-point metric spaces 
that admit an isometric embedding into a $\mathrm{CAT}(0)$ space whose detailed proof was omitted in \cite{Gr2}.

\subsection{Gromov's cycle conditions and their generalizations}
To prove Theorem \ref{main-th}, we introduce new necessary conditions 
for a metric space to admit an isometric embedding into a $\mathrm{CAT}(0)$ space 
by slightly modifying and generalizing Gromov's cycle conditions defined in \cite{Gr2}. 
First we briefly recall some definitions and facts established mainly in \cite{Gr2}. 
In this paper, graphs are always assumed to be simple and undirected. 
\begin{definition}[Gromov \cite{Gr2}]\label{Cycl-def}
Fix an integer $k \ge 4$. 
Let $G=(V,E)$ be the $k$-vertex cycle graph with vertex set $V$ and edge set $E$. 
A metric space $(X,d_X )$ is said to satisfy the {\em $\mathrm{Cycl}_k(0)$ condition} if 
for any map $f:V\to X$, there exists a map $g:V\to\mathbb{R}^2$ such that 
\begin{equation*}
\begin{cases}
\|g(u)-g(v)\|\leq d_X (f(u),f(v)),\quad\textrm{if }\{ u,v\}\in E,\\
\|g(u)-g(v)\|\geq d_X (f(u),f(v)),\quad\textrm{if }\{ u,v\}\not\in E
\end{cases}
\end{equation*}
for any $u,v\in V$.
\end{definition}
Gromov \cite{Gr2} proved that every $\mathrm{CAT}(0)$ space satisfies the $\mathrm{Cycl}_k (0)$ condition for every integer $k\geq 4$. 
He also stated the following fact in \cite[\S 7]{Gr2}.  
\begin{theorem}[Gromov \cite{Gr2}]\label{Cycl-boxtimes-th}
A metric space satisfies 
the $\mathrm{Cycl}_4 (0)$ condition if and only if it satisfies the $\boxtimes$-inequalities. 
\end{theorem}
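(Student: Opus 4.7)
Treat the two implications separately. The forward direction will follow from a single algebraic identity valid in any Hilbert space. The converse requires building a planar realization from the metric data, and the main difficulty is guaranteeing that a pair of Euclidean triangles glued along a common edge produces a quadrilateral whose two diagonals actually meet.

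\textbf{Forward direction} ($\mathrm{Cycl}_4(0) \Rightarrow \boxtimes$). Given four points $x_1, x_2, x_3, x_4 \in X$, let $f$ send the vertices of the $4$-cycle in cyclic order to these points, and let $g \colon V \to \mathbb{R}^2$ be any map as in the definition of $\mathrm{Cycl}_4(0)$; set $p_i = g(v_i)$. For $t, s \in [0, 1]$ define $p_t = (1-t) p_1 + t p_3$ and $q_s = (1-s) p_2 + s p_4$. The Hilbert-space identity
\[
 \left\| \sum_{i} \lambda_i v_i \right\|^2 = -\frac{1}{2} \sum_{i, j} \lambda_i \lambda_j \|v_i - v_j\|^2,
\]
valid for any vectors $v_i$ with weights $\lambda_i$ summing to zero, applied to $v_i = p_i$ and $(\lambda_1, \lambda_2, \lambda_3, \lambda_4) = (1-t, -(1-s), t, -s)$, expresses $\|p_t - q_s\|^2$ as exactly the right-hand side of the $\boxtimes$-inequality with $\|p_i - p_j\|$ in place of $d_X(x_i, x_j)$. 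Since this quantity is $\geq 0$, since its coefficients on cycle-edge squared distances are non-negative and those on diagonal squared distances non-positive, and since $\mathrm{Cycl}_4(0)$ bounds Euclidean cycle-edges above and Euclidean diagonals below by the metric distances, substituting $d_X(x_i, x_j)$ for each $\|p_i - p_j\|$ can only increase the expression, yielding the $\boxtimes$-inequality.

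\textbf{Converse direction} ($\boxtimes \Rightarrow \mathrm{Cycl}_4(0)$). Write $d_{ij} = d_X(x_i, x_j)$ and place $p_1 = (0, 0)$, $p_3 = (d_{13}, 0)$ in $\mathbb{R}^2$. The triangle inequalities in $X$ permit realizing the Euclidean triangles with vertex sets $\{p_1, p_2, p_3\}$ and $\{p_1, p_4, p_3\}$ having edge lengths $(d_{12}, d_{13}, d_{23})$ and $(d_{14}, d_{13}, d_{34})$ respectively, placed on opposite sides of the $x$-axis. The four cycle-edge distances and the diagonal $\|p_1 - p_3\|$ then equal the corresponding $d_{ij}$. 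To verify the remaining inequality $\|p_2 - p_4\| \geq d_{24}$, suppose that the segments $p_1 p_3$ and $p_2 p_4$ meet at an interior point; then some $(t_0, s_0) \in (0, 1)^2$ satisfies $p_{t_0} = q_{s_0}$, so $\|p_{t_0} - q_{s_0}\|^2 = 0$. Setting $D = \|p_2 - p_4\|$, the identity from the forward direction applied in the plane gives that the metric $\boxtimes$-right-hand side at $(t_0, s_0)$ equals $s_0(1 - s_0)(D^2 - d_{24}^2)$; by the $\boxtimes$-inequality this is $\geq 0$, hence $D \geq d_{24}$.

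\textbf{Main obstacle.} The critical step is ensuring that $p_1 p_3$ and $p_2 p_4$ actually meet in their interiors; this can fail when the quadrilateral $p_1 p_2 p_3 p_4$ is non-convex, typically when both glued triangles are obtuse at $p_1$ or both at $p_3$. The plan to handle such cases is to run the symmetric construction along the other diagonal --- placing $p_2, p_4$ at distance $d_{24}$, gluing the triangles $\{p_1, p_2, p_4\}$ and $\{p_2, p_3, p_4\}$ on opposite sides, and verifying $\|p_1 - p_3\| \geq d_{13}$ by the analogous argument --- and to show through a geometric case analysis that, under the $\boxtimes$-inequalities, at least one of the two gluings always yields a convex quadrilateral. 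Degenerate configurations (collinear triples, triangles of zero area, coincident points) should be handled either directly or via a straightforward limiting argument.
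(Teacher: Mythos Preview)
The paper itself does not prove this theorem; it only cites \cite{toyoda-cycl} and \cite[Lemma 2.6]{KTU} for a detailed proof. So there is no in-paper argument to compare against, and your proposal must be judged on its own merits.

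Your forward direction is clean and correct: the Hilbert-space identity gives exactly the $\boxtimes$-expression with Euclidean distances, and the sign pattern of the coefficients lets you pass to the metric distances via the $\mathrm{Cycl}_4(0)$ comparison.

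The converse direction has a genuine gap. The convex case is handled correctly, but your proposed fix for the non-convex case---switching to the other diagonal and arguing that under $\boxtimes$ at least one of the two gluings yields a convex quadrilateral---is based on a claim that is \emph{false}. Here is a counterexample. Take four points in $\mathbb{R}^3$ (so $\boxtimes$ certainly holds) with
\[
d_{13}=d_{23}=d_{24}=1,\qquad d_{12}=d_{34}=2-\epsilon,\qquad d_{14}=3-2\epsilon
\]
for small $\epsilon>0$; one checks this is realizable in $\mathbb{R}^3$ by placing $x_1,x_3$ on an axis, $x_2$ in a coordinate plane, and $x_4$ rotated by an angle near $\pi/6$ out of that plane. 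In the $13$-gluing one finds $p_2\approx(2,2\sqrt{\epsilon})$, $p_4\approx(3,-2\sqrt{3\epsilon})$, both with $x$-coordinate exceeding $d_{13}=1$, so the crossing of $[p_2,p_4]$ with the $x$-axis lies to the right of $p_3$ and the quadrilateral is non-convex. In the $24$-gluing one finds $p_1'\approx(-2,2\sqrt{3\epsilon})$, $p_3'\approx(-1,-2\sqrt{\epsilon})$, both with negative $x$-coordinate, so that gluing is non-convex as well. Thus no geometric case analysis can establish the claim you plan to prove.

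What actually happens in this example is that both gluings, though non-convex, still satisfy the required diagonal inequality (e.g.\ $\|p_2-p_4\|>d_{24}$); the issue is only that your argument does not reach this conclusion. The missing ingredient is essentially the four-point comparison-angle inequality
\[
\tilde\angle x_2 x_3 x_1+\tilde\angle x_1 x_3 x_4+\tilde\angle x_4 x_3 x_2\le 2\pi,
\]
which, once established from $\boxtimes$, immediately gives $\angle p_2 p_3 p_4\ge\tilde\angle x_2 x_3 x_4$ in the reflex-at-$p_3$ case and hence $\|p_2-p_4\|\ge d_{24}$. Deriving that angle inequality from $\boxtimes$ is itself a nontrivial step and is where the real work in the cited references lies; your outline does not supply it.
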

For a detailed proof of this theorem, see \cite[\S 7]{toyoda-cycl}. 
Because a geodesic space satisfies the $\boxtimes$-inequalities if and only if it is $\mathrm{CAT}(0)$, 
it follows from Theorem \ref{Cycl-boxtimes-th} that a geodesic space satisfies the $\mathrm{Cycl}_4 (0)$ condition if and only if 
it is $\mathrm{CAT}(0)$. 
This implies in particular that the $\mathrm{Cycl}_4 (0)$ condition implies the $\mathrm{Cycl}_k (0)$ conditions for all integers $k\geq 4$ 
under the assumption that the metric space is geodesic. 
Recently, the present author \cite{toyoda-cycl} proved that this implication is true even without assuming that the metric space is geodesic. 

\begin{theorem}[\cite{toyoda-cycl}]\label{Cycl-th}
If a metric space $X$ satisfies the $\mathrm{Cycl}_4 (0)$ condition, 
or equivalently, if $X$ satisfies the $\boxtimes$-inequalities, then 
$X$ satisfies the $\mathrm{Cycl}_k (0)$ condition for every integer $k\geq 4$. 
\end{theorem}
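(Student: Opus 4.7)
The plan is to proceed by induction on $k$, with base case $k=4$ immediate from Theorem \ref{Cycl-boxtimes-th}. Suppose every metric space satisfying the $\boxtimes$-inequalities satisfies $\mathrm{Cycl}_k(0)$, and take a metric space $X$ with the $\boxtimes$-inequalities together with a map $f \colon V \to X$ from the vertex set $V = \{v_0, \ldots, v_k\}$ of the $(k+1)$-cycle. I wish to construct $g \colon V \to \mathbb{R}^2$ realizing the required edge and non-edge inequalities for the $(k+1)$-cycle.

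My approach is a diagonal cut-and-paste. The chord $\{v_0, v_2\}$ splits the $(k+1)$-gon combinatorially into a triangle $(v_0, v_1, v_2)$ and a $k$-cycle $C' := (v_0, v_2, v_3, \ldots, v_k)$. Apply the induction hypothesis to $C'$ to obtain a planar realization $g' \colon V \setminus \{v_1\} \to \mathbb{R}^2$. Then I place $g(v_1)$ in the plane, aiming to satisfy the two edge constraints $|g(v_1) - g'(v_i)| \le d_X(f(v_1), f(v_i))$ for $i \in \{0,2\}$ and the $k-3$ non-edge constraints $|g(v_1) - g'(v_j)| \ge d_X(f(v_1), f(v_j))$ for $j \in \{3, \ldots, k-1\}$.

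Two technical steps carry the load. First, I must choose $g'$ so that $|g'(v_0) - g'(v_2)| = d_X(f(v_0), f(v_2))$ exactly, i.e.\ the diagonal is realized at full length. This is not immediate from the raw inductive hypothesis, so I would strengthen the induction: simultaneously with $\mathrm{Cycl}_k(0)$, prove that for any prescribed edge of the $k$-cycle one can find a realization attaining that edge at full length, a statement that transmits cleanly through the cut-and-paste step. Second, I place $g(v_1)$ on the side of the line through $g'(v_0), g'(v_2)$ opposite to the $g'(v_j)$'s, with $|g(v_1) - g'(v_i)| = d_X(f(v_1), f(v_i))$ for $i \in \{0,2\}$ so that the triangle realizes exactly. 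The non-edge inequalities for $v_1$ then follow by applying the $\boxtimes$-inequality to each four-point subset $(f(v_0), f(v_1), f(v_2), f(v_j))$, combined with the inequalities $|g'(v_0) - g'(v_j)| \ge d_X(f(v_0), f(v_j))$ and $|g'(v_2) - g'(v_j)| \ge d_X(f(v_2), f(v_j))$ already guaranteed by $g'$.

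The main obstacle is the first of these steps, namely forcing the diagonal of the $k$-cycle realization to full length, and more precisely finding the right formulation of the strengthened inductive statement so that the induction actually goes through. A natural alternative that avoids this technical point would be to identify an explicit list of $(k+1)$-point quadratic inequalities characterizing $\mathrm{Cycl}_{k+1}(0)$ (mirroring Theorem \ref{Cycl-boxtimes-th}) and derive them as positive combinations of the $\boxtimes$-inequalities applied to the various 4-subsets. However, establishing such an explicit characterization is itself nontrivial, and I expect the cut-and-paste induction to be the more tractable route, provided the inductive statement is stated sharply enough.
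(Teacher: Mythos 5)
Your proposal is not yet a proof, and beyond the obstacle you flag yourself (forcing the chord of the $k$-cycle realization to full length), the sketch contains a step that is simply false as stated. In your inductive step, the vertex $v_1$ has $k-2$ non-neighbours $v_3 ,\ldots ,v_k$ (not $k-3$; you omit $j=k$, and $j=k$ is precisely one of the two problematic cases). For $j=3$ the pair $\{ v_2 ,v_3 \}$ is an \emph{edge} of the $k$-cycle $C'$, and for $j=k$ the pair $\{ v_0 ,v_k \}$ is an edge of $C'$; hence the inductive realization $g'$ guarantees $\| g'(v_2 )-g'(v_3 )\|\leq d_X (f(v_2 ),f(v_3 ))$ and $\| g'(v_0 )-g'(v_k )\|\leq d_X (f(v_0 ),f(v_k ))$ --- the \emph{opposite} of the inequalities you claim are ``already guaranteed by $g'$''. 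The four-point argument on $(f(v_0 ),f(v_1 ),f(v_2 ),f(v_j ))$ (in effect Lemma \ref{quadrangle-lemma}) needs all four sides of the comparison quadrilateral at least as long as the corresponding distances in $X$, so it cannot be run for the two chords $\{ v_1 ,v_3 \}$ and $\{ v_1 ,v_k \}$ nearest the cut. Repairing this forces you to demand that the two $C'$-edges adjacent to the designated chord also be realized exactly, so the strengthening you would have to carry through the induction snowballs well beyond ``one designated edge at full length'', and you neither formulate nor prove any version of it.

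There are two further gaps in the placement of $g(v_1 )$. First, ``on the side of the line through $g'(v_0 ),g'(v_2 )$ opposite to the $g'(v_j )$'s'' is not well-defined: nothing in the $\mathrm{Cycl}_k (0)$ conclusion makes the realization convex, so the points $g'(v_3 ),\ldots ,g'(v_k )$ may lie on both sides of that line. Second, even where it is defined, separation by the line is not the hypothesis the $\boxtimes$-based comparison requires: Lemma \ref{quadrangle-lemma} needs the \emph{segments} $\lbrack g(v_1 ),g'(v_j )\rbrack$ and $\lbrack g'(v_0 ),g'(v_2 )\rbrack$ to intersect, and a segment joining points on opposite sides of the line crosses the line but not necessarily the chord segment, in which case no parameters $t,s$ realize the intersection point and the $\boxtimes$-inequality yields nothing. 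Note finally that the present paper does not prove Theorem \ref{Cycl-th} at all --- it imports it from \cite{toyoda-cycl} --- and the ingredients it does quote from that source (Proposition \ref{sdi-CAT(0)-prop}, Lemmas \ref{quadrangle-lemma} and \ref{combined-triangles-lemma}, with their degenerate and non-crossing cases) indicate that exactly the issues above are where the real work lies; your cut-and-paste plan, with its central inductive statement left open and its two chord cases resting on inequalities that point the wrong way, does not yet engage them.
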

Moreover, it was also stated in \cite[\S7]{Gr2} that any four-point metric space 
embeds isometrically into a three-dimensional Riemannian space form of constant curvature at most $0$ or a metric tree
whenever it satisfies the $\mathrm{Cycl}_4 (0)$ condition. 
Thus the following theorem holds. 
\begin{theorem}[Gromov \cite{Gr2}]\label{four-point-th}
A four-point metric space admits an isometric embedding into a $\mathrm{CAT}(0)$ space 
if and only if it satisfies the $\mathrm{Cycl}_4 (0)$ condition. 
\end{theorem}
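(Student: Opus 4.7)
\emph{Proof plan.} I would treat the two directions separately.

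For the forward implication, recall that Gromov proved every $\mathrm{CAT}(0)$ space satisfies the $\mathrm{Cycl}_k(0)$-property for every integer $k \geq 4$. Since the $\mathrm{Cycl}_4(0)$-property is a condition only on the distances among four points, it is clearly inherited by any four-point subspace, and hence by $X$ if $X$ embeds isometrically into a $\mathrm{CAT}(0)$ space.

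For the reverse implication, suppose $X = \{ x_1, x_2, x_3, x_4 \}$ satisfies the $\mathrm{Cycl}_4(0)$-property, equivalently the $\boxtimes$-inequalities by Theorem \ref{Cycl-boxtimes-th}. Following Gromov's hint in \cite[\S 7]{Gr2}, the plan is to show that $X$ admits an isometric embedding into either a three-dimensional Riemannian space form of constant curvature $\kappa \leq 0$ (namely $\mathbb{R}^3$ or a hyperbolic space $\mathbb{H}^3_\kappa$) or a metric tree, each of which is a $\mathrm{CAT}(0)$ space.

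The main construction I would use is a bending-of-triangles argument. Choose a pair, say $\{ x_1, x_3 \}$, to serve as a diagonal. The triangle inequality yields Euclidean triangles $T_1 = \triangle x_1 x_2 x_3$ and $T_2 = \triangle x_1 x_3 x_4$ with the prescribed side lengths from $X$. Glue $T_1$ and $T_2$ along $x_1 x_3$ and place the result in $\mathbb{R}^3$ at variable dihedral angle $\theta \in [0, \pi]$; the distance $D(\theta) = \| x_2 - x_4 \|$ varies continuously and monotonically from a minimum $D_{\min}$ (folded, coplanar) to a maximum $D_{\max}$ (unfolded, coplanar). If $d_X(x_2, x_4) \in [D_{\min}, D_{\max}]$, the intermediate value theorem produces an isometric embedding of $X$ into $\mathbb{R}^3$. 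When $d_X(x_2, x_4) > D_{\max}$, I would instead construct $T_1, T_2$ in a hyperbolic plane of variable curvature $\kappa < 0$; the corresponding unfolded distance increases as $\kappa \to -\infty$, and in the limit the triangles degenerate to tripods, giving a tree realization. Continuity in $\kappa$ would then supply the desired embedding into some $\mathbb{H}^3_\kappa$ or a tree. The remaining case $d_X(x_2, x_4) < D_{\min}$ would be handled symmetrically by swapping the roles of the two diagonals, i.e.\ using $\{ x_2, x_4 \}$ as the bending axis and $d_X(x_1, x_3)$ as the target distance to match.

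The main obstacle I anticipate is proving that the $\boxtimes$-inequalities precisely force the target distance into the achievable range of some geometric construction in this list. Concretely, if no admissible triple (choice of diagonal, curvature $\kappa \leq 0$, dihedral angle $\theta$) reproduces the sixth distance, one must exhibit $(s, t) \in [0, 1]^2$ witnessing a failure of the $\boxtimes$-inequality in $X$. I expect this to require identifying the extremal $(s, t)$ at which $\boxtimes$ becomes tight on the boundary of the achievable region, together with a direct calculation via the Euclidean and hyperbolic laws of cosines matching these boundaries with the limiting geometric configurations (flat, unfolded, or tree-like).
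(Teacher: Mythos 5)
Your forward direction is fine and agrees with the paper's. For the converse, however, you are reconstructing Gromov's original (never published in detail) sketch --- realizing the four points in $\mathbb{R}^3$, in some $\mathbb{H}^3_\kappa$, or in a metric tree via a bending-and-deformation argument --- whereas the paper deliberately takes a different route: it checks the $G(0)$-property for each of the eleven four-vertex graphs (all but the $4$-cycle $G^{(4)}_8$ follow trivially from Propositions \ref{semicomplete-prop}, \ref{graph-sum-prop} and \ref{a-point-sharing-prop}, and the $4$-cycle is exactly the $\mathrm{Cycl}_4(0)$-property, Proposition \ref{4-8-prop}), and then invokes Proposition \ref{G(0)-prop}, whose proof rests on the Andoni--Naor--Neiman duality (Theorem \ref{ANN-th}, i.e.\ separation of the closed convex cone $\mathcal{C}_4$). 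The paper's argument is soft and non-constructive but scales to five points; yours, if completed, would yield the stronger concrete classification of target spaces. So the routes are genuinely different.

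The problem is that your plan leaves open precisely the step that carries all the content, and the reductions you propose do not close it. Two concrete gaps. First, in the over-distance case the deformation through $\mathbb{H}^2_\kappa$ only sweeps distances in $(D_{\max}, D_{\mathrm{tree}}]$, where the tree limit is $D_{\mathrm{tree}}=\max(d_{12}+d_{34},\,d_{14}+d_{23})-d_{13}$; your intermediate value argument therefore reaches $d_{24}$ only if $d_{13}+d_{24}\leq\max(d_{12}+d_{34},\,d_{14}+d_{23})$. This relaxed four-point condition is \emph{not} a consequence of the $\boxtimes$-inequalities alone (the unit square violates it while satisfying $\boxtimes$), so you must prove it from $\boxtimes$ \emph{together with} the over-distance hypothesis --- a genuine lemma, not a routine law-of-cosines check. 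Second, the under-distance case is not ``handled symmetrically by swapping diagonals'': if $d_{24}<D_{\min}$ the four points do not embed in $\mathbb{R}^3$, so with the other diagonal the free distance $d_{13}$ is again either under- or over-distance, and only the over case feeds back into your construction. You must therefore rule out, using $\boxtimes$, the configuration being under-distance with respect to both pairings (and, combined with the first gap, organize which of the possible under/over patterns can occur). This bookkeeping is exactly the machinery the paper builds in Section \ref{TSD-TLD-sec} (Proposition \ref{embeddable-TSD-TLD-prop}, Lemmas \ref{no-bb-lemma} and \ref{no-ccc-lemma}, Corollary \ref{no-ccc-corollarly}) --- and it is telling that the paper develops it only for the hard five-point graphs $G^{(5)}_7$ and $G^{(5)}_9$ while avoiding it entirely for the four-point theorem. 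As written, your proposal identifies the correct geometric picture but defers its only nontrivial step, so it is not yet a proof.
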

Theorem \ref{Cycl-th} and Theorem \ref{four-point-th} tell us that 
the $\mathrm{Cycl}_4 (0)$ condition implies many necessary conditions for a metric space to admit an isometric embedding into 
a $\mathrm{CAT}(0)$ space. 
Therefore, it seems natural to ask whether the $\mathrm{Cycl}_4 (0)$ condition (or the validity of the $\boxtimes$-inequalities) implies 
the isometric embeddability into a $\mathrm{CAT}(0)$ space or not. 
However, the answer of this question turned out to be false. 
Recently, Eskenazis, Mendel and Naor \cite{EMN} proved that there exists a metric space that does not 
admit a coarse embedding into any $\mathrm{CAT}(0)$ space. 
On the other hand, 
it was proved in \cite[Proposition 3.1]{KTU} that 
for any $0<\alpha\leq 1/2$ and any metric space $(X,d_X )$, the metric space $(X,d_{X}^{\alpha})$ satisfies the $\mathrm{Cycl}_4 (0)$ condition. 
Therefore, if we choose a metric space $(Y,d_{Y})$ that does not admit a coarse embedding into any $\mathrm{CAT}(0)$ space and 
a constant $0<\alpha\leq 1/2$, then the metric space $(Y,d_{Y}^{\alpha})$ satisfies the $\mathrm{Cycl}_4 (0)$ condition 
but does not admit a coarse embedding into any $\mathrm{CAT}(0)$ space because $(Y,d_{Y}^{\alpha})$ is coarsely equivalent to $(Y,d_{Y})$. 

In this paper, to examine further 
to what extent the $\mathrm{Cycl}_4 (0)$ condition implies 
necessary conditions for a metric space to admit an isometric embedding into a $\mathrm{CAT}(0)$ space, 
we define the following new conditions. 
\begin{definition}\label{G(0)-def}
Let $G=(V,E)$ be a graph with vertex set $V$ and edge set $E$. 
A metric space $(X,d_X )$ is said to satisfy the {\em $G(0)$ condition} if 
for any map $f:V\to X$, there exist a $\mathrm{CAT}(0)$ space $(Y,d_Y )$ and a map $g:V\to Y$ such that 
\begin{equation}\label{G(0)-ineq}
\begin{cases}
d_Y (g(u),g(v))\leq d_X (f(u),f(v)),\quad\textrm{if }\{ u,v\}\in E,\\
d_Y (g(u),g(v))\geq d_X (f(u),f(v)),\quad\textrm{if }\{ u,v\}\not\in E
\end{cases}
\end{equation}
for any $u,v\in V$. 
\end{definition}
Recently, Lebedeva, Petrunin and Zolotov \cite{LPZ} also introduced a similar condition.  
In the definition of their condition in \cite[Section 8]{LPZ}, a $\mathrm{CAT}(0)$ space $Y$ in Definition \ref{G(0)-def} 
is replaced with a Hilbert space. 
It is easily observed that every $\mathrm{CAT}(0)$ space satisfies the 
$G(0)$ condition for every graph $G$. 
Therefore, for every graph $G$, the $G(0)$ condition is a necessary condition 
for a metric space to admit an isometric embedding into a $\mathrm{CAT}(0)$ space. 
In Section \ref{criterion-sec}, 
we will prove the following proposition, 
which states that the $G(0)$ conditions for all graphs $G$ form a necessary and sufficient condition 
for a metric space to admit an isometric embedding into a $\mathrm{CAT}(0)$ space. 
\begin{proposition}\label{G(0)-prop}
Fix a positive integer $n$. 
An $n$-point metric space admits an isometric embedding into a $\mathrm{CAT}(0)$ space if and only if 
it satisfies the $G(0)$ condition for every graph $G$ with $n$ vertices.  
\end{proposition}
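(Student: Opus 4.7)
The $(\Leftarrow)$ direction is immediate: composing the isometric embedding $X\hookrightarrow Y$ with any $f\colon V\to X$ produces a map $g$ that realizes \eqref{G(0)-ineq} with equality on every pair. For the converse, my plan is to feed the family of CAT$(0)$ witnesses supplied by the $G(0)$-property, one for each graph $G$ on $V=\{1,\dots,n\}$, into a single $\ell^2$-product with carefully chosen scaling factors. Fix $X=\{x_1,\dots,x_n\}$ and let $f(i)=x_i$. For each of the finitely many graphs $G$ on $V$, the $G(0)$-property applied to $f$ gives a CAT$(0)$ space $(Y_G,d_G)$ and a map $g_G\colon V\to Y_G$; set $D_G(i,j):=d_G(g_G(i),g_G(j))^2$ and $D_X(i,j):=d_X(x_i,x_j)^2$, viewed as vectors in $\mathbb{R}^{\binom{n}{2}}$. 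The central step is to show that $D_X$ belongs to the (closed, polyhedral) convex cone generated by the finite set $\{D_G\}_G$.

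I would prove this by Farkas's lemma: it suffices that any $v\in\mathbb{R}^{\binom{n}{2}}$ with $\langle v,D_G\rangle\geq 0$ for every $G$ also satisfies $\langle v,D_X\rangle\geq 0$. To verify the contrapositive, given $v$ with $\langle v,D_X\rangle<0$ I choose the graph $G_v$ whose edge set is $\{\{i,j\}:v_{ij}>0\}$. The sign rule in Definition \ref{G(0)-def} then forces $v_{ij}\bigl(D_{G_v}(i,j)-D_X(i,j)\bigr)\leq 0$ in every coordinate: on edges of $G_v$ one has $v_{ij}>0$ and $D_{G_v}(i,j)\leq D_X(i,j)$, while on non-edges $v_{ij}\leq 0$ and $D_{G_v}(i,j)\geq D_X(i,j)$. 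Summing yields $\langle v,D_{G_v}\rangle\leq\langle v,D_X\rangle<0$, contradicting the hypothesis on $v$. This matching of sign patterns to graphs is the step I expect to be the main obstacle: it is precisely the feature that distinguishes the full $G(0)$-family from a single condition like $\mathrm{Cycl}_4(0)$, and it is what lets the constraints collectively trap $D_X$ inside the cone.

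Once I have $D_X=\sum_G\lambda_G D_G$ with $\lambda_G\geq 0$, I finish by forming the $\ell^2$-product $Y:=\prod_{G:\lambda_G>0}(Y_G,\sqrt{\lambda_G}\,d_G)$. Rescaling a CAT$(0)$ space by a positive factor remains CAT$(0)$, and a finite $\ell^2$-product of CAT$(0)$ spaces is CAT$(0)$, so $Y$ is CAT$(0)$. Defining $g(i):=(g_G(i))_{G:\lambda_G>0}$, I compute $d_Y(g(i),g(j))^2=\sum_G\lambda_G D_G(i,j)=D_X(i,j)$, giving the sought isometric embedding $X\hookrightarrow Y$. Everything in this concluding step is routine once the Farkas reduction is in hand.
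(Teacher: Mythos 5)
Your proof is correct, and it takes a genuinely different route from the paper's. The paper obtains the nontrivial direction by combining Lemma \ref{key-lemma} with Theorem \ref{ANN-th}: it separates the squared-distance vector of $X$ from the closed convex cone $\mathcal{C}_n$ of \emph{all} $\mathrm{CAT}(0)$-realizable squared-distance vectors, notes that the separating functional is a quadratic metric inequality valid in every $\mathrm{CAT}(0)$ space, and then applies the sign-matching argument of Lemma \ref{key-lemma} to the associated graph; this relies on the closedness of $\mathcal{C}_n$, which the paper justifies via Pythagorean products, dilations and ultraproducts. You instead run the duality directly against the finitely generated cone spanned by the $2^{\binom{n}{2}}$ specific witnesses $D_G$, which is closed for elementary (Minkowski--Weyl) reasons, and your choice of $G_v$ as the graph of the positive coordinates of $v$ is exactly the combinatorial device of Lemma \ref{key-lemma} transplanted into the dual-cone computation; note that your argument only needs $\langle v,D_{G_v}\rangle\geq 0$ for the single witness $D_{G_v}$, not validity of $v$ on all $\mathrm{CAT}(0)$ spaces, which is why Theorem \ref{ANN-th} can be bypassed entirely. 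What your approach buys is independence from Theorem \ref{ANN-th} and from the closedness of $\mathcal{C}_n$, plus an explicit embedding of $X$ into a weighted $\ell^2$-product of finitely many witness spaces (at most $\binom{n}{2}$ of them, by Carath\'eodory for cones). What the paper's factorization buys is Lemma \ref{key-lemma} as a free-standing statement about individual quadratic metric inequalities, which it reuses elsewhere (e.g.\ Proposition \ref{4-noncycl-prop}). Both arguments ultimately hinge on the same observation: squaring preserves the order of the two sides of \eqref{G(0)-ineq}, so the sign pattern of any functional can be realized by a single graph.
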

Clearly, for each integer $k\geq 4$, the $\mathrm{Cycl}_k (0)$ condition implies the $G(0)$ condition 
for the cycle graph $G$ with $k$ vertices. 
Therefore, it follows from Theorem \ref{Cycl-th} that 
the $\mathrm{Cycl}_4 (0)$ condition (or the validity of the $\boxtimes$-inequalities) 
implies the $G(0)$ conditions for all cycle graphs $G$. 
In Sections \ref{four-point-sec}, \ref{five-point-sec}, \ref{5-7-sec} and \ref{5-9-sec}, we will prove that 
the $\mathrm{Cycl}_4 (0)$ condition also implies the $G(0)$ conditions 
for many finite graphs $G$ including 
all graphs containing at most five vertices. 
Together with Proposition \ref{G(0)-prop}, this proves Theorem \ref{main-th}, 
and also gives another proof of Theorem \ref{four-point-th} whose 
detailed proof was omitted in \cite{Gr2}.

\subsection{Quadratic metric inequalities that hold true in every $\mathrm{CAT}(0)$ space}
Homogeneous linear inequalities on the squares of distances among finite points like the $\boxtimes$-inequalities 
were called quadratic metric inequalities by Andoni, Naor, and Neiman \cite{ANN}. 
In this paper, by slightly modifying their notation, we use the following notation to denote a quadratic metric inequality. 
For any positive integer $n$, we denote $\lbrack n\rbrack =\{ 1,2,\ldots ,n\}$, and 
for any set $V$, we denote by $\binom{V}{2}$ the set of all two-element subsets of $V$. 
\begin{definition}
Fix a positive integer $n$. 
Let $E=\binom{\lbrack n\rbrack}{2}$, and let 
$(a_{ij})_{\{ i,j\}\in E}$ be a family of real numbers indexed by $E$. 
A metric space $(X ,d_X )$ is said to satisfy the {\em $(a_{ij})$-quadratic metric inequality} if any 
points $x_1 ,\ldots ,x_n \in X$ satisfy 
\begin{equation*}
0\leq\sum_{\left\{ i,j\right\}\in E}a_{ij}d_X \left(x_i ,x_j \right)^2 .
\end{equation*}
\end{definition}
The following theorem was proved in \cite{ANN}. 
\begin{theorem}[Andoni, Naor, and Neiman \cite{ANN}]\label{ANN-th}
Let $n$ be a positive integer. 
An $n$-point metric space $X$ admits an isometric embedding into a $\mathrm{CAT}(0)$ space 
if and only if 
$X$ satisfies the $(a_{ij})$-quadratic metric inequality 
for every family $(a_{ij})_{\{ i,j\}\in E}$ of real numbers indexed by $E=\binom{\lbrack n\rbrack}{2}$ such that 
every $\mathrm{CAT}(0)$ space satisfies the $(a_{ij})$-quadratic metric inequality. 
\end{theorem}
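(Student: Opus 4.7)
The forward implication is immediate: if $\iota : X \hookrightarrow Y$ is an isometric embedding into a $\mathrm{CAT}(0)$ space $Y$, then any $(a_{ij})$-quadratic metric inequality that holds in every $\mathrm{CAT}(0)$ space applies to $\iota(x_1), \ldots, \iota(x_n)$, and since $\iota$ preserves distances, the inequality descends to $x_1, \ldots, x_n$ in $X$.

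For the converse, my plan is to carry out a separation argument in the finite-dimensional real vector space $\mathbb{R}^E$ with $E = \binom{\lbrack n\rbrack}{2}$. Define
$$ K = \left\{ \bigl( d_Y(y_i, y_j)^2 \bigr)_{\{i,j\} \in E} : (Y, d_Y) \text{ is a } \mathrm{CAT}(0) \text{ space},\ y_1, \ldots, y_n \in Y \right\} \subset \mathbb{R}^E . $$
By construction, the squared-distance vector $D_X = (d_X(x_i, x_j)^2)_{\{i,j\} \in E}$ lies in $K$ if and only if $X$ admits an isometric embedding into a $\mathrm{CAT}(0)$ space. I would show that $K$ is a closed convex cone, so that if $D_X \notin K$, the standard separation theorem for closed convex cones in a finite-dimensional space produces $(a_{ij})_{\{i,j\} \in E} \in \mathbb{R}^E$ satisfying $\sum_{\{i,j\} \in E} a_{ij} v_{ij} \geq 0$ for every $v \in K$ while $\sum_{\{i,j\} \in E} a_{ij} d_X(x_i, x_j)^2 < 0$. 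The first statement is precisely that every $\mathrm{CAT}(0)$ space satisfies the $(a_{ij})$-quadratic metric inequality, contradicting the hypothesis on $X$; hence $D_X \in K$, as required.

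Convexity and the cone property of $K$ come from the scaled $\ell_2$-product: if $D^{(1)}, D^{(2)} \in K$ are realized in $\mathrm{CAT}(0)$ spaces $Y_1, Y_2$ and $\lambda \in \lbrack 0,1 \rbrack$, then $Y_1 \times Y_2$ equipped with the metric $\sqrt{\lambda d_{Y_1}^2 + (1-\lambda) d_{Y_2}^2}$ is again $\mathrm{CAT}(0)$, and the $n$ points $(y_i^{(1)}, y_i^{(2)})$ realize $\lambda D^{(1)} + (1-\lambda) D^{(2)}$; positive homogeneity follows by rescaling a single $\mathrm{CAT}(0)$ space by a positive constant.

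The main obstacle is closedness. Given $D^{(k)} \to D$ with each $D^{(k)}$ realized by points $y_1^{(k)}, \ldots, y_n^{(k)}$ in a $\mathrm{CAT}(0)$ space $Y^{(k)}$, my approach would be to form a non-principal ultralimit of the pointed metric spaces $(Y^{(k)}, y_1^{(k)})$. Since convergence of the $D^{(k)}$ forces the $n$-point configurations to have uniformly bounded diameter, and since ultralimits of $\mathrm{CAT}(0)$ spaces are $\mathrm{CAT}(0)$, the limit points $y_1, \ldots, y_n$ exist in the ultralimit $Y_\infty$ with squared distances $\lim_{k \to \omega} D^{(k)} = D$, placing $D$ in $K$. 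This closedness step is the only piece of the argument that invokes infinite-dimensional metric geometry, and it is where genuine care is needed; the remainder of the proof is an exercise in finite-dimensional convex geometry.
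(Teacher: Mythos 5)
Your proposal is correct and follows essentially the same route as the paper: both define the cone of squared-distance vectors realizable in $\mathrm{CAT}(0)$ spaces, establish that it is a closed convex cone via scaled Pythagorean products, dilations, and ultralimits/ultraproducts, and then apply the separation theorem for closed convex cones in $\mathbb{R}^E$. The paper simply cites these closure properties of the $\mathrm{CAT}(0)$ class (Sturm, Kleiner--Leeb) where you sketch their proofs.
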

For the original statement of Theorem \ref{ANN-th} in full generality, see \cite[Proposition 3]{ANN}. 
Theorem \ref{ANN-th} tells us that characterizations of those 
metric spaces that admit 
an isometric embedding into a $\mathrm{CAT(0)}$ space follow 
from characterizations of those 
quadratic metric inequalities that hold true in 
every $\mathrm{CAT}(0)$ space. 
We will prove the following lemma in Section \ref{criterion-sec}. 
\begin{lemma}\label{key-lemma}
Fix a positive integer $n$. 
Let $V=\lbrack n\rbrack$, and let $E =\binom{V}{2}$. 
Suppose $A=(a_{ij})_{\{ i,j\}\in E}$ is a family of real numbers indexed by $E$ such that 
every $\mathrm{CAT}(0)$ space satisfies the $(a_{ij})$-quadratic metric inequality. 
Let $E_+(A)\subseteq E$ be the set of all $\{ i,j\}\in E$ with $a_{ij}>0$, and let 
$G_A =\left( V, E_+(A)\right)$ be the graph with vertex set $V$ and edge set $E_{+}(A)$. 
If a metric space satisfies the $G_A (0)$ condition, 
then it satisfies the $(a_{ij})$-quadratic metric inequality. 
\end{lemma}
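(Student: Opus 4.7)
The plan is to exploit the fact that the $G_A(0)$-property was designed precisely to convert a CAT(0) quadratic inequality into the same inequality for $X$, by producing a comparison configuration in a CAT(0) space whose pairwise distances differ from those in $X$ only in the direction that is favorable for the signs of $a_{ij}$.

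Concretely, I would start with arbitrary points $x_1,\dots,x_n\in X$ and define $f\colon V\to X$ by $f(i)=x_i$. Applying the $G_A(0)$-property to $f$ yields a CAT$(0)$ space $(Y,d_Y)$ and a map $g\colon V\to Y$ such that $d_Y(g(i),g(j))\le d_X(x_i,x_j)$ whenever $\{i,j\}\in E_+(A)$ (i.e.\ $a_{ij}>0$), and $d_Y(g(i),g(j))\ge d_X(x_i,x_j)$ whenever $\{i,j\}\notin E_+(A)$ (i.e.\ $a_{ij}\le 0$). Since $Y$ is CAT$(0)$ and the family $(a_{ij})$ is assumed to be satisfied by every CAT$(0)$ space, one has
\begin{equation*}
0\le\sum_{\{i,j\}\in E}a_{ij}\,d_Y(g(i),g(j))^2.
\end{equation*}

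The key elementary step is then to compare the CAT(0) sum term by term with the corresponding sum for $X$. For $\{i,j\}\in E_+(A)$ one has $a_{ij}>0$ and $d_Y(g(i),g(j))^2\le d_X(x_i,x_j)^2$, hence $a_{ij}d_Y(g(i),g(j))^2\le a_{ij}d_X(x_i,x_j)^2$. For $\{i,j\}\notin E_+(A)$ one has $a_{ij}\le 0$ and $d_Y(g(i),g(j))^2\ge d_X(x_i,x_j)^2$; multiplying by the nonpositive number $a_{ij}$ again gives $a_{ij}d_Y(g(i),g(j))^2\le a_{ij}d_X(x_i,x_j)^2$. Summing over $E$ and combining with the previous display yields
\begin{equation*}
0\le\sum_{\{i,j\}\in E}a_{ij}\,d_X(x_i,x_j)^2,
\end{equation*}
which is the $(a_{ij})$-quadratic metric inequality for $X$.

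I do not anticipate a real obstacle here: the whole content of the argument is the sign bookkeeping in the inequality flip for non-edges, and this works cleanly because the definition of $E_+(A)$ puts exactly the strictly positive coefficients on the edges (where the $G_A(0)$-property contracts distances) and leaves the nonpositive coefficients on the non-edges (where the property expands them). The role of this lemma in the paper is that it reduces proving a particular quadratic metric inequality in $X$ to verifying a single graph-theoretic property $G_A(0)$, which is the strategy underlying the later sections.
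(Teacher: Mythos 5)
Your proposal is correct and follows essentially the same route as the paper: apply the $G_A(0)$-property to obtain a comparison configuration in a $\mathrm{CAT}(0)$ space, invoke the quadratic metric inequality there, and compare the two sums term by term using the sign of $a_{ij}$ on edges versus non-edges. The paper writes the comparison as a single chain of inequalities after splitting the sum over $E_+(A)$ and $E\setminus E_+(A)$, but the content is identical to your sign bookkeeping.
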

We call the graph $G_A$ as in the statement of Lemma \ref{key-lemma} 
the {\em graph associated to the $(a_{ij})$-quadratic metric inequality}. 
Proposition \ref{G(0)-prop} follows immediately from Lemma \ref{key-lemma} and Theorem \ref{ANN-th}. 
It also follows from Lemma \ref{key-lemma} that if every metric space satisfies the $G_{A}(0)$ condition for 
the graph $G_{A}$ associated to the $(a_{ij})$-quadratic metric inequality, 
then the $(a_{ij})$-quadratic metric inequality holds true in every metric space 
whenever it holds true in every $\mathrm{CAT}(0)$ space. 
In Section \ref{four-point-sec}, we will prove that every metric space satisfies the $G(0)$ conditions 
for many graphs $G$ (including all trees for example). 

\subsection{Some questions}
We pose the following questions. 
\begin{question}\label{G(0)-question}
Find a graph $G$ such that there exists a metric space $X$ 
such that $X$ satisfies the $\mathrm{Cycl}_4 (0)$ condition, but $X$ does not satisfy the $G(0)$ condition. 
\end{question}
\begin{question}\label{ineq-question}
Find a quadratic metric inequality $\mathcal{I}$ that satisfies the following two conditions$:$ 
\begin{enumerate}
\item[$\mathrm{(i)}$]
Every $\mathrm{CAT}(0)$ space satisfies $\mathcal{I}$.
\item[$\mathrm{(ii)}$]
There is a metric space that satisfies the $\boxtimes$-inequalities but that does not satisfy $\mathcal{I}$. 
\end{enumerate}
\end{question}
\begin{question}\label{G(0)-every-metric-question}
Find a characterization of those graphs $G$ such that every metric space satisfies the $G(0)$ condition. 
\end{question}

\subsection{Organization of the paper}
The paper is organized as follows. 
In Section \ref{CAT(0)-sec},  we recall some definitions and results from metric geometry.  
In Section \ref{quadrangle-sec}, we recall and establish some properties of metric spaces that satisfy the $\boxtimes$-inequalities. 
In Section \ref{criterion-sec}, we prove Lemma \ref{key-lemma} and Proposition \ref{G(0)-prop}. 
In Section \ref{four-point-sec}, we prove that the validity of the $\boxtimes$-inequalities implies the $G(0)$ condition for every 
graph $G$ containing at most four vertices. 
Combining this with Proposition \ref{G(0)-prop}, we obtain another proof of Theorem \ref{four-point-th}. 
In Section \ref{four-point-sec}, we also specify several graphs $G$ such that 
every metric space satisfies the $G(0)$ condition. 
Combining this with Lemma \ref{key-lemma}, we obtain a criterion for 
a quadratic metric inequality to hold true in every metric space whenever it 
holds true in every $\mathrm{CAT}(0)$ space. 
In Section \ref{five-point-sec}, 
we prove that the validity of the $\boxtimes$-inequalities implies the 
$G(0)$ condition for any graph $G$ with five vertices except two special graphs. 
In Section \ref{TSD-TLD-sec}, 
we introduce certain concepts concerning the isometric embeddability of a four-point metric space into a Euclidean space.  
In Section \ref{5-7-sec} and Section \ref{5-9-sec}, we prove that the validity of the $\boxtimes$-inequalities 
implies the $G (0)$ conditions for the remaining two graphs $G$ with five vertices 
by using the concepts introduced in Section \ref{TSD-TLD-sec}. 
Together with Proposition \ref{G(0)-prop}, this completes the proof of Theorem \ref{main-th}.

\section{Preliminaries}\label{CAT(0)-sec}

In this section, we set up some notations, and review some definitions and results in metric geometry. 
Throughout this paper, for every positive integer $n$, 
$\mathbb{R}^n$ is always equipped with the Euclidean metric. 
For distinct points $x,y\in\mathbb{R}^n$, 
we denote by $\overleftrightarrow{xy}$ the straight line through $x$ and $y$. 
For $x,y,z\in\mathbb{R}^2$ with $x\neq y$ and $y\neq z$, 
we denote by $\angle xyz\in\lbrack 0,\pi\rbrack$ the interior angle measure at $y$ of the (possibly degenerate) triangle with vertices $x$, $y$ and $z$.

A {\em geodesic} in a metric space $X$ is an isometric embedding of an interval of the real line 
into $X$. 
For $x,y\in X$, we call the image of a geodesic $\gamma :\lbrack 0,d_X (x,y)\rbrack\to X$ with 
$\gamma (0)=x$ and $\gamma (d_X (x,y))=y$ a {\em geodesic segment with endpoints $x$ and $y$}. 
A metric space $X$ is called {\em geodesic} if 
for any $x,y\in X$, there exists a geodesic segment with endpoints $x$ and $y$. 

\begin{definition}\label{CAT(0)-def}
A metric space $(X, d_X )$ is called a {\em $\mathrm{CAT}(0)$ space} if
$X$ is geodesic, and any $x,y,z \in X$ and any geodesic $\gamma : [0,d_X (x,y)] \to X$ 
with $\gamma(0) = x$ and $\gamma(d_X (x,y)) = y$ satisfy 
\begin{align}\label{CAT(0)-ineq}
d_X \left(z, \gamma(t d_X (x,y) )\right) ^2 \le 
 (1 - t) d_X (x, z) ^2 + t d_X (y, z) ^2 - t(1 - t) d_X (x, y)^2
\end{align}
for any $t\in\lbrack 0,1\rbrack$. 
\end{definition}
In $\mathbb{R}^n$, the inequality \eqref{CAT(0)-ineq} always holds with equality. 
A subset $S$ of a geodesic space $X$ is called {\em convex} 
if any geodesic segment in $X$ with endpoints $x$ and $y$ 
is contained in $S$ whenever $x,y\in S$. 
Clearly, a convex subset of a $\mathrm{CAT}(0)$ space equipped with the induced metric is a $\mathrm{CAT}(0)$ space. 
A geodesic space $X$ is called {\em uniquely geodesic} if 
for any $x,y\in X$, a geodesic segment in $X$ with endpoints $x$ and $y$ is unique. 
It is easily observed that 
every $\mathrm{CAT}(0)$ space is uniquely geodesic. 
For any points $x$ and $y$ in a uniquely geodesic space, 
we denote the geodesic segment with endpoints $x$ and $y$ by $\lbrack x,y\rbrack$. 
We also denote the sets $\lbrack x,y\rbrack\setminus\{ x,y\}$, 
$\lbrack x,y\rbrack\setminus\{ x\}$ and $\lbrack x,y\rbrack\setminus\{ y\}$ by $(x,y)$, $(x,y\rbrack$ and $\lbrack x,y)$, respectively. 
For a subset $S$ of a uniquely geodesic space $X$, 
the {\em convex hull} of $S$ is the intersection of all convex subsets of $X$ containing $S$, or 
equivalently, the minimal convex subset of $X$ that contains $S$. 
We denote the convex hull of $S$ by $\mathrm{conv}(S)$. 

For a family of metric spaces $(X_{\alpha},d_{\alpha})_{\alpha\in A}$, we equip the {\em disjoint union} 
\begin{equation*}
\coprod_{\alpha\in A}X_{\alpha}=\bigcup_{\alpha\in A}X_{\alpha}\times\{\alpha\}
\end{equation*}
with the metric $d$ defined by 
\begin{equation*}
d((x,\alpha ),(x',\alpha' ))
=
\begin{cases}
d_{\alpha} (x,x' )\quad&\textrm{if }\alpha =\alpha',\\
\infty\quad&\textrm{otherwise}. 
\end{cases}
\end{equation*}
We usually identify each set $X_{\alpha}$ with its image under 
the natural inclusion into $\coprod_{\alpha\in A}X_{\alpha}$. 

Suppose $(X,d_X )$ is a metric space with possibly infinite metrics, and $\sim$ is an equivalence relation on $X$ 
such that every equivalence class of $X$ by $\sim$ is closed. 
Let $\overline{X}=X/\sim$ be the set of all equivalence classes by $\sim$. 
For $\overline{x},\overline{y}\in\overline{X}$, we define 
\begin{equation*}
\overline{d}(\overline{x},\overline{y})
=
\inf\sum_{i=1}^{k}d_X (x_i,y_i ),
\end{equation*}
where the infimum is taken over all sequences $x_1 ,y_1 ,x_2 ,y_2 ,\ldots ,x_k ,y_k$ in $X$ such that 
$x_1 \in\overline{x}$, $y_k \in\overline{y}$, and $y_i \sim x_{i+1}$ for every $i\in\mathbb{Z}\cap\lbrack1,k-1\rbrack$. 
Then $\overline{d}$ becomes a metric on $\overline{X}$, which is called the {\em quotient metric} on $\overline{X}$. 

Suppose that $(X_1 ,d_1 )$ and $(X_2 ,d_2 )$ are metric spaces, and that 
$Z_1$ and $Z_2$ are closed subsets of $X_1$ and $X_2$, respectively. 
Suppose further that $Z_1$ and $Z_2$ are isometric via an isometry $f:Z_1 \to Z_2$. 
Define $\sim$ to be the equivalence relation on the disjoint union $X_1 \sqcup X_2$ 
generated by the relations $z\sim f(z)$ for all $z\in Z_1$. 
Let $X_0 =(X_1 \sqcup X_2)/\sim$ be the set of all equivalence classes by the equivalence relation $\sim$, 
and let $d_0$ be the quotient metric on $X_0$. 
Then $(X_0 ,d_0 )$ is the metric space called the 
{\em gluing of $X_1$ and $X_2$ along the isometry $f$}. 
We note that the natural inclusions of $X_1$ and $X_2$ into $X_0$ are both isometric embeddings. 
Assume in addition that $X_1$ and $X_2$ are complete locally compact $\mathrm{CAT}(0)$ spaces, and that 
$Z_1$ and $Z_2$ are convex subsets of $X_1$ and $X_2$, respectively. 
Then by Reshetnyak's gluing theorem, the gluing of $X_1$ and $X_2$ along 
$f$ is a $\mathrm{CAT}(0)$ space. 
For a proof of this fact, see \cite{R} or \cite[Theorem 9.1.21]{BBI}. 
A more general statement is in \cite[Chapter II.11, Theorem 11.1]{BH}.  
When two geodesic segments $\lbrack a,b\rbrack\subseteq X_1$ and $\lbrack c,d\rbrack\subseteq X_2$ are isometric, 
we mean by ``the metric space obtained by gluing $X_1$ and $X_2$ {\em by identifying $\lbrack a,b\rbrack$ with $\lbrack c,d\rbrack$}"
the gluing of $X_1$ and $X_2$ along the isometry $f:\lbrack a,b\rbrack\to\lbrack c,d\rbrack$ with $f(a)=c$ and $f(b)=d$. 

A large number of important examples of $\mathrm{CAT}(0)$ spaces arise as {\em piecewise Euclidean metric simplicial complexes}. 
For detailed expositions of piecewise Euclidean metric simplicial complexes, see \cite[Chapter I.7]{BH}. 
For our purposes, it suffices to keep in mind the following simple example. 
\begin{example}\label{2pi-example}
Suppose $p_1 ,x_1 ,y_1 ,p_2 ,y_2 ,z_2 ,p_3 ,z_3 ,x_3 \in\mathbb{R}^2$ are distinct points such that 
\begin{equation*}
\| p_1  -y_1 \| =\| p_2 -y_2 \| ,\quad
\| p_2  -z_2 \| =\| p_3 -z_3 \| ,\quad
\| p_3  -x_3 \| =\| p_1 -x_1 \| .
\end{equation*}
Equip the subsets 
\begin{equation*}
T_1 =\mathrm{conv}(\{ p_1 ,x_1 ,y_1 \}) ,\quad
T_2 =\mathrm{conv}(\{ p_2 ,y_2 ,z_2 \}) ,\quad
T_3 =\mathrm{conv}(\{ p_3 ,z_3 ,x_3 \})
\end{equation*}
of $\mathbb{R}^2$ with the induced metrics, and regard them as disjoint metric spaces. 
Suppose 
\begin{equation*}
f:\lbrack p_1 ,y_1 \rbrack\to\lbrack p_2 ,y_2\rbrack ,\quad
g:\lbrack p_2 ,z_2 \rbrack\to\lbrack p_3 ,z_3\rbrack ,\quad
h:\lbrack p_3 ,x_3 \rbrack\to\lbrack p_1 ,x_1\rbrack
\end{equation*}
are the isometries such that 
\begin{equation*}
f(p_1 )=p_2 ,\quad
f(y_1 )=y_2 ,\quad
g(p_2 )=p_3 ,\quad
g(z_2 )=z_3 ,\quad
h(p_3 )=p_1 ,\quad
h(x_3 )=x_1 .
\end{equation*}
Let $T$ be the quotient of the disjoint union $T_1 \sqcup T_2 \sqcup T_3$ by 
the equivalence relation $\sim$ generated by the relations 
$a\sim f(a)$, $b\sim g(b)$ and $c\sim h(c)$ for all 
$a\in\lbrack p_1 ,y_1 \rbrack$, $b\in\lbrack p_2 ,z_2 \rbrack$ and $c\in\lbrack p_3 ,x_3 \rbrack$, 
and let $d_T$ be the quotient metric on $T$. 
Then $(T,d_T )$ is a metric space, and 
we call it {\em the piecewise Euclidean metric simplicial complex constructed from 
$T_1$, $T_2$ and $T_3$ by identifying 
$\lbrack p_1 ,y_1 \rbrack\subseteq T_1$ with $\lbrack p_2 ,y_2 \rbrack\subseteq T_2$, 
$\lbrack p_2 ,z_2 \rbrack\subseteq T_2$ with $\lbrack p_3 ,z_3 \rbrack\subseteq T_3$, and 
$\lbrack p_3 ,x_3 \rbrack\subseteq T_3$ with $\lbrack p_1 ,x_1 \rbrack\subseteq T_1$}. 
It follows from a general criterion \cite[p.207, Lemma 5.6]{BH} that $T$ becomes a $\mathrm{CAT}(0)$ space if and only if 
\begin{equation}\label{2pi-example-sum-geq-2pi-ineq}
2\pi\leq\angle x_1 p_1 y_1 +\angle y_2 p_2 z_2 +\angle z_3 p_3 x_3 .
\end{equation}
We claim that it is easily observed that the above criterion holds true even if $T_1$, $T_2$ or $T_3$ is degenerate, 
or equivalently, even if some of the angles in the right-hand side of \eqref{2pi-example-sum-geq-2pi-ineq} 
take values in $\{ 0,\pi\}$. 
It is also easily observed that under the condition \eqref{2pi-example-sum-geq-2pi-ineq}, 
the natural inclusions of $T_1$, $T_2$ and $T_3$ into $T$ are all isometric embeddings 
although a simplex in a metric simplicial complex is generally not embedded isometrically into the complex.  
\end{example}

Let $(X,d_X )$ be a metric space, and let 
$x,y,z\in X$ be points with $x\neq y$ and $y\neq z$. 
Then there exist $\tilde{x},\tilde{y},\tilde{z}\in\mathbb{R}^2$ such that 
\begin{equation*}
\|\tilde{x}-\tilde{y}\| =d_X (x,y),\quad
\|\tilde{y}-\tilde{z}\| =d_X (y,z),\quad
\|\tilde{z}-\tilde{x}\| =d_X (z,x).
\end{equation*}
We define {\em the comparison angle measure} $\tilde{\angle}xyz$ to be $\angle \tilde{x}\tilde{y}\tilde{z}\in\lbrack 0,\pi\rbrack$. 
Clearly the comparison angle measure $\tilde{\angle}xyz$ does not depend on the choice of $\tilde{x},\tilde{y},\tilde{z}\in\mathbb{R}^2$. 

\begin{definition}
A geodesic space $X$ is said to have {\em nonnegative Alexandrov curvature} 
if, for any $p\in X$, there exists a neighborhood $U\subseteq X$ of $p$ such that 
any distinct four points $x,y,z,w\in U$ satisfy 
$$
\tilde{\angle}yxz+\tilde{\angle}zxw+\tilde{\angle}wxy\leq 2\pi .
$$
\end{definition}

There are many equivalent definitions of metric spaces with nonnegative Alexandrov curvature. 
We refer \cite{BBI} and \cite{BuGP} for detailed expositions of metric spaces with nonnegative Alexandrov curvature. 
For our purpose, it suffices to keep in mind the following two examples. 

\begin{example}\label{Alexandrov-example}
Let $S$ be the boundary of a convex bounded subset of $\mathbb{R}^3$. 
Let $d_S$ be the induced length metric on $S$. 
In other words, for any $x,y\in S$, 
$d_S (x,y)$ coincides with the infimum of the lengths of all paths $\gamma :\lbrack a,b\rbrack\to S$ such that 
$\gamma (a)=x$ and $\gamma (b)=y$. 
It is known that $(S,d_S )$ has nonnegative Alexandrov curvature. 
\end{example}

\begin{example}\label{double-example}
Suppose $x$, $y$ and $z$ are points in $\mathbb{R}^2$ that are not collinear. 
Let $T_1$ and $T_2$ be two isometric copies of $\mathrm{conv}(\{ x,y,z\})$. 
We denote the points in $T_1$ corresponding to $x$, $y$ and $z$ by 
$x_1$, $y_1$ and $z_1$, respectively, and 
the points in $T_2$ corresponding to $x$, $y$ and $z$ by 
$x_2$, $y_2$ and $z_2$, respectively. 
Suppose 
\begin{equation*}
f:\lbrack x_1 ,y_1 \rbrack\to\lbrack x_2 ,y_2\rbrack ,\quad
g:\lbrack y_1 ,z_1 \rbrack\to\lbrack y_2 ,z_2\rbrack ,\quad
h:\lbrack z_1 ,x_1 \rbrack\to\lbrack z_2 ,x_2\rbrack
\end{equation*}
are the isometries such that 
\begin{equation*}
f(x_1 )=x_2 ,\quad
f(y_1 )=y_2 ,\quad
g(y_1 )=y_2 ,\quad
g(z_1 )=z_2 ,\quad
h(z_1 )=z_2 ,\quad
h(x_1 )=x_2 .
\end{equation*}
Let $T_0$ be the quotient of the disjoint union $T_1 \sqcup T_2$ by 
the equivalence relation $\sim$ generated by the relations 
$a\sim f(a)$, $b\sim g(b)$ and $c\sim h(c)$ for all 
$a\in\lbrack x_1 ,y_1 \rbrack$, $b\in\lbrack y_1 ,z_1 \rbrack$ and $c\in\lbrack z_1 ,x_1 \rbrack$, 
and let $d_{T_0}$ be the quotient metric on $T_0$. 
It is known that $(T_0 ,d_{T_0})$ is a complete geodesic space 
with nonnegative Alexandrov curvature. 
Clearly 
the natural inclusions of $T_1$ and $T_2$ into $T_0$ are both 
isometric embeddings. 
We call the metric space $T_0$ defined above 
{\em the piecewise Euclidean simplicial complex obtained by gluing $T_1$ and $T_2$ along their boundaries}. 
\end{example}

In \cite{LS}, Lang and Schroeder generalized the classical Kirszbraun's extension theorem (see also \cite{AKP}). 
The following is a part of their result, which we will use in Section \ref{5-9-sec}. 
For the original statement in full generality, see \cite[Theorem A]{LS}. 

\begin{theorem}[Lang and Schroeder \cite{LS}]\label{LS-th}
Suppose that $X$ is a complete geodesic space with nonnegative Alexandrov curvature and 
$Y$ is a complete $\mathrm{CAT}(0)$ space. 
Suppose that $S$ is a subset of $X$ and $f: S\to Y$ is a $1$-Lipschitz map. 
Then there exists a $1$-Lipschitz map $\tilde{f}: X\to Y$ such that 
$\tilde{f}(x)=f(x)$ for any $x\in S$. 
\end{theorem}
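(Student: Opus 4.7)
The plan is to carry out three successive reductions and then attack the remaining finite, local problem with a variational argument exploiting the comparison geometry of both $X$ and $Y$.

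First I would apply Zorn's lemma to the poset of $1$-Lipschitz extensions of $f$ to subsets of $X$ containing $S$, ordered by graph inclusion, to obtain a maximal extension $\hat{f}:T\to Y$. If I can establish the following \emph{one-point extension property} then $T=X$ necessarily: given finite collections $x_1,\dots,x_n\in X$ and $y_1,\dots,y_n\in Y$ with $d_Y(y_i,y_j)\leq d_X(x_i,x_j)$ for all $i,j$ and given a point $p\in X$, there exists $q\in Y$ with $d_Y(q,y_i)\leq d_X(p,x_i)$ for all $i$. Indeed, if $T\neq X$ I pick $p\in X\setminus T$ and apply the property to each finite subcollection of $T$; the resulting family of closed balls $\overline{B}_Y(\hat{f}(x),d_X(p,x))$ in $Y$ then has the finite intersection property, and since these balls are convex in the complete $\mathrm{CAT}(0)$ space $Y$, a standard circumcenter argument shows that the total intersection is nonempty, contradicting maximality of $\hat{f}$.

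Second, to establish the one-point extension property I would introduce the convex, continuous, coercive function
\begin{equation*}
\phi(q)=\max_{1\leq i\leq n}\bigl(d_Y(q,y_i)^2-d_X(p,x_i)^2\bigr)
\end{equation*}
on $Y$, take a minimizer $q_0\in Y$, and denote the minimum by $\mu$; the goal is $\mu\leq 0$. Suppose toward a contradiction that $\mu>0$, and let $I=\{i:d_Y(q_0,y_i)^2-d_X(p,x_i)^2=\mu\}$ be the set of active indices. The first variation formula at the minimizer $q_0$, available in the $\mathrm{CAT}(0)$ space $Y$, produces a nontrivial barycentric relation among the unit directions from $q_0$ toward $\{y_i\}_{i\in I}$; via the Euclidean law of cosines this translates into a lower bound on the pairwise comparison angles $\tilde{\angle}\,y_iq_0y_j$ for $i,j\in I$.

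Third, the assumption $\mu>0$ gives $d_Y(q_0,y_i)^2>d_X(p,x_i)^2$ for $i\in I$; using the monotonicity of the law of cosines in the side opposite the vertex, together with $d_Y(y_i,y_j)\leq d_X(x_i,x_j)$, each lower bound on $\tilde{\angle}\,y_iq_0y_j$ converts into a strict lower bound on $\tilde{\angle}\,x_ipx_j$. Summing these bounds around $p$ would then violate what is permitted by the nonnegative Alexandrov curvature of $X$ at $p$, which forces $\tilde{\angle}\,x_ipx_j+\tilde{\angle}\,x_jpx_k+\tilde{\angle}\,x_kpx_i\leq 2\pi$ for triples in $I$. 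The resulting contradiction forces $\mu\leq 0$, proving the one-point extension property and hence the theorem. The hard part will be this third step: matching the barycentric information extracted at $q_0\in Y$ against the nonnegative-curvature angle condition at $p\in X$ through the Lipschitz bound $d_Y(y_i,y_j)\leq d_X(x_i,x_j)$ so that the two comparison geometries exactly cancel. Each ingredient is classical, but orchestrating them for arbitrary $n$ and possibly degenerate active configurations (where triangles $y_iq_0y_j$ collapse) is the delicate heart of the Lang--Schroeder Kirszbraun argument, and I would expect most of the real work to live there.
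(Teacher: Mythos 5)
You should first be aware that the paper does not prove this statement: Theorem \ref{LS-th} is imported as a black box from Lang and Schroeder \cite{LS} (the surrounding text explicitly refers the reader to Theorem A of \cite{LS} for the general statement), so there is no internal proof to compare against, and what you have written is an attempt to reprove the cited theorem. Your skeleton is the right one and does match Lang--Schroeder's architecture: Zorn's lemma, reduction to a one-point extension property for finite configurations via the finite intersection property of closed convex balls in a complete $\mathrm{CAT}(0)$ space, and then a variational argument at the minimizer $q_0$ of a convex max-type functional, with the contradiction coming from playing the first variation in $Y$ against the curvature hypothesis on $X$.

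The gap is in your third step, which you rightly flag as the heart of the matter but then propose to close with the wrong tool. The first variation at $q_0$ does not yield individual lower bounds on the pairwise comparison angles $\tilde{\angle}y_i q_0 y_j$; it yields only an aggregate statement: there exist weights $\lambda_i \ge 0$, not all zero, supported on the active set, such that $\sum_{i,j}\lambda_i\lambda_j\bigl(d_i^2+d_j^2-d_Y(y_i,y_j)^2\bigr)\le 0$, where $d_i =d_Y(q_0 ,y_i)$ (this uses that the angle between geodesics in a $\mathrm{CAT}(0)$ space is bounded above by the comparison angle). The matching ingredient needed on the $X$ side is the quadratic-form inequality $\sum_{i,j}\lambda_i\lambda_j\bigl(r_i^2+r_j^2-d_X(x_i,x_j)^2\bigr)\ge 0$ with $r_i =d_X(p,x_i)$, valid for all nonnegative weights at every point of a space of nonnegative Alexandrov curvature; this $(1+n)$-point comparison lemma is the genuinely nontrivial input of \cite{LS}, and together with $d_i>r_i$ on the active set and $d_Y(y_i,y_j)\le d_X(x_i,x_j)$ it produces the contradiction. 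The mechanism you propose instead --- converting to pointwise lower bounds on $\tilde{\angle}x_i p x_j$ and summing over triples against the condition $\tilde{\angle}x_i p x_j+\tilde{\angle}x_j p x_k+\tilde{\angle}x_k p x_i\le 2\pi$ --- cannot work once the active set has four or more elements: four directions pairwise at the tetrahedral angle $\arccos (-1/3)$ satisfy every triple condition with room to spare, occur in $\mathbb{R}^3$ (which has nonnegative curvature), and are exactly the kind of balanced configuration the first variation produces. The triple angle condition is strictly weaker than the quadratic-form inequality, and no summation over triples recovers it; without the $(1+n)$-point lemma your argument does not close.
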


Fix a positive integer $n$. 
Let $E_n =\binom{\lbrack n\rbrack}{2}$ be the set of all two-element subsets of $\lbrack n\rbrack =\{ 1,\ldots ,n\}$. 
Define 
$\mathcal{C}_n$ to be the set of all $(d_{ij})_{\{ i,j\}\in E_n}\in\mathbb{R}^{E_n}$ such that 
there exist a $\mathrm{CAT}(0)$ space $(X,d_X )$ and points $x_1 ,\ldots ,x_n \in X$ such that 
$d_{ij}=d_X (x_i ,x_j )^2$ for every $\{ i,j\}\in E_n$. 
Then $\mathcal{C}_n$ is a closed convex cone in $\mathbb{R}^{E_n}$. 
This follows immediately from the fact that 
the $\mathrm{CAT}(0)$ property is closed under taking 
Pythagorean product, taking dilation by a positive constant, and taking ultraproduct
(see \cite[Lemma 3.9]{St} and \cite[Section 2.4]{KL}). 
For completeness, we recall Andoni, Naor, and Neiman's proof of Theorem \ref{ANN-th}. 
\begin{proof}[Proof of Theorem \ref{ANN-th}]
Fix a positive integer $n$. 
The case in which $n=1$ is trivial, 
so we assume that $n\geq 2$. 
If an $n$-point metric space $X$ embeds isometrically into a $\mathrm{CAT}(0)$ space, 
then $X$ clearly satisfies every quadratic metric inequality that holds true in 
every $\mathrm{CAT}(0)$ space. 
We prove the converse direction by contrapositive. 
Assume that an $n$-point metric space $X=\{ x_1 ,\ldots ,x_n \}$ does not embed isometrically into a $\mathrm{CAT}(0)$ space. 
Then because 
$\mathcal{C}_n\subseteq\mathbb{R}^{E_n}$ is a closed convex cone, and $(d_X (x_i ,x_j )^2 )_{\{ i,j\}\in E_n}\not\in\mathcal{C}_n$, 
the separation theorem implies that there exists $(h_{ij})_{\{ i,j\}\in E_n}\in\mathbb{R}^{E_n}$ such that 
\begin{equation}\label{separation-ineqs}
\inf_{(d_{ij})\in\mathcal{C}_n}\sum_{\{ i,j\}\in E_n}h_{ij}d_{ij} \geq 0 ,\quad
\sum_{\{ i,j\}\in E_n}h_{ij}d_X (x_i ,x_j )^2 <0 .
\end{equation}
The first inequality in \eqref{separation-ineqs} means that 
the $(h_{ij})$-quadratic metric inequality holds true in every $\mathrm{CAT}(0)$ space, and 
the second inequality means that 
$X$ does not satisfy the $(h_{ij})$-quadratic metric inequality, 
which completes the proof. 
\end{proof}

\section{Comparison Quadrangles in the Euclidean plane}\label{quadrangle-sec}

In this section, we recall and establish some properties of metric spaces that satisfy the $\boxtimes$-inequalities. 
First, we recall the following fact, which was established by Sturm  
when he proved in \cite[Theorem 4.9]{St} that a geodesic space is $\mathrm{CAT}(0)$ whenever 
it satisfies the $\boxtimes$-inequalities.

\begin{proposition}\label{sdi-CAT(0)-prop}
Let $(X, d_X )$ be a metric space that satisfies the $\boxtimes$-inequalities. 
Suppose $x,y,z\in X$ are points such that 
$x\neq z$, and 
\begin{equation*}
d_X (x,z)=d_X (x,y)+d_X (y,z). 
\end{equation*}
Set $t=d_X (x,y)/d_X (x,z)$. 
Then we have 
\begin{equation*}
d_X (y,w) ^2 \le 
 (1 - t) d_X (x,w) ^2 + t d_X (z,w) ^2 - t(1 - t) d_X (x,z)^2 .
\end{equation*}
for any $w\in X$. 
\end{proposition}

For the proof of Proposition \ref{sdi-CAT(0)-prop}, see \cite[Proposition 7.1]{toyoda-cycl}. 
The following two lemmas will be used throughout this paper.

\begin{lemma}\label{quadrangle-lemma}
Let $(X,d_X )$ be a metric space that satisfies the $\boxtimes$-inequalities. 
Suppose $x,y,z,w\in X$ and $x' ,y',z' ,w'\in\mathbb{R}^2$ are points such that 
\begin{align*}
&d_X (x,y)\leq\| x'-y'\|,\quad d_X (y,z)\leq\| y'-z'\|,\quad d_X (z,w)\leq\| z'-w'\| ,\\
&d_X (w,x)\leq\| w'-x'\| ,\quad \| x'-z'\|\leq d_X (x,z),
\end{align*}
and $\lbrack x' ,z' \rbrack\cap\lbrack y' ,w' \rbrack\neq\emptyset$. 
Then $d_X (y,w)\leq\| y'-w'\|$. 
\end{lemma}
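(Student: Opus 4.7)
The plan is to parametrize the intersection of the two diagonals in the comparison picture and apply the $\boxtimes$-inequality in $X$ with the resulting parameters, matching it against the corresponding exact Euclidean identity. Concretely, I would pick $p\in\lbrack x',z'\rbrack\cap\lbrack y',w'\rbrack$ and write $p=(1-t)x'+tz'=(1-s)y'+sw'$ with $t,s\in\lbrack 0,1\rbrack$. Plugging $q=y'$ and $q=w'$ into the standard Euclidean identity
\[
\| p-q\|^2=(1-t)\| x'-q\|^2+t\| z'-q\|^2-t(1-t)\| x'-z'\|^2,
\]
and then combining these with $0=\| p-p\|^2=(1-s)\| y'-p\|^2+s\| w'-p\|^2-s(1-s)\| y'-w'\|^2$, yields the identity
\begin{align*}
s(1-s)\| y'-w'\|^2
&=(1-t)(1-s)\| x'-y'\|^2+t(1-s)\| y'-z'\|^2\\
&\quad +ts\| z'-w'\|^2+(1-t)s\| w'-x'\|^2-t(1-t)\| x'-z'\|^2.
\end{align*}

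Next, I would apply the $\boxtimes$-inequality with the same $t,s$ to $x,y,z,w\in X$ and rearrange to
\begin{align*}
s(1-s)\, d_X (y,w)^2
&\leq (1-t)(1-s)d_X (x,y)^2+t(1-s)d_X (y,z)^2\\
&\quad +ts\, d_X (z,w)^2+(1-t)s\, d_X (w,x)^2-t(1-t)d_X (x,z)^2.
\end{align*}
The four edge hypotheses bound each positive term on the right by its Euclidean counterpart, while the hypothesis $\| x'-z'\|\leq d_X (x,z)$ together with $t(1-t)\geq 0$ gives $-t(1-t)d_X (x,z)^2\leq -t(1-t)\| x'-z'\|^2$. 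In view of the Euclidean identity, this chain of bounds collapses to $s(1-s)d_X (y,w)^2\leq s(1-s)\| y'-w'\|^2$, and hence $d_X (y,w)\leq\| y'-w'\|$ whenever $s(1-s)>0$.

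The only fiddly part — and the main (minor) obstacle — is the boundary cases $t\in\{0,1\}$ or $s\in\{0,1\}$, where the estimate above divides by zero. If $t\in\{0,1\}$, then $x'$ or $z'$ lies on $\lbrack y',w'\rbrack$, so $\| y'-w'\|$ splits as a sum of two Euclidean edge lengths, and the triangle inequality in $X$ together with the edge hypotheses finishes the job directly. If $s\in\{0,1\}$, then $y'$ or $w'$ lies on $\lbrack x',z'\rbrack$ at parameter $t$, and the chain
\[
\| x'-z'\|=\| x'-y'\|+\| y'-z'\|\geq d_X (x,y)+d_X (y,z)\geq d_X (x,z)\geq\| x'-z'\|
\]
forces equality throughout; in particular $d_X (x,z)=d_X (x,y)+d_X (y,z)$ with $d_X (x,y)/d_X (x,z)=t$. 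Proposition \ref{sdi-CAT(0)-prop} then supplies the $\mathrm{CAT}(0)$-style bound $d_X (y,w)^2\leq (1-t)d_X (x,w)^2+t\, d_X (z,w)^2-t(1-t)d_X (x,z)^2$, which the edge hypotheses and the Euclidean identity $\| y'-w'\|^2=(1-t)\| x'-w'\|^2+t\| z'-w'\|^2-t(1-t)\| x'-z'\|^2$ in turn bound above by $\| y'-w'\|^2$. The case $s=1$ is identical after swapping the roles of $y$ and $w$.
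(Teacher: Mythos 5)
Your argument is correct and is essentially the proof the paper relies on: the paper defers to \cite[Lemma 3.3]{toyoda-cycl}, but the same scheme --- parametrizing the intersection point, expanding $0=\|((1-t)x'+tz')-((1-s)y'+sw')\|^2$ into the exact Euclidean counterpart of the $\boxtimes$-inequality and comparing term by term, then treating endpoint cases via the forced triangle equality and Proposition \ref{sdi-CAT(0)-prop} and degenerate cases via the triangle inequality --- is precisely what the paper carries out for the companion Lemma \ref{quadrangle-edge-lemma}. The only nit is the sub-case $x=z$ inside your $s\in\{0,1\}$ branch, where Proposition \ref{sdi-CAT(0)-prop} is not applicable since it assumes $x\neq z$ and the parameter $d_X(x,y)/d_X(x,z)$ is undefined; there the equality chain forces $x=y=z$ and $x'=y'=z'$, so $d_X(y,w)=d_X(x,w)\leq\|w'-x'\|=\|w'-y'\|$ and the conclusion is immediate.
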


For the proof of Lemma \ref{quadrangle-lemma}, 
see \cite[Corollary 5.2, Lemma 7.2]{toyoda-cycl}. 

\begin{lemma}\label{combined-triangles-lemma}
Let $(X, d_X )$ be a metric space that satisfies the $\boxtimes$-inequalities, 
and let $(Y,d_Y )$ be a metric space. 
Suppose $x,y,z,w\in X$ and $x',y',z',w'\in Y$ are points such that 
\begin{align*}
&d_X (x,y)\leq d_Y(x',y'),\quad d_X (y,z)\leq d_Y(y',z'),\quad d_X (z,w)\leq d_Y(z',w'),\\
&d_X (w,x)\leq d_Y(w',x'),\quad d_Y(x',z')\leq d_X (x,z). 
\end{align*}
Assume that there exist subsets $S$ and $T$ of $Y$ that satisfy the following conditions: 
\begin{enumerate}
\item[$(1)$] 
$S$ and $T$ are isometric to convex subsets of Euclidean spaces. 
\item[$(2)$]
$\{ x',y',z'\}\subseteq S$ and $\{ x',w',z'\}\subseteq T$.
\item[$(3)$]
There is a geodesic segment $\Gamma_1$ in $Y$ with endpoints $x'$ and $z'$ such that 
$\Gamma_1 \subseteq S\cap T$.
\item[$(4)$]
There exists a point $p\in\Gamma_1$ such that $d_Y (y' ,w' )=d_Y (y' ,p)+d_Y (p,w' )$. 
\end{enumerate}
Then $d_X (y,w)\leq d_Y (y',w')$. 
\end{lemma}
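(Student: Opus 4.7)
The plan is to reduce to Lemma~\ref{quadrangle-lemma} by constructing a Euclidean comparison quadrangle in $\mathbb R^2$. The Euclidean structure of $S$ and $T$ provides planar unfoldings of the triangles $\{x',y',z'\}$ and $\{x',w',z'\}$, and the common geodesic $\Gamma_1$ serves as a hinge along which these two planar triangles can be glued edge-to-edge.

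More precisely, using the isometry of $S$ with a convex subset of some Euclidean space, I isometrically embed the four-point configuration $\{x',y',z',p\}\subseteq S$ into $\mathbb R^2$ as $\tilde x,\tilde y,\tilde z,\tilde p$, placing $\tilde y$ in the closed upper half-plane bounded by $\overleftrightarrow{\tilde x\tilde z}$; under this embedding $\Gamma_1$ maps onto the segment $[\tilde x,\tilde z]$ since it is a Euclidean geodesic, and the point $\tilde p$ lies in $[\tilde x,\tilde z]$ by condition~(4). Doing the analogous unfolding of $\{x',w',z',p\}\subseteq T$, with $\tilde w$ placed in the closed lower half-plane and the image of $\Gamma_1$ identified with $[\tilde x,\tilde z]$, I obtain points $\tilde x,\tilde y,\tilde z,\tilde w\in\mathbb R^2$ whose pairwise Euclidean distances reproduce the $d_Y$-distances $d_Y(x',y'),d_Y(y',z'),d_Y(x',w'),d_Y(z',w'),d_Y(x',z')$.

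Since $\tilde y$ and $\tilde w$ lie in opposite closed half-planes, the segment $[\tilde y,\tilde w]$ crosses the line $\overleftrightarrow{\tilde x\tilde z}$; a planar argument using the positions of the two triangles and of $\tilde p$ will be used to show that the crossing point $\tilde q$ lies inside $[\tilde x,\tilde z]$. With this in hand, all hypotheses of Lemma~\ref{quadrangle-lemma} are satisfied for the two quadruples $(x,y,z,w)$ in $X$ and $(\tilde x,\tilde y,\tilde z,\tilde w)$ in $\mathbb R^2$: the four edge-type bounds $d_X(x,y)\le\|\tilde x-\tilde y\|$, etc.\ follow from the given hypotheses combined with the equalities above; the reversed diagonal $\|\tilde x-\tilde z\|=d_Y(x',z')\le d_X(x,z)$ is also given; and the crossing condition is what was just verified. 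Applying Lemma~\ref{quadrangle-lemma} thus yields $d_X(y,w)\le\|\tilde y-\tilde w\|$.

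To finish, I identify $\|\tilde y-\tilde w\|$ with $d_Y(y',w')$: writing $q\in\Gamma_1$ for the preimage of $\tilde q$, the unfoldings on each side give $\|\tilde y-\tilde w\|=\|\tilde y-\tilde q\|+\|\tilde q-\tilde w\|=d_Y(y',q)+d_Y(q,w')$, so the desired conclusion amounts to comparing this sum with $d_Y(y',w')$. I expect the main obstacle to lie at exactly this point, since the direct triangle inequality in $Y$ produces only the wrong-direction bound $d_Y(y',w')\le d_Y(y',q)+d_Y(q,w')$; the finer argument will likely exploit a Reshetnyak-type gluing of $S$ and $T$ along $\Gamma_1$, in the sense that the unfolded straight line $[\tilde y,\tilde w]$ realizes a length-minimizing path from $y'$ to $w'$ through $\Gamma_1$ that, in the setting where this lemma is intended to be applied, coincides with $d_Y(y',w')$.
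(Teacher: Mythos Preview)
The paper does not actually prove this lemma; it refers to \cite[Corollary~3.4]{toyoda-cycl}. Note also that condition~(4) as printed is a typo---every point of a geodesic from $x'$ to $z'$ satisfies it, so it would be vacuous. The intended condition, as the equivalent formulation~(4') and every application in the paper confirm, is that some $p\in\Gamma_1$ satisfies $d_Y(y',w')=d_Y(y',p)+d_Y(p,w')$.

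With that reading, your unfolding plan is right, but you have the difficulties backwards. The ``obstacle'' you flag at the end is illusory once you work with $\tilde p$ instead of the crossing point $\tilde q$: since $\|\tilde y-\tilde p\|=d_Y(y',p)$ and $\|\tilde p-\tilde w\|=d_Y(p,w')$, the plain Euclidean triangle inequality gives
\[
\|\tilde y-\tilde w\|\le\|\tilde y-\tilde p\|+\|\tilde p-\tilde w\|=d_Y(y',w'),
\]
which is the only direction you need. No Reshetnyak gluing is required here.

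The genuine gap is the step you wave past: the claim that $\tilde q\in[\tilde x,\tilde z]$. This can fail---take $\tilde x=(0,0)$, $\tilde z=(2,0)$, $\tilde y=(3,1)$, $\tilde w=(3,-1)$, so $[\tilde y,\tilde w]$ meets the line $\overleftrightarrow{\tilde x\tilde z}$ at $(3,0)\notin[\tilde x,\tilde z]$. The fix is a short case split. If $[\tilde y,\tilde w]\cap[\tilde x,\tilde z]\neq\emptyset$, apply Lemma~\ref{quadrangle-lemma} and then the bound above. If instead the crossing lies, say, beyond $\tilde z$, then the convex function $r\mapsto\|\tilde y-r\|+\|r-\tilde w\|$ on $\overleftrightarrow{\tilde x\tilde z}$ is minimized over $[\tilde x,\tilde z]$ at $\tilde z$, whence
\[
d_Y(y',w')=\|\tilde y-\tilde p\|+\|\tilde p-\tilde w\|\ge\|\tilde y-\tilde z\|+\|\tilde z-\tilde w\|=d_Y(y',z')+d_Y(z',w')\ge d_X(y,z)+d_X(z,w)\ge d_X(y,w),
\]
and you finish without invoking Lemma~\ref{quadrangle-lemma} at all.
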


For the proof of Lemma \ref{combined-triangles-lemma}, 
see \cite[Corollary 5.3, Lemma 7.2]{toyoda-cycl}. 

\begin{remark}
Clearly we may replace the condition $(4)$ in the statement of Lemma \ref{combined-triangles-lemma} with the following condition:
\begin{enumerate}
\item[$(4')$]
There is a geodesic segment $\Gamma_2$ in $Y$ with endpoints $y'$ and $w'$ such that 
$\Gamma_1\cap\Gamma_2\neq\emptyset$. 
\end{enumerate}
\end{remark}

We will also use the following lemma.

\begin{lemma}\label{quadrangle-edge-lemma}
Let $(X,d_X )$ be a metric space that satisfies the $\boxtimes$-inequalities. 
Suppose $x,y,z,w\in X$ and $x' ,y',z' ,w'\in\mathbb{R}^2$ are points with $z\neq w$ such that 
\begin{align*}
&d_X (y,z)\leq\| y'-z'\| ,\quad d_X (z,w)\leq\| z'-w'\| ,\quad d_X (w,x)\leq\| w'-x'\| ,\\
&\| x'-z'\|\leq d_X (x,z),\quad\| y'-w'\|\leq d_X (y,w),
\end{align*}
and $\lbrack x' ,z'\rbrack\cap\lbrack y' ,w'\rbrack\neq\emptyset$. 
Then 
$\| x'-y'\|\leq d_X (x,y)$. 
\end{lemma}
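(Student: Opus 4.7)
The plan is to apply the $\boxtimes$-inequality in $X$ at a specific pair of parameters $(t^*,s^*)$ read off from the intersection of the two diagonals in $\mathbb{R}^2$, and then compare it term-by-term with the corresponding $\boxtimes$-expression evaluated on the planar configuration. The starting observation, verified by a direct expansion, is that for any $u_1,u_2,u_3,u_4\in\mathbb{R}^2$ and $t,s\in[0,1]$, the right-hand side of the $\boxtimes$-inequality equals
\[
\bigl\|\bigl((1-t)u_1+tu_3\bigr)-\bigl((1-s)u_2+su_4\bigr)\bigr\|^2.
\]
Thus, choosing $p\in[x',z']\cap[y',w']$ and writing $p=(1-t^*)x'+t^*z'=(1-s^*)y'+s^*w'$ for suitable $t^*,s^*\in[0,1]$, the Euclidean $\boxtimes$-expression at $(t^*,s^*)$ vanishes identically. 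Note also that $z\neq w$ together with the hypothesis $d_X(z,w)\le\|z'-w'\|$ forces $z'\neq w'$, so the case $t^*=s^*=1$ is excluded from the outset.

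Applying the $\boxtimes$-inequality in $X$ at $(t^*,s^*)$ to $x,y,z,w$ and subtracting the (zero) Euclidean expression yields an inequality of the form
\[
0 \le (1-t^*)(1-s^*)\bigl(d_X(x,y)^2-\|x'-y'\|^2\bigr) + \Delta,
\]
where $\Delta$ is the sum of the remaining five $\boxtimes$-coefficients times the differences (squared distance in $X$) $-$ (squared distance in $\mathbb{R}^2$). A term-by-term inspection against the five hypotheses of the lemma shows each summand in $\Delta$ is nonpositive: the three side terms for $yz$, $zw$, $wx$ because the coefficients are nonnegative while $d_X^2\le\|\cdot\|^2$; the two diagonal terms for $xz$, $yw$ because those coefficients occur with a minus sign while $\|\cdot\|^2\le d_X^2$. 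Hence $(1-t^*)(1-s^*)\bigl(\|x'-y'\|^2-d_X(x,y)^2\bigr)\le 0$, and whenever $(1-t^*)(1-s^*)>0$ this immediately gives the conclusion $\|x'-y'\|\le d_X(x,y)$.

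The main obstacle is the degenerate boundary case $(1-t^*)(1-s^*)=0$, namely $t^*=1$ (so $z'\in[y',w']$) or $s^*=1$ (so $w'\in[x',z']$). Consider $t^*=1$. The corner subcases $s^*\in\{0,1\}$ force $y=z$ or $z=w$ via the hypotheses, making the conclusion trivial. For $s^*\in(0,1)$, combining the collinearity identity $\|y'-z'\|+\|z'-w'\|=\|y'-w'\|$ with the hypotheses and the triangle inequality in $X$ forces the equalities $d_X(y,z)=\|y'-z'\|$, $d_X(z,w)=\|z'-w'\|$, $d_X(y,w)=\|y'-w'\|$, together with $d_X(y,w)=d_X(y,z)+d_X(z,w)$. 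Proposition \ref{sdi-CAT(0)-prop} applied to the triple $y,z,w$, with $x$ as the fourth point, then yields an upper bound on $d_X(x,z)^2$ in terms of $d_X(x,y)^2,d_X(x,w)^2,d_X(y,w)^2$, while the Euclidean median identity provides the analogous equality for $\|x'-z'\|^2$ in terms of $\|x'-y'\|^2,\|x'-w'\|^2,\|y'-w'\|^2$. Solving each relation for the $xy$-term and invoking the surviving hypotheses $\|x'-z'\|\le d_X(x,z)$, $\|x'-w'\|\ge d_X(x,w)$, and $\|y'-w'\|=d_X(y,w)$ gives $\|x'-y'\|\le d_X(x,y)$. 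The symmetric subcase $s^*=1$, $t^*\in(0,1)$ is handled by the same argument applied to the triple $x,w,z$.
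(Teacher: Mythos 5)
Your proof is correct and follows essentially the same route as the paper's: your main case $(1-t^*)(1-s^*)>0$ is the paper's Case 1 (a point of $\lbrack x',z')\cap\lbrack y',w')$, direct comparison of the $\boxtimes$-inequality in $X$ with the vanishing Euclidean $\boxtimes$-expression), your degenerate cases with exactly one of $t^*,s^*$ equal to $1$ and the other in $(0,1)$ reproduce the paper's Case 2 (Proposition \ref{sdi-CAT(0)-prop} played against the Euclidean median identity, with $z\neq w$ guaranteeing the surviving coefficient $1-c$ is positive), and the corner configurations correspond to the paper's Case 3. The only omission is cosmetic: the corner $s^*=1$, $t^*=0$ is not spelled out, but it forces $x'=w'$ and hence $x=w$, whence $\| x'-y'\| =\| y'-w'\|\leq d_X (y,w)=d_X (x,y)$.
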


\begin{proof}
We consider three cases. 

\textsc{Case 1}: 
{\em $\lbrack x' ,z' )\cap\lbrack y' ,w' )\neq\emptyset$.} 
In this case, 
there exist $s\in\lbrack 0, 1)$ and $t\in\lbrack 0,1)$ such that 
\begin{equation*}
(1-t)x' + tz'
=
(1-s)y' + sw'.
\end{equation*}
It follows from this equality and the hypotheses of the lemma that 
\begin{align*}
  0= & \left\|\left( (1-t)x' +t z' \right) -
       \left( (1-s)y' +s w'\right)\right\|^2 \\
	=& (1-t)(1-s)\| x' -y' \|^2
	  + t(1-s)   \| y' -z' \|^2
	  + ts       \| z' -w'\|^2
	  + (1-t)s   \| w' -x' \|^2 \\
	& - t(1-t)   \| x' -z'\|^2
	  - s(1-s)   \| y' -w'\|^2 \\
	\geq & (1-t)(1-s) \| x' -y' \|^2
	  + t(1-s)     d_X (y,z)^2
	  + ts         d_X (z,w)^2
	  + (1-t)s     d_X (w,x)^2 \\
	& - t(1-t)     d_X (x,z)^2
	  - s(1-s) d_X (y,w)^2 .
\end{align*}
On the other hand, 
\begin{align*}
  0\leq
  &(1-t)(1-s) d_X (x,y)^2
   + t(1-s)   d_X (y,z)^2
   + ts       d_X (z,w)^2
   + (1-t)s   d_X (w,x)^2 \\
   - & t(1-t) d_X (x,z)^2
   - s(1-s)   d_X (y,w)^2
\end{align*}
because $X$ satisfies the $\boxtimes$-inequalities. 
Comparing these yields 
\begin{equation*}
\| x'-y'\|\leq d_X (x,y). 
\end{equation*}

\textsc{Case 2}: 
{\em $\lbrack x' ,z' )\cap\lbrack y' ,w' )=\emptyset$, $x'\neq z'$ and $y'\neq w'$.} 
In this case, 
$z'\in\lbrack y' ,w'\rbrack$ or $w'\in\lbrack x' ,z'\rbrack$ 
because $\lbrack x' ,z' \rbrack\cap\lbrack y' ,w' \rbrack\neq\emptyset$ by hypothesis. 
We assume without loss of generality that $z'\in\lbrack y' ,w'\rbrack$. 
Then 
\begin{equation*}
d_X (y,w)\leq d_X (y,z)+d_X (z,w)\leq\| y'-z'\|+\| z'-w'\|=\|y'-w'\|\leq d_X (y,w),
\end{equation*}
which implies that 
\begin{equation}\label{quadrangle-edge-eq-eq-1}
d_X (y,w)= d_X (y,z)+d_X (z,w)=\| y'-z'\|+\| z'-w'\|=\|y'-w'\| . 
\end{equation}
The second equality in \eqref{quadrangle-edge-eq-eq-1} implies that 
\begin{equation*}
d_X (y,z)=\| y'-z'\| ,\quad d_X (z,w)=\| z'-w'\| .
\end{equation*}
Hence we can write 
\begin{equation*}
z'=(1-c)y' + cw' ,
\end{equation*}
where 
\begin{equation}\label{quadrangle-edge-k-eq}
c=\frac{\| y'-z'\|}{\| y'-w'\|}=\frac{d_X (y,z)}{d_X (y,w)}.
\end{equation}
Because $0<d_X (z,w)\leq\| z'-w'\|$ by hypothesis, 
$z'\neq w'$ and $c\in\lbrack 0,1)$. 
We have 
\begin{align*}
d_X (x,z)^2
&\geq\| x' -z'\|^2 \\
&=\|x'-(1-c)y' -cw' \|^2 \\
&=(1-c)\|x' -y'\|^2 +c\|x' -w' \|^2 -c(1-c)\|y' -w' \|^2 \\
&\geq (1-c)\|x' -y'\|^2 +cd_X (x,w)^2 -c(1-c)d_X (y,w)^2 .
\end{align*}
On the other hand, \eqref{quadrangle-edge-eq-eq-1}, \eqref{quadrangle-edge-k-eq} and 
Proposition \ref{sdi-CAT(0)-prop} imply that 
\begin{equation*}
d_X (x,z)^2
\leq 
(1 - c) d_X (x,y)^2 + c d_X (x,w) ^2 - c(1-c) d_X (y,w)^2 . 
\end{equation*}
Comparing these yields 
\begin{equation*}
\| x'-y'\|\leq d_X (x,y).
\end{equation*}

\textsc{Case 3}: 
{\em $x'=z'$ or $y'=w'$}. 
In this case, we may assume without loss of generality that $x'=z'$. 
Then $x'\in\lbrack y' ,w'\rbrack$ because $\lbrack x' ,z'\rbrack\cap\lbrack y' ,w'\rbrack\neq\emptyset$. 
Therefore, 
\begin{equation*}
\| x'-y'\|
=
\| y'-w'\| -\| w'-x'\|
\leq
d_X (y,w)-d_X (w,x)
\leq
d_X (x,y). 
\end{equation*}

The above three cases exhaust all possibilities. 
\end{proof}

\begin{remark}
If we omit the condition that $z\neq w$ from the hypothesis of Lemma \ref{quadrangle-edge-lemma}, 
then the statement becomes false. 
For example, suppose $\theta$ and $\theta'$ are real numbers such that $0\leq\theta <\theta'\leq\pi$, and define points $x,y,z,w,x',y',z',w'\in\mathbb{R}^2$ by 
\begin{align*}
&x=(\cos\theta,\sin\theta),\quad y=(1,0),\quad z=w=(0,0),\\
&x'=(\cos\theta',\sin\theta' ),\quad y'=(1,0),\quad z'=w'=(0,0). 
\end{align*}
Then 
\begin{align*}
&\| y'-z'\|= \|y-z\|,\quad \| z'-w'\|= \| z-w\| ,\quad\| w'-x'\|=\| w-x\|,\\
&\| x'-z'\|=\| x-z\|,\quad\| y'-w'\|=\| y-w\| ,\quad\lbrack x' ,z'\rbrack\cap\lbrack y' ,w'\rbrack\neq\emptyset .
\end{align*}
However, 
\begin{equation*}
\|x-y\| <\|x'-y'\| .
\end{equation*}
\end{remark}

\section{A criterion for isometric embeddability into a $\mathrm{CAT}(0)$ space}\label{criterion-sec}

In this section, we prove Lemma \ref{key-lemma} and Proposition \ref{G(0)-prop}. 
We first prove Lemma \ref{key-lemma}. 

\begin{proof}[Proof of Lemma \ref{key-lemma}]
Let $(X,d_X )$ be a metric space that satisfies the $G_A (0)$ condition, and let $x_1 ,\ldots ,x_n \in X$. 
Then there exist a $\mathrm{CAT}(0)$ 
space $(Y,d_Y )$ and points $y_1 ,\ldots ,y_n \in Y$ such that 
\begin{equation*}
\begin{cases}
d_Y (y_i ,y_j )\leq d_X (x_i ,x_j ),\quad\textrm{if }\{ i,j\}\in E_+ (A),\\
d_Y (y_i ,y_j )\geq d_X (x_i ,x_j ),\quad\textrm{if }\{ i,j\}\not\in E_+ (A)
\end{cases}
\end{equation*}
for any $i,j\in V$. 
Because $Y$ satisfies the $(a_{ij})$-quadratic metric inequality by hypothesis, we have 
\begin{align*}
0
&\leq
\sum_{\{ i,j\}\in E}a_{ij}d_Y (y_i ,y_j )^2 \\
&=
\sum_{\{ i,j\}\in E_+ (A)}|a_{ij}| d_Y (y_i ,y_j )^2 -
\sum_{\{ i,j\}\in E\setminus E_+ (A)}|a_{ij}| d_Y (y_i ,y_j )^2 \\
&\leq
\sum_{\{ i,j\}\in E_+ (A)}|a_{ij}| d_X (x_i ,x_j )^2 -
\sum_{\{ i,j\}\in E\setminus E_+ (A)}|a_{ij}| d_X (x_i ,x_j )^2 \\
&=
\sum_{\{ i,j\}\in E}a_{ij}d_X (x_i ,x_j )^2 ,
\end{align*}
which proves that $X$ satisfies the $(a_{ij})$-quadratic metric inequality. 
\end{proof}

Proposition \ref{G(0)-prop} follows from Lemma \ref{key-lemma} and Theorem \ref{ANN-th}. 

\begin{proof}[Proof of Proposition \ref{G(0)-prop}]
Let $(X,d_X )$ be an $n$-point metric space. 
If $X$ admits an isometric embedding into a $\mathrm{CAT}(0)$ space, then 
$X$ satisfies the $G(0)$ condition for every graph $G$ with $n$ vertices because 
every $\mathrm{CAT}(0)$ space satisfies the $G(0)$ condition. 
Conversely, suppose that $X$ satisfies the $G(0)$ condition for every graph $G$ with $n$ vertices. 
Let $V=\lbrack n\rbrack$, and let $E =\binom{V}{2}$. 
Fix a family $A=(a_{ij})_{\{ i,j\}\in E}$ of real numbers indexed by $E$ 
such that every $\mathrm{CAT}(0)$ space satisfies the $(a_{ij})$-quadratic metric inequality. 
Let $E_+ (A)\subseteq E$ be the set of all $\{ i,j\}\in E$ such that $a_{ij}>0$, and 
let $G_A =(V, E_+ (A))$ be the graph with vertex set $V$ and edge set $E_{+}(A)$. 
Then $X$ satisfies the $G_A (0)$ condition, and therefore 
$X$ satisfies the $(a_{ij})$-quadratic metric inequality by Lemma \ref{key-lemma}. 
Thus it follows from Theorem \ref{ANN-th} that $X$ admits an isometric embedding into a $\mathrm{CAT}(0)$ space. 
\end{proof}

\section{Four points in a $\mathrm{CAT}(0)$ space}\label{four-point-sec}

In this section, we prove that if a metric space satisfies the $\boxtimes$-inequalities, 
then it satisfies the $G(0)$ condition for every graph $G$ with four vertices. 
Together with Proposition \ref{G(0)-prop}, this gives another proof of Theorem \ref{four-point-th}. 
We first observe that there are many graphs $G$ such that every metric space satisfies the $G(0)$ condition. 
As we declared before, graphs are always assumed to be simple and undirected.

\begin{proposition}\label{semicomplete-prop}
Let $G=(V,E)$ be a finite graph. 
Assume that there exists a vertex $v_0\in V$ such that 
$\{ u,v\}\in E$ for any $u,v\in V\setminus\{ v_0 \}$ with $u\neq v$. 
Then every metric space satisfies the $G(0)$ condition. 
\end{proposition}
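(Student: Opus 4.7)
The plan is to exhibit an explicit target CAT(0) space and map. Because the only non-edges of $G$ are among the pairs $\{v_0, v\}$ with $v \in V\backslash\{v_0\}$, while every pair inside $V\backslash\{v_0\}$ is an edge, the constraints split very asymmetrically: we need a space where $g(v_0)$ can be placed at a specific distance from each $g(v)$, but where the pairwise distances among $\{g(v) : v \neq v_0\}$ are all $\leq$ their original values.

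The key observation is that the map $x \mapsto d_X(f(v_0), x)$ is $1$-Lipschitz by the reverse triangle inequality, which is exactly what is needed. So I would take $Y = \mathbb{R}$ with the Euclidean metric (which is a CAT(0) space), and define
\begin{equation*}
g(v_0) = 0, \qquad g(v) = d_X(f(v_0), f(v)) \quad \text{for every } v \in V\backslash\{v_0\}.
\end{equation*}

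The verification is then a routine check of the three cases in \eqref{G(0)-ineq}. For any $v \in V\backslash\{v_0\}$, one has $d_Y(g(v_0), g(v)) = d_X(f(v_0), f(v))$, so both the $\leq$ inequality (when $\{v_0, v\} \in E$) and the $\geq$ inequality (when $\{v_0, v\} \notin E$) hold with equality, regardless of whether $\{v_0, v\}$ is an edge of $G$ or not. For any two distinct $u, v \in V\backslash\{v_0\}$, the pair $\{u,v\}$ is an edge of $G$ by hypothesis, and the reverse triangle inequality in $(X, d_X)$ gives
\begin{equation*}
d_Y(g(u), g(v)) = \bigl|d_X(f(v_0), f(u)) - d_X(f(v_0), f(v))\bigr| \leq d_X(f(u), f(v)),
\end{equation*}
which is exactly the required inequality.

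There is really no main obstacle: the entire content is recognizing that the distance-to-$f(v_0)$ map into $\mathbb{R}$ is simultaneously an isometry on edges incident to $v_0$ and a $1$-Lipschitz map on the remaining pairs. The proof is therefore a two-line construction followed by a three-line verification.
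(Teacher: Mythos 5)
Your construction is identical to the paper's: the paper also defines $g(v)=d_X(f(v_0),f(v))$ into $\mathbb{R}$ (which gives $g(v_0)=0$) and verifies the two cases via the reverse triangle inequality exactly as you do. The proof is correct and takes essentially the same approach.
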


\begin{proof}
Let $(X,d_X )$ be a metric space. 
For each map $f :V\to X$, 
define a map $g:V\to\mathbb{R}$ by $g(v)=d_X (f(v_0 ),f(v))$. 
Then 
\begin{equation*}
|g(u)-g(v)|=|d_X (f(v_0 ),f(u))-d_X (f(v_0 ),f(v))|
\leq
d_X (f(u),f(v))
\end{equation*}
for any $u,v\in V$, and 
\begin{equation*}
|g(v_0)-g(v)|=|d_X (f(v_0 ),f(v_0 ))-d_X (f(v_0 ),f(v))|=d_X (f(v_0 ),f(v))
\end{equation*}
for any $v\in V$. 
Therefore, 
\begin{equation*}
\begin{cases}
|g(u)-g(v)|\leq d_X (f(u),f(v)),\quad\textrm{if  }\{ u,v\}\in E, \\
|g(u)-g(v)|=d_X (f(u),f(v)),\quad\textrm{if  }\{ u,v\}\not\in E,
\end{cases}
\end{equation*}
for any $u,v\in V$. 
Thus $X$ satisfies the $G(0)$ condition. 
\end{proof}

Proposition \ref{semicomplete-prop} implies in particular that every metric space satisfies the $G(0)$ condition for 
every complete graph $G$. 

\begin{proposition}\label{graph-sum-prop}
Let $G_1$ and $G_2$ be finite graphs, and let 
$G$ be the graph sum of $G_1$ and $G_2$. 
In other words, the vertex and edge sets of $G$ are 
the disjoint union of the vertex sets of $G_1$ and $G_2$ and that of 
the edge sets of $G_1$ and $G_2$, respectively. 
Suppose $X$ is a metric space that satisfies the $G_1 (0)$ and $G_2 (0)$ conditions. 
Then $X$ satisfies the $G(0)$ condition. 
\end{proposition}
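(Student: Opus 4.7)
The plan is to use the $G_1(0)$- and $G_2(0)$-properties to produce two $\mathrm{CAT}(0)$ ``witnesses,'' then combine them into a single $\mathrm{CAT}(0)$ space in a way that preserves the inequalities from each piece while making every cross-distance between the two vertex sets arbitrarily large.

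Write $V = V_1 \sqcup V_2$ for the vertex set of $G$, where $V_i$ is the vertex set of $G_i$, and let $f : V \to X$ be an arbitrary map. Applying the $G_i(0)$-property to the restriction $f|_{V_i}$ produces a $\mathrm{CAT}(0)$ space $(Y_i, d_{Y_i})$ and a map $g_i : V_i \to Y_i$ verifying \eqref{G(0)-ineq} for the graph $G_i$. Choose arbitrary basepoints $y_i^* \in Y_i$ and a real number $L$ large enough that $L \geq d_X(f(u), f(v))$ for every $u \in V_1$ and $v \in V_2$. Let $I = [0, L]$ with its Euclidean metric. Build $Y$ in two steps: first glue $Y_1$ and $I$ along the one-point subsets $\{y_1^*\}$ and $\{0\}$, and then glue $Y_2$ to the resulting space along $\{y_2^*\}$ and $\{L\}$. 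Because single points are complete convex subsets of any $\mathrm{CAT}(0)$ space, the general form of Reshetnyak's gluing theorem (as in \cite[Chapter II.11, Theorem 11.1]{BH}) applies twice and shows that $Y$ is a $\mathrm{CAT}(0)$ space; moreover the natural inclusions $Y_1 \hookrightarrow Y$, $I \hookrightarrow Y$, $Y_2 \hookrightarrow Y$ are all isometric embeddings.

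Define $g : V \to Y$ by $g|_{V_i} = g_i$ via these inclusions. For any pair $\{u, v\}$ with both endpoints in the same $V_i$, the required inequality in \eqref{G(0)-ineq} (in either direction, depending on whether $\{u,v\} \in E(G_i)$ or not) is immediate from the isometric embedding $Y_i \hookrightarrow Y$ and the $G_i(0)$-property already established for $g_i$. The only genuinely new case is a non-edge $\{u, v\}$ of $G$ with $u \in V_1$ and $v \in V_2$: since the unique geodesic in $Y$ from $g_1(u)$ to $g_2(v)$ is forced to pass through $y_1^*$ and $y_2^*$ along the bridge $I$, one gets
\[
d_Y\bigl(g_1(u), g_2(v)\bigr) = d_{Y_1}\bigl(g_1(u), y_1^*\bigr) + L + d_{Y_2}\bigl(y_2^*, g_2(v)\bigr) \geq L \geq d_X\bigl(f(u), f(v)\bigr),
\]
as required. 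There is no serious obstacle in this argument; the only points needing minor care are the verification that Reshetnyak's gluing theorem applies with no further hypotheses on $Y_1, Y_2$ beyond being $\mathrm{CAT}(0)$ (which holds since the gluing loci are single points, hence automatically complete and convex), and the computation of the bridging distance, which is a standard property of gluings along points.
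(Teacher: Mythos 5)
Your proposal is correct and follows essentially the same route as the paper's own proof: both apply the $G_i(0)$-properties to obtain $\mathrm{CAT}(0)$ witnesses $Y_1$ and $Y_2$, join them by an interval whose length dominates all cross-distances, invoke Reshetnyak's gluing theorem at the two gluing points, and compute the bridged distance as a sum passing through the interval. The only cosmetic difference is that the paper takes the basepoints to be $g_1(v_1)$ and $g_2(v_2)$ for chosen vertices $v_1\in V_1$, $v_2\in V_2$, whereas you allow arbitrary basepoints, which changes nothing.
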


\begin{proof}
Suppose $G_1 =(V_1 ,E_1 )$, $G_2 =(V_2 ,E_2 )$ and $G=(V,E)$ are finite graphs such that 
$V$ is the disjoint union of $V_1$ and $V_2$, and $E$ is the disjoint union of $E_1$ and $E_2$. 
Suppose $(X,d_X )$ is a metric space that satisfies the $G_1 (0)$ and $G_2 (0)$ conditions. 
Fix $f:V\to X$. 
Then for each $i\in\{ 1,2\}$, there exist a $\mathrm{CAT}(0)$ space $(Y_i ,d_{Y_i})$ and a map $g_i : V_i \to Y_i$ such that 
\begin{equation*}
\begin{cases}
d_{Y_i} (g_i (u),g_i (v))\leq d_X (f(u),f(v)),\quad\textrm{if  }\{ u,v\}\in E_i , \\
d_{Y_i} (g_i (u),g_i (v))\geq d_X (f(u),f(v)),\quad\textrm{if  }\{ u,v\}\not\in E_i .
\end{cases}
\end{equation*}
for any $u,v\in V_i$. 
Choose vertices $v_1 \in V_1$ and $v_2 \in V_2$. 
Let 
\begin{equation*}
d=\max\{ d_X (f(u),f(v))\hspace{1mm}|\hspace{1mm}u,v\in V\} .
\end{equation*}
Define $(Y'_1 ,d_{Y'_1})$ to be the metric space obtained by gluing 
$Y_1$ and the closed interval $\lbrack 0, d\rbrack$ in $\mathbb{R}$ by identifying 
$g_1 (v_1)\in Y_1$ with $0\in\lbrack 0,d\rbrack$. 
Then $(Y'_1 ,d_{Y'_1} )$ is a $\mathrm{CAT}(0)$ space by Reshetnyak's gluing theorem. 
We denote by $g'_1 (v)$ the point in $Y'_1$ represented by $g_1 (v)\in Y_1$ for each $v\in V_1$, and by $d'$ 
the point in $Y'_1$ represented by $d\in\lbrack 0,d\rbrack$. 
Define $(Y,d_Y )$ to be the metric space obtained by gluing 
$Y'_1$ and $Y_2$ by identifying 
$d'\in Y'_1$ with $g_2 (v_2 )\in Y_2$. 
Then $(Y,d_Y )$ is a $\mathrm{CAT}(0)$ space by Reshetnyak's gluing theorem. 
Define a map $g:V\to Y$ by sending each $u\in V_1$ 
to the point in $Y$ represented by $g'_1 (u)\in Y'_1$, 
and each $v\in V_2$ the point in $Y$ represented by $g_2 (v)\in Y_2$. 
Then 
\begin{align*}
&\begin{cases}
d_{Y} (g(u),g(u'))=d_{Y_1} (g_1 (u),g_1 (u'))\leq d_X (f(u),f(u')),\quad\textrm{if  }\{ u,u'\}\in E_1 , \\
d_{Y} (g(u),g(u'))=d_{Y_1} (g_1 (u),g_1 (u'))\geq d_X (f(u),f(u')),\quad\textrm{if  }\{ u,u'\}\not\in E_1 ,
\end{cases}\\
&\begin{cases}
d_{Y} (g(v),g(v'))=d_{Y_2} (g_2 (v),g_2 (v'))\leq d_X (f(v),f(v')),\quad\textrm{if  }\{ v,v'\}\in E_2 , \\
d_{Y} (g(v),g(v'))=d_{Y_2} (g_2 (v),g_2 (v'))\geq d_X (f(v),f(v')),\quad\textrm{if  }\{ v,v'\}\not\in E_2 ,
\end{cases}\\
&\hspace{5mm}d_Y (g (u),g(v))
=
d_{Y_1}(g_1 (u),g_1 (v_1 ))+d+d_{Y_2}(g_2 (v_2 ),g_2 (v))\\
&\hspace{30mm}\geq
d_X (f(u),f(v))
\end{align*}
for any $u,u'\in V_1$ and any $v,v'\in V_2$. 
It follows that 
\begin{equation*}
\begin{cases}
d_Y (g(u),g(v))\leq d_X (f(u),f(v)),\quad\textrm{if  }\{ u,v\}\in E, \\
d_Y (g(u),g(v))\geq d_X (f(u),f(v)),\quad\textrm{if  }\{ u,v\}\not\in E
\end{cases}
\end{equation*}
for any $u,v\in V$. 
Thus $X$ satisfies the $G(0)$ condition. 
\end{proof}

\begin{corollary}\label{4-disconnected-corollary}
Every metric space satisfies the $G(0)$ condition for any disconnected graph $G$ with four vertices. 
\end{corollary}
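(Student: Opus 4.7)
The plan is to reduce the corollary to the $G'(0)$-property for graphs $G'$ with at most three vertices, and then invoke Proposition \ref{graph-sum-prop}. Given a disconnected graph $G$ on four vertices, I would partition its vertex set into two nonempty subsets $V_1$, $V_2$ with no edges of $G$ crossing the partition; letting $G_i$ denote the induced subgraph on $V_i$, this exhibits $G$ as the graph sum of $G_1$ and $G_2$. Since $|V_1|+|V_2|=4$ with both parts nonempty, each $G_i$ has at most three vertices. By Proposition \ref{graph-sum-prop}, it then suffices to verify that every metric space satisfies the $G'(0)$-property whenever $G'$ has at most three vertices.

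For this remaining step, I would use the classical fact that every three-point metric space embeds isometrically into $\mathbb{R}^2$. Given any $f\colon V(G')\to X$, I pick an isometric embedding of $f(V(G'))$ into the Euclidean plane and precompose to obtain $g\colon V(G')\to\mathbb{R}^2$ with $\|g(u)-g(v)\|=d_X(f(u),f(v))$ for all $u,v$. Both inequalities in \eqref{G(0)-ineq} then hold with equality, and since $\mathbb{R}^2$ is a $\mathrm{CAT}(0)$ space, the $G'(0)$-property follows. No significant obstacle arises here; the only mild subtlety is that Proposition \ref{semicomplete-prop} alone handles every three-vertex graph except the empty one (for which no distinguished vertex $v_0$ has its complement forming a clique), so one falls back on the three-point isometric embedding to cover that last case.
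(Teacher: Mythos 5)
Your proof is correct and follows essentially the same route as the paper: decompose the disconnected four-vertex graph as a graph sum of two graphs on at most three vertices, note that every metric space on at most three points embeds isometrically into $\mathbb{R}^2$ so that the $G_i(0)$-properties hold trivially (with equality in both inequalities), and conclude via Proposition \ref{graph-sum-prop}. Your side remark about the empty three-vertex graph escaping Proposition \ref{semicomplete-prop} is accurate but immaterial, since the three-point embedding argument covers all cases uniformly, exactly as in the paper.
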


\begin{proof}
Let $G$ be a disconnected graph with four vertices. 
Then there exist graphs $G_1$ and $G_2$ such that 
$G$ is the graph sum of $G_1$ and $G_2$, and $G_i$ contains at most three vertices for each $i\in\{ 1,2\}$.  
Because every metric space that contains at most three points 
admits an isometric embedding into $\mathbb{R}^2$, 
every metric space satisfies the $G_1(0)$ and $G_2 (0)$ conditions clearly. 
Therefore, it follows from Proposition \ref{graph-sum-prop} that every metric space satisfies the $G(0)$ condition. 
\end{proof}

\begin{proposition}\label{a-point-sharing-prop}
Let $G=(V,E)$ be a finite graph. 
Assume that there exist $V_1 ,V_2\subseteq V$ and $v_0 \in V$ such that 
$V_1 \cup V_2 =V$, $V_1 \cap V_2 =\{ v_0 \}$, and there are no edges $\{ u,v\}\in E$ with 
$u\in V_1 \setminus\{ v_0 \}$ and $v\in V_2 \setminus\{ v_0 \}$. 
Suppose $X$ is a metric space such that 
every subset $S\subseteq X$ with $|S|\leq\max\{ |V_1 |,|V_2 | \}$ admits an isometric embedding into 
a $\mathrm{CAT}(0)$ space. 
Then $X$ satisfies the $G(0)$ condition. 
\end{proposition}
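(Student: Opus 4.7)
The plan is to realize $f(V_1)$ and $f(V_2)$ as subsets of $\mathrm{CAT}(0)$ spaces $Y_1$ and $Y_2$ separately, using the hypothesis, and then glue $Y_1$ and $Y_2$ along the single shared vertex $v_0$. Because a one-point set is trivially convex, Reshetnyak's gluing theorem yields a $\mathrm{CAT}(0)$ space $Y$ into which both pieces embed isometrically, and the combined map will satisfy \eqref{G(0)-ineq}.

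Concretely, given $f:V\to X$, set $n=\max\{|V_1|,|V_2|\}$; since $|f(V_i)|\leq|V_i|\leq n$, the hypothesis produces for each $i\in\{1,2\}$ a $\mathrm{CAT}(0)$ space $Y_i$ together with an isometric embedding of $f(V_i)$ into $Y_i$. Composing with $f|_{V_i}$ yields a map $g_i:V_i\to Y_i$ with $d_{Y_i}(g_i(u),g_i(v))=d_X(f(u),f(v))$ for all $u,v\in V_i$. I would then let $Y$ be the gluing of $Y_1$ and $Y_2$ along the one-point isometry $g_1(v_0)\leftrightarrow g_2(v_0)$; by Reshetnyak's theorem $Y$ is $\mathrm{CAT}(0)$, and both $Y_i$ embed isometrically. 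Define $g:V\to Y$ to coincide with $g_1$ on $V_1$ and with $g_2$ on $V_2$; the identification at $v_0$ makes this well-defined.

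To verify \eqref{G(0)-ineq}, note that whenever $u,v$ both lie in a single $V_i$, the restriction of $g$ is isometric, so $d_Y(g(u),g(v))=d_X(f(u),f(v))$. Since the hypothesis forbids any edge between $V_1\setminus\{v_0\}$ and $V_2\setminus\{v_0\}$, this one identity disposes of every edge inequality as well as every non-edge inequality with both endpoints in the same piece. The only remaining case is a non-edge $\{u,v\}$ with $u\in V_1\setminus\{v_0\}$ and $v\in V_2\setminus\{v_0\}$; by the single-point gluing, every path in $Y$ from $g(u)$ to $g(v)$ passes through the identified vertex, giving
\[
d_Y(g(u),g(v))=d_X(f(u),f(v_0))+d_X(f(v_0),f(v))\geq d_X(f(u),f(v))
\]
by the triangle inequality in $X$, which is exactly the required non-edge inequality.

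No substantive obstacle arises: the argument is a streamlined variant of the proof of Proposition \ref{graph-sum-prop}, with the gluing at the shared vertex $v_0$ replacing the auxiliary interval used there. The only minor technicality is ensuring that Reshetnyak's gluing theorem applies to the two pieces; this is immediate in the singleton-gluing case, and if completeness is required one can first replace each $Y_i$ by the completion of the closed convex hull of $g_i(V_i)$ without altering any of the distances used above.
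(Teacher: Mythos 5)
Your proposal is correct and follows essentially the same route as the paper: embed $f(V_1)$ and $f(V_2)$ isometrically into $\mathrm{CAT}(0)$ spaces, glue along the single point corresponding to $v_0$ via Reshetnyak's theorem, and use the triangle inequality in $X$ for the cross pairs, which are all non-edges by hypothesis. No further comment is needed.
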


\begin{proof}
Fix a map $f:V\to X$. 
By hypothesis, both $f (V_1 )$ and $f(V_2 )$ admit isomeric embeddings into $\mathrm{CAT}(0)$ spaces. 
Hence for each $i\in\{ 1,2\}$, there exist a $\mathrm{CAT}(0)$ space $(Y_i ,d_{Y_i})$ and a map 
$g_i :V_i \to Y_i$ such that $d_{Y_i}(g_i (u),g_i (v))=d_{X}(f (u),f(v))$ 
for any $u,v\in V_i$. 
Define $(Y,d_Y )$ to be the metric space obtained by gluing 
$Y_1$ and $Y_2$ by identifying 
$g_1 (v_0 )\in Y_1$ with $g_2 (v_0 )\in Y_2$. 
Then $(Y,d_Y )$ is a $\mathrm{CAT}(0)$ space by Reshetnyak's gluing theorem. 
Define a map $g:V\to Y$ by sending each $u\in V_1$ 
to the point in $Y$ represented by $g_1 (u)\in Y_1$, and 
each $v\in V_2 \setminus\{ v_0 \}$ to the point in $Y$ represented by $g_2 (v)\in Y_2$. 
Then 
\begin{align*}
d_Y (g(u),g(u'))
&=
d_{Y_1}(g_1 (u),g_1 (u'))=d_X (f(u),f(u')),\\
d_Y (g(v),g(v'))
&=
d_{Y_2}(g_2 (v),g_2 (v'))=d_X (f(v),f(v')),\\
d_Y (g (u),g(v))
&=
d_{Y_1}(g_1 (u),g_1 (v_0 ))+d_{Y_2}(g_2 (v_0 ),g_2 (v))\\
&=
d_X (f(u),f(v_0 ))+d_X (f(v_0 ),f(v))\\
&\geq
d_X (f(u),f(v))
\end{align*}
for any $u,u'\in V_1 $ and $v,v'\in V_2$. It follows that 
\begin{equation*}
\begin{cases}
d_Y (g(u),g(v))=d_X (f(u),f(v)),\quad\textrm{if  }\{ u,v\}\in E, \\
d_Y (g(u),g(v))\geq d_X (f(u),f(v)),\quad\textrm{if  }\{ u,v\}\not\in E
\end{cases}
\end{equation*}
for any $u,v\in V$. 
Thus $X$ satisfies the $G(0)$ condition. 
\end{proof}

For a finite graph $G=(V,E)$ and a vertex $v\in V$, the {\em degree of $v$}, denoted by $\mathrm{deg}(v)$, is the number of 
edges $e\in E$ such that $v\in e$.

\begin{corollary}\label{4-deg1-corollary}
Suppose $G=(V,E)$ is a graph such that $|V|=4$, and 
there exists a vertex $v_1 \in V$ with $\mathrm{deg}(v_1 )=1$. 
Then every metric space satisfies the $G(0)$ condition. 
\end{corollary}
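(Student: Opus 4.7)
The plan is to derive the corollary as a direct instance of Proposition \ref{a-point-sharing-prop}. Since $\mathrm{deg}(v_1 ) = 1$, there is a unique vertex $v_0 \in V \setminus \{v_1 \}$ with $\{v_1 ,v_0 \}\in E$; let $v_2 ,v_3$ denote the remaining two vertices of $V$. I would set
\begin{equation*}
V_1 = \{v_0 ,v_1 \} ,\qquad V_2 = \{v_0 ,v_2 ,v_3 \} ,
\end{equation*}
so that $V_1 \cup V_2 = V$ and $V_1 \cap V_2 = \{v_0 \}$. Because the only edge of $G$ incident to $v_1$ is $\{v_0 ,v_1 \}$, no edge of $G$ joins the singleton $V_1 \setminus \{v_0 \} = \{v_1 \}$ to $V_2 \setminus \{v_0 \} = \{v_2 ,v_3 \}$. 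This verifies the combinatorial hypothesis of Proposition \ref{a-point-sharing-prop}.

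It then remains to verify the metric hypothesis. Since $\max\{ |V_1 |, |V_2 | \} = 3$, I need to check that every subset of a given metric space $X$ of cardinality at most $3$ embeds isometrically into some $\mathrm{CAT}(0)$ space. But any metric space on at most three points embeds isometrically into the Euclidean plane $\mathbb{R}^2$ (one can realize the triangle with the given three side lengths in $\mathbb{R}^2$), and $\mathbb{R}^2$ is a $\mathrm{CAT}(0)$ space. Applying Proposition \ref{a-point-sharing-prop} to any metric space $X$ therefore yields the $G(0)$-property, completing the proof.

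There is no genuine obstacle here; the statement is essentially bookkeeping, with Proposition \ref{a-point-sharing-prop} doing all of the work of building, for any $f\colon V\to X$, a $\mathrm{CAT}(0)$ target by Reshetnyak-gluing a Euclidean realization of $f(V_1 )$ to one of $f(V_2 )$ along the single shared vertex $v_0$.
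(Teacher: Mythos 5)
Your proposal is correct and follows essentially the same route as the paper: both reduce the statement to Proposition \ref{a-point-sharing-prop} via the decomposition of $V$ into $\{v_0 ,v_1\}$ and $V\setminus\{v_1\}$ meeting in $v_0$, and both use that any three-point metric space embeds isometrically into $\mathbb{R}^2$. The only difference is the immaterial swap of the labels $V_1$ and $V_2$.
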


\begin{proof}
Let $v_0 \in V$ be the vertex such that $\{ v_0 ,v_1 \}\in E$. 
Let $V_1 =V\setminus\{ v_1 \}$, and let $V_2 =\{ v_0 ,v_1 \}$. 
Then 
$V_1 \cup V_2 =V$, $V_1 \cap V_2 =\{ v_0 \}$, and there are no edges $\{ u,v\}\in E$ with 
$u\in V_1 \setminus\{ v_0 \}$ and $v\in V_2 \setminus\{ v_0 \}$. 
Furthermore, 
$\max\{ |V_1 |, |V_2 |\} =3$, and every metric space containing at most three points admits an isometric embedding into $\mathbb{R}^2$. 
Therefore, it follows from Proposition \ref{a-point-sharing-prop} that every metric space satisfies the $G(0)$ condition. 
\end{proof}

Recall that there are eleven simple undirected graphs on four vertices up to graph isomorphism, 
which are listed in \textsc{Figure} \ref{fig:4vertices-graphs}. 
We call them $G^{(4)}_1 ,\ldots , G^{(4)}_{11}$, respectively as in \textsc{Figure} \ref{fig:4vertices-graphs}.

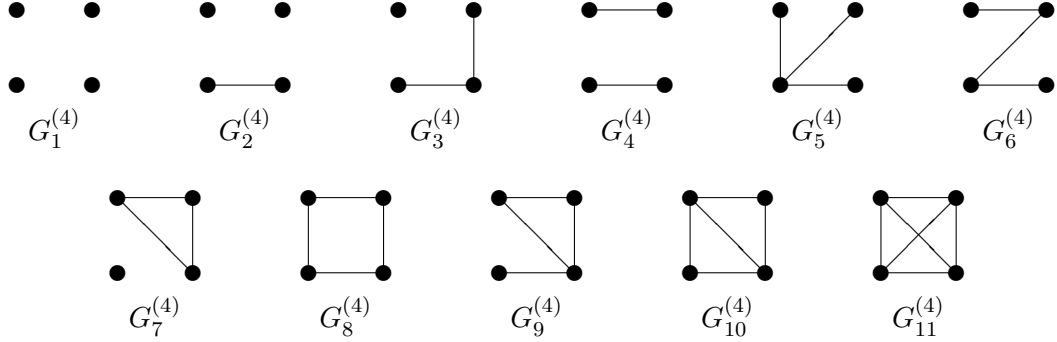
\begin{figure}[H]
\setlength{\unitlength}{1mm}
\begin{minipage}{0.15\hsize}
\centering
\begin{picture}(12,12)
\put(1,1){\circle*{2}}
\put(11,1){\circle*{2}}
\put(11,11){\circle*{2}}
\put(1,11){\circle*{2}}
\end{picture}
{\subcaption*{$G^{(4)}_1$}}
\end{minipage}
\begin{minipage}{0.15\hsize}
\centering
\begin{picture}(12,12)
\put(1,1){\circle*{2}}
\put(11,1){\circle*{2}}
\put(11,11){\circle*{2}}
\put(1,11){\circle*{2}}
\put(1,1){\line(1,0){10}}
\end{picture}
{\subcaption*{$G^{(4)}_2$}}
\end{minipage}
\begin{minipage}{0.15\hsize}
\centering
\begin{picture}(12,12)
\put(1,1){\circle*{2}}
\put(11,1){\circle*{2}}
\put(11,11){\circle*{2}}
\put(1,11){\circle*{2}}
\put(1,1){\line(1,0){10}}
\put(11,1){\line(0,1){10}}
\end{picture}
{\subcaption*{$G^{(4)}_3$}}
\end{minipage}
\begin{minipage}{0.15\hsize}
\centering
\begin{picture}(12,12)
\put(1,1){\circle*{2}}
\put(11,1){\circle*{2}}
\put(11,11){\circle*{2}}
\put(1,11){\circle*{2}}
\put(1,1){\line(1,0){10}}
\put(1,11){\line(1,0){10}}
\end{picture}
{\subcaption*{$G^{(4)}_4$}}
\end{minipage}
\begin{minipage}{0.15\hsize}
\centering
\begin{picture}(12,12)
\put(1,1){\circle*{2}}
\put(11,1){\circle*{2}}
\put(11,11){\circle*{2}}
\put(1,11){\circle*{2}}
\put(1,1){\line(1,0){10}}
\put(1,1){\line(1,1){10}}
\put(1,1){\line(0,1){10}}
\end{picture}
{\subcaption*{$G^{(4)}_5$}}
\end{minipage}
\begin{minipage}{0.15\hsize}
\centering
\begin{picture}(12,12)
\put(1,1){\circle*{2}}
\put(11,1){\circle*{2}}
\put(11,11){\circle*{2}}
\put(1,11){\circle*{2}}
\put(1,1){\line(1,0){10}}
\put(1,1){\line(1,1){10}}
\put(1,11){\line(1,0){10}}
\end{picture}
{\subcaption*{$G^{(4)}_6$}}
\end{minipage}
\vspace{5mm}\\
\begin{minipage}{0.15\hsize}
\centering
\begin{picture}(12,12)
\put(1,1){\circle*{2}}
\put(11,1){\circle*{2}}
\put(11,11){\circle*{2}}
\put(1,11){\circle*{2}}
\put(11,11){\line(0,-1){10}}
\put(11,11){\line(-1,0){10}}
\put(1,11){\line(1,-1){10}}
\end{picture}
{\subcaption*{$G^{(4)}_7$}}
\end{minipage}
\begin{minipage}{0.15\hsize}
\centering
\begin{picture}(12,12)
\put(1,1){\circle*{2}}
\put(11,1){\circle*{2}}
\put(11,11){\circle*{2}}
\put(1,11){\circle*{2}}
\put(1,1){\line(1,0){10}}
\put(1,1){\line(0,1){10}}
\put(1,11){\line(1,0){10}}
\put(11,1){\line(0,1){10}}
\end{picture}
{\subcaption*{$G^{(4)}_8$}}
\end{minipage}
\begin{minipage}{0.15\hsize}
\centering
\begin{picture}(12,12)
\put(1,1){\circle*{2}}
\put(11,1){\circle*{2}}
\put(11,11){\circle*{2}}
\put(1,11){\circle*{2}}
\put(11,11){\line(0,-1){10}}
\put(11,11){\line(-1,0){10}}
\put(1,11){\line(1,-1){10}}
\put(1,1){\line(1,0){10}}
\end{picture}
{\subcaption*{$G^{(4)}_9$}}
\end{minipage}
\begin{minipage}{0.15\hsize}
\centering
\begin{picture}(12,12)
\put(1,1){\circle*{2}}
\put(11,1){\circle*{2}}
\put(11,11){\circle*{2}}
\put(1,11){\circle*{2}}
\put(11,11){\line(0,-1){10}}
\put(11,11){\line(-1,0){10}}
\put(1,11){\line(1,-1){10}}
\put(1,1){\line(1,0){10}}
\put(1,1){\line(0,1){10}}
\end{picture}
{\subcaption*{$G^{(4)}_{10}$}}
\end{minipage}
\begin{minipage}{0.15\hsize}
\centering
\begin{picture}(12,12)
\put(1,1){\circle*{2}}
\put(11,1){\circle*{2}}
\put(11,11){\circle*{2}}
\put(1,11){\circle*{2}}
\put(11,11){\line(0,-1){10}}
\put(11,11){\line(-1,0){10}}
\put(1,11){\line(1,-1){10}}
\put(1,1){\line(1,0){10}}
\put(1,1){\line(0,1){10}}
\put(1,1){\line(1,1){10}}
\end{picture}
{\subcaption*{$G^{(4)}_{11}$}}
\end{minipage}
\caption{The graphs on four vertices.}\label{fig:4vertices-graphs}
\end{figure}
All graphs listed in \textsc{Figure} \ref{fig:4vertices-graphs} except the cycle graph $G^{(4)}_{8}$ 
satisfy the hypothesis of Proposition 
\ref{semicomplete-prop}, Corollary \ref{4-disconnected-corollary} or Corollary \ref{4-deg1-corollary}. 
Thus every metric space satisfies the $G(0)$ conditions 
for all graphs $G$ with four vertices that is not isomorphic to the cycle graph. 
The following proposition follows from this observation and Lemma \ref{key-lemma}. 

\begin{proposition}\label{4-noncycl-prop}
Let $V=\{ 1,2,3,4\}$, and let $E=\binom{V}{2}$. 
Suppose $A=(a_{ij})_{\{ i,j\}\in E}$ is a family of real numbers indexed by $E$ such that 
every $\mathrm{CAT}(0)$ space satisfies the $(a_{ij})$-quadratic metric inequality. 
Define $E_+ (A)\subseteq E$ to be the set of all $\{ i,j\}\in E$ with $a_{ij}>0$. 
If the graph $G_A =\left( V, E_+ (A)\right)$ is not isomorphic to the cycle graph, 
then every metric space satisfies the $(a_{ij})$-quadratic metric inequality. 
\end{proposition}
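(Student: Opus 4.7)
The plan is to combine Lemma \ref{key-lemma} with the case analysis of graphs on four vertices that has been carried out in the paragraph immediately preceding the proposition. Since $G_A$ has four vertices by construction and is assumed not isomorphic to the cycle graph $G^{(4)}_8$, it must be isomorphic to one of the ten remaining graphs $G^{(4)}_1,\ldots,G^{(4)}_7, G^{(4)}_9, G^{(4)}_{10}, G^{(4)}_{11}$ listed in Figure \ref{fig:4vertices-graphs}. The goal is to show every metric space satisfies the $G_A(0)$-property; once this is known, Lemma \ref{key-lemma} delivers the $(a_{ij})$-quadratic metric inequality immediately, because the hypothesis already guarantees that every $\mathrm{CAT}(0)$ space satisfies it.

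Next I would verify by direct inspection that each of the ten admissible graphs falls under the hypothesis of one of the three previously established results. The graphs $G^{(4)}_1, G^{(4)}_2, G^{(4)}_3, G^{(4)}_4$, and $G^{(4)}_7$ are disconnected, so Corollary \ref{4-disconnected-corollary} shows that every metric space satisfies the corresponding $G(0)$-property. The star $G^{(4)}_5$, the path $G^{(4)}_6$, and the triangle-with-a-pendant $G^{(4)}_9$ each possess a vertex of degree one, so Corollary \ref{4-deg1-corollary} applies. Finally, $G^{(4)}_{10}$ (which is $K_4$ minus an edge) and $G^{(4)}_{11}=K_4$ each contain a vertex $v_0$ whose removal leaves a triangle on the remaining three vertices, so Proposition \ref{semicomplete-prop} applies. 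In every one of the ten cases, every metric space satisfies the $G_A(0)$-property.

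I do not expect any real obstacle; the argument is pure bookkeeping, and the substantive content has already been handled by Proposition \ref{semicomplete-prop}, Corollary \ref{4-disconnected-corollary}, and Corollary \ref{4-deg1-corollary}. The only step requiring any attention is making sure the enumeration of non-cycle graphs on four vertices is exhaustive and that each such graph does indeed satisfy the hypothesis of one of the three cited results; this was essentially asserted in the paragraph before the proposition, and the role of the present proof is simply to package that observation together with Lemma \ref{key-lemma}.
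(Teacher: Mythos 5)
Your proposal is correct and follows the paper's own argument: the paper's proof likewise observes that every non-cycle four-vertex graph falls under Proposition \ref{semicomplete-prop}, Corollary \ref{4-disconnected-corollary} or Corollary \ref{4-deg1-corollary}, so every metric space satisfies the $G_A(0)$-property, and then concludes via Lemma \ref{key-lemma}. Your explicit case-by-case assignment of the ten graphs is accurate and merely spells out what the paper asserts in the paragraph preceding the proposition.
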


\begin{proof}
If $G_A$ is not isomorphic to the cycle graph $G^{(4)}_{8}$, then every metric space 
satisfies the $G_A (0)$ condition as we observed above. 
Therefore, it follows from Lemma \ref{key-lemma} that every metric space 
satisfies the $(a_{ij})$-quadratic metric inequality. 
\end{proof}

It follows from the above observation and 
Proposition \ref{G(0)-prop} that 
a four-point metric space admits an isometric embedding into a $\mathrm{CAT}(0)$ space 
if and only if it satisfies the $G^{(4)}_8 (0)$ condition. 
This implies in particular that not every metric space satisfies the $G^{(4)}_8 (0)$ condition because 
not every four-point metric space admits an isometric embedding into a $\mathrm{CAT}(0)$ space 
as we observed in Example \ref{four-point-counter-ex}. 
The following proposition is an immediate consequence of Theorem \ref{Cycl-th}. 

\begin{proposition}\label{4-8-prop}
If a metric space $X$ satisfies the $\boxtimes$-inequalities, 
then $X$ satisfies the $G^{(4)}_8 (0)$ condition. 
\end{proposition}

\begin{proof}
If a metric space $X$ satisfies the $\boxtimes$-inequalities, 
then $X$ satisfies the $\mathrm{Cycl}_4 (0)$ condition by Theorem \ref{Cycl-th}, 
which clearly implies that $X$ satisfies the $G^{(4)}_8 (0)$ condition. 
\end{proof}

The facts that we have proved so far give another proof of Theorem \ref{four-point-th}. 

\begin{proof}[Proof of Theorem \ref{four-point-th}]
Assume that a four-point metric space $X$ admits an isometric embedding into a $\mathrm{CAT}(0)$ space. 
Then $X$ satisfies the $\boxtimes$-inequalities because every $\mathrm{CAT}(0)$ space satisfies the $\boxtimes$-inequalities. 
Conversely, assume that a four-point metric space $X$ satisfies the $\boxtimes$-inequalities. 
Then it follows from Proposition \ref{semicomplete-prop}, Corollary \ref{4-disconnected-corollary}, Corollary \ref{4-deg1-corollary} and 
Proposition \ref{4-8-prop} that $X$ satisfies the $G(0)$ conditions for all graphs $G$ with four vertices, which implies that 
$X$ admits an isometric embedding into a $\mathrm{CAT}(0)$ space by Proposition \ref{G(0)-prop}. 
\end{proof}

The following facts are worth noting 
although they are not necessary for our purposes. 

\begin{proposition}\label{tree-prop}
Every metric space satisfies the $G(0)$ condition for 
every tree $G$. 
\end{proposition}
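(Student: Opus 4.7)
The plan is to realize $G$ geometrically as a metric tree, which will automatically be CAT(0), and then observe that distances in such a tree between vertex images equal the sum of edge lengths along the unique combinatorial path in $G$.

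Given a tree $G = (V, E)$, a metric space $(X, d_X)$, and a map $f : V \to X$, I would construct $Y$ as follows. For each edge $e = \{u,v\} \in E$, take a closed interval $I_e \subseteq \mathbb{R}$ of length $d_X(f(u), f(v))$ with endpoints labelled by $u$ and $v$. Form the disjoint union $\coprod_{e \in E} I_e$, and let $Y$ be its quotient by the equivalence relation that, for each $w \in V$, identifies all endpoints labelled $w$. Equip $Y$ with the quotient metric, and let $g : V \to Y$ send each vertex to its corresponding point in $Y$.

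To show $Y$ is CAT(0), I would induct on $|E|$. Since $G$ is a tree, its edges can be ordered $e_1, \ldots, e_m$ so that for each $i \geq 2$, $e_i$ shares exactly one vertex with the subtree spanned by $e_1, \ldots, e_{i-1}$. At step $i$, the previously constructed space $Y_{i-1}$ is a complete locally compact CAT(0) space (a finite metric tree), and $Y_i$ is obtained by gluing the interval $I_{e_i}$ to $Y_{i-1}$ along a single common point, which is a convex subset of both factors. Reshetnyak's gluing theorem, as quoted in Section \ref{CAT(0)-sec}, then yields that $Y = Y_m$ is CAT(0).

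For the required inequalities, given $u,v \in V$, let $u = w_0, w_1, \ldots, w_k = v$ be the unique path in $G$ from $u$ to $v$. By the tree structure of $Y$, the concatenation of the intervals $I_{\{w_{i-1}, w_i\}}$ forms a geodesic segment in $Y$ from $g(u)$ to $g(v)$; indeed, removing any edge along this combinatorial path disconnects $Y$, so any path from $g(u)$ to $g(v)$ must cross each such edge. Hence $d_Y(g(u), g(v)) = \sum_{i=1}^{k} d_X(f(w_{i-1}), f(w_i))$. If $\{u, v\} \in E$ then $k = 1$ and this equals $d_X(f(u), f(v))$, verifying the edge condition (with equality). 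If $\{u, v\} \notin E$ then $k \geq 2$ and the triangle inequality in $X$ gives $d_Y(g(u), g(v)) \geq d_X(f(u), f(v))$, verifying the non-edge condition. There is no serious obstacle; the only mildly technical point is confirming that the concatenated combinatorial path is genuinely a geodesic in $Y$, which is a routine consequence of the tree structure of $Y$.
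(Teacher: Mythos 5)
Your proposal is correct and follows essentially the same route as the paper: build the metric tree with edge lengths $d_X(f(u),f(v))$, note it is $\mathrm{CAT}(0)$, and use the triangle inequality in $X$ for non-adjacent pairs. The paper states this tersely; your construction via gluing intervals and the inductive Reshetnyak argument just supplies the routine details the paper leaves implicit.
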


\begin{proof}
Let $(X,d_X )$ be a metric space and let $G=(V,E)$ be a tree. 
For any $f: V\to X$, define $Y$ to be the metric tree obtained by 
assigning the length $d_X (f(u),f(v))$ to each edge $\{ u,v\}\in E$ of $G$. 
Then $Y$ becomes a $\mathrm{CAT}(0)$ space, and the triangle inequality for $d_X$ ensures that 
the natural inclusion $g:V\to Y$ satisfies that 
\begin{equation*}
\begin{cases}
d_Y (g(u),g(v))=d_X (f(u),f(v)),\quad\textrm{if  }\{ u,v\}\in E, \\
d_Y (g(u),g(v))\geq d_X (f(u),f(v)),\quad\textrm{if  }\{ u,v\}\not\in E
\end{cases}
\end{equation*}
for any $u,v\in V$. 
Thus $X$ satisfies the $G(0)$ condition. 
\end{proof}

The following corollary follows immediately from Proposition \ref{tree-prop} and Lemma \ref{key-lemma}. 

\begin{corollary}
Let $n$ be a positive integer, 
let $V=\lbrack n\rbrack$, and let $E=\binom{V}{2}$. 
Suppose $A=(a_{ij})_{\{ i,j\}\in E}$ is a family of real numbers indexed by $E$. 
Let $E_+ (A)$ be the set of all $\{ i,j\}\in E$ with $a_{ij}>0$. 
If every $\mathrm{CAT}(0)$ space satisfies the $(a_{ij})$-quadratic metric inequality, 
and if the graph $G_{A}=(V,E_+ (A))$ is isomorphic to a tree, 
then every metric space satisfies the $(a_{ij})$-quadratic metric inequality. 
\end{corollary}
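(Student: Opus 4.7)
The plan is to note that this corollary is an immediate consequence of two results already established in the paper: Proposition \ref{tree-prop} and Lemma \ref{key-lemma}. There is no substantive new content to prove; the task is simply to chain the two statements correctly.

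First I would invoke Proposition \ref{tree-prop} with $G = G_A$. Since $G_A$ is assumed to be isomorphic to a tree, that proposition guarantees that every metric space $(X, d_X)$ satisfies the $G_A(0)$-property. Next, I would invoke Lemma \ref{key-lemma}: the hypothesis that every $\mathrm{CAT}(0)$ space satisfies the $(a_{ij})$-quadratic metric inequality, combined with the $G_A(0)$-property of $X$ just established, yields that $X$ satisfies the $(a_{ij})$-quadratic metric inequality. Since $X$ was an arbitrary metric space, this completes the proof.

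The only point requiring any minor verification is that the graph $G_A$ in the statement of the corollary is literally the graph associated to the quadratic metric inequality in the sense of Lemma \ref{key-lemma}, which it is by definition; the vertex set $V = [n]$ and edge set $E_+(A)$ coincide in both places. There is no genuine obstacle in this argument, and no extra case analysis is needed: Proposition \ref{tree-prop} makes no assumptions on $X$ beyond being a metric space, and Lemma \ref{key-lemma} is exactly the bridge from a graph property to a quadratic inequality. Consequently, the proof can be presented in just a few lines, essentially as a two-step application of previously proved results.
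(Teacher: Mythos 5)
Your proposal is correct and matches the paper exactly: the paper states this corollary as an immediate consequence of Proposition \ref{tree-prop} (every metric space satisfies the $G(0)$-property for every tree $G$) followed by Lemma \ref{key-lemma}, which is precisely the two-step chain you describe. No gaps.
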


\section{Five points in a $\mathrm{CAT}(0)$ space}\label{five-point-sec}

In this section, we prove that if a metric space $X$ satisfies the $\boxtimes$-inequalities, 
then $X$ satisfies the $G(0)$ conditions for all graphs $G$ with five vertices except two special graphs. 
We start with the following two propositions.

\begin{proposition}\label{5-disconnected-prop}
If a metric space $X$ satisfies the $\boxtimes$-inequalities, 
then $X$ satisfies the $G(0)$ condition for every disconnected graph $G$ with five vertices. 
\end{proposition}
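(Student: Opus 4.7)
The plan is to reduce the five-vertex disconnected case to the four-vertex case already established, via the graph-sum construction of Proposition \ref{graph-sum-prop}.

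First, since $G=(V,E)$ is disconnected on five vertices, every connected component has at most four vertices. Hence we may partition $V=V_1\sqcup V_2$ so that no edge of $E$ joins a vertex of $V_1$ to a vertex of $V_2$, and such that $|V_i|\leq 4$ for each $i\in\{1,2\}$ (one may take $V_1$ to be the vertex set of one connected component and $V_2$ the union of the remaining components; the possible sizes are $(1,4)$ or $(2,3)$). Writing $G_i$ for the induced subgraph on $V_i$, the graph $G$ is then the graph sum of $G_1$ and $G_2$ in the sense of Proposition \ref{graph-sum-prop}.

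Next, I would invoke what has already been proved in Section \ref{four-point-sec}: if $X$ satisfies the $\boxtimes$-inequalities, then $X$ satisfies the $G'(0)$-property for \emph{every} graph $G'$ on at most four vertices. Indeed, for $|V(G')|\leq 3$ this is trivial because every metric space of at most three points embeds isometrically into $\mathbb{R}^2$, and for $|V(G')|=4$ it is the content of Proposition \ref{semicomplete-prop}, Corollary \ref{4-disconnected-corollary}, Corollary \ref{4-deg1-corollary}, and Proposition \ref{4-8-prop} taken together (these cover all eleven isomorphism types in \textsc{Figure} \ref{fig:4vertices-graphs}). In particular, $X$ satisfies the $G_1(0)$- and $G_2(0)$-properties.

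Finally, applying Proposition \ref{graph-sum-prop} to the decomposition $G=G_1+G_2$ yields that $X$ satisfies the $G(0)$-property, completing the proof. There is no real obstacle here; the argument is a direct assembly of earlier results, and the only thing to check is the existence of the partition $V=V_1\sqcup V_2$ with $|V_i|\leq 4$, which is immediate from disconnectedness and the fact that $|V|=5$.
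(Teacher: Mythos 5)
Your proposal is correct and follows essentially the same route as the paper: decompose the disconnected graph as a graph sum $G_1 + G_2$ with $|V_i|\le 4$, verify the $G_1(0)$- and $G_2(0)$-properties, and conclude via Proposition \ref{graph-sum-prop}. The only cosmetic difference is that the paper establishes the $G_i(0)$-properties by citing Theorem \ref{four-point-th} (every $\le 4$-point subset of $X$ embeds isometrically into a $\mathrm{CAT}(0)$ space), whereas you reassemble the same conclusion from the individual four-vertex results of Section \ref{four-point-sec}, which are precisely the ingredients of that theorem's proof.
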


\begin{proof}
Let $X$ be a metric space that satisfies the $\boxtimes$-inequalities, and 
let $G$ be a disconnected graph with five vertices. 
Then there exist graphs $G_1$ and $G_2$ such that 
$G$ is the graph sum of $G_1$ and $G_2$, and the number of vertices of $G_i$ is at most four for each $i\in\{ 1,2\}$. 
Because every subset $S\subseteq X$ with $|S|\leq 4$ admits an isometric embedding into a $\mathrm{CAT}(0)$ space by Theorem \ref{four-point-th}, 
$X$ satisfies the $G_1 (0)$ and $G_2 (0)$ conditions clearly. 
Thus it follows from Proposition \ref{graph-sum-prop} that $X$ satisfies the $G(0)$ condition. 
\end{proof}

\begin{proposition}\label{5-deg1-prop}
Let $X$ be a metric space that satisfies the $\boxtimes$-inequalities. 
Suppose $G=(V,E)$ is a graph such that $|V| =5$, and 
there exists a vertex $v_1 \in V$ with $\mathrm{deg}(v_1 )=1$. 
Then $X$ satisfies the $G(0)$ condition. 
\end{proposition}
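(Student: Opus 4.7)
The plan is to apply Proposition \ref{a-point-sharing-prop} in exactly the same way that Corollary \ref{4-deg1-corollary} did, but now using the newly available Theorem \ref{four-point-th} to supply the required four-point embeddability. The key structural observation is that a degree-one vertex gives a very clean decomposition of $V$ into two overlapping pieces sharing only the neighbor $v_0$ of $v_1$.

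Concretely, I would let $v_0 \in V$ be the unique vertex with $\{v_0, v_1\} \in E$, and set
\begin{equation*}
V_1 = V \setminus \{v_1\}, \qquad V_2 = \{v_0, v_1\}.
\end{equation*}
Then $V_1 \cup V_2 = V$ and $V_1 \cap V_2 = \{v_0\}$. Moreover, since $v_1$ has degree one and its unique edge goes to $v_0 \in V_1 \cap V_2$, there are no edges $\{u,v\} \in E$ with $u \in V_1 \setminus \{v_0\}$ and $v \in V_2 \setminus \{v_0\} = \{v_1\}$. This verifies the combinatorial hypothesis of Proposition \ref{a-point-sharing-prop}.

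Next I would verify the metric hypothesis. We have $\max\{|V_1|, |V_2|\} = \max\{4, 2\} = 4$. Since $X$ satisfies the $\boxtimes$-inequalities, every subset $S \subseteq X$ with $|S| \leq 4$ also satisfies the $\boxtimes$-inequalities (as the $\boxtimes$-inequalities involve at most four points). By Theorem \ref{four-point-th}, every such $S$ admits an isometric embedding into a $\mathrm{CAT}(0)$ space. This is precisely the hypothesis of Proposition \ref{a-point-sharing-prop} required for our choice of $V_1, V_2, v_0$.

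Applying Proposition \ref{a-point-sharing-prop} then yields that $X$ satisfies the $G(0)$-property, which completes the proof. There is essentially no obstacle here: the statement is a direct five-vertex analogue of Corollary \ref{4-deg1-corollary}, and the only new ingredient is Theorem \ref{four-point-th}, which upgrades the trivial three-point embeddability used in the four-vertex case to four-point embeddability under the $\boxtimes$-hypothesis.
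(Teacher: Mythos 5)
Your proposal is correct and follows exactly the same route as the paper's proof: the same decomposition $V_1 = V\setminus\{v_1\}$, $V_2=\{v_0,v_1\}$, the same appeal to Theorem \ref{four-point-th} to get four-point embeddability from the $\boxtimes$-inequalities, and the same application of Proposition \ref{a-point-sharing-prop}. No issues.
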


\begin{proof}
Let $v_0 \in V$ be the vertex with $\{ v_0 ,v_1 \}\in E$, let 
$V_1 =V\setminus\{ v_1 \}$, and let $V_2 =\{ v_0 ,v_1 \}$. 
Then 
$V_1 \cup V_2 =V$, $V_1 \cap V_2 =\{ v_0 \}$, and there are no edges $\{ u,v\}\in E$ with 
$u\in V_1 \setminus\{ v_0 \}$ and $v\in V_2 \setminus\{ v_0 \}$. 
Because $X$ satisfies the $\boxtimes$-inequalities, 
every subset $S\subseteq X$ with $|S|\leq 4$ admits an isometric embedding into a $\mathrm{CAT}(0)$ space by Theorem \ref{four-point-th}. 
Thus it follows from Proposition \ref{a-point-sharing-prop} that $X$ satisfies the $G(0)$ condition. 
\end{proof}

It follows from Proposition \ref{5-disconnected-prop} and Proposition \ref{5-deg1-prop} that 
if a five-vertex graph $G$ has a vertex $v$ with $\mathrm{deg}(v)\leq 1$, 
then a metric space $X$ satisfies the $G(0)$ condition whenever $X$ satisfies the 
$\boxtimes$-inequalities. 
Up to graph isomorphism, there are eleven five-vertex graphs $G$ 
such that every vertex $v$ of $G$ satisfies $\mathrm{deg}(v)\geq 2$, 
which are listed in \textsc{Figure} \ref{fig:5vertices-graphs}. 
As in \textsc{Figure} \ref{fig:5vertices-graphs}, 
we call these graphs $G^{(5)}_1 ,\ldots ,G^{(5)}_{11}$, respectively. 
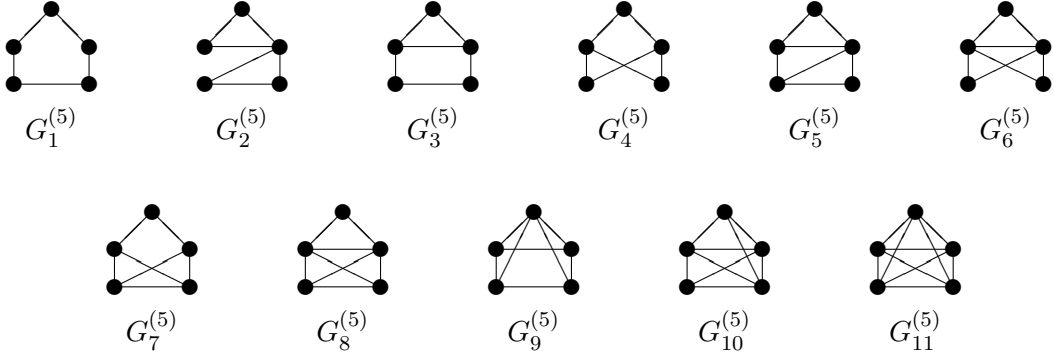
\begin{figure}[H]
\setlength{\unitlength}{1mm}
\begin{minipage}{0.15\hsize}
\centering
\begin{picture}(12,14)
\put(1,1){\circle*{2}}
\put(11,1){\circle*{2}}
\put(1,6){\circle*{2}}
\put(11,6){\circle*{2}}
\put(6,11){\circle*{2}}
\put(1,1){\line(1,0){10}}
\put(1,1){\line(0,1){5}}
\put(11,1){\line(0,1){5}}
\put(11,6){\line(-1,1){5}}
\put(6,11){\line(-1,-1){5}}
\end{picture}
\subcaption*{$G^{(5)}_1$}
\end{minipage}
\begin{minipage}{0.15\hsize}
\centering
\begin{picture}(12,14)
\put(1,1){\circle*{2}}
\put(11,1){\circle*{2}}
\put(1,6){\circle*{2}}
\put(11,6){\circle*{2}}
\put(6,11){\circle*{2}}
\put(1,1){\line(1,0){10}}
\put(1,1){\line(2,1){10}}
\put(11,1){\line(0,1){5}}
\put(11,6){\line(-1,1){5}}
\put(11,6){\line(-1,0){10}}
\put(6,11){\line(-1,-1){5}}
\end{picture}
\subcaption*{$G^{(5)}_2$}
\end{minipage}
\begin{minipage}{0.15\hsize}
\centering
\begin{picture}(12,14)
\put(1,1){\circle*{2}}
\put(11,1){\circle*{2}}
\put(1,6){\circle*{2}}
\put(11,6){\circle*{2}}
\put(6,11){\circle*{2}}
\put(1,1){\line(1,0){10}}
\put(1,1){\line(0,1){5}}
\put(11,1){\line(0,1){5}}
\put(11,6){\line(-1,1){5}}
\put(11,6){\line(-1,0){10}}
\put(6,11){\line(-1,-1){5}}
\end{picture}
\subcaption*{$G^{(5)}_3$}
\end{minipage}
\begin{minipage}{0.15\hsize}
\centering
\begin{picture}(12,14)
\put(1,1){\circle*{2}}
\put(11,1){\circle*{2}}
\put(1,6){\circle*{2}}
\put(11,6){\circle*{2}}
\put(6,11){\circle*{2}}
\put(1,1){\line(2,1){10}}
\put(1,1){\line(0,1){5}}
\put(11,1){\line(-2,1){10}}
\put(11,1){\line(0,1){5}}
\put(11,6){\line(-1,1){5}}
\put(6,11){\line(-1,-1){5}}
\end{picture}
\subcaption*{$G^{(5)}_4$}
\end{minipage}
\begin{minipage}{0.15\hsize}
\centering
\begin{picture}(12,14)
\put(1,1){\circle*{2}}
\put(11,1){\circle*{2}}
\put(1,6){\circle*{2}}
\put(11,6){\circle*{2}}
\put(6,11){\circle*{2}}
\put(1,1){\line(1,0){10}}
\put(1,1){\line(2,1){10}}
\put(1,1){\line(0,1){5}}
\put(11,1){\line(0,1){5}}
\put(11,6){\line(-1,1){5}}
\put(11,6){\line(-1,0){10}}
\put(6,11){\line(-1,-1){5}}
\end{picture}
\subcaption*{$G^{(5)}_5$}
\end{minipage}
\begin{minipage}{0.15\hsize}
\centering
\begin{picture}(12,14)
\put(1,1){\circle*{2}}
\put(11,1){\circle*{2}}
\put(1,6){\circle*{2}}
\put(11,6){\circle*{2}}
\put(6,11){\circle*{2}}
\put(1,1){\line(2,1){10}}
\put(1,1){\line(0,1){5}}
\put(11,1){\line(-2,1){10}}
\put(11,1){\line(0,1){5}}
\put(11,6){\line(-1,1){5}}
\put(11,6){\line(-1,0){10}}
\put(6,11){\line(-1,-1){5}}
\end{picture}
\subcaption*{$G^{(5)}_6$}
\end{minipage}
\vspace{5mm}\\
\begin{minipage}{0.15\hsize}
\centering
\begin{picture}(12,14)
\put(1,1){\circle*{2}}
\put(11,1){\circle*{2}}
\put(1,6){\circle*{2}}
\put(11,6){\circle*{2}}
\put(6,11){\circle*{2}}
\put(1,1){\line(1,0){10}}
\put(1,1){\line(2,1){10}}
\put(1,1){\line(0,1){5}}
\put(11,1){\line(-2,1){10}}
\put(11,1){\line(0,1){5}}
\put(11,6){\line(-1,1){5}}
\put(6,11){\line(-1,-1){5}}
\end{picture}
\subcaption*{$G^{(5)}_7$}
\end{minipage}
\begin{minipage}{0.15\hsize}
\centering
\begin{picture}(12,14)
\put(1,1){\circle*{2}}
\put(11,1){\circle*{2}}
\put(1,6){\circle*{2}}
\put(11,6){\circle*{2}}
\put(6,11){\circle*{2}}
\put(1,1){\line(1,0){10}}
\put(1,1){\line(2,1){10}}
\put(1,1){\line(0,1){5}}
\put(11,1){\line(-2,1){10}}
\put(11,1){\line(0,1){5}}
\put(11,6){\line(-1,1){5}}
\put(11,6){\line(-1,0){10}}
\put(6,11){\line(-1,-1){5}}
\end{picture}
\subcaption*{$G^{(5)}_8$}
\end{minipage}
\begin{minipage}{0.15\hsize}
\centering
\begin{picture}(12,14)
\put(1,1){\circle*{2}}
\put(11,1){\circle*{2}}
\put(1,6){\circle*{2}}
\put(11,6){\circle*{2}}
\put(6,11){\circle*{2}}
\put(1,1){\line(1,0){10}}
\put(1,1){\line(0,1){5}}
\put(1,1){\line(1,2){5}}
\put(11,1){\line(-1,2){5}}
\put(11,1){\line(0,1){5}}
\put(11,6){\line(-1,1){5}}
\put(11,6){\line(-1,0){10}}
\put(6,11){\line(-1,-1){5}}
\end{picture}
\subcaption*{$G^{(5)}_9$}
\end{minipage}
\begin{minipage}{0.15\hsize}
\centering
\begin{picture}(12,14)
\put(1,1){\circle*{2}}
\put(11,1){\circle*{2}}
\put(1,6){\circle*{2}}
\put(11,6){\circle*{2}}
\put(6,11){\circle*{2}}
\put(1,1){\line(1,0){10}}
\put(1,1){\line(2,1){10}}
\put(1,1){\line(0,1){5}}
\put(11,1){\line(-2,1){10}}
\put(11,1){\line(-1,2){5}}
\put(11,1){\line(0,1){5}}
\put(11,6){\line(-1,1){5}}
\put(11,6){\line(-1,0){10}}
\put(6,11){\line(-1,-1){5}}
\end{picture}
\subcaption*{$G^{(5)}_{10}$}
\end{minipage}
\begin{minipage}{0.15\hsize}
\centering
\begin{picture}(12,14)
\put(1,1){\circle*{2}}
\put(11,1){\circle*{2}}
\put(1,6){\circle*{2}}
\put(11,6){\circle*{2}}
\put(6,11){\circle*{2}}
\put(1,1){\line(1,0){10}}
\put(1,1){\line(2,1){10}}
\put(1,1){\line(0,1){5}}
\put(1,1){\line(1,2){5}}
\put(11,1){\line(-2,1){10}}
\put(11,1){\line(-1,2){5}}
\put(11,1){\line(0,1){5}}
\put(11,6){\line(-1,1){5}}
\put(11,6){\line(-1,0){10}}
\put(6,11){\line(-1,-1){5}}
\end{picture}
\subcaption*{$G^{(5)}_{11}$}
\end{minipage}
\caption{The five-vertex graphs each of whose vertex satisfies $\mathrm{deg}\geq 2$.}\label{fig:5vertices-graphs}
\end{figure}

\begin{proposition}\label{5-1-prop}
If a metric space $X$ satisfies the $\boxtimes$-inequalities, 
then $X$ satisfies the $G^{(5)}_1 (0)$ condition. 
\end{proposition}

\begin{proof}
If a metric space $X$ satisfies the $\boxtimes$-inequalities, then  
$X$ satisfies the $\mathrm{Cycl}_5 (0)$ condition by Theorem \ref{Cycl-th}, 
which clearly implies that $X$ satisfies the $G^{(5)}_1 (0)$ condition. 
\end{proof}

\begin{proposition}\label{5-2-prop}
Every metric space satisfies the $G^{(5)}_2 (0)$ condition. 
\end{proposition}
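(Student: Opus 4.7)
The plan is to apply Proposition \ref{a-point-sharing-prop}. First I would label the five vertices of $G^{(5)}_2$ by reading off the picture. Taking $v_1 = (1,1)$, $v_2=(11,1)$, $v_3=(1,6)$, $v_4=(11,6)$, $v_5=(6,11)$, the edge set is
\begin{equation*}
E = \bigl\{ \{v_1,v_2\},\ \{v_1,v_4\},\ \{v_2,v_4\},\ \{v_3,v_4\},\ \{v_3,v_5\},\ \{v_4,v_5\} \bigr\}.
\end{equation*}
Note that $v_4$ is adjacent to each of the other four vertices, while the non-edges are precisely $\{v_1,v_3\}$, $\{v_1,v_5\}$, $\{v_2,v_3\}$, $\{v_2,v_5\}$.

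Next I would choose the partition $V_1 = \{v_1, v_2, v_4\}$, $V_2 = \{v_3, v_4, v_5\}$ with distinguished vertex $v_0 = v_4$. Then $V_1 \cup V_2 = V$ and $V_1 \cap V_2 = \{v_4\}$, and the four pairs between $V_1 \setminus \{v_4\} = \{v_1, v_2\}$ and $V_2 \setminus \{v_4\} = \{v_3, v_5\}$ are exactly the non-edges listed above, so none of them lie in $E$.

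Because every metric space containing at most three points admits an isometric embedding into $\mathbb{R}^2$ (and hence into a $\mathrm{CAT}(0)$ space), and $\max\{|V_1|, |V_2|\} = 3$, the hypothesis of Proposition \ref{a-point-sharing-prop} is satisfied by every metric space $X$. Thus Proposition \ref{a-point-sharing-prop} directly yields the $G^{(5)}_2(0)$-property for every $X$. There is no real obstacle here; the only thing to check is that the non-edge structure of $G^{(5)}_2$ admits the required two-clique decomposition sharing a single vertex, which is evident once $v_4$ is singled out as the high-degree vertex.
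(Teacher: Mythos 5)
Your proof is correct and follows essentially the same route as the paper: the paper also observes that $G^{(5)}_2$ is two triangles sharing a single vertex (the degree-four vertex, called $v_5$ in the paper's labeling), takes $V_1$ and $V_2$ to be those two triangles, and applies Proposition \ref{a-point-sharing-prop} together with the fact that every three-point metric space embeds isometrically into $\mathbb{R}^2$. The only difference is your choice of vertex labels, which is immaterial.
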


\begin{figure}[htbp]
\setlength{\unitlength}{1mm}
\begin{minipage}{0.15\hsize}
\centering
\begin{picture}(12,14)
\put(-4.5,0){$v_3$}
\put(-4.5,5){$v_2$}
\put(7.5,12){$v_1$}
\put(13,0){$v_4$}
\put(13,5){$v_5$}
\put(1,1){\circle*{2}}
\put(11,1){\circle*{2}}
\put(1,6){\circle*{2}}
\put(11,6){\circle*{2}}
\put(6,11){\circle*{2}}
\put(1,1){\line(1,0){10}}
\put(1,1){\line(2,1){10}}
\put(11,1){\line(0,1){5}}
\put(11,6){\line(-1,1){5}}
\put(11,6){\line(-1,0){10}}
\put(6,11){\line(-1,-1){5}}
\end{picture}
\end{minipage}
\caption{}\label{fig:5-2-graph}
\end{figure}

\begin{proof}
Let $V$ and $E$ be the vertex set and the edge set of $G^{(5)}_2 (0)$, respectively. 
We set 
\begin{align*}
V&=\{ v_1 ,v_2 ,v_3 ,v_4,v_5\} ,\\
E&=\{ \{ v_1,v_2\},\{ v_2,v_5\},\{ v_5,v_1\}, \{ v_3,v_4\},\{ v_4,v_5\},\{ v_5,v_3\}\} ,
\end{align*}
as shown in \textsc{Figure} \ref{fig:5-2-graph}. 
Set 
\begin{equation*}
V_1 =\{ v_1 ,v_2 ,v_5 \} ,\quad V_2 =\{ v_3 ,v_4 ,v_5 \} .
\end{equation*} 
Then $V_1 \cup V_2 =V$, $V_1 \cap V_2 =\{ v_5 \}$, and there are no edges $\{ u,v\}\in E$ with 
$u\in V_1 \setminus\{ v_5 \}$ and $v\in V_2 \setminus\{ v_5 \}$. 
Because every metric space containing at most three points admits an isometric embedding into $\mathbb{R}^2$, 
it follows from Proposition \ref{a-point-sharing-prop} that every metric space satisfies the $G^{(5)}_2 (0)$ condition. 
\end{proof}

Before proving that the validity of the $\boxtimes$-inequalities 
implies the $G^{(5)}_3 (0)$ condition, we prove the following lemma.

\begin{lemma}\label{jabara-lemma}
Let $(X, d_X )$ be a metric space that satisfies the $\boxtimes$-inequalities, and let 
$(Y,d_Y )$ be a metric space. 
Suppose $p,x,y,z,w\in X$ and $p', x',y',z',w'\in Y$ are points such that 
\begin{align*}
d_X (p,x)&\leq d_Y (p',x'),\quad
d_X (x,y)\leq d_Y (x',y'),\quad
d_X (y,z)\leq d_Y (y',z'),\\
d_X (z,w)&\leq d_Y (z',w'),\quad
d_X (w,p)\leq d_Y (w',p'), \\
d_X (p,y)&=d_Y (p',y'),\quad
d_X (p,z)=d_Y (p',z').
\end{align*}
Assume that there exist subsets $T_1$, $T_2$ and $T_3$ of $Y$ that satisfy the following conditions: 
\begin{enumerate}
\item[$(1)$] 
$T_1$, $T_2$ and $T_3$ are isometric to convex subsets of Euclidean spaces. 
\item[$(2)$]
$\{ p',x',y'\}\subseteq T_1$, $\{ p',y',z'\}\subseteq T_2$ and $\{ p',z',w'\}\subseteq T_3$. 
\item[$(3)$]
There exists a geodesic segment $\Gamma_1$ in $Y$ with endpoints $p'$ and $y'$ such that 
\begin{equation*}
\Gamma_1 \subseteq T_1\cap T_2 .
\end{equation*}
\item[$(4)$]
There exists a geodesic segment 
$\Gamma_2$ in $Y$ with endpoints $p'$ and $z'$ such that 
\begin{equation*}
\Gamma_2 \subseteq T_2 \cap T_3 .
\end{equation*}
\item[$(5)$]
There exist $q_1 \in\Gamma_1$ and $q_2 \in\Gamma_2$ such that 
\begin{equation*}
d_Y (x',w')=d_Y (x',q_1 )+d_Y (q_1 ,q_2 )+d_Y (q_2 ,w').
\end{equation*}
\end{enumerate}
Then 
$d_X (x,w)\leq d_Y (x',w' )$. 
\end{lemma}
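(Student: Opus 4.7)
The plan is to reduce the statement to two applications of Lemma~\ref{combined-triangles-lemma} by introducing an auxiliary Euclidean space in which the three flat pieces $T_1$, $T_2$, $T_3$ are unfolded, so that the triangle inequality in that space can replace condition~$(5)$.

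First, I would fix a sufficiently large $N$ and construct comparison points
\begin{equation*}
\tilde{p},\ \tilde{x},\ \tilde{y},\ \tilde{z},\ \tilde{w},\ \tilde{q}_1,\ \tilde{q}_2 \in E = \mathbb{R}^{N}
\end{equation*}
with the property that whenever the $Y$-preimages of two comparison points both lie in a common $T_i$, the Euclidean distance between them equals the corresponding $d_Y$-distance. The construction is the evident unfolding: isometrically embed $T_1$ into $E$ producing $\tilde{p}, \tilde{x}, \tilde{y}, \tilde{q}_1$; then isometrically embed $T_2$ into $E$, using a rigid motion to align $\Gamma_1$ with the segment $[\tilde{p}, \tilde{y}]$, producing $\tilde{z}$ and $\tilde{q}_2$; and finally embed $T_3$ similarly, aligning $\Gamma_2$ with $[\tilde{p}, \tilde{z}]$, producing $\tilde{w}$. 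Condition~$(1)$ guarantees each isometric embedding exists, and $N$ may be taken large enough that the required alignments are achievable.

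Next, I would combine condition~$(5)$ with the triangle inequality in $E$ to obtain
\begin{equation*}
d_E(\tilde{x}, \tilde{w}) \leq d_E(\tilde{x}, \tilde{q}_1) + d_E(\tilde{q}_1, \tilde{q}_2) + d_E(\tilde{q}_2, \tilde{w}) = d_Y(x', q_1) + d_Y(q_1, q_2) + d_Y(q_2, w') = d_Y(x', w'),
\end{equation*}
reducing the goal to $d_X(x, w) \leq d_E(\tilde{x}, \tilde{w})$. I would establish this by applying Lemma~\ref{combined-triangles-lemma} twice, in both cases with $E$ as the auxiliary metric space and $S = T = E$, which are trivially isometric to convex Euclidean subsets. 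The first application, to the quadrilateral $(p, x, y, z)$ against $(\tilde{p}, \tilde{x}, \tilde{y}, \tilde{z})$ with $[\tilde{p}, \tilde{y}]$ as the shared diagonal and the hypothesis $d_Y(p', y') = d_X(p, y)$ supplying the diagonal equality in $E$, produces $d_X(x, z) \leq d_E(\tilde{x}, \tilde{z})$. Feeding this back into a second application, to $(p, x, z, w)$ against $(\tilde{p}, \tilde{x}, \tilde{z}, \tilde{w})$ with shared diagonal $[\tilde{p}, \tilde{z}]$ and diagonal equality provided by $d_Y(p', z') = d_X(p, z)$, yields $d_X(x, w) \leq d_E(\tilde{x}, \tilde{w})$, completing the argument.

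The main obstacle will be the unfolding step: one must verify that the three isometric embeddings can be simultaneously arranged inside a common Euclidean space so that their restrictions to $\Gamma_1$ and to $\Gamma_2$ agree, and in particular that $\tilde{q}_1$ and $\tilde{q}_2$ are unambiguously defined independent of which piece they are viewed through. Once this bookkeeping is in place, matching the hypotheses of Lemma~\ref{combined-triangles-lemma} in each application is routine since every relevant edge length is available by construction.
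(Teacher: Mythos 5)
Your reduction has a genuine gap at the two applications of Lemma~\ref{combined-triangles-lemma}. That lemma is not merely a statement about edge lengths and flatness: its condition $(4)$ (equivalently $(4')$ in the remark following it, which is the form the paper actually verifies every time it invokes the lemma) requires the two diagonals to cross, i.e.\ the geodesic joining the pair you want to bound from below must meet the shared diagonal geodesic. After you unfold $T_1$, $T_2$, $T_3$ flat into $E=\mathbb{R}^N$, there is no reason for the straight segment $[\tilde{x},\tilde{z}]$ to meet $[\tilde{p},\tilde{y}]$, nor for $[\tilde{x},\tilde{w}]$ to meet $[\tilde{p},\tilde{z}]$, so neither application is justified. (Reading condition $(4)$ literally as printed would make it vacuous and the lemma false, so you cannot lean on that reading.) Worse, your intermediate inequality $d_X(x,z)\leq d_E(\tilde{x},\tilde{z})$ is simply false in general: take $X=Y$ to be the glued complex itself with $T_1=\mathrm{conv}(\{(0,0),(1,0),(10,1)\})$ and $T_2=\mathrm{conv}(\{(0,0),(1,0),(10,-1)\})$ identified along $[(0,0),(1,0)]$, with $p'=(0,0)$, $y'=(1,0)$, $x'=(10,1)$, $z'=(10,-1)$; then $d_X(x,z)=2\sqrt{82}$, while the flat unfolding is exactly this planar picture and gives $d_E(\tilde{x},\tilde{z})=2$. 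Flattening can only decrease distances between points of different pieces, which is the wrong direction for your argument.

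The paper avoids this by gluing the three comparison triangles into a piecewise Euclidean $\mathrm{CAT}(0)$ complex $\tilde{Y}$ rather than unfolding them into a common Euclidean space. In $\tilde{Y}$ the crossing conditions for $[\tilde{x},\tilde{z}]$ and $[\tilde{y},\tilde{w}]$ are automatic, because any path between adjacent sheets must pass through the gluing segment; this yields $d_X(x,z)\leq d_{\tilde{Y}}(\tilde{x},\tilde{z})$ and $d_X(y,w)\leq d_{\tilde{Y}}(\tilde{y},\tilde{w})$. The real work, entirely absent from your outline, is then $d_X(x,w)\leq d_{\tilde{Y}}(\tilde{x},\tilde{w})$, where the two points lie two sheets apart: the paper splits into four cases according to whether $\tilde{T}_1\cup\tilde{T}_2$ or $\tilde{T}_2\cup\tilde{T}_3$ unfolds to a convex planar region (so Lemma~\ref{combined-triangles-lemma} applies across the remaining gluing segment), whether the total angle at $\tilde{p}$ is at least $\pi$ (so the geodesic passes through $\tilde{p}$ and one uses the triangle inequality via $p$), or whether the geodesic runs along $\tilde{y}$ and $\tilde{z}$ (and one uses the triangle inequality via $y$ and $z$), together with the degenerate subcases where some of the five points coincide. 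Only after that does condition $(5)$ enter, to compare $d_{\tilde{Y}}(\tilde{x},\tilde{w})$ with $d_Y(x',w')$; that final step is the one part of your proposal that survives.
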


\begin{proof}
Choose $p_1 ,x_1 ,y_1 ,p_2 ,y_2 ,z_2 ,p_3 ,z_3 ,w_3 \in\mathbb{R}^2$ such that 
\begin{align*}
&\| p_1 -x_1 \| =d_Y (p',x'),\quad \| x_1 -y_1 \| =d_Y (x',y'),\quad \| y_1 -p_1 \| =d_Y (y',p'), \\
&\| p_2 -y_2 \| =d_Y (p',y'),\quad \| y_2 -z_2 \|=d_Y (y',z'),\quad \| z_2 -p_2 \| =d_Y (z',p'), \\
&\| p_3 -z_3 \| =d_Y (p',z'),\quad \| z_3 -w_3 \|=d_Y (z',w'),\quad \| w_3 -p_3 \| =d_Y (w',p') .
\end{align*}
Equip the subsets 
\begin{equation*}
T'_1 =\mathrm{conv}(\{ p_1 ,x_1 ,y_1 \}) ,\quad
T'_2 =\mathrm{conv}(\{ p_2 ,y_2 ,z_2 \}) ,\quad
T'_3 =\mathrm{conv}(\{ p_3 ,z_3 ,w_3 \}) .
\end{equation*}
of $\mathbb{R}^2$ with the induced metrics, and regard them as disjoint metric spaces. 
Define $(Y', d_{Y'})$ to be the metric space obtained by gluing $T'_1$ and $T'_2$ by identifying 
$\lbrack p_1 ,y_1 \rbrack\subseteq T'_1$ with $\lbrack p_2 ,y_2 \rbrack\subseteq T'_2$. 
Then $Y'$ is a $\mathrm{CAT}(0)$ space by Reshetnyak's gluing theorem. 
We denote the points in $Y'$ represented by 
$p_1, x_1 ,y_1 \in T_1$ and $z_2 \in T_2$ 
by $p''$, $x''$, $y''$ and $z''$, respectively. 
Define $(\tilde{Y}, d_{\tilde{Y}})$ to be the metric space obtained by gluing $Y'$ and $T'_3$ by identifying 
$\lbrack p'' ,z'' \rbrack\subseteq Y'$ with $\lbrack p_3 ,z_3 \rbrack\subseteq T_3$. 
Then $\tilde{Y}$ is a $\mathrm{CAT}(0)$ space by Reshetnyak's gluing theorem, which is pictured in \textsc{Figure} \ref{jabara-fig}. 
\begin{figure}[htbp]
\centering\begin{tikzpicture}[scale=0.6]
\draw (0,2) -- (6,0);
\draw (6,0) -- (11.5,2.3);
\draw (11.5,2.3) -- (9,5);
\draw (9,5) -- (4,5);
\draw (4,5) -- (0,2);
\draw (6,0) -- (4,5);
\draw (6,0) -- (9,5);
\node [left] at (0,2) {$\tilde{w}$};
\node [below] at (6,0) {$\tilde{p}$};
\node [right] at (11.5,2.3) {$\tilde{x}$};
\node [above] at (9,5) {$\tilde{y}$};
\node [above] at (4,5) {$\tilde{z}$};
\node [above right] at (2.6,2) {$\tilde{T}_3$};
\node [above] at (6.5,2.5) {$\tilde{T}_2$};
\node [above right] at (8.5,2) {$\tilde{T}_1$};
\end{tikzpicture}
\caption{The metric space $\tilde{Y}$ in the proof of Lemma \ref{jabara-lemma}.}\label{jabara-fig}
\end{figure}
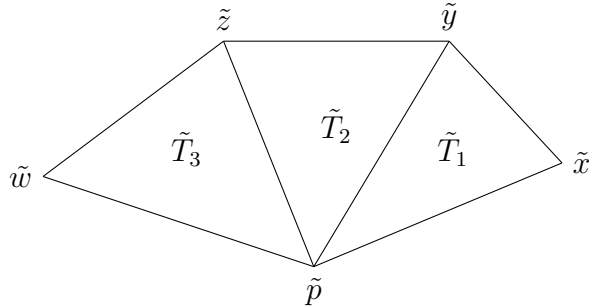
We denote the points in $\tilde{Y}$ represented by 
$p'', x'' ,y'' ,z'' \in Y'$ and $w_3 \in T_3$ 
by $\tilde{p}$, $\tilde{x}$, $\tilde{y}$, $\tilde{z}$ and $\tilde{w}$, respectively. 
For each $i\in\{ 1,2,3\}$, 
the natural inclusion of $T'_i$ into $\tilde{Y}$ is clearly an isometric embedding. 
Let $\tilde{T}_i \subseteq\tilde{Y}$ be the image of $T'_i$ under the natural inclusion for each $i\in\{ 1,2,3\}$. 
It is clear from the definition of $\tilde{Y}$ that $\tilde{T}_1 \cap\tilde{T}_2 =\lbrack\tilde{p},\tilde{y}\rbrack$, and 
$\lbrack\tilde{p},\tilde{y}\rbrack\cap\lbrack\tilde{x},\tilde{z}\rbrack\neq\emptyset$. 
Hence Lemma \ref{combined-triangles-lemma} implies that 
\begin{equation}\label{jabara-lemma-xz-ineq}
d_X (x,z)\leq d_{\tilde{Y}}(\tilde{x},\tilde{z})
\end{equation}
because it follows from the hypothesis of the lemma and the definition of $\tilde{Y}$ that 
\begin{align*}
&d_X (p,x)\leq d_Y (p',x')=d_{\tilde{Y}}(\tilde{p},\tilde{x}),\quad
d_X (x,y)\leq d_Y (x',y')=d_{\tilde{Y}}(\tilde{x},\tilde{y}),\\
&d_X (y,z)\leq d_Y (y',z')=d_{\tilde{Y}}(\tilde{y},\tilde{z}),\quad
d_X (z,p)=d_Y (z',p')=d_{\tilde{Y}}(\tilde{z},\tilde{p}),\\
&d_X (p,y)=d_Y (p',y')=d_{\tilde{Y}}(\tilde{p},\tilde{y}).
\end{align*}
Similarly, Lemma \ref{combined-triangles-lemma} also implies that 
\begin{equation}\label{jabara-lemma-yw-ineq}
d_X (y,w)\leq d_{\tilde{Y}}(\tilde{y},\tilde{w}). 
\end{equation}
Next, we will prove that 
\begin{equation}\label{jabara-lemma-xw-ineq}
d_X (x,w)\leq d_{\tilde{Y}}(\tilde{x},\tilde{w}).
\end{equation}
To prove this, we first observe that \eqref{jabara-lemma-xw-ineq} holds whenever one of the following equalities holds: 
\begin{equation}\label{jabara-lemma-zero-edge-eqs}
p=x,\quad
x=y,\quad
y=z,\quad
z=w,\quad
w=p,\quad
p=y,\quad
p=z .
\end{equation}
If $p=x$, then 
$\tilde{p}=\tilde{x}$ by definition of $\tilde{Y}$, so 
\begin{equation*}
d_X (x,w)
=
d_X (p,w)
\leq
d_Y (p',w' )
=
d_{\tilde{Y}}(\tilde{p},\tilde{w})
=
d_{\tilde{Y}}(\tilde{x},\tilde{w}) .
\end{equation*}
If $w=p$, then we obtain \eqref{jabara-lemma-xw-ineq} similarly. 
If $x=y$, then $\tilde{x}=\tilde{y}$ by definition of $\tilde{Y}$, so 
it follows from \eqref{jabara-lemma-yw-ineq} that 
\begin{equation*}
d_X (x,w)
=
d_X (y,w)
\leq
d_{\tilde{Y}}(\tilde{y},\tilde{w})
=
d_{\tilde{Y}}(\tilde{x},\tilde{w}).
\end{equation*}
If $z=w$, then \eqref{jabara-lemma-xw-ineq} follows from \eqref{jabara-lemma-xz-ineq} similarly. 
If $p=y$ or $p=z$, then $\tilde{p}\in\lbrack\tilde{x},\tilde{w}\rbrack$ by definition of $\tilde{Y}$, so 
\begin{align*}
d_X (x,w)
&\leq
d_X (x,p)+d_X (p,w)
\leq
d_Y (x',p')+d_Y (p',w')\\
&=
d_{\tilde{Y}} (\tilde{x},\tilde{p})+d_{\tilde{Y}}(\tilde{p},\tilde{w})
=
d_{\tilde{Y}} (\tilde{x},\tilde{w}). 
\end{align*}
Finally, if $y=z$, then 
\begin{equation*}
\tilde{y}=\tilde{z},\quad
\tilde{T}_1 \cap\tilde{T}_3
=\lbrack\tilde{p},\tilde{y}\rbrack ,\quad
\lbrack\tilde{p},\tilde{y}\rbrack\cap\lbrack\tilde{x},\tilde{w}\rbrack\neq\emptyset ,
\end{equation*}
by definition of $\tilde{Y}$, 
so Lemma \ref{combined-triangles-lemma} implies \eqref{jabara-lemma-xw-ineq} 
because it follows from the hypothesis of the lemma and the definition of $\tilde{Y}$ that 
\begin{align*}
&d_X (p,x)
\leq
d_Y (p',x' )
=
d_{\tilde{Y}}(\tilde{p},\tilde{x}),\quad
d_X (x,y)
\leq
d_Y (x',y' )
=
d_{\tilde{Y}}(\tilde{x},\tilde{y}),\\
&d_X (y,w)
=
d_X (z,w)
\leq
d_Y (z',w' )
=
d_{\tilde{Y}}(\tilde{z},\tilde{w})
=
d_{\tilde{Y}}(\tilde{y},\tilde{w}),\\
&d_X (w,p)
\leq
d_Y (w',p')
=
d_{\tilde{Y}}(\tilde{w},\tilde{p}),\quad
d_X (p,y)
=
d_Y (p',y' )
=
d_{\tilde{Y}}(\tilde{p},\tilde{y}).
\end{align*}
So henceforth we assume that any equality in \eqref{jabara-lemma-zero-edge-eqs} does not hold. 
We consider four cases. 

\textsc{Case 1}: 
{\em $\angle x_1 y_1 p_1 +\angle p_2 y_2 z_2 \leq\pi$ and $\angle x_1 p_1 y_1+\angle y_2 p_2 z_2 \leq\pi$}. 
In this case, 
the subset $\tilde{T}_1 \cup\tilde{T}_2$ of $\tilde{Y}$ is isometric to a convex subset of the Euclidean plane, and 
it is clear from the definition of $\tilde{Y}$ that 
$(\tilde{T}_1 \cup \tilde{T}_2 )\cap\tilde{T}_3=\lbrack\tilde{p},\tilde{z}\rbrack$ and 
$\lbrack\tilde{p},\tilde{z}\rbrack\cap\lbrack\tilde{x},\tilde{w}\rbrack\neq\emptyset$. 
Therefore, Lemma \ref{combined-triangles-lemma} implies the desired inequality \eqref{jabara-lemma-xw-ineq} 
because it follows from the hypothesis of the lemma, the definition of $\tilde{Y}$ and \eqref{jabara-lemma-xz-ineq} that 
\begin{align*}
&d_X (p,x)\leq d_Y (p',x')=d_{\tilde{Y}} (\tilde{p},\tilde{x}),\quad
d_X (x,z)\leq d_{\tilde{Y}} (\tilde{x},\tilde{z}),\\
&d_X (z,w)\leq d_Y (z',w' )=d_{\tilde{Y}} (\tilde{z},\tilde{w}),\quad
d_X (w,p)\leq d_Y (w',p')=d_{\tilde{Y}} (\tilde{w},\tilde{p}),\\
&d_X (p,z)=d_Y (p',z')=d_{\tilde{Y}} (\tilde{p},\tilde{z}).
\end{align*}

\textsc{Case 2}: 
{\em $\angle y_2 z_2 p_2 +\angle p_3 z_3 w_3 \leq\pi$ and $\angle y_2 p_2 z_2+\angle z_3 p_3 w_3 \leq\pi$}. 
In this case, 
the subset $\tilde{T}_2 \cup\tilde{T}_3$ is isometric to a convex subset of the Euclidean plane, and 
it is clear from the definition of $\tilde{Y}$ that $\tilde{T}_1 \cap(\tilde{T}_2 \cup\tilde{T}_3 )=\lbrack\tilde{p},\tilde{y}\rbrack$ and 
$\lbrack\tilde{p},\tilde{y}\rbrack\cap\lbrack\tilde{x},\tilde{w}\rbrack\neq\emptyset$. 
Therefore, Lemma \ref{combined-triangles-lemma} implies the desired inequality \eqref{jabara-lemma-xw-ineq} in the same way as in \textsc{Case 1}. 

\textsc{Case 3}: 
{\em $\angle x_1 p_1 y_1+\angle y_2 p_2 z_2+\angle z_3 p_3 w_3 \geq\pi$}. 
In this case, we clearly have 
\begin{equation*}
d_{\tilde{Y}}(\tilde{x},\tilde{w})
=
\| x_1 -p_1 \|+\| p_3 -w_3 \| ,
\end{equation*}
and hence 
\begin{align*}
d_{\tilde{Y}}(\tilde{x},\tilde{w})
=
d_Y (x',p')+d_Y (p',w')
\geq
d_X (x,p)+d_X (p,w)
\geq
d_X (x,w).
\end{align*}

\textsc{Case 4}: 
{\em Neither the assumption of \textsc{Case 1}, \textsc{Case 2} nor \textsc{Case 3} holds}. 
In this case, 
\begin{equation}\label{jabara-lemma-pre-pipi-ineq}
\angle x_1 p_1 y_1+\angle y_2 p_2 z_2 \leq\pi ,\quad
\angle y_2 p_2 z_2+\angle z_3 p_3 w_3 \leq\pi
\end{equation}
because the assumption of \textsc{Case 3} does not hold. 
Because neither the assumption of \textsc{Case 1} nor \textsc{Case 2} holds, it follows from \eqref{jabara-lemma-pre-pipi-ineq} that 
\begin{equation}\label{jabara-lemma-pipi-ineq}
\angle x_1 y_1 p_1 +\angle p_2 y_2 z_2 >\pi ,\quad\angle y_2 z_2 p_2 +\angle p_3 z_3 w_3 >\pi
\end{equation}
It clearly follows from \eqref{jabara-lemma-pipi-ineq} that 
\begin{equation*}
d_{\tilde{Y}}(\tilde{x},\tilde{w})
=
\| x_1 -y_1 \| +\| y_2 -z_2 \| +\| z_3 -w_3 \| ,
\end{equation*}
and hence 
\begin{align*}
d_{\tilde{Y}}(\tilde{x},\tilde{w})
&=
d_Y (x',y')+d_Y (y',z')+d_Y (z',w')\\
&\geq
d_X (x,y)+d_X (y,z)+d_X (z,w)
\geq
d_X (x,w),
\end{align*}
which completes the proof of \eqref{jabara-lemma-xw-ineq}.

It follows from the conditions $(1)$ and $(2)$ in the statement of the lemma that there exist isometric embeddings 
$f_1 :\tilde{T}_1 \to T_1$, $f_2 :\tilde{T}_2 \to T_2$ and $f_3 :\tilde{T}_3 \to T_3$ such that 
\begin{align*}
f_1 (\tilde{p})&=p' ,\quad
f_1 (\tilde{x})=x',\quad
f_1 (\tilde{y})=y'\\
f_2 (\tilde{p})&=p' ,\quad
f_2 (\tilde{y})=y',\quad
f_2 (\tilde{z})=z'\\
f_3 (\tilde{p})&=p' ,\quad
f_3 (\tilde{z})=z',\quad
f_3 (\tilde{w})=w' .
\end{align*}
Then $f_1 (\lbrack\tilde{p},\tilde{y}\rbrack )$ is a geodesic segment with 
endpoints $p'$ and $y'$ contained in $T_1$, 
and $f_2 (\lbrack\tilde{p},\tilde{y}\rbrack )$ is a geodesic segment with 
endpoints $p'$ and $y'$ contained in $T_2$. 
Since $T_1$ and $T_2$ are both uniquely geodesic, it follows that 
$f_1 (\lbrack\tilde{p},\tilde{y}\rbrack )=\Gamma_1 =f_2 (\lbrack\tilde{p},\tilde{y}\rbrack )$, 
and thus $f_1$ and $f_2$ agree on $\lbrack\tilde{p},\tilde{y}\rbrack$. 
Similarly, $f_2$ and $f_3$ agree on $\lbrack\tilde{p},\tilde{z}\rbrack$. 
Suppose $\tilde{q}_1 \in\lbrack\tilde{p},\tilde{y}\rbrack$ and $\tilde{q}_2 \in\lbrack\tilde{p},\tilde{z}\rbrack$ are the points such that 
$f_1(\tilde{q}_1 )=f_2 (\tilde{q}_1 )=q_1$ and $f_2 (\tilde{q}_2 )=f_3 (\tilde{q}_2 )=q_2$. 
Then 
\begin{align*}
d_Y (x',w' )
&=
d_Y (x' ,q_1 )+d_Y (q_1 ,q_2 )+d_Y (q_2 ,w')\\
&=
d_{\tilde{Y}} (f_1^{-1} (x' ),f_1^{-1} (q_1) )+d_{\tilde{Y}} (f_2^{-1}(q_1 ),f_2^{-1}(q_2 ))+d_{\tilde{Y}} (f_3^{-1}(q_2 ),f_3^{-1}(w' ))\\
&=
d_{\tilde{Y}} (\tilde{x},\tilde{q}_1 )+d_{\tilde{Y}} (\tilde{q}_1 ,\tilde{q}_2 )+d_{\tilde{Y}} (\tilde{q}_2 ,\tilde{w})
\geq
d_{\tilde{Y}}(\tilde{x},\tilde{w}).
\end{align*}
Combining this with \eqref{jabara-lemma-xw-ineq} yields $d_X (x,w)\leq d_Y (x',w' )$. 
\end{proof}

\begin{proposition}\label{5-3-5-prop}
If a metric space $X$ satisfies the $\boxtimes$-inequalities, 
then $X$ satisfies the $G^{(5)}_3 (0)$ and $G^{(5)}_5 (0)$ conditions. 
\end{proposition}

\begin{figure}[htbp]
\setlength{\unitlength}{1mm}
\begin{minipage}{0.15\hsize}
\centering
\begin{picture}(12,14)
\put(-4.5,0){$v_3$}
\put(-4.5,5){$v_2$}
\put(7.5,12){$v_1$}
\put(13,0){$v_4$}
\put(13,5){$v_5$}
\put(1,1){\circle*{2}}
\put(11,1){\circle*{2}}
\put(1,6){\circle*{2}}
\put(11,6){\circle*{2}}
\put(6,11){\circle*{2}}
\put(1,1){\line(1,0){10}}
\put(1,1){\line(0,1){5}}
\put(11,1){\line(0,1){5}}
\put(11,6){\line(-1,1){5}}
\put(11,6){\line(-1,0){10}}
\put(6,11){\line(-1,-1){5}}
\end{picture}
\subcaption*{$G^{(5)}_3$}
\end{minipage}
\hspace{20mm}
\begin{minipage}{0.15\hsize}
\centering
\begin{picture}(12,14)
\put(-4.5,0){$v_3$}
\put(-4.5,5){$v_2$}
\put(7.5,12){$v_1$}
\put(13,0){$v_4$}
\put(13,5){$v_5$}
\put(1,1){\circle*{2}}
\put(11,1){\circle*{2}}
\put(1,6){\circle*{2}}
\put(11,6){\circle*{2}}
\put(6,11){\circle*{2}}
\put(1,1){\line(1,0){10}}
\put(1,1){\line(2,1){10}}
\put(1,1){\line(0,1){5}}
\put(11,1){\line(0,1){5}}
\put(11,6){\line(-1,1){5}}
\put(11,6){\line(-1,0){10}}
\put(6,11){\line(-1,-1){5}}
\end{picture}
\subcaption*{$G^{(5)}_5$}
\end{minipage}
\caption{}\label{fig:5-35-graphs}
\end{figure}

\begin{proof}
Let $(X,d_X )$ be a metric space that satisfies the $\boxtimes$-inequalities. 
Suppose the graphs $G^{(5)}_3$ and $G^{(5)}_5$ have a common vertex set $V$, and 
edge sets $E_3$ and $E_5$, respectively. 
We set 
\begin{align*}
V&=\{ v_1 ,v_2 ,v_3 ,v_4,v_5\} ,\\
E_3 &=\{ \{ v_1,v_2\},\{ v_2,v_3\},\{ v_3,v_4\}, \{ v_4,v_5\},\{ v_5,v_1\},\{ v_2,v_5\}\} ,\\
E_5 &=\{ \{ v_1,v_2\},\{ v_2,v_3\},\{ v_3,v_4\}, \{ v_4,v_5\},\{ v_5,v_1\},\{ v_2,v_5\},\{ v_3,v_5\}\} ,
\end{align*}
as shown in \textsc{Figure} \ref{fig:5-35-graphs}. 
Fix a map $f:V\to X$, and set 
\begin{equation*}
d_{ij}=d_X (f(v_i ),f(v_j ))
\end{equation*}
for any $i,j\in\{ 1,2,3,4,5\}$. 
By Theorem \ref{four-point-th}, if $d_{ij}=0$ for some $i,j\in\{ 1,2,3,4,5\}$ with $i\neq j$, then 
there exist a $\mathrm{CAT}(0)$ space $(Y_0 ,d_{Y_0} )$ and a map $g_0 : V\to Y_0$ such that 
$d_{Y_0} (g_0 (v_i ),g_0 (v_j ))=d_{ij}$ for any $i,j\in\{ 1,2,3,4,5\}$. 
Hence we assume $d_{ij}>0$ 
for any $i,j\in\{ 1,2,3,4,5\}$ with $i\neq j$. 

Choose $x_1 , x_2 , x_5, y_2 , y_3 , y_5 ,z_3 , z_4 , z_5 \in\mathbb{R}^2$ such that 
\begin{align*}
\| x_1 -x_2 \|&=d_{12},\quad\| x_2 -x_5 \|=d_{25},\quad\| x_5 -x_1 \|=d_{51},\\
\| y_2 -y_3 \|&=d_{23},\quad\| y_3 -y_5 \|=d_{35},\quad\| y_5 -y_2 \|=d_{52},\\
\| z_3 -z_4 \|&=d_{34},\quad\| z_4 -z_5 \|=d_{45},\quad\| z_5 -z_3 \|=d_{53}. 
\end{align*}
Equip the subsets 
\begin{equation*}
T_1 =\mathrm{conv}(\{ x_1 , x_2 , x_5 \}),\quad T_2 =\mathrm{conv}(\{ y_2 , y_3 , y_5 \}), 
\quad T_3 =\mathrm{conv}(\{ z_3 , z_4 , z_5 \})
\end{equation*}
of $\mathbb{R}^2$ with the induced metrics, and 
regard them as disjoint metric spaces. 
Define $(Y',d_{Y'} )$ to be the metric space obtained by gluing $T_1$ and $T_2$ by identifying 
$\lbrack x_2 ,x_5\rbrack\subseteq T_1$ with $\lbrack y_2 ,y_5 \rbrack\subseteq T_2$. 
Then $(Y',d_{Y'} )$ is a $\mathrm{CAT}(0)$ space by Reshetnyak's gluing theorem. 
We denote by $p_i$ the point in $Y'$ represented by $x_i \in T_1$ for each $i\in\{ 1,2,5\}$, and 
by $p_3$ the point in $Y'$ represented by $y_3 \in T_2$. 
Define $(Y,d_{Y} )$ to be the metric space obtained by gluing $Y'$ and $T_3$ by identifying 
$\lbrack p_3 ,p_5\rbrack\subseteq Y'$ with $\lbrack z_3 ,z_5 \rbrack\subseteq T_3$. 
Then $(Y,d_Y )$ is a $\mathrm{CAT}(0)$ space by Reshetnyak's gluing theorem, 
and for each $i\in\{ 1,2,3\}$, 
the natural inclusion of $T_i$ into $Y$ is clearly an isometric embedding. 
Let $g:V\to Y$ be the map that assigns the point in $Y$ represented by $p_i \in Y'$ to $v_i \in V$ for each $i\in\{ 1,2,3,5\}$, and 
the point in $Y$ represented by $z_4 \in T_3$ to $v_4 \in V$. 
Then Lemma \ref{combined-triangles-lemma} implies that 
\begin{equation}\label{5-3-5-1324-ineq}
d_Y (g (v_1 ),g(v_3 ))\geq d_{13},\quad
d_Y (g (v_2 ),g(v_4 ))\geq d_{24},
\end{equation}
and Lemma \ref{jabara-lemma} implies that 
\begin{equation}\label{5-3-5-14-ineq}
d_Y (g (v_1 ),g(v_4 ))\geq d_{14} .
\end{equation}
It follows from 
\eqref{5-3-5-1324-ineq}, \eqref{5-3-5-14-ineq} and the definition of $Y$ that any $u,v\in V$ satisfy 
\begin{equation*}
\begin{cases}
d_Y (g(u),g(v))=d_X (f(u),f(v)),\quad\textrm{if  }\{ u,v\}\in E_i, \\
d_Y (g(u),g(v))\geq d_X (f(u),f(v)),\quad\textrm{if  }\{ u,v\}\not\in E_i
\end{cases}
\end{equation*}
for each $i\in\{ 3,5\}$. 
Thus $X$ satisfies the $G^{(5)}_3 (0)$ and $G^{(5)}_5 (0)$ conditions. 
\end{proof}

\begin{proposition}\label{5-4-6-prop}
If a metric space $X$ satisfies the $\boxtimes$-inequalities, 
then $X$ satisfies the $G^{(5)}_4 (0)$ and $G^{(5)}_6 (0)$ conditions. 
\end{proposition}

\begin{figure}[htbp]
\setlength{\unitlength}{1mm}
\begin{minipage}{0.15\hsize}
\centering
\begin{picture}(12,14)
\put(-4.5,0){$v_3$}
\put(-4.5,5){$v_2$}
\put(7.5,12){$v_1$}
\put(13,0){$v_4$}
\put(13,5){$v_5$}
\put(1,1){\circle*{2}}
\put(11,1){\circle*{2}}
\put(1,6){\circle*{2}}
\put(11,6){\circle*{2}}
\put(6,11){\circle*{2}}
\put(1,1){\line(2,1){10}}
\put(1,1){\line(0,1){5}}
\put(11,1){\line(-2,1){10}}
\put(11,1){\line(0,1){5}}
\put(11,6){\line(-1,1){5}}
\put(6,11){\line(-1,-1){5}}
\end{picture}
\subcaption*{$G^{(5)}_4$}
\end{minipage}
\hspace{20mm}
\begin{minipage}{0.15\hsize}
\centering
\begin{picture}(12,14)
\put(-4.5,0){$v_3$}
\put(-4.5,5){$v_2$}
\put(7.5,12){$v_1$}
\put(13,0){$v_4$}
\put(13,5){$v_5$}
\put(1,1){\circle*{2}}
\put(11,1){\circle*{2}}
\put(1,6){\circle*{2}}
\put(11,6){\circle*{2}}
\put(6,11){\circle*{2}}
\put(1,1){\line(2,1){10}}
\put(1,1){\line(0,1){5}}
\put(11,1){\line(-2,1){10}}
\put(11,1){\line(0,1){5}}
\put(11,6){\line(-1,1){5}}
\put(11,6){\line(-1,0){10}}
\put(6,11){\line(-1,-1){5}}
\end{picture}
\subcaption*{$G^{(5)}_6$}
\end{minipage}
\caption{}\label{fig:5-46-graphs}
\end{figure}

\begin{proof}
Let $(X,d_X )$ be a metric space that satisfies the $\boxtimes$-inequalities. 
Suppose the graphs $G^{(5)}_4$ and $G^{(5)}_6$ have a common vertex set $V$, 
and edge sets $E_4$ and $E_6$, respectively. 
We set 
\begin{align*}
V&=\{ v_1 ,v_2 ,v_3 ,v_4,v_5\} ,\\
E_4&=\{ \{ v_1,v_2\},\{ v_2,v_3\},\{ v_4,v_5\}, \{ v_5,v_1\},\{ v_2,v_4\},\{ v_3,v_5\}\} ,\\
E_6&=\{ \{ v_1,v_2\},\{ v_2,v_3\},\{ v_4,v_5\}, \{ v_5,v_1\},\{ v_2,v_4\},\{ v_3,v_5\},\{ v_5,v_2\}\} ,
\end{align*}
as shown in \textsc{Figure} \ref{fig:5-46-graphs}. 
Fix a map $f:V\to X$, and set 
$$
d_{ij}=d_X (f(v_i) ,f(v_j ))
$$
for any $i,j\in\{ 1,2,3,4,5\}$. 
By Theorem \ref{four-point-th}, if $d_{ij}=0$ for some $i,j\in\{ 1,2,3,4,5\}$ with $i\neq j$, then 
there exist a $\mathrm{CAT}(0)$ space $(Y_0 ,d_{Y_0} )$ and a map $g_0 : V\to Y_0$ such that 
$d_{Y_0} (g_0 (v_i ),g_0 (v_j ))=d_{ij}$ for any $i,j\in\{ 1,2,3,4,5\}$. 
Hence we assume $d_{ij}>0$ 
for any $i,j\in\{ 1,2,3,4,5\}$ with $i\neq j$. 

Choose $x_1 , x_2 , x_5 ,y_2 , y_3 , y_5 ,z_2 , z_4 , z_5\in\mathbb{R}^2$ such that 
\begin{align*}
\| x_1 -x_2 \|&=d_{12},\quad\| x_2 -x_5 \|=d_{25},\quad\| x_5 -x_1 \|=d_{51},\\
\| y_2 -y_3 \|&=d_{23},\quad\| y_3 -y_5 \|=d_{35},\quad\| y_5 -y_2 \|=d_{52},\\
\| z_2 -z_4 \|&=d_{24},\quad\| z_4 -z_5 \|=d_{45},\quad\| z_5 -z_2 \|=d_{52}. 
\end{align*}
Equip the subsets 
\begin{equation*}
T_1 =\mathrm{conv}(\{ x_1 , x_2 , x_5 \}),\quad T_2 =\mathrm{conv}(\{ y_2 , y_3 , y_5 \}), 
\quad T_3 =\mathrm{conv}(\{ z_2 , z_4 , z_5 \})
\end{equation*}
of $\mathbb{R}^2$ with the induced metrics, and 
regard them as disjoint metric spaces. 
We define $Y'$ to be the metric space obtained by gluing $T_1$ and $T_2$ by identifying 
$\lbrack x_2 ,x_5\rbrack\subseteq T_1$ with $\lbrack y_2 ,y_5 \rbrack\subseteq T_2$. 
Then $Y'$ is a $\mathrm{CAT}(0)$ space by Reshetnyak's gluing theorem. 
We denote by $p_i$ the point in $Y'$ represented by $x_i \in T_1$ for each $i\in\{ 1,2,5\}$, and 
by $p_3$ the point in $Y'$ represented by $y_3 \in T_2$. 
Define $(Y,d_{Y} )$ to be the metric space obtained by gluing $Y'$ and $T_3$ by identifying 
$\lbrack p_2 ,p_5\rbrack\subseteq Y'$ with $\lbrack z_2 ,z_5 \rbrack\subseteq T_3$. 
Then $(Y,d_Y )$ is a $\mathrm{CAT}(0)$ space by Reshetnyak's gluing theorem, and 
the natural inclusion of $T_i$ into $Y$ is clearly an isometric embedding for each $i\in\{ 1,2,3\}$. 
Let $g:V\to Y$ be the map that assigns the point in $Y$ represented by $p_i \in Y'$ to each $v_i \in\{ v_1 ,v_2 ,v_3 ,v_5 \}$, 
and the point in $Y$ represented by $z_4 \in T_3$ to $v_4$. 
Then it is clear from the definition of $Y$ that 
the geodesic segment $\lbrack g(v_2 ),g(v_5 )\rbrack\subseteq Y$ is shared by the images of $T_1$, $T_2$ and $T_3$ under 
the natural inclusions. 
Because it is also clear from the definition of $Y$ that 
$\lbrack g(v_2 ),g(v_5 )\rbrack\cap\lbrack g(v_1 ),g(v_3 )\rbrack\neq\emptyset$, 
Lemma \ref{combined-triangles-lemma} implies that 
\begin{equation}\label{5-4-6-13-ineq}
d_Y (g (v_1 ),g(v_3 ))\geq d_{13}.
\end{equation}
Similarly, Lemma \ref{combined-triangles-lemma} also implies that 
\begin{equation}\label{5-4-6-3441-ineq}
d_Y (g (v_3 ),g(v_4 ))\geq d_{34},\quad
d_Y (g (v_4 ),g(v_1 ))\geq d_{41}.  
\end{equation}
It follows from \eqref{5-4-6-13-ineq}, \eqref{5-4-6-3441-ineq} and the definition of $Y$ that any $u,v\in V$ satisfy 
\begin{equation*}
\begin{cases}
d_Y (g(u),g(v))=d_X (f(u),f(v)),\quad\textrm{if  }\{ u,v\}\in E_i , \\
d_Y (g(u),g(v))\geq d_X (f(u),f(v)),\quad\textrm{if  }\{ u,v\}\not\in E_i ,
\end{cases}
\end{equation*}
for each $i\in\{ 4,6\}$. 
Thus $X$ satisfies the $G^{(5)}_4 (0)$ and $G^{(5)}_6 (0)$ conditions. 
\end{proof}

The following proposition follows immediately from Proposition \ref{semicomplete-prop}. 

\begin{proposition}\label{5-8-10-11-prop}
Every metric space satisfies the $G^{(5)}_8 (0)$, $G^{(5)}_{10} (0)$ and $G^{(5)}_{11} (0)$ conditions. 
\end{proposition}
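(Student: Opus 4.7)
The plan is to apply Proposition \ref{semicomplete-prop} to each of the three graphs. Recall that this proposition states that every metric space satisfies the $G(0)$-property for a graph $G=(V,E)$ whenever there is a vertex $v_0 \in V$ such that all vertices in $V \setminus \{v_0\}$ are pairwise adjacent (equivalently, the non-edges of $G$ are all incident to a single vertex). So the task reduces to inspecting each of $G^{(5)}_8$, $G^{(5)}_{10}$, $G^{(5)}_{11}$ and exhibiting such a vertex $v_0$.

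First, I would treat the easiest case, $G^{(5)}_{11}$: since this is the complete graph $K_5$ on $\{v_1,\ldots,v_5\}$, any vertex may be chosen as $v_0$, and the hypothesis of Proposition \ref{semicomplete-prop} is trivially met. Next I would inspect $G^{(5)}_{10}$: reading off its edge set from Figure \ref{fig:5vertices-graphs}, one sees that it has nine edges and only a single non-edge, namely $\{v_1,v_3\}$. Choosing $v_0 = v_1$ (or $v_0 = v_3$), the remaining four vertices form a $K_4$, so Proposition \ref{semicomplete-prop} applies. Finally, for $G^{(5)}_8$, one reads off that the only missing edges are $\{v_1,v_3\}$ and $\{v_1,v_4\}$, both incident to $v_1$. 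Hence the induced subgraph on $V \setminus \{v_1\} = \{v_2,v_3,v_4,v_5\}$ is complete, so choosing $v_0 = v_1$ again satisfies the hypothesis.

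Since in each of the three cases we have verified the hypothesis of Proposition \ref{semicomplete-prop}, the conclusion that every metric space satisfies the corresponding $G(0)$-property follows immediately. There is essentially no obstacle here: the whole proof is a matter of reading the edge sets of the three graphs from Figure \ref{fig:5vertices-graphs} and pointing to the vertex whose removal leaves a clique. The only mild care required is to be explicit about the identification of the non-edges in $G^{(5)}_8$ and $G^{(5)}_{10}$, so that the reader can confirm the reduction to Proposition \ref{semicomplete-prop} without having to redraw the diagrams.
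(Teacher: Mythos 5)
Your proposal is correct and follows exactly the same route as the paper: the paper's proof of Proposition \ref{5-8-10-11-prop} likewise observes that each of $G^{(5)}_8$, $G^{(5)}_{10}$ and $G^{(5)}_{11}$ has a vertex $v_0$ whose complement induces a complete graph and then invokes Proposition \ref{semicomplete-prop}. Your identification of the non-edges ($\{v_1,v_3\},\{v_1,v_4\}$ for $G^{(5)}_8$, $\{v_1,v_3\}$ for $G^{(5)}_{10}$, none for $G^{(5)}_{11}$) is accurate, and the extra explicitness is harmless.
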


\begin{proof}
For each $i\in\{ 8,10,11\}$, the graph $G^{(5)}_i =(V,E)$ has a vertex $v_0\in V$ such that 
$\{ u,v\}\in E$ for any $u,v\in V\setminus\{ v_0 \}$ with $u\neq v$. 
Therefore, Proposition \ref{semicomplete-prop} implies that every metric space satisfies the $G^{(5)}_i (0)$ condition 
for each $i\in\{ 8,10,11\}$. 
\end{proof}

By Propositions 
\ref{5-disconnected-prop}, \ref{5-deg1-prop}, \ref{5-1-prop}, \ref{5-2-prop}, \ref{5-3-5-prop}, \ref{5-4-6-prop} and 
\ref{5-8-10-11-prop}, to prove that the validity of the $\boxtimes$-inequalities implies the $G(0)$ condition for 
every graph $G$ with five vertices, 
it only remains to prove that it implies the $G^{(5)}_7(0)$ and $G^{(5)}_9 (0)$ conditions.

\section{Embeddability of four points into a Euclidean space}\label{TSD-TLD-sec}

In this section, we introduce certain concepts concerning isometric embeddability of four-point subsets of metric spaces 
into the three dimensional Euclidean space, and 
by using those concepts, discuss several properties of metric spaces that satisfy the $\boxtimes$-inequalities. 
Those properties will be used to prove that the validity of the $\boxtimes$-inequalities implies 
the $G^{(5)}_7 (0)$ and $G^{(5)}_9 (0)$ conditions.

\begin{definition}\label{TSD-TLD-def}
Let $(X,d_X )$ be a metric space, and let $x,y,z,w\in X$ be four distinct points. 
We say that $\{x,y,z,w\}$ is {\em under-distance} (resp. {\em over-distance}) {\em with respect to $\{ y,w\}$} if 
any $x' ,y' ,z' ,w' \in\mathbb{R}^3$ satisfy 
$d_X (y,w)<\|y' -w' \|$ (resp. $\| y'-w' \| <d_X (y,w)$) whenever 
\begin{multline}\label{TSD-TLD-def-eqs}
\| x'- y' \| =d_X (x,y),\quad
\| y' -z' \| =d_X (y,z),\quad
\| z' -x' \| =d_X (z,x),\\
\| x' -w' \| =d_X (x,w),\quad
\| w' -z' \| =d_X (w,z).
\end{multline}
\end{definition}

It is easily observed that for any four distinct points $x$ $y$, $z$ and $w$ in any metric space $X$, 
there exist $x',y',z',w'\in\mathbb{R}^3$ satisfying \eqref{TSD-TLD-def-eqs}. 
Therefore, $\{ x,y,z,w\}$ does not become under-distance and over-distance with respect to $\{ y,w\}$ simultaneously.

\begin{proposition}\label{embeddable-TSD-TLD-prop}
Let $(X,d_X )$ be a metric space, and let $x,y,z,w\in X$ be four distinct points. 
Then one and only one of the following conditions holds true. 
\begin{enumerate}
\item[$(\mathrm{a})$]
The subset $\{x,y,z,w \}\subseteq X$ admits an isometric embedding into $\mathbb{R}^3$. 
\item[$(\mathrm{b})$]
$\{ x,y,z,w\}$ is under-distance with respect to $\{ y,w\}$. 
\item[$(\mathrm{c})$]
$\{ x,y,z,w\}$ is over-distance with respect to $\{ y,w\}$. 
\end{enumerate}
\end{proposition}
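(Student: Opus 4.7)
The plan is to fix $x', y', z' \in \mathbb{R}^3$ realizing the triangle distances for $x, y, z$ and study how $\|y' - w'\|$ varies as $w'$ ranges over the set solving the last two equations of \eqref{TSD-TLD-def-eqs}. First I would build such a configuration. Since $x$, $y$, $z$ are distinct and $d_X$ satisfies the triangle inequality, there exist $x', y', z' \in \mathbb{R}^2 \subset \mathbb{R}^3$ realizing the distances $d_X(x,y)$, $d_X(y,z)$, $d_X(z,x)$; in particular $x' \neq z'$ and the axis $\ell = \overleftrightarrow{x'z'}$ is well-defined. The triangle inequality also furnishes $w_0 \in \mathbb{R}^2$ with $\|w_0 - x'\| = d_X(x,w)$ and $\|w_0 - z'\| = d_X(w,z)$. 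The set $C$ of all $w' \in \mathbb{R}^3$ satisfying those two equations is then the orbit of $w_0$ under rotation about $\ell$: either the single point $w_0$ (if $w_0 \in \ell$) or a circle in the plane through $w_0$ perpendicular to $\ell$.

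Next I would compute the range of $\|y' - w'\|$ as $w'$ varies over $C$. Choosing orthonormal coordinates with $\ell$ as the first axis and $y'$ in the half-plane $\{(a, b, 0) : b \ge 0\}$, I write $y' = (a, b, 0)$ and parametrize $w' = (c, r\cos\theta, r\sin\theta)$ for fixed constants $c \in \mathbb{R}$ and $r \ge 0$. A direct expansion yields
\begin{equation*}
\|y' - w'\|^{2} = (a-c)^{2} + b^{2} + r^{2} - 2br\cos\theta,
\end{equation*}
which traces the closed interval $[m^{2}, M^{2}]$ with $m = \sqrt{(a-c)^{2} + (b-r)^{2}}$ and $M = \sqrt{(a-c)^{2} + (b+r)^{2}}$ (the interval collapses to a point if $b = 0$ or $r = 0$, with no effect on what follows). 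Moreover, any two quintuples $(x', y', z', w')$ satisfying \eqref{TSD-TLD-def-eqs} are congruent via a rigid motion of $\mathbb{R}^{3}$, so the interval $[m, M]$ depends only on the five input distances.

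With this in hand the trichotomy is immediate: condition $(\mathrm{a})$ holds iff some configuration attains $\|y' - w'\| = d_X(y,w)$, i.e.\ $d_X(y,w) \in [m, M]$; condition $(\mathrm{b})$ holds iff $d_X(y,w) < \|y' - w'\|$ for every configuration, i.e.\ $d_X(y,w) < m$; and condition $(\mathrm{c})$ holds iff $d_X(y,w) > M$. These three cases are pairwise disjoint and together exhaust $[0, \infty)$, proving the proposition. I do not anticipate any serious obstacle; the main point requiring care is the verification that the set of achievable values of $\|y' - w'\|$ is exactly the closed interval $[m, M]$, since this is what makes the trichotomy tight and handles uniformly the degenerate cases in which one of the two triangles becomes collinear or $w_0$ already lies on $\ell$.
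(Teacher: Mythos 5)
Your argument is correct and is essentially the paper's own proof: the paper likewise normalizes $x'$, $y'$, $z'$ to a reference triangle, observes that the admissible $w'$ sweep out the circle obtained by rotating a planar solution about the line $\overleftrightarrow{x'z'}$, and shows that $\|y'-w'\|$ ranges over exactly a closed interval, from which the trichotomy follows. One sentence should be rephrased: two quadruples satisfying \eqref{TSD-TLD-def-eqs} are generally \emph{not} congruent (their sixth distances can differ); what you need, and what your construction actually delivers, is only that any such quadruple can be moved by a rigid motion so that its triangle $(x',y',z')$ coincides with the reference one, whence its $w'$ lands on $C$ and the achievable range $[m,M]$ is independent of the configuration.
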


\begin{proof}
Define $\tilde{x},\tilde{z}\in\mathbb{R}^3$ by 
\begin{equation*}
\tilde{x}=(0,0,0), \quad
\tilde{z}=(d_X (x ,z ),0,0).
\end{equation*}
Suppose $\tilde{y}=(y^{(1)},y^{(2)},0)$ and $\tilde{w}=(w^{(1)},w^{(2)},0)$ are the points in $\mathbb{R}^3$ such that 
\begin{align*}
&\|\tilde{x}-\tilde{y}\|=d_X (x,y),\quad\|\tilde{y}-\tilde{z}\|=d_X (y,z),\quad y^{(2)}\geq 0,\\
&\|\tilde{x}-\tilde{w}\|=d_X (x,w),\quad \|\tilde{w}-\tilde{z}\|=d_X (w,z),\quad w^{(2)}\geq 0 .
\end{align*}
Clearly, such $\tilde{y}$ and $\tilde{w}$ exist uniquely. 
For each $\theta\in\lbrack 0,\pi\rbrack$, define $\tilde{w}(\theta )\in\mathbb{R}^3$ by 
\begin{equation*}
\tilde{w} (\theta ) =(w^{(1)},w^{(2)}\cos\theta ,w^{(2)}\sin\theta ) .
\end{equation*}
Then it is easily seen that 
\begin{equation*}
\| \tilde{x}-\tilde{w} (\theta)\|=d_X (x,w),\quad
\|\tilde{w}(\theta )-\tilde{z}\|=d_X (w,z)\\
\end{equation*}
for any $\theta\in\lbrack 0,\pi\rbrack$, and the function $\theta\mapsto\|\tilde{y}-\tilde{w}(\theta )\|$ is non-decreasing on $\lbrack 0,\pi\rbrack$. 
To prove the proposition, it suffices to prove the following three statements: 
\begin{enumerate}
\item[$(\mathrm{a}')$]
$\{x,y,z,w \}$ admits an isometric embedding into $\mathbb{R}^3$ if and only if 
\begin{equation}\label{four-points-in-R3-embeddable-ineq}
\|\tilde{y}-\tilde{w}(0) \|
\leq
d_X (y,w)
\leq
\|\tilde{y}-\tilde{w}(\pi) \| .
\end{equation}
\item[$(\mathrm{b}')$]
$\{ x,y,z,w\}$ is under-distance with respect to $\{ y,w\}$ if and only if 
\begin{equation*}
d_X (y,w)<\|\tilde{y}-\tilde{w}(0)\| .
\end{equation*} 
\item[$(\mathrm{c}')$]
$\{ x,y,z,w\}$ is over-distance with respect to $\{ y,w\}$ if and only if 
\begin{equation*}
\|\tilde{y}-\tilde{w}(\pi )\| <d_X (y,w).
\end{equation*}
\end{enumerate}

Let $x',y',z',w' \in\mathbb{R}^3$ be arbitrary points that satisfy the equalities \eqref{TSD-TLD-def-eqs} in Definition \ref{TSD-TLD-def}. 
Then there exists a point $w''=(\alpha_1 ,\alpha_2 ,\alpha_3 )\in\mathbb{R}^3$ such that 
\begin{equation*}
\| x' -w' \| =\|\tilde{x}-w'' \| ,\quad
\| y' -w' \| =\|\tilde{y}-w'' \| ,\quad
\| z' -w' \| =\|\tilde{z}-w'' \|
\end{equation*}
by definition of the points $\tilde{x}$, $\tilde{y}$ and $\tilde{z}$. 
Then 
\begin{align*}
\alpha_1^2 +\alpha_2^2 +\alpha_3^2 &=(w^{(1)})^2 +(w^{(2)})^2 ,\\
(\alpha_1 -d_X (x,z) )^2 +\alpha_2^2 +\alpha_3^2
&=(w^{(1)}-d_X (x,z) )^2 +(w^{(2)})^2
\end{align*}
because 
\begin{equation*}
\|\tilde{x}-w''\| =d_X (x,w)=\|\tilde{x}-\tilde{w}\| ,\quad
\| w'' -\tilde{z}\| =d_X (w,z)=\|\tilde{w}-\tilde{z}\| .
\end{equation*}
Since $d_X (x,z)\neq 0$, these equalities imply that 
\begin{equation}\label{four-points-in-R3-alpha123eqs}
\alpha_1 =w^{(1)},\quad
\alpha_2^2 +\alpha_3^2 =(w^{(2)})^2 .
\end{equation}
It follows from the second equality in \eqref{four-points-in-R3-alpha123eqs} that 
\begin{equation}\label{four-points-in-R3-alpha2ineq}
|\alpha_2 |\leq |w^{(2)}| .
\end{equation}
Using \eqref{four-points-in-R3-alpha123eqs}, we compute that 
\begin{align*}
\| y' -w' \|^2
&=
\|\tilde{y}-w'' \|^2 \\
&=
(y^{(1)}-\alpha_1 )^2 +(y^{(2)}-\alpha_2 )^2 +\alpha_3^2 \\
&=
(y^{(1)}-w^{(1)})^2 +(y^{(2)})^2 -2\alpha_2 y^{(2)}+(w^{(2)})^2 .
\end{align*}
Together with \eqref{four-points-in-R3-alpha2ineq}, this implies that 
\begin{equation}\label{four-points-in-R3-yw-ineqs}
\|\tilde{y}-\tilde{w}(0)\|
\leq
\|y'-w' \|
\leq
\|\tilde{y}-\tilde{w}(\pi )\|
\end{equation}
because
\begin{equation*}
\|\tilde{y}-\tilde{w}(0) \|^2
=
(y^{(1)}-w^{(1)})^2 +(y^{(2)}-w^{(2)})^2 ,\quad
\|\tilde{y}-\tilde{w}(\pi )\|^2
=
(y^{(1)}-w^{(1)})^2 +(y^{(2)}+w^{(2)})^2 .
\end{equation*}

The statements $(\mathrm{b}')$ and $(\mathrm{c}')$ follow immediately from 
the fact that the inequality \eqref{four-points-in-R3-yw-ineqs} 
holds true for arbitrary $x' ,y', z', w' \in\mathbb{R}^3$ satisfying \eqref{TSD-TLD-def-eqs}. 
It also follows immediately from this fact that 
if $\{ x,y,z,w\}$ admits an isometric embedding into $\mathbb{R}^3$, then 
\eqref{four-points-in-R3-embeddable-ineq} holds true. 
If \eqref{four-points-in-R3-embeddable-ineq} holds true, then 
there exists $\theta_0 \in\lbrack 0,\pi\rbrack$ that satisfies 
\begin{equation*}
\|\tilde{y} -\tilde{w}(\theta_0 )\|=d_X (y,w)
\end{equation*}
because the function $\theta\mapsto\|\tilde{y}-\tilde{w}(\theta)\|$ is continuous on $\lbrack 0,\pi\rbrack$, 
and therefore the map $\varphi :\{ x,y,z,w\}\to\mathbb{R}^3$ defined by 
\begin{equation*}
\varphi (x)=\tilde{x},\quad
\varphi (y)=\tilde{y},\quad
\varphi (z)=\tilde{z},\quad
\varphi (w)=\tilde{w}(\theta_0 )
\end{equation*}
is an isometric embedding. 
Thus $(\mathrm{a}')$ is also true.  
\end{proof}

Before discussing properties of metric spaces that satisfy the $\boxtimes$-inequalities 
by using the concepts introduced above, 
we recall the following two basic facts. 
Both of them hold clearly, so we omit their proofs.

\begin{lemma}\label{law-of-cosine-lemma}
Suppose $x,y,z,x',y',z'\in\mathbb{R}^2$ are points such that 
\begin{equation*}
0<\| x-y\| =\| x' -y'\| ,\quad
0<\| z-y\| =\| z' -y' \| .
\end{equation*}
Then $\| x-z\|\leq\| x'-z'\|$ if and only if 
$\angle xyz\leq\angle x'y'z'$. 
Moreover, $\| x-z\| =\| x'-z'\|$ if and only if 
$\angle xyz=\angle x'y'z'$. 
\end{lemma}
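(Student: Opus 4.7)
The plan is to reduce both directions (and the equality clause) to a single identity coming from the law of cosines. Because $\|x-y\|>0$ and $\|z-y\|>0$, the interior angle $\angle xyz$ is well defined; likewise for $\angle x'y'z'$. The law of cosines then gives
\begin{equation*}
\|x-z\|^2 = \|x-y\|^2 + \|z-y\|^2 - 2\|x-y\|\,\|z-y\|\cos(\angle xyz),
\end{equation*}
and an analogous identity holds for the primed triangle.

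Next, I would substitute the hypotheses $\|x-y\|=\|x'-y'\|$ and $\|z-y\|=\|z'-y'\|$ and subtract the two identities to obtain
\begin{equation*}
\|x'-z'\|^2 - \|x-z\|^2 = 2\|x-y\|\,\|z-y\|\bigl(\cos(\angle xyz) - \cos(\angle x'y'z')\bigr).
\end{equation*}
The coefficient $2\|x-y\|\,\|z-y\|$ is strictly positive, so the sign of the left-hand side agrees with the sign of $\cos(\angle xyz) - \cos(\angle x'y'z')$, and the two sides vanish simultaneously.

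Finally, since the cosine function is strictly decreasing on $[0,\pi]$, the condition $\angle xyz \le \angle x'y'z'$ is equivalent to $\cos(\angle xyz) \ge \cos(\angle x'y'z')$, which by the previous display is equivalent to $\|x-z\|^2 \le \|x'-z'\|^2$, i.e.\ $\|x-z\|\le\|x'-z'\|$. The same chain of equivalences with equalities in place of inequalities yields the ``moreover'' clause. There is no real obstacle here; the only point to check is the positivity of $\|x-y\|\,\|z-y\|$, which is guaranteed by the hypothesis that both distances are nonzero and legitimizes dividing by this quantity when comparing the two cosines.
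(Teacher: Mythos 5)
Your argument is correct and complete: the law-of-cosines identity, the positivity of $2\|x-y\|\,\|z-y\|$, and the strict monotonicity of cosine on $[0,\pi]$ give exactly the stated equivalences, including the equality clause. The paper omits a proof of this lemma entirely (it is declared to hold clearly), and your law-of-cosines computation is precisely the standard argument the author had in mind.
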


\begin{lemma}\label{convexhull-separation-lemma}
Suppose $x,y,z,w\in\mathbb{R}^2$ are points such that $w\not\in\{ x,y,z\}$. 
Then 
\begin{equation*}
w\in\mathrm{conv}(\{ x,y,z\})
\end{equation*}
if and only if 
$y$ and $z$ are not on the same side of $\overleftrightarrow{xw}$, and 
$\pi\leq\angle ywx+\angle xwz$. 
\end{lemma}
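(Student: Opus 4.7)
The plan is to prove the equivalence by examining the intersection of the line $\overleftrightarrow{xw}$ with the segment $[y,z]$, translating between the convex-combination picture of $w \in \mathrm{conv}(\{x,y,z\})$ and the angle picture at $w$. Condition (A) is equivalent to the statement that $[y,z]$ meets $\overleftrightarrow{xw}$, by the standard fact that two points lie on opposite closed sides of a line if and only if the segment between them meets the line. For the forward implication of (A), if $y$ and $z$ were strictly on the same side of $\overleftrightarrow{xw}$, then writing $w = \alpha x + \beta y + \gamma z$ as a convex combination and computing the signed distance of $w$ to the line would force $\beta = \gamma = 0$, contradicting $w \ne x$.

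The heart of the argument is the generic case in which $[y,z]$ meets $\overleftrightarrow{xw}$ at a single point $p$ strictly between $y$ and $z$. Since the ray $\vec{wp}$ then lies in the angular sector at $w$ spanned by $\vec{wy}$ and $\vec{wz}$, one has $\angle ywp + \angle pwz = \angle ywz < \pi$, the strict inequality because $w$, $y$, $z$ are not collinear in this generic case. Since $w$, $x$, $p$ are collinear, the rays from $w$ through $x$ and through $p$ either coincide or are opposite. In the first case $\angle ywx = \angle ywp$ and $\angle xwz = \angle pwz$, so $\angle ywx + \angle xwz = \angle ywz < \pi$ and (B) fails; in the second case $\angle ywx = \pi - \angle ywp$ and $\angle xwz = \pi - \angle pwz$, so $\angle ywx + \angle xwz = 2\pi - \angle ywz > \pi$ and (B) holds. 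Thus (B) is equivalent to $\vec{wx}$ being opposite to $\vec{wp}$, i.e., to $w$ lying strictly between $x$ and $p$, which is in turn equivalent to $w \in [x,p] \subseteq \mathrm{conv}(\{x,y,z\})$.

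The main obstacle will be handling the degenerate configurations: all four points collinear, exactly one of $y$ or $z$ lying on $\overleftrightarrow{xw}$, or $w$ coinciding with the intersection point $p \in [y,z]$. In each such configuration some of the angles $\angle ywx$, $\angle xwz$ collapse to $0$ or $\pi$ so that the generic angle calculation degenerates, and one must verify by a direct case analysis that (B) is precisely matched by the corresponding inclusion statement about points on a common line. Each individual case is elementary, but enumerating them carefully is the delicate part of the proof.
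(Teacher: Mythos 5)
The paper gives no proof of this lemma to compare against: it is stated alongside Lemma \ref{law-of-cosine-lemma} with the remark that ``both of them hold clearly, so we omit their proofs.'' Judged on its own merits, your argument is sound and follows the natural route. The reduction of the side condition to ``$[y,z]$ meets $\overleftrightarrow{xw}$,'' the barycentric argument showing $w\in\mathrm{conv}(\{x,y,z\})$ forces $y,z$ not to lie strictly on one side, and the supplementary-angle dichotomy ($\angle ywx+\angle xwz$ equals $\angle ywz<\pi$ when $\vec{wx}=\vec{wp}$ and $2\pi-\angle ywz>\pi$ when the rays are opposite) are all correct; in the generic case the collinearity of $w,y,z$ is indeed automatically excluded, since it would force $w=p$ or all four points collinear. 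Two small points deserve attention. First, your closing step ``equivalent to $w\in[x,p]\subseteq\mathrm{conv}(\{x,y,z\})$'' only justifies the implication from $w\in[x,p]$ to $w\in\mathrm{conv}(\{x,y,z\})$; for the forward direction of the lemma you also need $\mathrm{conv}(\{x,y,z\})\cap\overleftrightarrow{xw}=[x,p]$, which follows in one line from barycentric coordinates (a point $(1-t)x+t\lambda y+t(1-\lambda)z$ of the line lies in the triangle exactly when $t\in[0,1]$) but should be said. Second, the degenerate configurations are identified but not executed; since each reduces to a one-dimensional or zero-angle verification this is acceptable for a lemma the paper itself declares obvious, but note that the case where $x,y,z$ are collinear with $x\in(y,z)$ and $w$ off that line is not in your list of degeneracies---it is in fact handled by your generic argument with $p=x$, so no harm is done, only the bookkeeping of which cases are ``generic'' is slightly off.
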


In the rest of this section, we discuss several properties of metric spaces that satisfy the $\boxtimes$-inequalities 
by using the concepts introduced above.

\begin{lemma}\label{four-points-in-R3-short-lemma}
Let $(X,d_X )$ be a metric space that satisfies the $\boxtimes$-inequalities. 
Suppose $x,y,z,w\in X$ are four distinct points such that 
$\{x,y,z,w\}$ is under-distance with respect to $\{y,w\}$. 
Suppose $x',y',z',w'\in\mathbb{R}^2$ are points such that 
\begin{align*}
&\|x'-y'\|=d_X (x,y),\quad\|y'-z'\|=d_X (y,z),\quad\|z'-x'\|=d_X (z,x) \\
&\|x'-w'\|=d_X (x,w),\quad \|w'-z'\|=d_X (w,z).
\end{align*}
Then 
\begin{equation*}
\lbrack x',y'\rbrack\cap\lbrack z',w'\rbrack =\emptyset ,\quad
\lbrack x',w' \rbrack\cap\lbrack y',z'\rbrack=\emptyset ,
\end{equation*}
and the points $x'$, $y'$, $z'$ and $w'$ are not collinear. 
\end{lemma}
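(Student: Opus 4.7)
The plan is to prove the three conclusions in sequence: first $\lbrack x',y'\rbrack\cap\lbrack z',w'\rbrack=\emptyset$, then by a $y\leftrightarrow w$ symmetry $\lbrack x',w'\rbrack\cap\lbrack y',z'\rbrack=\emptyset$, and finally deduce non-collinearity as a combinatorial consequence of the first two. The starting observation is that any planar realization is a fortiori a spatial one, so the under-distance hypothesis gives the strict inequality $d_X(y,w)<\|y'-w'\|$.

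For the first claim, argue by contradiction and write a common point of $\lbrack x',y'\rbrack$ and $\lbrack z',w'\rbrack$ as $(1-t)x'+ty'=(1-s)z'+sw'$ with $t,s\in\lbrack 0,1\rbrack$. I would split according to whether $(t,s)\in (0,1)^2$ (generic case) or one of $t,s$ lies in $\{0,1\}$ (boundary case). In the generic case, use the Euclidean identity
\begin{align*}
0 \;=\; \|(1-t)x'+ty'-(1-s)z'-sw'\|^2
\;=\;& (1-t)(1-s)\|x'-z'\|^2+(1-t)s\|x'-w'\|^2 \\
&+t(1-s)\|y'-z'\|^2+ts\|y'-w'\|^2 \\
&-t(1-t)\|x'-y'\|^2-s(1-s)\|z'-w'\|^2.
\end{align*}
Substituting the five hypothesized distance equalities and the strict under-distance inequality $\|y'-w'\|^2>d_X(y,w)^2$, and using $ts>0$, yields
\begin{align*}
0 \;>\;& (1-t)(1-s)d_X(x,z)^2+(1-t)s\,d_X(x,w)^2+t(1-s)d_X(y,z)^2 \\
&+ts\,d_X(y,w)^2-t(1-t)d_X(x,y)^2-s(1-s)d_X(z,w)^2.
\end{align*}
But this expression is precisely what the $\boxtimes$-inequality applied to $(x,z,y,w)$ (in that cyclic order) with parameters $(t,s)$ says must be $\geq 0$, a contradiction. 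The right way to see the matching is to view $\lbrack x',y'\rbrack$ and $\lbrack z',w'\rbrack$ as the two diagonals of the relabeled quadrangle $x'z'y'w'$.

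For the four boundary cases, distinctness of $x,y,z,w$ in $X$ forces the intersection point to lie in the \emph{open} opposite segment, producing a metric triangle equality among three of the four points. If $t=0$ then $x'\in(z',w')$, giving $d_X(z,w)=d_X(z,x)+d_X(x,w)$; Proposition \ref{sdi-CAT(0)-prop} applied to $(z,x,w)$ with auxiliary point $y$ bounds $d_X(x,y)^2$ above by $(1-r)d_X(z,y)^2+r\,d_X(y,w)^2-r(1-r)d_X(z,w)^2$, while the Euclidean identity for $x'=(1-r)z'+rw'$ gives the exact equality $d_X(x,y)^2=(1-r)d_X(z,y)^2+r\,\|y'-w'\|^2-r(1-r)d_X(z,w)^2$; strict under-distance then yields the absurd $d_X(x,y)^2<d_X(x,y)^2$. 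The case $s=0$ is symmetric with $z$ metrically between $x$ and $y$. The cases $t=1$ and $s=1$ force $y'\in(z',w')$ or $w'\in(x',y')$ respectively, and then Euclidean collinearity gives $\|y'-w'\|$ equal to a signed difference of given distances that by the triangle inequality in $X$ does not exceed $d_X(y,w)$, directly contradicting under-distance. The second claim $\lbrack x',w'\rbrack\cap\lbrack y',z'\rbrack=\emptyset$ follows by running the same argument after swapping $y\leftrightarrow w$, which preserves both the five equalities and the under-distance hypothesis.

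For non-collinearity, one uses a simple combinatorial fact: four distinct points on a line admit three partitions into two pairs, and exactly one of those partitions consists of two disjoint segments while the other two partitions each consist of segments with non-empty intersection. In particular, at least one of the partitions $\{\{x,y\},\{z,w\}\}$ and $\{\{x,w\},\{y,z\}\}$ corresponds to intersecting segments in any collinear configuration, contradicting the two already-established conclusions. I expect the main obstacle to be the bookkeeping in the boundary cases, specifically keeping track of which metric triangle equality is forced and which invocation of Proposition \ref{sdi-CAT(0)-prop} to pair with the Euclidean identity so that under-distance produces precisely the strict inequality needed.
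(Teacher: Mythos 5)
Your proof is correct. For the two disjointness claims you have, in effect, re-proved the paper's Lemma \ref{quadrangle-edge-lemma} inline: your generic case $(t,s)\in(0,1)^2$ is Case 1 of that lemma's proof (the Euclidean quadrilateral identity played against the $\boxtimes$-inequality for the cyclic order $x,z,y,w$), your $t=0$ and $s=0$ cases are its Case 2 (an endpoint falling in the open opposite segment, handled by Proposition \ref{sdi-CAT(0)-prop}), and your $t=1$, $s=1$ cases reduce to the triangle inequality. The paper instead simply cites Lemma \ref{quadrangle-edge-lemma} (with the roles of the four points permuted so that the two intersecting segments become the ``diagonals'' of that lemma), obtaining $\|y'-w'\|\le d_X(y,w)$ in one line; the only hypothesis to check is that the lemma's condition $z\ne w$ becomes $x\ne z$ under the relabeling, which holds by distinctness, so you could compress your argument considerably. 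Where you genuinely diverge is the non-collinearity step: the paper parametrizes the broken paths $x'\to y'\to z'$ and $z'\to w'\to x'$ and invokes the intermediate value theorem to force one of the two forbidden intersections, whereas you observe that among the three pairings of four distinct collinear points into two segments exactly one pairing is disjoint, so at least one of the pairings $\{\{x,y\},\{z,w\}\}$ and $\{\{x,w\},\{y,z\}\}$ must yield intersecting segments. Both work, and your version is arguably cleaner; it does tacitly use that $x',y',z',w'$ are pairwise distinct, which indeed follows from the five prescribed positive distances together with $\|y'-w'\|>d_X(y,w)>0$, but that deserves a sentence.
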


\begin{proof}
If we had $\lbrack x',y' \rbrack\cap\lbrack z',w' \rbrack\neq\emptyset$, 
then Lemma \ref{quadrangle-edge-lemma} would imply that 
\begin{equation*}
\| y'-w'\|\leq d_X (y,w), 
\end{equation*}
contradicting the hypothesis that $\{x,y,z,w\}$ is under-distance with respect to $\{y,w\}$. 
Hence we have 
\begin{equation}\label{four-points-in-R3-short-wy-zw}
\lbrack x',y'\rbrack\cap\lbrack z',w'\rbrack =\emptyset .
\end{equation}
Similarly, we also have 
\begin{equation}\label{four-points-in-R3-short-xw-zy}
\lbrack x',w' \rbrack\cap\lbrack y',z'\rbrack=\emptyset .
\end{equation}

To prove that $x'$, $y'$, $z'$ and $w'$ are not collinear, 
suppose to the contrary that there exists a straight line $L\subset\mathbb{R}^2$ containing 
$x'$, $y'$, $z'$ and $w'$. 
Choose an isometric embedding 
$\varphi :L\to\mathbb{R}$ such that $\varphi (x')<\varphi (z')$. 
Define maps $\gamma_1 :\lbrack 0,2\rbrack\to\mathbb{R}$, $\gamma_2 :\lbrack 0,2\rbrack\to\mathbb{R}$ and $f:\lbrack 0,2\rbrack\to\mathbb{R}$ by 
\begin{align*}
\gamma_1 (t)
&=
\begin{cases}
\varphi (x')+\left(\varphi (y')-\varphi (x')\right) t,\quad &t\in\lbrack 0,1\rbrack ,\\
\varphi (y')+\left(\varphi (z')-\varphi (y')\right) (t-1) ,\quad &t\in (1,2\rbrack  ,
\end{cases}
\\
\gamma_2 (t)
&=
\begin{cases}
\varphi (z')+\left(\varphi (w')-\varphi (z')\right)t,\quad &t\in\lbrack 0,1\rbrack ,\\
\varphi (w')+\left(\varphi (x')-\varphi (w')\right) (t-1) ,\quad &t\in (1,2\rbrack ,
\end{cases}
\\
f(t)
&=
\gamma_2 (t)-\gamma_1 (t). 
\end{align*}
Then 
\begin{equation*}
f(0)=\varphi (z')-\varphi (x')>0,\quad 
f(2)=\varphi (x')-\varphi (z')<0,
\end{equation*}
and $f$ is continuous on $\lbrack 0,2\rbrack$. 
Hence there exists $t_0 \in ( 0,2)$ such that $f(t_0 )=0$. 
In the case in which $t_0 \leq 1$, we have 
\begin{equation*}
\varphi^{-1}(\gamma_1 (t_0 ))=\varphi^{-1}(\gamma_2 (t_0 ))\in\lbrack x',y'\rbrack\cap\lbrack z',w'\rbrack ,
\end{equation*}
and in the case in which $t_0 > 1$, we have 
\begin{equation*}
\varphi^{-1}(\gamma_1 (t_0 ))=\varphi^{-1}(\gamma_2 (t_0 ))\in\lbrack y',z' \rbrack\cap\lbrack x',w'\rbrack .
\end{equation*}
This contradicts \eqref{four-points-in-R3-short-wy-zw} or \eqref{four-points-in-R3-short-xw-zy}. 
Thus $x'$, $y'$, $z'$ and $w'$ are not collinear. 
\end{proof}

The following corollary follows immediately from Lemma \ref{four-points-in-R3-short-lemma}. 

\begin{corollary}\label{four-points-in-R3-short-corollary}
Let $(X,d_X )$ be a metric space that satisfies the $\boxtimes$-inequalities. 
Suppose $x,y,z,w\in X$ are four distinct points such that 
$\{x,y,z,w\}$ is under-distance with respect to $\{y,w\}$. 
Suppose $x',y',z',w'\in\mathbb{R}^2$ are points such that 
\begin{align*}
&\|x'-y'\|=d_X (x,y),\quad\|y'-z'\|=d_X (y,z),\quad\|z'-x'\|=d_X (z,x) \\
&\|x'-w'\|=d_X (x,w),\quad \|w'-z'\|=d_X (w,z), 
\end{align*}
and $w'$ is not on the opposite side of $\overleftrightarrow{x'z'}$ from $y'$. 
Then 
\begin{equation}\label{four-points-in-R3-short-y}
y'\in\mathrm{conv}(\{ x',z',w'\})
\end{equation}
or
\begin{equation}\label{four-points-in-R3-short-w}
w'\in\mathrm{conv}(\{ x',z',y'\}) . 
\end{equation}
Moreover, \eqref{four-points-in-R3-short-y} and \eqref{four-points-in-R3-short-w} do not hold simultaneously. 
\end{corollary}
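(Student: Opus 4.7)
The plan is to deduce the corollary from Lemma \ref{four-points-in-R3-short-lemma} together with the elementary characterization of triangle membership in Lemma \ref{convexhull-separation-lemma}. First I would dispose of the degenerate situation in which $w'$ itself lies on the line $\overleftrightarrow{x'z'}$: if $w'\notin [x',z']$ then one of $x'$ or $z'$ lies strictly between $w'$ and the other on $\overleftrightarrow{x'z'}$, which forces either $x'\in [z',w']\cap [x',y']$ or $z'\in [x',w']\cap [y',z']$, contradicting the disjointness conclusion of Lemma \ref{four-points-in-R3-short-lemma}. Hence $w'\in [x',z']\subseteq\mathrm{conv}(\{x',z',y'\})$, and the analogous argument disposes of the case in which $y'$ is on $\overleftrightarrow{x'z'}$. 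So I may reduce to the main case where both $y'$ and $w'$ lie strictly in the same open half-plane bounded by $\overleftrightarrow{x'z'}$.

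In the main case, I introduce the four angles $\alpha=\angle y'x'z'$, $\beta=\angle w'x'z'$, $\gamma=\angle y'z'x'$, $\delta=\angle w'z'x'$, all in $(0,\pi)$, and split into cases according to the signs of $\alpha-\beta$ and $\gamma-\delta$. If $\beta\le\alpha$ and $\delta\le\gamma$, then $w'$ lies inside both angular cones $\angle z'x'y'$ and $\angle x'z'y'$, whose intersection, since $\alpha+\gamma<\pi$, is exactly the closed triangle $\mathrm{conv}(\{x',z',y'\})$; this gives \eqref{four-points-in-R3-short-w}. The symmetric subcase $\beta\ge\alpha$ and $\delta\ge\gamma$ gives \eqref{four-points-in-R3-short-y}. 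In a mixed subcase, say $\beta\le\alpha$ and $\delta>\gamma$, the ray from $x'$ through $w'$ enters the triangle $\mathrm{conv}(\{x',z',y'\})$ at $x'$ and exits through $[z',y']$ at a unique point $p\ne z'$; as the moving point on this ray travels away from $x'$, the angle subtended at $z'$ increases monotonically from $0$ to $\pi-\beta$, taking the value $\gamma$ precisely at $p$. Thus $\delta>\gamma$ forces $w'$ strictly beyond $p$, so $p\in (x',w')\cap [z',y']$, contradicting Lemma \ref{four-points-in-R3-short-lemma}. The remaining mixed subcase is handled symmetrically.

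For the mutual exclusivity of \eqref{four-points-in-R3-short-y} and \eqref{four-points-in-R3-short-w}, if both held then $\mathrm{conv}(\{x',z',y'\})\subseteq\mathrm{conv}(\{x',z',w'\})\subseteq\mathrm{conv}(\{x',z',y'\})$, so the two hulls coincide; if this common hull is a non-degenerate triangle then its extreme points are determined uniquely and $y'=w'$, contradicting distinctness, while if it degenerates to a segment then $y'$ and $w'$ both lie on $\overleftrightarrow{x'z'}$, making all four points collinear and again contradicting Lemma \ref{four-points-in-R3-short-lemma}. I expect the main obstacle to be the careful bookkeeping of boundary configurations in which three of the four points are collinear although the four are not, corresponding to angles $\alpha,\beta,\gamma,\delta$ attaining the extreme value $0$ or $\pi$; in particular, the monotonicity argument for the angle at $z'$ along the ray from $x'$ must be phrased so that the equality case $w'=p$ is itself identified as the forbidden segment intersection produced by Lemma \ref{four-points-in-R3-short-lemma}.
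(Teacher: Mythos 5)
Your proposal is correct and follows essentially the same route as the paper: both derive the corollary from the two segment-disjointness statements and the non-collinearity provided by Lemma \ref{four-points-in-R3-short-lemma}, the only difference being that you spell out the elementary planar case analysis (and the extreme-point argument for exclusivity) that the paper dismisses as ``clear.''
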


\begin{proof}
It follows from Lemma \ref{four-points-in-R3-short-lemma} that 
\begin{equation}\label{four-points-in-R3-short-corollary-no-intersections}
\lbrack x',y'\rbrack\cap\lbrack z',w'\rbrack =\emptyset ,\quad
\lbrack x',w' \rbrack\cap\lbrack y',z'\rbrack=\emptyset ,
\end{equation}
and $x'$, $y'$, $z'$ and $w'$ are not collinear, 
which clearly implies that \eqref{four-points-in-R3-short-y} or \eqref{four-points-in-R3-short-w} holds. 
By \eqref{four-points-in-R3-short-corollary-no-intersections}, we have $y'\neq w'$. 
Together with the fact that $x'$, $y'$, $z'$ and $w'$ are not collinear, 
this implies that \eqref{four-points-in-R3-short-y} and \eqref{four-points-in-R3-short-w} do not hold simultaneously. 
\end{proof}

\begin{lemma}\label{four-points-in-R3-long}
Let $(X,d_X )$ be a metric space that satisfies the $\boxtimes$-inequalities. 
Suppose $x,y,z,w\in X$ are four distinct points 
such that $\{x,y,z,w\}$ is over-distance with respect to $\{y,w\}$. 
Then 
$
\tilde{\angle}yxz+\tilde{\angle}zxw>\pi
$ 
or 
$
\tilde{\angle}yzx+\tilde{\angle}xzw>\pi
$. 
\end{lemma}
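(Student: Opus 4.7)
The plan is to argue by contrapositive. Suppose $\tilde{\angle}yxz + \tilde{\angle}zxw \leq \pi$ and $\tilde{\angle}yzx + \tilde{\angle}xzw \leq \pi$; the goal is to exhibit a realization of the five given distances in $\mathbb{R}^2$ (hence in $\mathbb{R}^3$) whose resulting sixth distance $\|y'-w'\|$ satisfies $\|y'-w'\| \geq d_X(y,w)$, contradicting the assumption that $\{x,y,z,w\}$ is over-distance with respect to $\{y,w\}$. To this end, first place $x',z' \in \mathbb{R}^2$ with $\|x'-z'\| = d_X(x,z)$ and then realize the triangles $xyz$ and $xzw$ as Euclidean triangles sharing the edge $\lbrack x',z'\rbrack$, with $y'$ and $w'$ on opposite sides of $\overleftrightarrow{x'z'}$. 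By construction the four base angles of these triangles coincide with the four comparison angles $\tilde{\angle}yxz$, $\tilde{\angle}zxw$, $\tilde{\angle}yzx$, $\tilde{\angle}xzw$.

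The main geometric point is that under the assumed angle inequalities, the diagonals $\lbrack x',z'\rbrack$ and $\lbrack y',w'\rbrack$ must meet. A direct computation shows this: placing $x' = (0,0)$, $z' = (d,0)$ with $d = d_X(x,z) > 0$, writing $y' = r_1(\cos\alpha,\sin\alpha)$ and $w' = r_2(\cos\beta,-\sin\beta)$ with $\alpha,\beta\in(0,\pi)$ the base angles at $x'$, the intersection of $\overleftrightarrow{y'w'}$ with the $x_1$-axis has first coordinate
\begin{equation*}
p_1 = \frac{r_1 r_2 \sin(\alpha+\beta)}{r_1 \sin\alpha + r_2 \sin\beta},
\end{equation*}
so $p_1 \geq 0$ precisely when $\alpha + \beta \leq \pi$; the symmetric computation anchored at $z'$ yields $p_1 \leq d$ precisely when the angle sum at $z'$ is at most $\pi$. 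Hence the two hypotheses together force $p \in \lbrack x',z'\rbrack$, so $\lbrack x',z'\rbrack \cap \lbrack y',w'\rbrack \neq \emptyset$.

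At this point Lemma \ref{quadrangle-lemma} applies verbatim (all five relevant distance equalities hold by construction) and yields $d_X(y,w) \leq \|y'-w'\|$, contradicting over-distance. The main technical obstacle will be the degenerate case in which one of the two triangles is flat, so that $y'$ or $w'$ lies on $\overleftrightarrow{x'z'}$ and the ``opposite side'' construction is ambiguous. If the offending vertex still lies in $\lbrack x',z'\rbrack$, it already witnesses $\lbrack x',z'\rbrack \cap \lbrack y',w'\rbrack \neq \emptyset$ and the argument proceeds unchanged. If instead it lies on an extension of $\lbrack x',z'\rbrack$, say $x$ is between $y$ and $z$ metrically (so $d_X(y,z) = d_X(y,x)+d_X(x,z)$ and $\tilde{\angle}yxz = \pi$), then Proposition \ref{sdi-CAT(0)-prop} applied to this triple with $t = d_X(y,x)/d_X(y,z)$, combined with Stewart's identity in any Euclidean realization of the five distances, yields $(1-t) d_X(y,w)^2 \geq (1-t)\|y'-w'\|^2$, hence $d_X(y,w) \geq \|y'-w'\|$ directly, again contradicting over-distance. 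I expect this degenerate bookkeeping to be the only real obstacle; the core of the proof is essentially one application of Lemma \ref{quadrangle-lemma}.
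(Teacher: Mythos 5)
Your proposal is correct and is essentially the contrapositive of the paper's own argument: the paper realizes the two triangles in the plane on opposite sides of $\lbrack x',z'\rbrack$, uses Lemma \ref{quadrangle-lemma} to deduce that the over-distance hypothesis forces $\lbrack x',z'\rbrack\cap\lbrack y',w'\rbrack=\emptyset$, and then reads off a reflex angle by a four-case analysis, while you run the same construction in reverse, checking via the explicit formula for $p_1$ that the two angle hypotheses force the diagonals to meet and then invoking the same Lemma \ref{quadrangle-lemma}. The only divergence is in the degenerate collinear cases, which the paper dispatches by direct angle bookkeeping and you dispatch via Proposition \ref{sdi-CAT(0)-prop} combined with Stewart's identity; both routes are valid.
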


\begin{proof}
Define $x',z'\in\mathbb{R}^2$ by 
\begin{equation*}
x'=(d_X (x,z),0),\quad
z'=(0,0). 
\end{equation*}
Suppose $y'=(y^{(1)},y^{(2)})$ and $w'=(w^{(1)},w^{(2)})$ are the points in $\mathbb{R}^2$ such that 
\begin{align*}
&\|x'-y'\|=d_X (x,y),\quad\|y'-z'\|=d_X (y,z),\quad y^{(2)}\geq 0 \\
&\|x'-w' \|=d_X (x,w),\quad\| w'-z' \|=d_X (w,z),\quad w^{(2)}\leq 0. 
\end{align*}
Then 
\begin{equation*}
\| y'-w' \| <d_{X}(y,w)
\end{equation*}
because $\{x,y,z,w\}$ is over-distance with respect to $\{y,w\}$. 
It follows that 
\begin{equation}\label{four-points-in-R3-long-xz-cap-yw-empty-eq}
\lbrack x',z'\rbrack\cap\lbrack y',w'\rbrack =\emptyset ,
\end{equation}
because otherwise Lemma \ref{quadrangle-lemma} would imply that 
$d_{X}(y,w)\leq\| y'-w' \|$. 
We consider four cases.

\textsc{Case 1}: 
{\em $y'\not\in\overleftrightarrow{x'z'}$ and $w' \not\in\overleftrightarrow{x'z'}$.} 
In this case, \eqref{four-points-in-R3-long-xz-cap-yw-empty-eq} implies that the region 
determined by the quadrilateral $\lbrack x' ,y'\rbrack\cup\lbrack y',z'\rbrack\cup\lbrack z',w'\rbrack\cup\lbrack w',x'\rbrack$ 
is not convex, and therefore 
at least one of the interior angle measures of the quadrilateral is greater than $\pi$. 
It follows that 
\begin{equation*}
\tilde{\angle}yxz+\tilde{\angle}zxw
=
\angle y'x'z'+\angle z'x'w'
>
\pi
\end{equation*}
or 
\begin{equation*}
\tilde{\angle}yzx+\tilde{\angle}xzw
=
\angle y'z'x'+\angle x'z'w'
>\pi .
\end{equation*}

\textsc{Case 2}: 
{\em $y'\in\overleftrightarrow{x'z'}$ and $w' \not\in\overleftrightarrow{x'z'}$}. 
In this case, \eqref{four-points-in-R3-long-xz-cap-yw-empty-eq} implies that 
one of the following inequalities holds:  
\begin{equation*}
y^{(1)}<0,\quad d_X (x,z)<y^{(1)}. 
\end{equation*}
If $y^{(1)}<0$, then 
\begin{equation*}
\tilde{\angle}yzx+\tilde{\angle}xzw
=
\angle y'z'x'+\angle x'z'w'
=\pi +\angle x'z'w'
>\pi .
\end{equation*}
If $d_X (x,z)<y^{(1)}$, then 
\begin{equation*}
\tilde{\angle}yxz+\tilde{\angle}zxw
=
\angle y'x'z'+\angle z'x'w'
=
\pi +\angle z'x'w'
>\pi .
\end{equation*}

\textsc{Case 3}: 
{\em $y'\not\in\overleftrightarrow{x'z'}$ and $w'\in\overleftrightarrow{x'z'}$}. 
In this case, we can prove that 
$\tilde{\angle}yxz+\tilde{\angle}zxw>\pi$ or $\tilde{\angle}yzx+\tilde{\angle}xzw>\pi$ holds 
in exactly the same way as in \textsc{Case 2}. 

\textsc{Case 4}: 
{\em $y'\in\overleftrightarrow{x'z'}$ and $w'\in\overleftrightarrow{x'z'}$}. 
In this case, \eqref{four-points-in-R3-long-xz-cap-yw-empty-eq} implies that one of the following inequalities holds: 
\begin{equation*}
\max\{ y^{(1)},w^{(1)}\} <0,\quad
d_X (x,z)<\min\{ y^{(1)},w^{(1)}\} .
\end{equation*} 
If $\max\{ y^{(1)},w^{(1)}\} <0$, then 
\begin{equation*}
\tilde{\angle}yzx+\tilde{\angle}xzw
=
\angle y'z'x'+\angle x'z'w'
=2\pi >\pi . 
\end{equation*}
If $d_X (x,z)<\min\{ y^{(1)},w^{(1)}\}$, then 
\begin{equation*}
\tilde{\angle}yxz+\tilde{\angle}zxw
=
\angle y'x'z'+\angle z'x'w'=2\pi >\pi .
\end{equation*}

The above four cases exhaust all possibilities. 
\end{proof}

\begin{lemma}\label{no-bb-lemma}
Let $(X,d_X)$ be a metric space that satisfies the $\boxtimes$-inequalities, 
and let $x,y,z,w \in X$ be four distinct points such that 
$\{ x,y,z,w\}$ is under-distance with respect to $\{ x,w\}$ and $\{ y,w\}$. 
Suppose $x', y',z'\in\mathbb{R}^2$ are points such that 
\begin{equation*}
\|x'-y'\|=d_X (x,y),\quad\|y'-z'\|=d_X (y,z),\quad\|z'-x'\|=d_X (z,x). 
\end{equation*}
Suppose $w'\in\mathbb{R}^2$ is a point such that 
\begin{equation*}
\|y'-w'\|=d_X (y,w),\quad \|w'-z'\|=d_X (w,z),
\end{equation*}
and $w'$ is not on the opposite side of $\overleftrightarrow{y'z'}$ from $x'$. 
Suppose $w''\in\mathbb{R}^2$ is a point such that 
\begin{equation*}
\|x'-w''\|=d_X (x,w),\quad \|w'' -z'\|=d_X (w,z),
\end{equation*}
and $w''$ is not on the opposite side of $\overleftrightarrow{x'z'}$ from $y'$. 
Then 
\begin{align*}
&w' \in\mathrm{conv}\left(\{ x', y',z'\}\right) ,\quad
w'' \in\mathrm{conv}\left(\{ x',y',z'\}\right) ,\\
&\mathrm{conv}\left(\{y' ,z',w'\} \right)\cap\mathrm{conv}\left(\{x' ,z',w''\}\right) =\{ z'\} .
\end{align*}
\end{lemma}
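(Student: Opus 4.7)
The plan is to translate both under-distance hypotheses into angle inequalities at the common vertex $z'$ and then read off all three assertions from the resulting angular ordering of rays at $z'$. Set $A = \tilde{\angle}xzy$, $B = \tilde{\angle}yzw$ and $C = \tilde{\angle}xzw$. Because $x'$ and $w'$ lie on the same side of $\overleftrightarrow{y'z'}$, the planar angle at $z'$ between $z'x'$ and $z'w'$ equals $|A-B|$; similarly $\angle y'z'w'' = |A-C|$ because $y'$ and $w''$ lie on the same side of $\overleftrightarrow{x'z'}$. Viewing $\mathbb{R}^2 \subset \mathbb{R}^3$ and applying Definition \ref{TSD-TLD-def} gives $\|x'-w'\| > d_X(x,w)$ and $\|y'-w''\| > d_X(y,w)$; by the law of cosines and Lemma \ref{law-of-cosine-lemma} these become
\begin{equation*}
|A-B| > C \quad \text{and} \quad |A-C| > B.
\end{equation*}

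I will then check that, because $A,B,C \in [0,\pi]$, these two inequalities jointly force $A > B+C$. Among the four sign choices in the two absolute values, three are inconsistent: pairing $B-A>C$ with $A-C>B$ gives $C<0$ after addition, pairing $A-B>C$ with $C-A>B$ gives $B<0$, and pairing $B-A>C$ with $C-A>B$ gives $A<0$. Only the choice $A-B>C$ together with $A-C>B$ is consistent, and each inequality reduces to $A > B+C$.

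Next I will apply Corollary \ref{four-points-in-R3-short-corollary} in two forms. Applied with the roles of $x$ and $y$ interchanged to the configuration $x',y',z',w'$, it yields either $x' \in \mathrm{conv}(\{y',z',w'\})$ or $w' \in \mathrm{conv}(\{x',y',z'\})$; applied directly to $x',y',z',w''$, it yields either $y' \in \mathrm{conv}(\{x',z',w''\})$ or $w'' \in \mathrm{conv}(\{x',y',z'\})$. Now $\mathrm{conv}(\{y',z',w'\})$ is contained in the angular cone at $z'$ bounded by the rays $z'y'$ and $z'w'$, whose angular width is $B = \angle y'z'w'$, while the ray $z'x'$ is at angular distance $A > B$ from $z'y'$; hence $x' \notin \mathrm{conv}(\{y',z',w'\})$ and the first dichotomy forces $w' \in \mathrm{conv}(\{x',y',z'\})$. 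Analogously $\mathrm{conv}(\{x',z',w''\})$ lies in the cone at $z'$ of width $C = \angle x'z'w''$ bounded by $z'x'$ and $z'w''$, while $z'y'$ is at angular distance $A > C$ from $z'x'$, so $y' \notin \mathrm{conv}(\{x',z',w''\})$ and $w'' \in \mathrm{conv}(\{x',y',z'\})$.

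For the last assertion, the angular order of the four rays at $z'$ is $z'y', z'w', z'w'', z'x'$ at angles $0 < B < A-C < A$, so the two cones containing $\mathrm{conv}(\{y',z',w'\})$ and $\mathrm{conv}(\{x',z',w''\})$, namely $[0,B]$ and $[A-C,A]$, meet only at the vertex $z'$; therefore so do the two triangles. The main technical hurdle will be packaging the law-of-cosines step carefully so that the same-side hypotheses deliver differences (rather than sums) of the comparison angles at $z'$; once that is in place, the case analysis and the angular-cone argument are routine. Degenerate configurations in which $B = 0$ or $C = 0$ are absorbed by allowing non-strict versions of the inequalities, with $w'$ or $w''$ possibly lying on an edge of $\mathrm{conv}(\{x',y',z'\})$ while the intersection in the third assertion still reduces to $\{z'\}$.
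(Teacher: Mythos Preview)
Your approach is correct and takes a genuinely different route from the paper's. You work with comparison angles at $z$, establish the key inequality $\tilde\angle xzy > \tilde\angle yzw + \tilde\angle xzw$ via a sign analysis of $|A-B|>C$ and $|A-C|>B$, and then read off all three assertions from the resulting angular ordering of rays at $z'$. The paper instead observes that the perpendicular bisector $L$ of the segment $[w',w'']$ passes through $z'$ and strictly separates $x'$ (on the $w''$-side) from $y'$ (on the $w'$-side): this is immediate from $\|x'-w''\|=d_X(x,w)<\|x'-w'\|$, $\|y'-w'\|=d_X(y,w)<\|y'-w''\|$, and $\|z'-w'\|=d_X(z,w)=\|z'-w''\|$. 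That single separating line yields $x'\notin\mathrm{conv}(\{y',z',w'\})$, $y'\notin\mathrm{conv}(\{x',z',w''\})$, and the fact that the two triangles meet only at $z'$, all at once; both proofs then finish by invoking Corollary~\ref{four-points-in-R3-short-corollary}. The paper's bisector argument is shorter and handles all degenerate placements uniformly without any angle bookkeeping. Your angular route is more computational but yields the explicit inequality $A>B+C$ as a byproduct; to make the ``angular ordering $0\le B<A-C\le A$'' step airtight you should record that $A<\pi$ (equivalently, $x',y',z'$ are not collinear), which follows directly from Lemma~\ref{four-points-in-R3-short-lemma}, since otherwise $z'\in[x',y']\cap[z',w'']$ would be nonempty.
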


\begin{figure}[htbp]
\centering\begin{tikzpicture}[scale=0.6]
\draw (0,0) -- (6,0);
\draw (6,0) -- (3.5,4);
\draw (3.5,4) -- (0,0);
\draw (0,0) -- (3.3,0.5);
\draw (3.3,0.5) -- (3.5,4);
\draw (3.5,4) -- (3.7,0.5);
\draw (3.7,0.5) -- (6,0);
\node [left] at (0,0) {$x'$};
\node [above] at (3.5,4) {$z'$};
\node [above left] at (3.3,0.5) {$w''$};
\node [above right] at (3.7,0.5) {$w'$};
\node [right] at (6,0) {$y'$};
\end{tikzpicture}
\caption{The points in $\mathbb{R}^2$ appeared in the statement of Lemma \ref{no-bb-lemma}.}\label{no-bb-fig}
\end{figure}
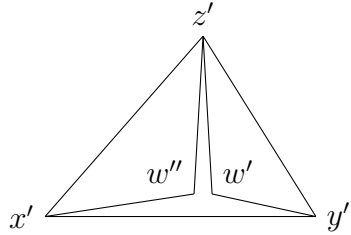

\begin{proof}
Because $\{ x,y,z,w\}$ is under-distance with respect to $\{ x,w\}$ and $\{ y,w\}$, 
\begin{equation*}
d_X (x,w)< \|x' -w' \| ,\quad d_X (y,w)<\|y'-w''\| .
\end{equation*}
The first inequality implies that 
\begin{equation*}\label{no-bb-14-eq}
0
<
\|x' -w' \| -d_X (x,w)
=
\|x' -w' \| -\|x' -w''\|
\leq
\|w' -w''\| ,
\end{equation*}
which ensures that $w' \neq w''$. 
Let $L \subseteq\mathbb{R}^2$ be the perpendicular bisector of the line segment $\lbrack w' ,w'' \rbrack$. 
Then 
$x'$ is on the same side of $L$ as $w''$, 
$y'$ is on the same side of $L$ as $w'$, 
and $z'\in L$ 
because 
\begin{align*}
&\|x' -w'' \| =d_X (x,w)<\|x' -w' \| ,\quad
\|y' -w'\| =d_X (y,w)<\|y' -w'' \| ,\\
&\|z' -w'\| =d_X (z,w)=\|z'-w''\| .
\end{align*}
It follows that 
\begin{equation}\label{not-in-convexhull-bb}
x' \not\in\mathrm{conv}\left(\{ y',z',w'\}\right), \quad
y' \not\in\mathrm{conv}\left(\{ x',z',w''\}\right) ,
\end{equation}
and 
\begin{equation*}
\mathrm{conv}\left(\{y' ,z',w'\} \right)\cap\mathrm{conv}\left(\{x' ,z',w''\}\right) =\{ z'\} .
\end{equation*}
Because $\{ x,y,z,w\}$ is under-distance with respect to $\{ x,w\}$ and $\{ y,w\}$, 
\eqref{not-in-convexhull-bb} and Corollary \ref{four-points-in-R3-short-corollary} imply that 
\begin{equation*}
w' \in\mathrm{conv}\left(\{ x', y',z'\}\right) ,\quad
w'' \in\mathrm{conv}\left(\{ x',y',z'\}\right) ,
\end{equation*}
which completes the proof. 
\end{proof}

\begin{corollary}\label{xyz-non-collinear-corollary}
Let $(X,d_X)$ be a metric space that satisfies the $\boxtimes$-inequalities, and 
let $x,y,z,w \in X$ be four distinct points such that 
$\{ x,y,z,w\}$ is under-distance with respect to $\{ x,w\}$ and $\{ y,w\}$. 
Suppose $x',y',z'\in\mathbb{R}^2$ are points such that 
\begin{equation*}
\| x'-y'\|=d_X (x,y),\quad
\| y'-z'\|=d_X (y,z),\quad
\| z'-x'\|=d_X (z,x).
\end{equation*}
Then $x'$, $y'$ and $z'$ are not collinear. 
\end{corollary}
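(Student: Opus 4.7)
The plan is to argue by contradiction, assuming $x',y',z'$ are collinear. Since $\|x'-y'\|=d_X(x,y)$, $\|y'-z'\|=d_X(y,z)$, $\|z'-x'\|=d_X(z,x)$, collinearity forces a triangle equality among these three positive distances, so one of $x',y',z'$ is strictly between the other two on the common line. The hypothesis is symmetric in $x$ and $y$, so there are essentially two cases: either the middle point is $y'$ (equivalently, by symmetry, $x'$) or it is $z'$. In each case I will construct an auxiliary fourth point in $\mathbb{R}^2$ realizing two of the remaining distances to $w$, use the appropriate strict under-distance hypothesis to lower-bound the third distance from that point, and then derive a contradiction with Proposition \ref{sdi-CAT(0)-prop} via the Euclidean parallelogram identity.

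Consider first the case where $y'$ is strictly between $x'$ and $z'$, and set $t=d_X(x,y)/d_X(x,z)\in(0,1)$, so that $y'=(1-t)x'+tz'$ and $d_X(x,z)=d_X(x,y)+d_X(y,z)$. Pick $w^{*}\in\mathbb{R}^2$ with $\|y'-w^{*}\|=d_X(y,w)$ and $\|z'-w^{*}\|=d_X(z,w)$, which exists by the triangle inequality in $X$ for $\{y,z,w\}$. The tuple $(x',y',z',w^{*})$ then satisfies the five equalities of Definition \ref{TSD-TLD-def} corresponding to the pair $\{x,w\}$, so the hypothesis that $\{x,y,z,w\}$ is under-distance with respect to $\{x,w\}$ yields $d_X(x,w)<\|x'-w^{*}\|$. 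Applying the Euclidean parallelogram identity to the affine relation $y'=(1-t)x'+tz'$ gives
\[
\|y'-w^{*}\|^2 = (1-t)\|x'-w^{*}\|^2 + t\|z'-w^{*}\|^2 - t(1-t)\|x'-z'\|^2;
\]
solving for $\|x'-w^{*}\|^2$ and combining with the strict inequality above, I get
\[
(1-t)d_X(x,w)^2 + td_X(z,w)^2 - t(1-t)d_X(x,z)^2 < d_X(y,w)^2,
\]
which contradicts the bound
\[
d_X(y,w)^2 \leq (1-t)d_X(x,w)^2 + td_X(z,w)^2 - t(1-t)d_X(x,z)^2
\]
supplied by Proposition \ref{sdi-CAT(0)-prop}. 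The subcase where $x'$ lies between $y'$ and $z'$ is handled identically after exchanging the roles of $x$ and $y$ and using under-distance with respect to $\{y,w\}$ instead.

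In the remaining case, $z'$ is strictly between $x'$ and $y'$, so $d_X(x,y)=d_X(x,z)+d_X(z,y)$ and, with $v=d_X(x,z)/d_X(x,y)\in(0,1)$, $z'=(1-v)x'+vy'$. I choose $w^{\sharp}\in\mathbb{R}^2$ with $\|x'-w^{\sharp}\|=d_X(x,w)$ and $\|z'-w^{\sharp}\|=d_X(z,w)$, and apply under-distance with respect to $\{y,w\}$ to the tuple $(x',y',z',w^{\sharp})$ to get $d_X(y,w)<\|y'-w^{\sharp}\|$. The parallelogram identity for $z'=(1-v)x'+vy'$ applied to the fourth point $w^{\sharp}$, solved for $\|y'-w^{\sharp}\|^2$, turns this strict inequality into
\[
(1-v)d_X(x,w)^2 + vd_X(y,w)^2 - v(1-v)d_X(x,y)^2 < d_X(z,w)^2,
\]
contradicting the bound on $d_X(z,w)^2$ furnished by Proposition \ref{sdi-CAT(0)-prop} applied to the same triangle equality. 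The main obstacle is purely a matter of bookkeeping: one must keep track of which under-distance hypothesis is invoked in each case and avoid confusing the roles of $x$, $y$, and $z$, since $z$ is the point not appearing in either of the two under-distance pairs. The Euclidean calculations themselves reduce to the parallelogram identity and are routine.
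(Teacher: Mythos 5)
Your proof is correct, but it takes a genuinely different route from the paper's. The paper's own argument is structural: it realizes $w$ as a point $w'$ with $\|x'-w'\|=d_X(x,w)$ and $\|w'-z'\|=d_X(w,z)$ on the appropriate side of $\overleftrightarrow{x'z'}$, invokes Lemma \ref{no-bb-lemma} (which itself combines both under-distance hypotheses) to conclude $w'\in\mathrm{conv}(\{x',y',z'\})$, and then observes that collinearity of $x',y',z'$ would force all four points to be collinear, contradicting Lemma \ref{four-points-in-R3-short-lemma}. You instead exploit the fact that collinearity of the comparison triangle forces a triangle \emph{equality} among $d_X(x,y)$, $d_X(y,z)$, $d_X(z,x)$, and then play the strict inequality from the under-distance hypothesis against the non-strict inequality of Proposition \ref{sdi-CAT(0)-prop} via Stewart's identity. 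Your case analysis is sound: you correctly note that when $y'$ (resp.\ $x'$) is the middle point only the under-distance hypothesis on $\{x,w\}$ (resp.\ $\{y,w\}$) produces a contradiction, while the $z'$-in-the-middle case can use either; and your auxiliary points $w^{*}$, $w^{\sharp}$ realize exactly the five constraints of Definition \ref{TSD-TLD-def} complementary to the pair being tested. What your approach buys is self-containedness — it needs only Proposition \ref{sdi-CAT(0)-prop} and the definition, bypassing Lemmas \ref{no-bb-lemma} and \ref{four-points-in-R3-short-lemma} entirely — at the cost of a three-way case split; the paper's proof is shorter on the page because that machinery has already been built.
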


\begin{proof}
Choose a point $w'\in\mathbb{R}^2$ such that 
\begin{equation*}
\|x'-w'\|=d_X (x,w),\quad \|w'-z'\|=d_X (w,z), 
\end{equation*}
and $w'$ is not on the opposite side of $\overleftrightarrow{x'z'}$ from $y'$. 
Then Lemma \ref{no-bb-lemma} implies that 
$w'\in\mathrm{conv}(\{ x',y',z'\})$ 
because $\{ x,y,z,w\}$ is under-distance with respect to $\{ x,w\}$ and $\{ y,w\}$. 
Therefore, if $x'$, $y'$ and $z'$ were collinear, then 
$x'$, $y'$ ,$z'$ and $w'$ would be collinear, 
contradicting Lemma \ref{four-points-in-R3-short-lemma}. 
\end{proof}

\begin{lemma}\label{no-ccc-lemma}
Let $(X,d_X)$ be a metric space that satisfies the $\boxtimes$-inequalities. 
Suppose $x,y,z,w \in X$ are four distinct points such that 
$\{ x,y,z,w \}$ is over-distance with respect to $\{ x,w\}$ and $\{ y,w\}$. 
Then 
\begin{equation*}
\pi <\tilde{\angle}xzy+\tilde{\angle}yzw ,\quad
\pi <\tilde{\angle}xzy +\tilde{\angle}xzw .
\end{equation*}
\end{lemma}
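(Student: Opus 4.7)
The plan is to prove both inequalities simultaneously by a direct planar coordinate computation around $z$: I translate each over-distance hypothesis into an angular dichotomy via the law of cosines, then show that the ``wrong'' branches of the two dichotomies are mutually incompatible.

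First I set up coordinates in $\mathbb{R}^2$ with $z' = (0,0)$, $y' = (b,0)$ where $b = d_X(y,z)$, and $x' = (c\cos\phi, c\sin\phi)$ with $c = d_X(x,z)$ and $\phi = \tilde{\angle}xzy$. Writing $\psi = \tilde{\angle}yzw$, $\xi = \tilde{\angle}xzw$, and $r = d_X(z,w)$, I introduce two unfolded auxiliary points: the point $W_{xz} = (r\cos(\phi+\xi), r\sin(\phi+\xi))$, which lies on the side of $\overleftrightarrow{x'z'}$ opposite to $y'$ and satisfies $\|x' - W_{xz}\| = d_X(x,w)$, and the point $W_{yz} = (r\cos\psi, -r\sin\psi)$, which lies on the side of $\overleftrightarrow{y'z'}$ opposite to $x'$ and satisfies $\|y' - W_{yz}\| = d_X(y,w)$.

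Next I translate the over-distance hypotheses. The inequality $\|y' - W_{xz}\| < d_X(y,w)$ (equivalent to over-distance with respect to $\{y,w\}$) becomes, via the law of cosines in the triangle $y'z'W_{xz}$, the inequality $\cos(\phi + \xi) > \cos\psi$. Since $\phi + \xi \in (0, 2\pi)$ and $\psi \in [0, \pi]$, this is equivalent to the exclusive dichotomy $(B_1)$: $\phi + \xi < \psi$, or $(B_2)$: $\phi + \xi > 2\pi - \psi$; here $(B_1)$ forces $\phi + \xi < \pi$, while $(B_2)$ is equivalent to the desired inequality $\pi < \tilde{\angle}xzy + \tilde{\angle}xzw$. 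By the symmetric analysis, over-distance with respect to $\{x,w\}$ yields $(A_1)$: $\phi + \psi < \xi$, or $(A_2)$: $\phi + \psi > 2\pi - \xi$, with $(A_2)$ equivalent to $\pi < \tilde{\angle}xzy + \tilde{\angle}yzw$.

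Finally I argue by contradiction that $(A_2)$ and $(B_2)$ both hold. If both $(A_2)$ and $(B_2)$ fail, then $(A_1) \wedge (B_1)$ gives $\phi + \psi < \xi$ and $\phi + \xi < \psi$, which sum to $2\phi < 0$ and contradict $\phi > 0$. If only $(A_2)$ fails (so $(A_1) \wedge (B_2)$ holds), the inequalities $\xi > \phi + \psi$ and $\phi + \xi + \psi > 2\pi$ combine to give $\xi > \pi$, impossible for a comparison angle. The case where only $(B_2)$ fails is symmetric and yields $\psi > \pi$. Hence both $(A_2)$ and $(B_2)$ must hold, which is the statement of the lemma. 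The main obstacle is the careful translation of $\cos(\phi + \xi) > \cos\psi$ into the correct high- versus low-branch sub-cases over the full range $\phi + \xi \in (0, 2\pi)$; mild degenerate configurations where a comparison angle equals $0$ or $\pi$ can be treated separately, since the strict over-distance inequality forces $\cos(\phi+\xi) > \cos\psi$ to be strict and hence excludes the boundary cases $\phi + \xi = \psi$ and $\phi + \xi = 2\pi - \psi$.
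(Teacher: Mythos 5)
Your argument is correct, and it takes a genuinely different route from the paper's. The paper introduces both unfolded images $w'$ (about $\overleftrightarrow{y'z'}$) and $w''$ (about $\overleftrightarrow{x'z'}$), uses the perpendicular bisector of $\lbrack w',w''\rbrack$ to rule out $y'\in\mathrm{conv}(\{x',z',w'\})$ and $x'\in\mathrm{conv}(\{y',z',w''\})$, and then obtains a contradiction by combining Lemma \ref{four-points-in-R3-long} with Lemma \ref{convexhull-separation-lemma}; since Lemma \ref{four-points-in-R3-long} is proved via Lemma \ref{quadrangle-lemma}, that route passes through the $\boxtimes$-inequalities. You instead convert each over-distance hypothesis into a single cosine inequality at the common vertex $z$ --- which is legitimate, because the proof of Proposition \ref{embeddable-TSD-TLD-prop} shows that over-distance with respect to a pair is equivalent to the strict inequality for the maximally unfolded planar configuration, and your identity $\|y'-W_{xz}\|^2=b^2+r^2-2br\cos(\phi+\xi)$ is just the dot-product formula, valid for all $\phi+\xi\in\lbrack 0,2\pi\rbrack$ --- and then finish by arithmetic on the three comparison angles. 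The dichotomies are exhaustive and mutually exclusive as you claim, and the three exclusions ($2\phi<0$, then $\xi>\pi$, then $\psi>\pi$) are sound, degenerate configurations included, since everything is phrased in terms of cosines of angles lying in $\lbrack 0,\pi\rbrack$. Your approach buys something real: it never invokes the $\boxtimes$-inequalities, so it establishes the conclusion for any four distinct points of any metric space satisfying the two over-distance hypotheses, whereas the paper's proof is structured to parallel the under-distance Lemma \ref{no-bb-lemma} and to reuse its convex-hull machinery. Two small points to tighten in a final write-up: make the appeal to the unfolded-configuration characterization of over-distance explicit (it is contained in the proof of Proposition \ref{embeddable-TSD-TLD-prop} rather than in its statement), and note that $(A_2)$ and $(B_2)$, which read $\phi+\psi+\xi>2\pi$, are a priori stronger than the target inequalities $\phi+\psi>\pi$ and $\phi+\xi>\pi$; they are equivalent to them only modulo the dichotomy, and in any case only the forward implications are needed.
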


\begin{figure}[htbp]
\centering\begin{tikzpicture}[scale=0.5]
\draw (0,0) -- (5.5,0);
\draw (5.5,0) -- (3,3);
\draw (3,3) -- (0,0);
\draw (0,0) -- (3.8,5.8);
\draw (3.8,5.8) -- (3,3);
\draw (5.5,0) -- (2.2,5.8);
\draw (2.2,5.8) -- (3,3);
\node [left] at (0,0) {$x'$};
\node [left] at (3,3) {$z'$};
\node [right] at (3.8,5.8) {$w''$};
\node [left] at (2.2,5.8) {$w'$};
\node [right] at (5.5,0) {$y'$};
\end{tikzpicture}
\caption{The points in $\mathbb{R}^2$ appeared in the proof of Lemma \ref{no-ccc-lemma}.}\label{no-bb-fig}
\end{figure}
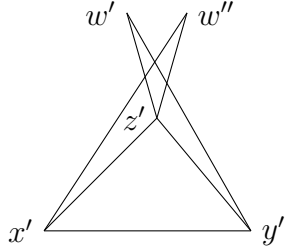

\begin{proof}
Choose $x', y',z'\in\mathbb{R}^2$ such that 
\begin{equation*}
\|x'-y'\|=d_X (x,y),\quad\|y'-z'\|=d_X (y,z),\quad\|z'-x'\|=d_X (z,x). 
\end{equation*}
Suppose $w'\in\mathbb{R}^2$ is a point such that 
\begin{equation*}
\|y'-w' \|=d_X (y,w),\quad \|w'-z' \|=d_X (w,z),
\end{equation*}
and $w'$ is not on the same side of $\overleftrightarrow{y'z'}$ as $x'$. 
Suppose $w''\in\mathbb{R}^2$ is a point such that 
\begin{equation*}
\|x'-w''\|=d_X (x,w),\quad \|w'' -z' \|=d_X (w,z),
\end{equation*}
and $w''$ is not on the same side of $\overleftrightarrow{x'z'}$ as $y'$. 
Then because $\{ x,y,z,w \}$ is over-distance with respect to $\{ x,w\}$ and $\{ y,w\}$, 
\begin{equation*}
\| x'-w'\| <d_X (x,w),\quad
\| y'-w''\| <d_X (y,w). 
\end{equation*}
The first inequality implies that 
\begin{equation*}
0
<
d_X (x,w)-\|x' -w' \|
=
\|x' -w'' \| -\|x' -w'\|
\leq
\|w''-w'\| ,
\end{equation*}
which ensures that $w' \neq w''$. 
Let $L\subseteq\mathbb{R}^2$ be the perpendicular bisector of the line segment $\lbrack w' ,w'' \rbrack$. 
Then 
$x'$ is on the same side of $L$ as $w'$, 
$y'$ is on the same side of $L$ as $w''$, 
and $z' \in L$ because 
\begin{align*}
&\|x' -w'\|<d_X (x,w)=\|x' -w'' \| ,\quad
\|y' -w''\|<d_X (y,w)=\|y'-w' \| ,\\
&\|z' -w'\|=d_X (z ,w)=\|z' -w''\| .
\end{align*}
It follows that 
\begin{equation}\label{no-ccc-for-contradiction}
x'\not\in\mathrm{conv}(\{ y',z',w''\}),\quad
y'\not\in\mathrm{conv}(\{ x',z',w'\}).
\end{equation}
We prove that 
\begin{equation}\label{no-ccc-xzy-yzw-for-contradiction}
\pi <\tilde{\angle}xzy+\tilde{\angle}yzw
\end{equation}
and 
\begin{equation}\label{no-ccc-xzy-xzw-for-contradiction}
\pi <\tilde{\angle}xzy +\tilde{\angle}xzw
\end{equation}
by contradiction. 
If \eqref{no-ccc-xzy-yzw-for-contradiction} were not true, then 
Lemma \ref{four-points-in-R3-long} would imply that 
\begin{equation*}
\pi
<
\tilde{\angle}xyz+\tilde{\angle}zyw
=
\angle x'y'z' +\angle z'y'w'
\end{equation*}
because $\{ x,y,z,w\}$ is over-distance with respect to $\{ x,w\}$, 
and therefore 
Lemma \ref{convexhull-separation-lemma} 
and the hypothesis that $w'$ is not on the same side of $\overleftrightarrow{y'z'}$ as $x'$ 
would imply that 
\begin{equation*}
y' \in\mathrm{conv}(\{x',z',w'\} ),
\end{equation*}
contradicting \eqref{no-ccc-for-contradiction}. 
Similarly, if \eqref{no-ccc-xzy-xzw-for-contradiction} were not true, then we would obtain 
\begin{equation*}
x' \in\mathrm{conv}(\{y',z',w''\} ),
\end{equation*}
contradicting \eqref{no-ccc-for-contradiction}, which completes the proof. 
\end{proof}

\begin{corollary}\label{no-ccc-corollarly}
Let $(X,d_X)$ be a metric space that satisfies the $\boxtimes$-inequalities. 
Suppose $x,y,z,w\in X$ are four distinct points such that 
$\{ x,y,z,w\}$ is over-distance with respect to $\{x,w\}$ and $\{y,w\}$. 
Then $\{ x,y,z,w\}$ is not over-distance with respect to $\{ z,w\}$. 
\end{corollary}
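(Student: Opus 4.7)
The plan is a proof by contradiction. Suppose, on the contrary, that $\{x,y,z,w\}$ is also over-distance with respect to $\{z,w\}$. Then the hypothesis of Lemma~\ref{no-ccc-lemma} is satisfied for every one of the three unordered pairs chosen from the collection $\bigl\{\{x,w\},\{y,w\},\{z,w\}\bigr\}$, because any two of these three pairs share the common element $w$, which is exactly the configuration the lemma requires.

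The next step is to apply Lemma~\ref{no-ccc-lemma} three times, once for each such pair of pairs, after relabeling so that the ``$z$'' of the lemma is played by the unique point of $\{x,y,z\}$ that does not appear in either of the two distinguished pairs. Retaining in each case the first of the two conclusions of the lemma, I expect to obtain the three inequalities
\[
\pi < \tilde{\angle}xzy + \tilde{\angle}yzw, \qquad
\pi < \tilde{\angle}xyz + \tilde{\angle}zyw, \qquad
\pi < \tilde{\angle}yxz + \tilde{\angle}zxw.
\]
Summing these three and using that the comparison triangle on $\{x,y,z\}$ is a genuine Euclidean triangle, so $\tilde{\angle}yxz+\tilde{\angle}xyz+\tilde{\angle}xzy=\pi$, I get
\[
2\pi < \tilde{\angle}yzw + \tilde{\angle}zyw + \tilde{\angle}zxw.
\]

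To derive a contradiction from this, I would observe that $\tilde{\angle}yzw$ and $\tilde{\angle}zyw$ are two angles of the (Euclidean) comparison triangle on $\{y,z,w\}$, hence their sum is at most $\pi$, while $\tilde{\angle}zxw\leq\pi$ by definition of the comparison angle. Together these force the right-hand side to be at most $2\pi$, contradicting the strict inequality above. The only real subtlety, and therefore the main place where care is required, is the bookkeeping involved in setting up the three applications of Lemma~\ref{no-ccc-lemma} with the correct relabeling; once the relabelings are checked, the contradiction is purely arithmetic.
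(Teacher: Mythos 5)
Your proof is correct and follows essentially the same route as the paper: both derive a contradiction by applying Lemma \ref{no-ccc-lemma} under suitable relabelings and then invoking Euclidean angle-sum bounds on the comparison triangles. The paper is marginally more economical---it uses only two of your three inequalities, namely $\pi<\tilde{\angle}xyz+\tilde{\angle}zyw$ and $\pi<\tilde{\angle}xzy+\tilde{\angle}yzw$, whose sum already exceeds the combined angle sums of the comparison triangles on $\{x,y,z\}$ and $\{y,z,w\}$---but your bookkeeping and final arithmetic are sound.
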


\begin{proof}
Suppose to the contrary that $\{ x,y,z,w\}$ is over-distance with respect to 
$\{ x,w\}$, $\{ y, w\}$ and $\{ z,w \}$. 
Then Lemma \ref{no-ccc-lemma} implies that 
\begin{equation*}
\pi
<
\tilde{\angle}xyz +\tilde{\angle}zyw ,\quad
\pi
<
\tilde{\angle}xzy +\tilde{\angle}yzw ,
\end{equation*}
contradicting the fact that 
\begin{align*}
&( \tilde{\angle}xyz +\tilde{\angle}zyw  )+
( \tilde{\angle}xzy +\tilde{\angle}yzw   ) \\
&\leq
(\tilde{\angle}xyz+\tilde{\angle}xzy+\tilde{\angle}zxy)
+(\tilde{\angle}zyw+\tilde{\angle}yzw+\tilde{\angle}zwy)
=2\pi ,
\end{align*}
which proves the corollary. 
\end{proof}

\begin{lemma}\label{no-double-aa-cc-lemma}
Let $(X,d_X)$ be a metric space that satisfies the $\boxtimes$-inequalities. 
Suppose $p,x,y,z,w \in X$ are five distinct points such that 
$\{ p,x,y,z \}$ is under-distance with respect to $\{ x,y\}$ and $\{ y,z\}$, and 
$\{ p,y,z,w\}$ is under-distance with respect to $\{ y,z\}$ and $\{ z,w\}$. 
Then 
\begin{equation*}
\tilde{\angle}xpy +\tilde{\angle}ypw <\pi ,\quad
\tilde{\angle}xpz +\tilde{\angle}zpw <\pi .
\end{equation*}
\end{lemma}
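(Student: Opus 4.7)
The plan is to apply Lemma~\ref{no-bb-lemma} to each of the two hypothesized under-distance quadruples and then combine the resulting planar pictures through a Reshetnyak gluing, reducing both target inequalities to a single link-length bound in a $\mathrm{CAT}(0)$ space. Applied to $\{p,x,y,z\}$ with $y$ playing the role of the common under-distance point, Lemma~\ref{no-bb-lemma} produces a planar comparison triangle $T_1=\triangle x'z'p'$ for $\triangle xpz$ together with two distinct interior points $y_1',y_2'$, both at distance $d_X(y,p)$ from $p'$, with $y_1'$ realizing $d_X(y,z)$ to $z'$ and $y_2'$ realizing $d_X(y,x)$ to $x'$. The non-overlap clause ``$\mathrm{conv}(\{z',p',y_1'\})\cap\mathrm{conv}(\{x',p',y_2'\})=\{p'\}$'' forces the angular order $z',y_1',y_2',x'$ at $p'$, giving the strict inequality
$(\ast)\ \tilde{\angle}xpy+\tilde{\angle}ypz<\tilde{\angle}xpz$.
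Applied symmetrically to $\{p,y,z,w\}$ with $z$ as the common point, it produces a planar comparison triangle $T_2$ for $\triangle ypw$ with two interior positions of $z$, and yields
$(\ast\ast)\ \tilde{\angle}ypz+\tilde{\angle}zpw<\tilde{\angle}ypw$.

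Next, I glue $T_1$ and $T_2$ along their common comparison subtriangle for $\triangle pyz$ (namely $\triangle p'z'y_1'\subset T_1$ and $\triangle p''y''z_2'\subset T_2$) via the natural isometry identifying their vertices. By Reshetnyak's gluing theorem, the resulting piecewise-Euclidean space $Y$ is $\mathrm{CAT}(0)$. Flatness of each piece yields $\tilde{\angle}_Y xpy=\tilde{\angle}_X xpz-\tilde{\angle}_X ypz$ and $\tilde{\angle}_Y ypw=\tilde{\angle}_X ypw$, while $(\ast)$ and $(\ast\ast)$ produce the strict comparisons $\tilde{\angle}_X xpy<\tilde{\angle}_Y xpy$ and $\tilde{\angle}_X zpw<\tilde{\angle}_Y zpw$; all other pairwise distances among $p,x,y,z,w$ coincide in $X$ and $Y$ except that $d_Y(x,y)>d_X(x,y)$ and $d_Y(z,w)>d_X(z,w)$. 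The link of $p$ in $Y$ is an arc of length $L=\tilde{\angle}_X xpz+\tilde{\angle}_X ypw-\tilde{\angle}_X ypz$. Granting $L\le\pi$, both conclusions follow at once: by $(\ast)$, $\tilde{\angle}xpy+\tilde{\angle}ypw<\tilde{\angle}_Y xpy+\tilde{\angle}_Y ypw=L\le\pi$, and by $(\ast\ast)$, $\tilde{\angle}xpz+\tilde{\angle}zpw<\tilde{\angle}xpz+(\tilde{\angle}ypw-\tilde{\angle}ypz)=L\le\pi$.

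The main obstacle is therefore establishing $L\le\pi$. Should $L>\pi$, the Alexandrov angle at $p$ in $Y$ between $[p,x]$ and $[p,w]$ would equal $\pi$, forcing $p\in[x,w]$ in $Y$ and $d_Y(x,w)=d_X(p,x)+d_X(p,w)$. I plan to rule this out by comparing a suitably chosen $\boxtimes$-inequality for $\{p,x,y,w\}$ realized in the $\mathrm{CAT}(0)$ space $Y$---where $d_Y(x,w)$ attains its triangle-inequality maximum and $d_Y(x,y)$ exceeds $d_X(x,y)$ strictly---with the corresponding inequality in $X$, probably after an auxiliary application of Lemma~\ref{no-bb-lemma} or Lemma~\ref{combined-triangles-lemma} to a four-point subconfiguration bridging the $X$- and $Y$-data. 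Producing this numerical contradiction from the tight degenerate value of $d_Y(x,w)$ together with the strict inflation $d_Y(x,y)>d_X(x,y)$ constitutes the technically most delicate step.
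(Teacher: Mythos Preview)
Your reduction is sound up to the point you flag as the main obstacle. Applying Lemma~\ref{no-bb-lemma} twice really does give the strict relations $(\ast)$ and $(\ast\ast)$, and both target inequalities follow at once from $L:=\tilde{\angle}xpz+\tilde{\angle}ypw-\tilde{\angle}ypz<\pi$. The gap is precisely the case $L>\pi$. Your Reshetnyak gluing of $T_1$ and $T_2$ along the common comparison triangle for $\triangle pyz$ is legitimate and the link length at $p$ is indeed $L$, but the claim that $L>\pi$ forces $p\in[x,w]_Y$ is unjustified: the glued space $Y$ branches along the identified segment $[y_\star,z_\star]$ (three half-planes meet there), so a geodesic from $x'$ to $w''$ can switch sheets across that edge and avoid $p$ entirely. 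Even granting $d_Y(x,w)=d_X(p,x)+d_X(p,w)$, no concrete $\boxtimes$-comparison is exhibited, and it is not clear one exists that produces the needed contradiction.

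The paper closes exactly this gap with an idea you are missing: both invocations of Lemma~\ref{no-bb-lemma} can be carried out in a \emph{single} copy of $\mathbb{R}^2$. One places the comparison triangle $x',p',z'$ and the point $y'$ (your $y_1'$) from the first application, and then places $w'$ in the \emph{same} plane by $\|p'-w'\|=d_X(p,w)$ and $\|y'-w'\|=d_X(y,w)$. Since $\|p'-y'\|,\|p'-z'\|,\|y'-z'\|$ already equal the corresponding $X$-distances, Lemma~\ref{no-bb-lemma} applies again to the planar quadruple $p',y',w',z'$, yielding $z'\in\mathrm{conv}(\{p',y',w'\})$. Writing $\bm{x},\bm{y},\bm{z},\bm{w}$ for the position vectors from $p'$, one then has convex combinations $\bm{y}=s\bm{x}+t\bm{z}$ and $\bm{z}=s'\bm{y}+t'\bm{w}$; a short elimination shows $\bm{x}$ and $\bm{w}$ are linearly independent and that $\bm{z}$ is a strictly positive combination of them. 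Hence $L=\angle x'p'z'+\angle z'p'w'=\angle x'p'w'<\pi$ is literally a Euclidean angle, and the proof is finished without any gluing or $\mathrm{CAT}(0)$ machinery. Your glued space $Y$ was attempting to manufacture what the plane already provides once the two applications of Lemma~\ref{no-bb-lemma} are made to share $p',y',z'$.
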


\begin{figure}[htbp]
\centering\begin{tikzpicture}[scale=0.8]
\draw (0,0) -- (6,0);
\draw (6,0) -- (3.5,4);
\draw (3.5,4) -- (0,0);
\draw (0,0) -- (3.3,0.5);
\draw (3.3,0.5) -- (3.5,4);
\draw (3.5,4) -- (3.7,0.5);
\draw (3.7,0.5) -- (6,0);
\draw (3.7,0.5) -- (11,-2.2);
\draw (11,-2.2) -- (3.5,4);
\draw (3.5,4) -- (6.2,0.13);
\draw (6.2,0.13) -- (11,-2.2);
\node [below left] at (0,0) {$x'$};
\node [above] at (3.5,4) {$p'$};
\node [above left] at (3.3,0.5) {$y''$};
\node [above right] at (3.7,0.5) {$y'$};
\node [above left] at (5.7,0.2) {$z'$};
\node [above right] at (6.2,0.13) {$z''$};
\node [right] at (11,-2.2) {$w'$};
\end{tikzpicture}
\caption{The points in $\mathbb{R}^2$ appeared in the proof of Lemma \ref{no-double-aa-cc-lemma}.}\label{no-double-aa-cc-fig}
\end{figure}
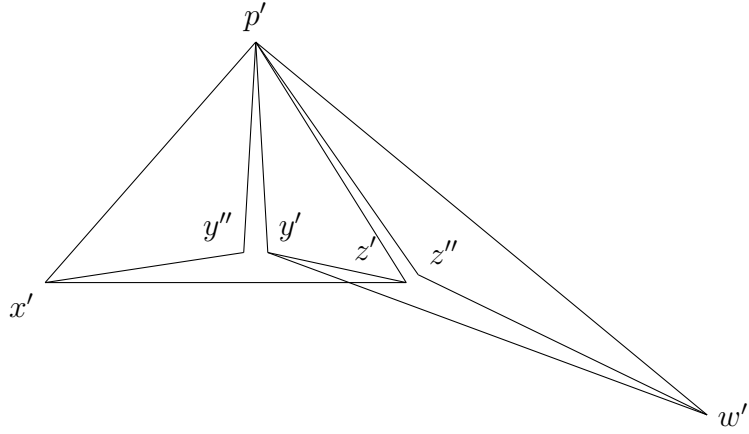

\begin{proof}
Choose $p',x',z' \in\mathbb{R}^2$ such that 
\begin{equation*}
\|p' -x'\|=d_X (p,x),\quad\|x'-z'\|=d_X (x,z),\quad\|z'-p'\|=d_X (z,p). 
\end{equation*}
Suppose $y'\in\mathbb{R}^2$ is a point such that 
\begin{equation*}
\|p' -y'\|=d_X (p,y),\quad\|y'-z'\|=d_X (y,z),
\end{equation*}
and $y'$ is not on the opposite side of $\overleftrightarrow{p'z'}$ from $x'$. 
Suppose $y'' \in\mathbb{R}^2$ is a point such that 
\begin{equation*}
\|p' -y''\|=d_X (p,y),\quad\|y'' -x'\|=d_X (y,x),
\end{equation*}
and $y''$ is not on the opposite side of $\overleftrightarrow{p'x'}$ from $z'$. 
Then because $\{ p,x,y,z \}$ is under-distance with respect to $\{ x,y\}$ and $\{ y,z\}$, 
Lemma \ref{no-bb-lemma} implies that 
\begin{align}
&y' \in\mathrm{conv}(\{ p',x',z'\}),\quad
y'' \in\mathrm{conv}(\{ p',x',z'\}),\label{yy-in-convexhull-eq}\\
&\mathrm{conv}(\{ z',p',y'\})\cap\mathrm{conv}(\{ x',p',y''\})=\{ p'\} .\label{yy-separation-eq}
\end{align}
Suppose $w' \in\mathbb{R}^2$ is a point such that 
\begin{equation*}
\|p' -w'\|=d_X (p,w),\quad\|w'-y'\|=d_X (w,y), 
\end{equation*}
and $w'$ is not on the opposite side of $\overleftrightarrow{p'y'}$ from $z'$. 
Suppose $z'' \in\mathbb{R}^2$ is a point such that  
\begin{equation*}
\|p' -z''\|=d_X (p,z),\quad\|z'' -w'\|=d_X (z,w)
\end{equation*}
and $z''$ is not on the opposite side of $\overleftrightarrow{p'w'}$ from $y'$. 
Then because $\{ p,y,z,w\}$ is under-distance with respect to $\{ y,z\}$ and $\{ z,w\}$, 
Lemma \ref{no-bb-lemma} implies that 
\begin{align}
&z'\in\mathrm{conv}(\{ p',y',w'\}),\quad
z''\in\mathrm{conv}(\{ p',y',w'\}),\label{zz-in-convexhull-eq}\\
&\mathrm{conv}(\{ y',p',z'\})\cap\mathrm{conv}(\{ w' ,p' ,z'' \})=\{ p'\} .\label{zz-separation-eq}
\end{align}
We define four vectors $\bm{x}, \bm{y}, \bm{z}, \bm{w}\in\mathbb{R}^2$ by 
\begin{equation*}
\bm{x}=x'-p',\quad\bm{y}=y'-p',\quad\bm{z}=z'-p',\quad\bm{w}=w'-p' .
\end{equation*}
Then 
\begin{align*}
&\|\bm{x}\|=d_X (x,p)>0,\quad\|\bm{y}\|=d_X (y,p)>0,\\
&\|\bm{z}\|=d_X (z,p)>0,\quad\|\bm{w}\|=d_X (w,p)>0.
\end{align*}
Because $\{ p,x,y,z \}$ is 
under-distance with respect to $\{ x,y\}$ and $\{ y,z\}$, 
Corollary \ref{xyz-non-collinear-corollary} implies that $p'$, $x'$ and $z'$ are not collinear, and therefore 
$\bm{x}$ and $\bm{z}$ are linearly independent. 
Because $\{ p,y,z,w \}$ is 
under-distance with respect to $\{ y,z\}$ and $\{ z,w\}$, 
Corollary \ref{xyz-non-collinear-corollary} implies that $p'$, $y'$ and $w'$ are not collinear, and therefore 
$\bm{y}$ and $\bm{w}$ are linearly independent. 
Because $y'\not\in\overleftrightarrow{p'x'}$ by \eqref{yy-in-convexhull-eq} and \eqref{yy-separation-eq}, 
$\bm{x}$ and $\bm{y}$ are linearly independent. 
Because $z'\not\in\overleftrightarrow{p'w'}$ by \eqref{zz-in-convexhull-eq} and \eqref{zz-separation-eq}, 
$\bm{z}$ and $\bm{w}$ are also linearly independent. 
By \eqref{yy-in-convexhull-eq}, there exist $s,t\in\lbrack 0,1\rbrack$ such that $s+t \leq 1$, and 
$\bm{y}=s\bm{x}+t\bm{z}$. 
Because 
\begin{equation*}
\| y' -z' \| =d_X (y,z)>0,
\end{equation*}
we have 
\begin{equation}\label{t-neq-1-eq}
t<1.
\end{equation}
By \eqref{zz-in-convexhull-eq}, there exist $s',t'\in\lbrack 0,1\rbrack$ such that $s'+t' \leq 1$, and 
\begin{equation*}
\bm{z}
=
s'\bm{y}+t'\bm{w}
=
s'(s\bm{x}+t\bm{z})+t'\bm{w}.
\end{equation*}
Hence 
\begin{equation}\label{linear-comb-of-xzw-eq}
-ss'\bm{x}+(1-ts')\bm{z}-t' \bm{w}=\bm{0} ,
\end{equation}
where $\bm{0}$ denotes the zero vector in $\mathbb{R}^2$. 
By \eqref{t-neq-1-eq}, we have 
\begin{equation}\label{1-ts'-greater-than-0-ineq}
1-ts'> 0. 
\end{equation}
Because $\bm{x}$ and $\bm{z}$ are linearly independent, 
it follows from \eqref{linear-comb-of-xzw-eq} and \eqref{1-ts'-greater-than-0-ineq} that 
\begin{equation}\label{t'-greater-than-0-ineq}
t'> 0.
\end{equation}
Because $\bm{z}$ and $\bm{w}$ are linearly independent, it follows from 
\eqref{linear-comb-of-xzw-eq} and \eqref{1-ts'-greater-than-0-ineq} that 
\begin{equation}\label{ss'-greater-than-0-ineq}
ss'>0.
\end{equation}
We have 
\begin{equation*}
\bm{w}=-\frac{ss'}{t'}\bm{x}+\frac{1-ts'}{t'}\bm{z},\quad\frac{1-ts'}{t'}>0 
\end{equation*}
by \eqref{linear-comb-of-xzw-eq}, \eqref{1-ts'-greater-than-0-ineq} and \eqref{t'-greater-than-0-ineq}, 
and therefore $\bm{x}$ and $\bm{w}$ are linearly independent, which implies in particular that 
\begin{equation}\label{xpw-less-than-pi-ineq}
\angle x' p' w'
<\pi .
\end{equation}
We also have 
\begin{equation*}
\bm{z}=\frac{ss'}{1-ts'}\bm{x}+\frac{t'}{1-ts'}\bm{w},\quad\frac{ss'}{1-ts'}>0,\quad\frac{t'}{1-ts'}>0
\end{equation*}
by \eqref{linear-comb-of-xzw-eq}, \eqref{1-ts'-greater-than-0-ineq}, \eqref{t'-greater-than-0-ineq} 
and \eqref{ss'-greater-than-0-ineq}, 
and therefore the ray from $p'$ through $z'$ is between 
that from $p'$ through $x'$ and 
that from $p'$ through $w'$.  
Hence 
\begin{equation}\label{xpz-zpw-eq}
\angle x'p'z' +\angle z' p' w'
=
\angle x' p' w' .
\end{equation}
Because $\{ p,y,z,w\}$ is under-distance with respect to $\{ z,w\}$, we have 
\begin{equation*}
\|z''-p'\|=\| z'-p'\|=d_X (z,p)>0,\quad
\| z'' -w' \| =d_X (z,w)<\| z' -w' \| ,
\end{equation*}
and therefore Lemma \ref{law-of-cosine-lemma} implies that 
\begin{equation}\label{zpw-comp-ineq}
\angle z'' p' w'
<
\angle z' p' w' .
\end{equation}
Combining \eqref{xpw-less-than-pi-ineq}, \eqref{xpz-zpw-eq} and \eqref{zpw-comp-ineq} yields 
\begin{align*}
\tilde{\angle}xpz +\tilde{\angle}zpw
&=
\angle x'p'z' +\angle z''p'w' \\
&<
\angle x'p'z' +\angle z'p'w' \\
&=
\angle x'p'w'
<\pi .
\end{align*}
Clearly the inequality 
\begin{equation*}
\tilde{\angle}xpy +\tilde{\angle}ypw <\pi
\end{equation*}
is proved in the same way, 
which completes the proof. 
\end{proof}

The following corollary follows from 
Lemma \ref{no-ccc-lemma} and Lemma \ref{no-double-aa-cc-lemma}, which will play 
an important role when we prove that the validity of the $\boxtimes$-inequalities 
implies the $G^{(5)}_9 (0)$ condition in Section \ref{5-9-sec}.

\begin{corollary}\label{no-double-aa-cc-coro}
Let $X$ be a metric space that satisfies the $\boxtimes$-inequalities. 
Suppose $p ,x_1 ,x_2 ,x_3 ,x_4 \in X$ are five distinct points such that 
$\{ p ,x_1 ,x_2 ,x_3 \}$ is under-distance with respect to $\{ x_1 ,x_2 \}$ and $\{ x_2 ,x_3 \}$, 
and $\{ p ,x_3 ,x_4 ,x_1\}$ is over-distance with respect to $\{ x_3 ,x_4 \}$ and $\{ x_4 ,x_1 \}$. 
Assume that neither $\{ p ,x_2 ,x_3 ,x_4\}$ nor $\{ p ,x_4 ,x_1 ,x_2 \}$ admits an isometric embedding into $\mathbb{R}^3$. 
Then 
$\{ p ,x_2 ,x_3 ,x_4\}$ is over-distance with respect to $\{ x_2 ,x_3 \}$ or $\{ x_3 ,x_4 \}$, and 
$\{ p ,x_4 ,x_1 ,x_2 \}$ is over-distance with respect to $\{ x_4 ,x_1 \}$ or $\{ x_1 ,x_2 \}$. 
\end{corollary}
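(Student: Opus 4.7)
The plan is to prove both conclusions by contradiction, in each case combining Lemma \ref{no-double-aa-cc-lemma} with Lemma \ref{no-ccc-lemma} to obtain two directly contradictory inequalities about comparison angles at $p$.

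For the first conclusion, suppose towards contradiction that $\{p,x_2,x_3,x_4\}$ is \emph{not} over-distance with respect to either $\{x_2,x_3\}$ or $\{x_3,x_4\}$. Since this quadruple does not admit an isometric embedding into $\mathbb{R}^3$, Proposition \ref{embeddable-TSD-TLD-prop} (applied with the distinguished pair $\{x_2,x_3\}$, and then with $\{x_3,x_4\}$) forces $\{p,x_2,x_3,x_4\}$ to be under-distance with respect to both $\{x_2,x_3\}$ and $\{x_3,x_4\}$. Together with the hypothesis that $\{p,x_1,x_2,x_3\}$ is under-distance with respect to $\{x_1,x_2\}$ and $\{x_2,x_3\}$, this puts us exactly in the setting of Lemma \ref{no-double-aa-cc-lemma} under the relabeling $x\mapsto x_1$, $y\mapsto x_2$, $z\mapsto x_3$, $w\mapsto x_4$. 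The conclusion gives, in particular,
\begin{equation*}
\tilde{\angle}x_1 p x_3 + \tilde{\angle}x_3 p x_4 < \pi.
\end{equation*}

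On the other hand, since $\{p,x_3,x_4,x_1\}$ is over-distance with respect to both $\{x_3,x_4\}$ and $\{x_4,x_1\}$, Lemma \ref{no-ccc-lemma} applies to this quadruple with the common vertex $x_4$ playing the role of $w$; concretely, substitute $x\mapsto x_3$, $y\mapsto x_1$, $z\mapsto p$, $w\mapsto x_4$, so that the two over-distance pairs $\{x,w\}=\{x_3,x_4\}$ and $\{y,w\}=\{x_1,x_4\}$ match the hypotheses of the lemma. The second conclusion of Lemma \ref{no-ccc-lemma} then reads
\begin{equation*}
\pi < \tilde{\angle}x_3 p x_1 + \tilde{\angle}x_3 p x_4.
\end{equation*}
Using the symmetry $\tilde{\angle}x_3 p x_1=\tilde{\angle}x_1 p x_3$, this directly contradicts the inequality above, which proves the first assertion.

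The second assertion is obtained by exactly the same mechanism with the roles of the two under-distance quadruples swapped. Assuming $\{p,x_4,x_1,x_2\}$ is under-distance with respect to both $\{x_4,x_1\}$ and $\{x_1,x_2\}$, we apply Lemma \ref{no-double-aa-cc-lemma} to this quadruple together with $\{p,x_1,x_2,x_3\}$ (under-distance with respect to $\{x_1,x_2\}$ and $\{x_2,x_3\}$) using the relabeling $x\mapsto x_4$, $y\mapsto x_1$, $z\mapsto x_2$, $w\mapsto x_3$, to deduce
\begin{equation*}
\tilde{\angle}x_4 p x_1 + \tilde{\angle}x_1 p x_3 < \pi.
\end{equation*}
The same application of Lemma \ref{no-ccc-lemma} to $\{p,x_3,x_4,x_1\}$ as above yields $\pi<\tilde{\angle}x_3 p x_1 + \tilde{\angle}x_1 p x_4$, contradicting the previous display. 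No genuine obstacle should appear: the whole content is already packaged in Lemmas \ref{no-double-aa-cc-lemma} and \ref{no-ccc-lemma}, and the only point requiring care is selecting labelings so that the hypotheses of Lemma \ref{no-double-aa-cc-lemma} (two consecutive under-distance quadruples sharing three common points including $p$) and of Lemma \ref{no-ccc-lemma} (two over-distance pairs sharing a common vertex distinct from $p$) align to produce matching angle sums with opposite strict inequalities.
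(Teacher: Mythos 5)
Your proposal is correct and follows essentially the same route as the paper: negate the conclusion, upgrade "not over-distance and not embeddable" to "under-distance" via Proposition \ref{embeddable-TSD-TLD-prop}, feed the two consecutive under-distance quadruples into Lemma \ref{no-double-aa-cc-lemma} to get $\tilde{\angle}x_1 p x_3 +\tilde{\angle}x_3 p x_4 <\pi$, and contradict this with the reverse inequality from Lemma \ref{no-ccc-lemma} applied to $\{p,x_3,x_4,x_1\}$. The only cosmetic difference is that the paper disposes of the second assertion by a without-loss-of-generality symmetry ($x_1 \leftrightarrow x_3$) where you write it out explicitly; both are fine.
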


\begin{proof}
Suppose to the contrary that 
$\{ p ,x_2 ,x_3 ,x_4\}$ is neither over-distance with respect to $\{ x_2 ,x_3 \}$ nor $\{ x_3 ,x_4 \}$, or 
$\{ p ,x_4 ,x_1 ,x_2 \}$ is neither over-distance with respect to $\{ x_4 ,x_1 \}$ nor $\{ x_1 ,x_2 \}$. 
We may assume without loss of generality that 
$\{ p ,x_2 ,x_3 ,x_4\}$ is neither over-distance with respect to $\{ x_2 ,x_3 \}$ nor $\{ x_3 ,x_4 \}$. 
Then Proposition \ref{embeddable-TSD-TLD-prop} implies that 
$\{ p ,x_2 ,x_3 ,x_4\}$ is under-distance with respect to $\{ x_2 ,x_3 \}$ and $\{ x_3 ,x_4 \}$ 
because $\{ p ,x_2 ,x_3 ,x_4\}$ does not admit an isometric embedding into $\mathbb{R}^3$. 
Combining this with the hypothesis that $\{ p ,x_1 ,x_2 ,x_3 \}$ is under-distance with respect to $\{ x_1 ,x_2 \}$ and $\{ x_2 ,x_3 \}$, 
Lemma \ref{no-double-aa-cc-lemma} implies that 
\begin{equation}\label{no-double-aa-cc-coro-ineq1}
\tilde{\angle}x_1 px_3 +\tilde{\angle}x_3 px_4 <\pi .
\end{equation}
On the other hand, because $\{ p ,x_3 ,x_4 ,x_1\}$ is over-distance with respect to $\{ x_3 ,x_4 \}$ and $\{ x_4 ,x_1 \}$, 
Lemma \ref{no-ccc-lemma} implies that 
\begin{equation*}
\pi <\tilde{\angle}x_1 px_3 +\tilde{\angle}x_3 px_4 ,
\end{equation*}
contradicting \eqref{no-double-aa-cc-coro-ineq1}.
\end{proof}

We define some notations, which will be used several times in the next two sections. 

Let $(X,d_X )$ be a metric space, and let 
$x,y,z,w\in X$ be four distinct points. 
Choose points $x_1 ,y_1 ,z_1 ,x_2 ,z_2 ,w_2 ,x_3 ,w_3 ,y_3 \in\mathbb{R}^2$ such that 
\begin{multline}\label{Ldisc-points-eqs}
\| x_1 -y_1 \| =d_X (x,y),\quad
\| y_1 -z_1 \| =d_X (y,z),\quad
\| z_1 -x_1 \| =d_X (z,x),\\
\| x_2 -z_2 \| =d_X (x,z),\quad
\| z_2 -w_2 \| =d_X (z,w),\quad
\| w_2 -x_2 \| =d_X (w,x),\\
\| x_3 -w_3 \| =d_X (x,w),\quad
\| w_3 -y_3 \| =d_X (w,y),\quad
\| y_3 -x_3 \| =d_X (y,x).
\end{multline}
Equip the subsets 
\begin{equation}\label{Ldisc-simplices}
T_1 =\mathrm{conv}(\{ x_1 ,y_1 ,z_1 \} ),\quad
T_2 =\mathrm{conv}(\{ x_2 ,z_2 ,w_2 \} ) ,\quad
T_3 =\mathrm{conv}(\{ x_3 ,w_3 ,y_3 \} )
\end{equation}
of $\mathbb{R}^2$ with the induced metrics, and regard them as disjoint metric spaces. 
We denote by $D(x;y,z,w)$ 
the piecewise Euclidean metric simplicial complex constructed from 
$T_1$, $T_2$ and $T_3$ by identifying 
$\lbrack x_1 ,z_1 \rbrack\subseteq T_1$ with $\lbrack x_2 ,z_2 \rbrack\subseteq T_2$, 
$\lbrack x_2 ,w_2 \rbrack\subseteq T_2$ with $\lbrack x_3 ,w_3 \rbrack\subseteq T_3$, and 
$\lbrack x_3 ,y_3 \rbrack\subseteq T_3$ with $\lbrack x_1 ,y_1 \rbrack\subseteq T_1$. 
We denote the images of $T_1$, $T_2$ and $T_3$ under the natural inclusions into $D(x;y,z,w)$ 
by $T_{D(x;y,z,w)}(x,y,z)$, $T_{D(x;y,z,w)}(x,z,w)$ and $T_{D(x;y,z,w)}(x,w,y)$, respectively. 
When there is no risk of confusion, we abbreviate these notations by 
$T(x,y,z)$, $T(x,z,w)$ and $T(x,w,y)$, respectively. 
The map from $\{ x,y,z,w\}$ to $D(x;y,z,w)$ 
sending $x$, $y$ ,$z$ and $w$ to the points in $D(x;y,z,w)$ represented by $x_1 ,y_1 ,z_1 \in T_1$ and $w_2\in T_2$, respectively is called 
the {\em natural inclusion of $\{ x,y,z,w\}$ into $D(x;y,z,w)$}. 
Clearly, up to isometry, $D(x;y,z,w)$, $T(x,y,z)$, $T(x,z,w)$, $T(x,w,y)$ and 
the natural inclusion of $\{ x,y,z,w\}$ into $D(x;y,z,w)$ 
are independent of the choice of the points 
$x_1 ,y_1 ,z_1 ,x_2 ,z_2 ,w_2 ,x_3 ,w_3 ,y_3 \in\mathbb{R}^2$.

\begin{lemma}\label{Ldisc-lemma}
Let $(X,d_X )$ be a metric space that satisfies the $\boxtimes$-inequalities. 
Suppose $x,y,z,w\in X$ are four distinct points such that 
$\{ x,y,z,w\}$ is over-distance with respect to $\{ y,w\}$, and 
\begin{equation}\label{Ldisc-lemma-pi-yxz-zxw-ineq}
\pi
<
\tilde{\angle} yxz +\tilde{\angle} zxw.
\end{equation}
Then $D(x;y,z,w)$ is a $\mathrm{CAT}(0)$ space, and 
for each $i\in\{ 1,2,3\}$, the natural inclusion of 
a (possibly degenerate) triangular region $T_i \subseteq\mathbb{R}^2$ as in \eqref{Ldisc-simplices} into 
$D(x;y,z,w)$ is an isometric embedding. 
In particular, $T(x,y,z)$, $T(x,z,w)$ and $T(x,w,y)$ are closed convex subsets of $D(x;y,z,w)$. 
Moreover, 
the natural inclusion of $\{ x,y,z,w\}$ into $D(x;y,z,w)$ is an isometric embedding. 
\end{lemma}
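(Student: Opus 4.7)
My plan is to reduce the lemma to the $\mathrm{CAT}(0)$ gluing criterion of Example \ref{2pi-example}, applied to the three triangular regions $T_1, T_2, T_3$ in the definition of $D(x;y,z,w)$ meeting at the common vertex $x$. That criterion requires the angle sum inequality
\begin{equation*}
\tilde{\angle}yxz + \tilde{\angle}zxw + \tilde{\angle}wxy \geq 2\pi ,
\end{equation*}
and once this is verified, Example \ref{2pi-example} (in the form allowing degenerate triangles) directly yields the first two conclusions: $D(x;y,z,w)$ is a $\mathrm{CAT}(0)$ space and each natural inclusion of $T_i$ into $D(x;y,z,w)$ is an isometric embedding.

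The main step is to prove this angle sum inequality from the over-distance hypothesis. Setting $\alpha = \tilde{\angle}yxz$, $\beta = \tilde{\angle}zxw$, $\theta = \tilde{\angle}wxy$, I follow the coordinate setup in the proof of Proposition \ref{embeddable-TSD-TLD-prop}: position $\tilde{x}, \tilde{z}$ on the first coordinate axis, $\tilde{y}$ in the closed upper half-plane and $\tilde{w}(\pi)$ in the closed lower half-plane, matching the five distances $d_X(x,y)$, $d_X(y,z)$, $d_X(z,x)$, $d_X(x,w)$, $d_X(w,z)$. A law-of-cosines computation then gives
\begin{equation*}
\|\tilde{y} - \tilde{w}(\pi)\|^2 = d_X(x,y)^2 + d_X(x,w)^2 - 2\, d_X(x,y)\, d_X(x,w)\, \cos(\alpha + \beta),
\end{equation*}
while $d_X(y,w)^2 = d_X(x,y)^2 + d_X(x,w)^2 - 2\, d_X(x,y)\, d_X(x,w)\, \cos\theta$. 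The over-distance hypothesis with respect to $\{y, w\}$ forces $\|\tilde{y} - \tilde{w}(\pi)\| < d_X(y,w)$, hence $\cos\theta < \cos(\alpha + \beta) = \cos(2\pi - (\alpha + \beta))$. Since $\alpha + \beta > \pi$ by the second hypothesis, both $\theta$ and $2\pi - (\alpha + \beta)$ lie in $[0, \pi]$, where $\cos$ is strictly decreasing; therefore $\theta > 2\pi - (\alpha + \beta)$ and the angle sum strictly exceeds $2\pi$.

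Granted the $\mathrm{CAT}(0)$ property of $D(x;y,z,w)$ and the isometric embeddings of the $T_i$, the remaining conclusions come for free. Each of $T(x,y,z)$, $T(x,z,w)$, $T(x,w,y)$ is the isometric image of a compact convex Euclidean triangular region and is therefore closed; convexity follows because $D(x;y,z,w)$ is uniquely geodesic, so the unique geodesic in $D(x;y,z,w)$ between two points of $T(x,y,z)$ must coincide with the Euclidean segment inside the isometric copy of $T_1$, and analogously for $T(x,z,w)$ and $T(x,w,y)$. The isometric embedding of $\{x,y,z,w\}$ into $D(x;y,z,w)$ then needs no further argument, since each of the six pairs of distinct points in $\{x,y,z,w\}$ is realised inside at least one of the isometrically embedded triangles: $\{x,y\}, \{y,z\}, \{z,x\}$ in $T(x,y,z)$; $\{x,z\}, \{z,w\}, \{w,x\}$ in $T(x,z,w)$; and $\{x,w\}, \{w,y\}, \{y,x\}$ in $T(x,w,y)$. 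The only real obstacle is the trigonometric bridge in the middle step; once the planar picture is set up, monotonicity of cosine on $[0, \pi]$ finishes it, and everything else is a consequence of Example \ref{2pi-example} and the unique-geodesic property of $\mathrm{CAT}(0)$ spaces.
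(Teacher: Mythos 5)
Your proof is correct and follows essentially the same route as the paper: both verify the angle-sum criterion $\tilde{\angle}yxz+\tilde{\angle}zxw+\tilde{\angle}wxy>2\pi$ of Example \ref{2pi-example} by placing $y$ and $w$ on opposite sides of the line through $x$ and $z$, noting that over-distance forces the resulting planar distance $\|\tilde{y}-\tilde{w}(\pi)\|$ to be strictly less than $d_X(y,w)$, and converting that into the angle inequality $\tilde{\angle}wxy>2\pi-(\tilde{\angle}yxz+\tilde{\angle}zxw)$. The only cosmetic difference is that you carry out the comparison via an explicit law-of-cosines computation and monotonicity of cosine on $\lbrack 0,\pi\rbrack$, where the paper invokes Lemma \ref{law-of-cosine-lemma} for the same monotonicity.
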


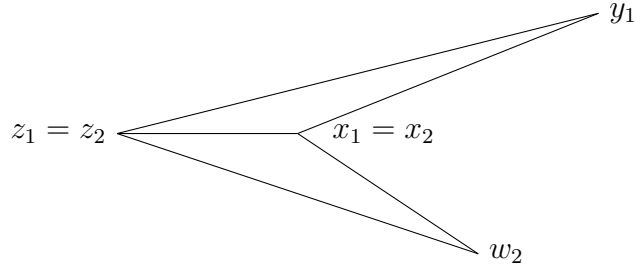
\begin{figure}[htbp]
\centering\begin{tikzpicture}[scale=0.8]
\draw (0,2) -- (6,0);
\draw (6,0) -- (3,2);
\draw (3,2) -- (8,4);
\draw (8,4) -- (0,2);
\draw (0,2) -- (3,2);
\node [right] at (3.4,2) {$x_1 =x_2$};
\node [left] at (0,2) {$z_1 =z_2$};
\node [right] at (8,4) {$y_1$};
\node [right] at (6,0) {$w_2$};
\end{tikzpicture}
\caption{Points in $\mathbb{R}^2$ appeared in the proof of Lemma \ref{Ldisc-lemma}.}\label{Ldisc-fig}
\end{figure}

\begin{proof}
Suppose $x_1 ,y_1 ,z_1 ,x_2 ,z_2 ,w_2 ,x_3 ,w_3 ,y_3 \in\mathbb{R}^2$ are points satisfying \eqref{Ldisc-points-eqs}. 
By transforming $x_2$, $z_2$ and $w_2$ if necessary, we may assume that 
$x_1 =x_2$, $z_1 =z_2$, and $w_2$ 
is not on the same side of $\overleftrightarrow{x_1 z_1}$ as $y_1$, as shown in \textsc{Figure} \ref{Ldisc-fig}. 
By \eqref{Ldisc-lemma-pi-yxz-zxw-ineq}, 
\begin{equation}\label{Ldisc-lemma-sum-pi-ineq}
\pi
<
\angle y_1 x_1 z_1 +\angle z_2 x_2 w_2
=
\angle y_1 x_1 z_1 +\angle z_1 x_1 w_2 ,
\end{equation}
which implies that 
\begin{equation}\label{Ldisc-lemma-sum-2pi-eq}
\angle y_1 x_1 z_1 +\angle z_1 x_1 w_2 +\angle w_2 x_1 y_1
=
2\pi .
\end{equation}
Because $\{ x,y,z,w\}$ is over-distance with respect to $\{ y,w\}$, 
\begin{equation*}
\| y_1 -w_2 \| < d_X (y,w). 
\end{equation*}
Hence we have 
\begin{align*}
&\| x_1 -y_1 \|=d_{X}(x,y)=\| x_3 -y_3 \|,\quad
\| x_1 -w_2 \|=d_{X}(x,w)=\| x_3 -w_3 \|,\\
&\| y_1 -w_2 \|<d_{X}(y,w)=\| y_3 -w_3\| ,
\end{align*}
and therefore Lemma \ref{law-of-cosine-lemma} implies that 
\begin{equation*}
\angle w_2 x_1 y_1 <\angle w_3 x_3 y_3 .
\end{equation*}
Combining this with \eqref{Ldisc-lemma-sum-2pi-eq} yields 
\begin{equation}\label{Ldisc-lemma-angle-2pi-ineq}
2\pi <
\angle y_1 x_1 z_1 +\angle z_1 x_1 w_2 +\angle w_3 x_3 y_3
=
\angle y_1 x_1 z_1 +\angle z_2 x_2 w_2 +\angle w_3 x_3 y_3 .
\end{equation}
For each $i\in\{ 1,2,3\}$, let $T_i$ be 
the (possibly degenerate) triangular region defined by \eqref{Ldisc-simplices}. 
As we mentioned in Example \ref{2pi-example}, \eqref{Ldisc-lemma-angle-2pi-ineq} ensures that $D(x;y,z,w)$ is a $\mathrm{CAT}(0)$ space, 
and that for each $i\in\{ 1,2,3\}$, the natural inclusion of $T_i$ into $D(x;y,z,w)$ is an isometric embedding. 
In particular, the natural inclusion $\varphi :\{ x,y,z,w\}\to D(x;y,z,w)$ is an isometric embedding because 
for any $a,b\in\{ x,y,z,w\}$, both $\varphi (a)$ and $\varphi (b)$ are represented by elements of $T_i$ for some $i\in\{ 1,2,3\}$. 
\end{proof}

\begin{remark}
Suppose $X$ is a metric space that satisfies the $\boxtimes$-inequalities, and 
$x,y,z,w\in X$ are four distinct points such that $\{ x,y,z,w\}$ is 
over-distance with respect to $\{ y,w\}$. 
Then 
Lemma \ref{four-points-in-R3-long} implies that 
\begin{equation*}
\pi
<
\tilde{\angle}yxz+\tilde{\angle}zxw ,
\end{equation*}
or 
\begin{equation*}
\pi
<
\tilde{\angle} yzx+\tilde{\angle}xzw .
\end{equation*}
Thus renaming the points if necessary, 
the points $x$, $y$, $z$ and $w$ always satisfy the condition \eqref{Ldisc-lemma-pi-yxz-zxw-ineq}, 
and therefore 
$D(x;y,z,w)$ becomes a $\mathrm{CAT}(0)$ space, and 
the natural inclusion of $\{ x,y,z,w\}$ into $D(x;y,z,w)$ becomes an isometric embedding by Lemma \ref{Ldisc-lemma}. 
\end{remark}

\section{The $G^{(5)}_7 (0)$ condition}\label{5-7-sec}

In this section, we prove that the validity of the $\boxtimes$-inequalities implies the $G^{(5)}_7 (0)$ condition. 
We start with the following three simple facts. 
All of them hold clearly, so we omit their proofs.

\begin{lemma}[{cf. \cite[p.25, 2.16(1)]{BH}}]\label{triangle-interior-lemma}
Let $x,y,z,w\in\mathbb{R}^2$. 
If $w\in\mathrm{conv}(\{ x,y,z\} )$, then 
\begin{equation}\label{triangle-interior-ineq}
\| x-w\|+\| w-y\|
\leq
\| x-z\| +\| z-y\| .
\end{equation}
If in addition $x$, $y$, $z$ and $w$ are distinct, and $\angle yxw<\angle yxz$, then strict inequality holds in \eqref{triangle-interior-ineq}. 
\end{lemma}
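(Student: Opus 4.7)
The plan is to prove both assertions by extending the ray from $x$ through $w$ until it meets the side $\lbrack y,z\rbrack$, and then applying the triangle inequality twice. Assume first the generic case where $x,y,z$ are non-collinear and $w \neq x$. The hypothesis $w\in\mathrm{conv}(\{x,y,z\})$ guarantees that the ray from $x$ in the direction of $w$ meets $\lbrack y,z\rbrack$ at a unique point $w'$, and moreover $w\in\lbrack x,w'\rbrack$. Then $\|x-w'\|=\|x-w\|+\|w-w'\|$, and the triangle inequality applied to $w,w',y$ gives $\|w-y\|\leq\|w-w'\|+\|w'-y\|$; adding these yields
\[
\|x-w\|+\|w-y\|\leq\|x-w'\|+\|w'-y\|.
\]

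Next, since $w'\in\lbrack y,z\rbrack$ we have $\|w'-y\|=\|y-z\|-\|w'-z\|$, and the triangle inequality applied to $x,z,w'$ gives $\|x-w'\|\leq\|x-z\|+\|z-w'\|$. Substituting,
\[
\|x-w'\|+\|w'-y\|\leq\|x-z\|+\|z-w'\|+\|w'-y\|=\|x-z\|+\|y-z\|,
\]
which combined with the previous display proves \eqref{triangle-interior-ineq} in the generic case. The boundary cases where $w$ coincides with one of $x,y,z$, or where $x,y,z$ are collinear, will be checked separately; each reduces immediately to the ordinary triangle inequality among $x$, $y$, $z$ (for instance, if $x,y,z$ are collinear then $\mathrm{conv}(\{x,y,z\})$ is a segment and $w$ lies on it, so all three norm sums are directly comparable).

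For the strict inequality, suppose in addition that $x,y,z,w$ are distinct and $\angle yxw<\angle yxz$. A strict inequality between these two angles is incompatible with the three points $x,y,z$ being collinear (both angles would be forced into $\{0,\pi\}$ with $\angle yxz$ no larger than $\angle yxw$ up to the positions of $y$ and $z$ relative to $x$; a quick case check rules this out whenever the four points are distinct). Hence the triangle $xyz$ is non-degenerate. Moreover $w'\neq z$, since $w'=z$ would force $w\in\lbrack x,z\rbrack$ and hence $\angle yxw=\angle yxz$. Because $x,y,z$ are non-collinear and $w'\in\lbrack y,z\rbrack\setminus\{z\}$, the point $x$ does not lie on the line through $z$ and $w'$, so the triangle inequality for $x,z,w'$ is strict; this strictness propagates up the chain and yields the strict form of \eqref{triangle-interior-ineq}.

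The main ``obstacle'' is purely bookkeeping: one must confirm that the construction of $w'$ is valid in every configuration and track exactly where strictness enters. The substantive content is elementary Euclidean geometry and carries no genuine difficulty.
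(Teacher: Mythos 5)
The paper gives no proof of this lemma (it is listed among three facts whose proofs are omitted as ``clear''), so there is no argument of the author's to compare against; your ray construction through $w$ meeting $\lbrack y,z\rbrack$ is the standard proof indicated by the reference to Bridson--Haefliger, and your proof of the non-strict inequality \eqref{triangle-interior-ineq} is correct, including the (admittedly breezy) disposal of the degenerate configurations.

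There is, however, a concrete error in your treatment of the strict inequality. You claim that $\angle yxw<\angle yxz$ is incompatible with $x$, $y$, $z$ being collinear, and that ``a quick case check rules this out whenever the four points are distinct.'' It does not: take $y=(-1,0)$, $x=(0,0)$, $z=(1,0)$, $w=(-1/2,0)$. These are four distinct points, $w\in\mathrm{conv}(\{x,y,z\})=\lbrack y,z\rbrack$, and $\angle yxw=0<\pi=\angle yxz$. So the collinear case survives your hypotheses, and your reduction to a non-degenerate triangle $xyz$ is unjustified as written. The gap is reparable rather than fatal: in the collinear case the hypotheses force $\angle yxw=0$ and $\angle yxz=\pi$, hence $x\in(y,z)$ and $w\in(x,y)$, so $\| x-w\|+\| w-y\| =\| x-y\|$ while $\| x-z\|+\| z-y\| =2\| x-z\| +\| x-y\| >\| x-y\|$ because $z\neq x$, and strictness holds directly. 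You should replace the false incompatibility claim with this explicit computation (or with some other honest handling of the collinear case); the remainder of your strictness argument --- $w'\neq z$, hence $x\notin\overleftrightarrow{zw'}=\overleftrightarrow{yz}$, hence $z\notin\lbrack x,w'\rbrack$ and the triangle inequality for $x,z,w'$ is strict --- is correct for the non-degenerate case.
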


\begin{lemma}\label{angle-lemma}
Let $o\in\mathbb{R}^2$. 
Suppose $x,y,z\in\mathbb{R}^2 \setminus\{ o\}$ are points such that 
$y$ and $z$ are not on opposite sides of $\overleftrightarrow{ox}$, and 
$\angle xoy\leq\angle xoz$. 
Then $x$ and $z$ are not on the same side of $\overleftrightarrow{oy}$, and 
$\angle xoz
=
\angle xoy +\angle yoz$. 
\end{lemma}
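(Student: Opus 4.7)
The plan is to reduce the lemma to a direct polar-coordinate computation. First I would translate so that $o$ becomes the origin, making $x,y,z$ nonzero vectors in $\mathbb{R}^2$, and then rotate so that $x$ lies on the positive horizontal axis. The hypothesis that $y$ and $z$ are not on opposite sides of $\overleftrightarrow{ox}$ says that $y$ and $z$ lie in a common closed half-plane determined by the horizontal axis; relabeling by a reflection if necessary, I may assume both lie in the closed upper half-plane. Write $y=\|y\|(\cos\alpha,\sin\alpha)$ and $z=\|z\|(\cos\gamma,\sin\gamma)$ with $\alpha,\gamma\in[0,\pi]$; then $\alpha=\angle xoy$ and $\gamma=\angle xoz$, so the hypothesis becomes $\alpha\leq\gamma$.

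For the angle-additivity claim, I would observe that since both polar angles lie in $[0,\pi]$, the angle between $y$ and $z$ as vectors is simply $|\gamma-\alpha|=\gamma-\alpha$. Hence $\angle yoz=\gamma-\alpha$, and
\begin{equation*}
\angle xoy+\angle yoz=\alpha+(\gamma-\alpha)=\gamma=\angle xoz,
\end{equation*}
which is the required identity.

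For the half-plane claim, I would consider the linear form $\ell(a,b)=-a\sin\alpha+b\cos\alpha$, whose zero set is exactly $\overleftrightarrow{oy}$. Evaluating, $\ell(x)=-\|x\|\sin\alpha\leq 0$, and $\ell(z)=\|z\|(\sin\gamma\cos\alpha-\cos\gamma\sin\alpha)=\|z\|\sin(\gamma-\alpha)\geq 0$, the latter because $\gamma-\alpha\in[0,\pi]$. Therefore $x$ and $z$ cannot lie on the same open side of $\overleftrightarrow{oy}$, which is the desired conclusion. The degenerate cases $\alpha\in\{0,\pi\}$ or $\gamma-\alpha\in\{0,\pi\}$ simply mean that $x$ or $z$ lies on the line $\overleftrightarrow{oy}$ itself, and are automatically consistent with both assertions. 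I do not anticipate any real obstacle; the lemma essentially encodes the additivity of polar angles on a half-turn, and the entire argument reduces to the identity $\sin(\gamma-\alpha)=\sin\gamma\cos\alpha-\cos\gamma\sin\alpha$ together with the sign of $\sin$ on $[0,\pi]$.
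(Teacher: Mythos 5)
Your proof is correct. The paper itself offers no argument for this lemma (it is listed among "three simple facts" whose proofs are omitted as clear), and your polar-coordinate computation — reducing both claims to $\angle yoz=\gamma-\alpha$ and to the sign of $\sin(\gamma-\alpha)$ on $[0,\pi]$ — is a complete and valid formalization of exactly the kind of elementary verification the author had in mind.
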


\begin{lemma}\label{axes-angle-lemma}
Suppose $o,x,y\in\mathbb{R}^2$ are points that are not collinear. 
Suppose $p,q\in\mathbb{R}^2$ are points such that 
neither $p$ nor $q$ is on the same side of $\overleftrightarrow{oy}$ as $x$, and 
$p$ is not on the same side of $\overleftrightarrow{ox}$ as $q$. 
Then 
$\angle pxq
=
\angle pxo +\angle oxq$. 
\end{lemma}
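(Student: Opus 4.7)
The plan is to reduce the angle identity $\angle pxq=\angle pxo+\angle oxq$ to showing that $o-x$ lies in the closed convex cone generated by $p-x$ and $q-x$; once this is established, a standard planar computation (reducing to the shorter arc on a unit circle centred at $x$) yields the required additivity of angles.

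To establish the cone membership, I would first invoke the hypothesis that $p$ and $q$ lie on opposite closed sides of $\overleftrightarrow{ox}$ to produce a point $r=(1-s)p+sq$ with $s\in\lbrack 0,1\rbrack$ lying on $\overleftrightarrow{ox}$. The heart of the argument is then to locate $r$: since $o,x,y$ are not collinear, the lines $\overleftrightarrow{ox}$ and $\overleftrightarrow{oy}$ meet only at $o$, so $o$ splits $\overleftrightarrow{ox}$ into two open rays, the one containing $x$ lying strictly on the $x$-side of $\overleftrightarrow{oy}$ and the complementary one (together with $o$ itself) lying in the closed half-plane of $\overleftrightarrow{oy}$ opposite to $x$. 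By hypothesis $p$ and $q$, and hence also $r$, belong to this latter closed half-plane, so $r=x+t(o-x)$ for some $t\geq 1$. Combining $t(o-x)=r-x=(1-s)(p-x)+s(q-x)$ with $t>0$ and $1-s,s\geq 0$ immediately exhibits $o-x$ in the desired cone.

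The main obstacle I anticipate is the bookkeeping for degenerate configurations: $p$ or $q$ may itself lie on $\overleftrightarrow{ox}$ so that $r$ coincides with an endpoint of $\lbrack p,q\rbrack$; the point $r$ may equal $o$ (namely $t=1$); or $p-x$ and $q-x$ may be linearly dependent so that $\angle pxq\in\{ 0,\pi\}$. In each such case one of the three angles collapses to $0$ and the identity still holds, but this has to be verified separately; this case-checking is the only nontrivial work in the argument.
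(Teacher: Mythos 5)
Your argument is correct. Note that the paper offers no proof to compare against: Lemma \ref{axes-angle-lemma} is one of the three facts the author declares to ``hold clearly'' and omits. Your route is a sound way to fill this in: the hypotheses force $p\neq x$ and $q\neq x$ (since $x\notin\overleftrightarrow{oy}$), the segment $\lbrack p,q\rbrack$ meets $\overleftrightarrow{ox}$ in a point $r$ which, lying in the closed half-plane of $\overleftrightarrow{oy}$ opposite $x$, must be of the form $x+t(o-x)$ with $t\geq 1$, and this exhibits $o-x$ in the closed convex cone spanned by $p-x$ and $q-x$. The degenerate configurations you flag all reduce to the single clean statement that any nonzero $w$ in the closed convex cone spanned by nonzero $u,v$ satisfies $\angle (u,v)=\angle (u,w)+\angle (w,v)$, including the collinear cases, so no further case analysis is genuinely needed beyond verifying that statement once.
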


We use these facts to prove the following lemma, 
which will play a key role to prove 
that the validity of the $\boxtimes$-inequalities implies the $G^{(5)}_7 (0)$ condition.

\begin{lemma}\label{larger-larger-lemma}
Suppose $x,y,z,w\in\mathbb{R}^2$ are four distinct points such that 
$w\in\mathrm{conv}(\{ x,y,z\} )$. 
Suppose $x',y',z',w'\in\mathbb{R}^2$ are points such that 
\begin{multline}\label{larger-assumption1}
\| x'-z'\|=\|x-z\|,\quad
\| z'-y'\|=\|z-y\|,\\
\| x'-w'\|=\|x-w\|,\quad
\| w'-y'\|=\|w-y\| .
\end{multline}
If $\| z'-w'\|\leq\| z-w\|$, then 
$\| x'-y'\|\leq\| x-y\|$. 
\end{lemma}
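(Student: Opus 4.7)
The plan is to place both configurations in normalized coordinates $w=w'=(0,0)$, $z=(d,0)$, $z'=(d',0)$ with $d:=\|w-z\|>0$ and $0<d'\le d$; the five preserved distances then determine $x_1,y_1,x'_1,y'_1$ uniquely while leaving $x_2,y_2,x'_2,y'_2$ determined only up to sign. The hypothesis $w\in\mathrm{conv}(\{x,y,z\})$ forces $x$ and $y$ to lie on opposite sides of $\overleftrightarrow{wz}$ in the main case. The case $d'=0$ is incompatible with the hypothesis (it would force $x,y$ onto the perpendicular bisector of $\overline{wz}$, putting $\mathrm{conv}(\{x,y,z\})$ strictly on the $z$-side and excluding $w$), while the sub-cases where $x$ or $y$ lies on $\overleftrightarrow{wz}$ saturate a triangle inequality in $\triangle xwz$ or $\triangle ywz$, force $d'=d$, and immediately yield $\|x'-y'\|=\|x-y\|$.

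With $a:=\|x-w\|$, $c:=\|y-w\|$, $\alpha:=\angle xwz$, $\beta:=\angle ywz$, and the convention $x=(a\cos\alpha,a\sin\alpha)$, $y=(c\cos\beta,-c\sin\beta)$, set $u:=a\sin\alpha$, $v:=c\sin\beta$, and introduce also $\gamma_x:=\angle xzw$, $\gamma_y:=\angle yzw$, $\mu_x:=\angle wxz$, $\mu_y:=\angle wyz$. A barycentric calculation using the identity $vx_1+uy_1 = ac\sin(\alpha+\beta)$ shows that $w\in\mathrm{conv}(\{x,y,z\})$ is equivalent to $\alpha+\beta\ge\pi$. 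The law of cosines gives
\[
\phi(t) := \|x-y\|^2 = a^2 + c^2 - 2ac\cos\bigl(\alpha(t)+\beta(t)\bigr)
\]
as a function of $t:=\|w-z\|^2$, and differentiating $\cos\alpha$ and $\cos\beta$ with respect to $t$ and simplifying via the law of sines $u=\|x-z\|\sin\gamma_x$, $v=\|y-z\|\sin\gamma_y$ yields
\[
\frac{d(\alpha+\beta)}{dt} = -\,\frac{\|x-z\|\,\|y-z\|\,\sin(\gamma_x+\gamma_y)}{2\,t\,u\,v}.
\]

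Summing the angles in the two triangles gives the crucial identity $\gamma_x+\gamma_y = 2\pi-(\alpha+\beta)-(\mu_x+\mu_y)$, so whenever $\alpha+\beta\ge\pi$ one automatically has $\gamma_x+\gamma_y\le\pi$, hence $\sin(\gamma_x+\gamma_y)\ge 0$ and $\sin(\alpha+\beta)\le 0$. Consequently $d(\alpha+\beta)/dt\le 0$ and $d\phi/dt = 2ac\sin(\alpha+\beta)\cdot d(\alpha+\beta)/dt\ge 0$ at every $t$ where $\alpha+\beta\ge\pi$. A closed-set/continuity argument (using the strict inequality $d(\alpha+\beta)/dt<0$ at any interior point where $\mu_x+\mu_y>0$, which holds throughout the non-degenerate interior of the realizability interval) then shows that $\alpha(t)+\beta(t)\ge\pi$ is preserved as $t$ decreases from $t_0:=d^2$ down to $t':=(d')^2$. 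Hence $\phi$ is non-decreasing on $[t',t_0]$, giving $\phi(t')\le\phi(t_0)=\|x-y\|^2$.

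Finally, the primed configuration itself satisfies either $x',y'$ on opposite sides of $\overleftrightarrow{w'z'}$, giving $\|x'-y'\|^2=\phi(t')$, or $x',y'$ on the same side, giving $\|x'-y'\|^2=\phi(t')-4u'v'\le\phi(t')$. Either way $\|x'-y'\|^2\le\phi(t')\le\|x-y\|^2$. The main technical hurdle will be the continuity/closed-set step ensuring that $\alpha+\beta\ge\pi$ persists as $t$ decreases, together with careful bookkeeping for the degenerate cases where a triangle inequality is saturated or the four points are collinear.
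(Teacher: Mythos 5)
Your proposal is correct, but it takes a genuinely different route from the paper's. The paper argues by contrapositive and stays entirely synthetic: assuming $\|x-y\|<\|x'-y'\|$, it reflects $z'$ across $\overleftrightarrow{x'y'}$, applies the perimeter inequality of Lemma \ref{triangle-interior-lemma} to the interior point, constructs an auxiliary point $\tilde{y}$, and chases angles through Lemmas \ref{law-of-cosine-lemma}, \ref{angle-lemma}, \ref{axes-angle-lemma} and \ref{convexhull-separation-lemma} to force $\|z-w\|<\|z'-w'\|$, with a separate case when $x$, $y$, $w$ are collinear. You instead run a one-parameter hinge deformation: fixing the four outer distances and letting $t=\|w-z\|^2$ decrease, you show that the region $\alpha+\beta\geq\pi$ (your barycentric criterion is exactly Lemma \ref{convexhull-separation-lemma} in normalized coordinates; the identity $v a\cos\alpha+u c\cos\beta=ac\sin(\alpha+\beta)$ checks out) is invariant under the flow, since $m v\cos\gamma_x+n u\cos\gamma_y=mn\sin(\gamma_x+\gamma_y)$ together with the angle-sum identity gives $d(\alpha+\beta)/dt\leq 0$ there, while $\sin(\alpha+\beta)\leq 0$ makes $\phi(t)=\|x-y\|^2$ nondecreasing; I verified the derivative formula, the invariance argument (including the strict sign at an interior crossing of $\alpha+\beta=\pi$), the exclusion of $z'=w'$, and the boundary sub-cases where $x$ or $y$ lies on $\overleftrightarrow{wz}$ (there $w\in\lbrack x,z\rbrack$ or $w\in\lbrack y,z\rbrack$, the saturated triangle inequality does force $d'=d$ and $\|x'-y'\|=\|x-y\|$, matching the paper's collinear Case 2). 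One step you should spell out in a full write-up is realizability and nondegeneracy on the whole interval: the triangle inequalities for $(a,\|x-z\|,\sqrt{t})$ and $(c,\|y-z\|,\sqrt{t})$ hold at both endpoints because the primed and unprimed configurations exist, and since each bound is monotone in $t$ they hold \emph{strictly} on the open interval, which is precisely what guarantees $u,v>0$ and $\mu_x+\mu_y\in(0,\pi)$, hence the strict derivative sign your closed-set argument relies on. As for what each approach buys: the paper's proof is elementary, calculus-free, and self-contained given its Lemmas \ref{triangle-interior-lemma}--\ref{convexhull-separation-lemma}; yours is conceptually more transparent --- it exhibits the lemma as the statement that shrinking the inner diagonal of a convex hinge monotonically shrinks the outer distance at every intermediate stage, a strictly stronger monotone conclusion --- at the cost of differential bookkeeping and careful quarantining of degenerate configurations, which your sketch handles correctly.
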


\begin{proof}
Suppose $x,y,z,w\in\mathbb{R}^2$ are four distinct points such that 
$w\in\mathrm{conv}(\{ x,y,z\} )$, and 
$x',y',z',w'\in\mathbb{R}^2$ are points satisfying \eqref{larger-assumption1}. 
To prove the lemma by contrapositive, we assume that 
\begin{equation}\label{larger-assumption2}
\| x-y\| <\| x'-y'\| .
\end{equation}
Then $x'\neq y'$ because $0<\| x-y\|$. 
Choose a point $\overline{z}\in\mathbb{R}^2$ such that 
\begin{equation}\label{larger-def-overz}
\| x'-\overline{z}\|=\| x'-z' \| ,\quad
\|\overline{z}-y' \|=\| z' -y' \| ,
\end{equation}
and $\overline{z}$ is not on the opposite side of $\overleftrightarrow{x'y'}$ from $w'$. 
If $z'$ and $w'$ are not on opposite sides of $\overleftrightarrow{x'y'}$, 
we may choose $\overline{z}=z'$. 
Otherwise, $\overline{z}$ is 
the point obtained by reflecting $z'$ orthogonally across $\overleftrightarrow{x'y'}$. 
Clearly, 
\begin{equation}\label{larger-overz-zdash}
\|\overline{z}-w'\|\leq\| z'-w'\| .
\end{equation}
Because $w\in\mathrm{conv}(\{ x,y,z\} )$, 
Lemma \ref{triangle-interior-lemma} implies that 
\begin{align}\label{larger-w'-interiorlike}
\| x'-w'\|+\|w' -y'\|
&=
\| x-w\|+\| w-y\| \\
&\leq
\| x-z\| +\| z-y\|
=
\| x'-\overline{z}\| +\|\overline{z}-y' \| .\nonumber
\end{align}
If $\angle y' x' \overline{z}$ were less than $\angle y'x'w'$, and 
$\angle x' y' \overline{z}$ were less than $\angle x' y' w'$, then 
$\overline{z}$ would lie in $\mathrm{conv}(\{ x' ,y' ,w' \} )$, and 
therefore Lemma \ref{triangle-interior-lemma} would imply that 
\begin{equation*}
\| x'-\overline{z}\| +\|\overline{z}-y' \|
<
\| x'-w' \|+\| w'-y' \| ,
\end{equation*}
contradicting \eqref{larger-w'-interiorlike}. 
Thus $\angle y'x'w' \leq\angle y' x' \overline{z}$ or $\angle x' y' w' \leq\angle x' y' \overline{z}$. 
We may assume without loss of generality that 
$\angle y'x'w' \leq\angle y' x' \overline{z}$. 
Then Lemma \ref{angle-lemma} implies that 
\begin{equation}\label{larger-assumption3}
\angle y' x' \overline{z}
=
\angle y' x' w' +\angle w' x' \overline{z}
\end{equation}
because $\overline{z}$ is not on the opposite side of $\overleftrightarrow{x'y'}$ from $w'$ by definition. 
We consider two cases.

\textsc{Case 1}: 
{\em $x$, $y$ and $w$ are not collinear.} 
Suppose $\tilde{y}\in\mathbb{R}^2$ is the point such that 
\begin{equation*}
\|\tilde{y}-w\| =\|y-w\|, \quad\angle xw\tilde{y} =\angle x'w'y' ,
\end{equation*}
and $\tilde{y}$ is not on the opposite side of $\overleftrightarrow{wx}$ from $y$, 
as shown in \textsc{Figure} \ref{larger-fig}. 
\begin{figure}[htbp]
\centering\begin{tikzpicture}[scale=0.5]
\draw (0,0) -- (2,2);
\draw (0,0) -- (2,5);
\draw (2,5) -- (6,0);
\draw (2,2) -- (6,0);
\draw (2,2) -- (6.47,1.8);
\draw (0,0) -- (6,0);
\node [below left] at (0,0) {$x$};
\node [below right] at (6,0) {$y$};
\node [above] at (2,2) {$w$};
\node [above] at (2,5) {$z$};
\node [above right] at (6.47,1.8) {$\tilde{y}$};
\draw (12,0) -- (14.46,4.79);
\draw (14.46,4.79) -- (18.71,0);
\draw (12,0) -- (14.46,1.39);
\draw (14.46,1.39) -- (18.71,0);
\draw (12,0) -- (18.71,0);
\node [below left] at (12,0) {$x'$};
\node [below right] at (18.71,0) {$y'$};
\node [above] at (14.46,1.39) {$w'$};
\node [above] at (14.46,4.79) {$\overline{z}$};
\end{tikzpicture}
\caption{Proof of Lemma \ref{larger-larger-lemma}.}\label{larger-fig}
\end{figure}
Then the triangle with vertices $x$, $\tilde{y}$ and $w$ is congruent to 
that with vertices $x'$, $y'$ and $w'$. Hence 
\begin{equation}\label{larger-xyw-cong}
\| x-\tilde{y}\| =\| x' -y'\| ,\quad 
\angle \tilde{y}xw=\angle y' x' w'.
\end{equation}
Because $\| x-w\| =\| x' -w' \|$ and $\| y-w\| =\| y'- w' \|$, 
\eqref{larger-assumption2} and Lemma \ref{law-of-cosine-lemma} imply that 
\begin{equation}\label{larger-angle-xwy}
\angle xwy <\angle x'w'y' =\angle xw\tilde{y}.
\end{equation}
Because $\tilde{y}$ is not on the opposite side of $\overleftrightarrow{wx}$ from $y$ by definition, 
\eqref{larger-angle-xwy} and Lemma \ref{angle-lemma} imply that 
\begin{equation}\label{larger-xwy-ywtildey-eq}
\angle xwy +\angle yw\tilde{y}
=
\angle xw\tilde{y}
\leq\pi .
\end{equation} 
Because $w\in\mathrm{conv}(\{ x,y,z\})$, 
Lemma \ref{convexhull-separation-lemma} implies that 
\begin{equation*}
\pi\leq\angle xwy +\angle ywz.
\end{equation*}
Combining this with \eqref{larger-xwy-ywtildey-eq} 
yields $\angle yw\tilde{y}\leq\angle ywz$. 
Furthermore, $\tilde{y}$ and $z$ are not on opposite sides of $\overleftrightarrow{wy}$ 
because neither $\tilde{y}$ nor $z$ is on the same side of $\overleftrightarrow{wy}$ as $x$ 
by Lemma \ref{angle-lemma} and Lemma \ref{convexhull-separation-lemma}, respectively, and 
$x\not\in\overleftrightarrow{wy}$ by the assumption of \textsc{Case 1}. 
Therefore, Lemma \ref{angle-lemma} implies that 
\begin{equation*}
\angle ywz
=
\angle yw\tilde{y}+\angle\tilde{y}wz.
\end{equation*}
Because $0<\angle yw\tilde{y}$ by \eqref{larger-angle-xwy} and \eqref{larger-xwy-ywtildey-eq}, 
it follows that 
\begin{equation*}
\angle\tilde{y}wz
<
\angle ywz.
\end{equation*}
Because $\| y-w\| =\|\tilde{y}-w\|$ by definition of $\tilde{y}$, 
this implies that 
\begin{equation}\label{larger-yz}
\|\tilde{y}-z\| <\| y-z\|=\| y'-\overline{z}\|
\end{equation}
by Lemma \ref{law-of-cosine-lemma}. 
Because 
$\| x-z\| =\|x'-\overline{z}\|$, 
and $\| x-\tilde{y}\| =\| x'-y'\|$ by \eqref{larger-xyw-cong}, 
it follows from \eqref{larger-yz} and Lemma \ref{law-of-cosine-lemma} that 
\begin{equation}\label{larger-zxy}
\angle \tilde{y}xz <\angle y' x' \overline{z}.
\end{equation} 
As we mentioned above, neither $\tilde{y}$ nor $z$ is on the same side of $\overleftrightarrow{wy}$ as $x$. 
Furthermore, $\tilde{y}$ and $z$ are not on the same side of $\overleftrightarrow{wx}$ 
because 
$\tilde{y}$ is not on the opposite side of $\overleftrightarrow{wx}$ from $y$ by definition of $\tilde{y}$, 
$z$ is not on the same side of $\overleftrightarrow{wx}$ as $y$ by Lemma \ref{convexhull-separation-lemma}, 
and $y\not\in\overleftrightarrow{wx}$ by the assumption of \textsc{Case 1}. 
Therefore, Lemma \ref{axes-angle-lemma} implies that 
\begin{equation}\label{larger-zxw-wxy}
\angle \tilde{y}xz=\angle\tilde{y}xw+\angle wxz.
\end{equation}
By \eqref{larger-assumption3}, \eqref{larger-xyw-cong}, \eqref{larger-zxy} and \eqref{larger-zxw-wxy}, 
\begin{align*}
\angle wxz
&=
\angle \tilde{y}xz-\angle\tilde{y}xw \\
&<
\angle y' x' \overline{z}-\angle\tilde{y}xw \\
&=
\angle y' x' \overline{z}-\angle y' x' w' \\
&=
\angle w' x'\overline{z}.
\end{align*}
Hence Lemma \ref{law-of-cosine-lemma} implies that 
\begin{equation*}
\|z-w\|<\|\overline{z}-w'\|
\end{equation*}
because $\| z-x\| =\|\overline{z}-x' \|$ and $\| w-x\| =\| w' -x' \|$. 
Combining this with \eqref{larger-overz-zdash} yields 
\begin{equation*}
\|z-w\|< \| z'-w'\| .
\end{equation*}

\textsc{Case 2}: 
{\em $x$, $y$ and $w$ are collinear.} 
In this case, 
$w\in\overleftrightarrow{xy}\setminus\lbrack x,y\rbrack$, 
because otherwise 
we would have 
\begin{equation*}
\| x'-y'\|
\leq
\| x'-w'\| +\| w'-y'\|
=
\| x-w\| +\| w-y\|
=
\| x-y\| ,
\end{equation*}
contradicting \eqref{larger-assumption2}. 
Because $w\in\mathrm{conv}(\{ x,y,z\} )$, it follows that 
\begin{equation}\label{larger-zwxy-or-xywz}
w\in\lbrack x,z\rbrack\cap\lbrack y,z\rbrack ,
\end{equation}
which implies in particular that 
\begin{equation}\label{larger-zw-ineq}
\| z-w\|
=
\left| \| x-z\| -\| x-w\|\right|
=
\left| \| x'-z'\| -\| x'-w'\|\right|
\leq
\| z'-w'\| .
\end{equation}
To prove that equality does not hold in the inequality in \eqref{larger-zw-ineq}, suppose to the contrary that 
$\| z-w\| =\| z'-w'\|$. 
Then \eqref{larger-zwxy-or-xywz} implies that 
\begin{align*}
\| x'-z'\|
&=
\| x-z\|
=
\| x-w\| +\| w-z\|
=
\| x'-w'\| +\| w'-z'\| ,\\
\| y'-z'\|
&=
\| y-z\|
=
\| y-w\| +\| w-z\|
=
\| y'-w'\| +\| w'-z'\| ,
\end{align*}
and thus 
\begin{equation}\label{larger-wdashin}
w'\in\lbrack x',z'\rbrack\cap\lbrack y',z'\rbrack .
\end{equation}
On the other hand, we have $x'\not\in\lbrack y',z'\rbrack$ and $y'\not\in\lbrack x',z'\rbrack$, 
because otherwise we would have 
\begin{equation*}
\|x'-y'\|
=
\left| \| x'-z'\| -\| y'-z'\|\right|
=
\left| \| x-z\| -\| y-z\|\right|
\leq
\| x-y\| ,
\end{equation*}
contradicting \eqref{larger-assumption2}. 
Hence 
\begin{equation*}
\lbrack x',z'\rbrack\cap\lbrack y',z'\rbrack =\{ z' \} .
\end{equation*}
Combining this with \eqref{larger-wdashin} yields $z'=w'$. 
Therefore, 
\begin{align*}
&\| x-z\| =\| x'-z'\|=\| x'-w'\| =\| x-w\| ,\\
&\| y-z\| =\| y'-z'\|=\| y'-w'\| =\| y-w\| .
\end{align*}
Because $w\in\mathrm{conv}(\{ x,y,z\})$, 
these equalities imply that $z=w$, 
contradicting the hypothesis that $z\neq w$. 
Thus equality does not hold in the inequality in \eqref{larger-zw-ineq}, 
which completes the proof of the lemma. 
\end{proof}

We are ready to prove the following proposition.

\begin{proposition}\label{5-7-prop}
If a metric space $X$ satisfies the $\boxtimes$-inequalities, 
then $X$ satisfies the $G^{(5)}_7 (0)$ condition. 
\end{proposition}

\begin{figure}[htbp]
\setlength{\unitlength}{1mm}
\begin{minipage}{0.15\hsize}
\centering
\begin{picture}(12,14)
\put(-4.5,0){$v_3$}
\put(-4.5,5){$v_2$}
\put(7.5,12){$v_1$}
\put(13,0){$v_4$}
\put(13,5){$v_5$}
\put(1,1){\circle*{2}}
\put(11,1){\circle*{2}}
\put(1,6){\circle*{2}}
\put(11,6){\circle*{2}}
\put(6,11){\circle*{2}}
\put(1,1){\line(1,0){10}}
\put(1,1){\line(2,1){10}}
\put(1,1){\line(0,1){5}}
\put(11,1){\line(-2,1){10}}
\put(11,1){\line(0,1){5}}
\put(11,6){\line(-1,1){5}}
\put(6,11){\line(-1,-1){5}}
\end{picture}
\end{minipage}
\caption{}\label{fig:5-7-graph}
\end{figure}

\begin{proof}
Let $(X,d_X )$ be a metric space that satisfies the $\boxtimes$-inequalities. 
Let $V$ and $E$ be the vertex set and the edge set of $G^{(5)}_7 (0)$, respectively. 
We set 
\begin{align*}
V&=\{ v_1 ,v_2 ,v_3 ,v_4,v_5\} ,\\
E&=\{ \{ v_1,v_2\},\{ v_2,v_3\},\{ v_3,v_4\},\{ v_4,v_5\},\{ v_5,v_1\}, \{ v_2,v_4\},\{ v_3,v_5\}\} ,
\end{align*}
as shown in \textsc{Figure} \ref{fig:5-7-graph}. 
Fix a map $f:V\to X$, and set 
$$
d_{ij}=d_X (f(v_i) ,f(v_j ))
$$
for any $i,j\in\{ 1,2,3,4,5\}$. 
By Theorem \ref{four-point-th}, if $d_{ij}=0$ for some $i,j\in\{ 1,2,3,4,5\}$ with $i\neq j$, then 
there exist a $\mathrm{CAT}(0)$ space $(Y_0 ,d_{Y_0} )$ and a map $g_0 : V\to Y_0$ such that 
$d_{Y_0} (g_0 (v_i ),g_0 (v_j ))=d_{ij}$ for any $i,j\in\{ 1,2,3,4,5\}$. 
Therefore, we assume that $d_{ij}>0$ 
for any $i,j\in\{ 1,2,3,4,5\}$ with $i\neq j$. 
Choose $p_1, p_2 ,p_5 \in\mathbb{R}^2$ such that 
\begin{equation*}
\| p_1 -p_2 \|=d_{12},\quad\| p_2 -p_5 \|=d_{25},\quad\| p_5 -p_1 \|=d_{51}. 
\end{equation*}
Equip the subset $P=\mathrm{conv}(\{ p_1 ,p_2 ,p_5 \})$ of $\mathbb{R}^2$ with the induced metric, and 
regard it as a metric space in its own right. 
We consider three cases. 

\textsc{Case 1}: 
{\em The subset $\{ f(v_2 ), f(v_3 ), f(v_4 ), f(v_5)\}$ of $X$ admits an isometric embedding into $\mathbb{R}^3$}. 
Let $\varphi :\{ f(v_2 ), f(v_3 ), f(v_4 ), f(v_5)\}\to\mathbb{R}^3$ be an isometric embedding. 
Define $(Y_1,d_{Y_1} )$ to be the metric space obtained by gluing $P$ and $\mathbb{R}^3$ by identifying 
$\lbrack p_2 ,p_5 \rbrack\subseteq P$ with $\lbrack\varphi ( f(v_2 )) ,\varphi (f(v_5 ))\rbrack\subseteq\mathbb{R}^3$. 
Then $(Y_1, d_{Y_1})$ is a $\mathrm{CAT}(0)$ space by Reshetnyak's gluing theorem. 
Define a map $g_1 :V\to Y_1$ by sending $v_i$ to the point in $Y_1$ 
represented by $\varphi ( f(v_i ))\in\mathbb{R}^3$ for each $i\in\{ 2,3,4,5\}$, and 
$v_1$ to the point in $Y_1$ represented by $p_1\in P$. 
Then 
\begin{align}
d_{Y_1}(g_1 (v_1 ),g_1 (v_i ))&=\| p_1 -p_i \| =d_{1i},\label{5-7-I-1i-eq}\\
d_{Y_1}(g_1 (v_j ),g_1 (v_k ))&=\|\varphi (f(v_j )) -\varphi (f(v_k ))\| =d_{jk}\label{5-7-I-jk-eq}
\end{align}
for any $i\in\{ 2,5\}$ and any $j,k\in\{ 2,3,4,5\}$. 
It is clear from the definitions of $Y_1$ and $g_1$ that 
\begin{align*}
&\lbrack g_1 (v_2 ) ,g_1 (v_5 )\rbrack\cap\lbrack g_1 (v_1 ) ,g_1 (v_i )\rbrack\neq\emptyset ,\\
&\mathrm{conv}(\{ g_1 (v_1 ),g_1 (v_2 ) ,g_1 (v_5 )\})\cap
\mathrm{conv}(\{ g_1 (v_i ),g_1 (v_2 ) ,g_1 (v_5 )\})
=
\lbrack g_1 (v_2 ) ,g_1 (v_5 )\rbrack
\end{align*}
for each $i\in\{ 3,4\}$, 
and 
$\mathrm{conv}(\{ g_1 (v_j ),g_1 (v_2 ) ,g_1 (v_5 )\})$ is isometric to a convex subset of the Euclidean plane 
for each $j\in\{ 1,3,4\}$. 
Therefore, for each $i\in\{ 3,4\}$, Lemma \ref{combined-triangles-lemma} implies that 
\begin{equation}\label{5-7-I-1i-ineq}
d_{Y_1}(g_1 (v_1 ) ,g_1 (v_i ) )\geq d_{1i}
\end{equation}
because 
\begin{align*}
&d_{Y_1} (g_1 (v_5 ),g_1(v_1 ))= d_{51}, \quad 
d_{Y_1} (g_1 (v_1),g_1(v_2 ))= d_{12}, \quad 
d_{Y_1} (g_1 (v_2),g_1(v_i ))= d_{2i}, \\
&d_{Y_1} (g_1 (v_i ),g_1(v_5 ))= d_{i5}, \quad 
d_{Y_1} (g_1 (v_2 ),g_1(v_5 ))= d_{25}
\end{align*}
by \eqref{5-7-I-1i-eq} and \eqref{5-7-I-jk-eq}. 
By \eqref{5-7-I-1i-eq}, \eqref{5-7-I-jk-eq} and \eqref{5-7-I-1i-ineq}, 
\begin{equation*}
\begin{cases}
d_{Y_1} (g_1 (v_i ),g_1 (v_j ))= d_{ij} ,\quad\textrm{if  }\{ v_i ,v_j\}\in E, \\
d_{Y_1} (g_1 (v_i ),g_1 (v_j ))\geq d_{ij} ,\quad\textrm{if  }\{ v_i ,v_j\}\not\in E
\end{cases}
\end{equation*}
for any $i,j\in\{ 1,2,3,4,5\}$. 
Thus $g_1$ is a map from $V$ to a $\mathrm{CAT}(0)$ space with the desired properties. 

\textsc{Case 2}: 
{\em $\{ f(v_2 ), f(v_3 ), f(v_4 ), f(v_5)\}$ is under-distance with respect to $\{  f(v_2 ),f(v_5)\}$}. 
Choose $x_2 ,x_3 ,x_4 \in\mathbb{R}^2$ such that 
\begin{equation*}
\| x_2 -x_3 \| =d_{23},\quad
\| x_3 -x_4 \| =d_{34},\quad
\| x_4 -x_2 \| =d_{42}.
\end{equation*}
Suppose $x_5 \in\mathbb{R}^2$ is a point such that 
\begin{equation*}
\| x_3 -x_5 \| =d_{35},\quad\| x_5 -x_4 \| =d_{54}, 
\end{equation*}
and $x_5$ is not on the opposite side of $\overleftrightarrow{x_3 x_4}$ from $x_2$. 
Then the assumption of \textsc{Case 2} implies that 
\begin{equation}\label{5-7-prop-II-25-ineq}
d_{25}<\| x_2 -x_5\| ,
\end{equation}
and Corollary \ref{four-points-in-R3-short-corollary} implies that 
$x_5\in\mathrm{conv}(\{ x_3 ,x_4 ,x_2\})$ or $x_2\in\mathrm{conv}(\{ x_3 ,x_4 ,x_5\})$. 
By the symmetry of the graph $G^{(5)}_7 (0)$, we may assume without loss of generality that 
\begin{equation}\label{5-7-prop-II-5-342}
x_5\in\mathrm{conv}(\{ x_3 ,x_4 ,x_2\} ).
\end{equation}
Choose $y_2 ,y_3 ,y_5 \in\mathbb{R}^2$ such that 
\begin{equation*}
\|y_2 -y_3\|=d_{23},\quad \|y_3 -y_5\|=d_{35},\quad \|y_5 -y_2\|=d_{52}, 
\end{equation*}
and choose $y_4 \in\mathbb{R}^2$ such that 
\begin{equation*}
\|y_2 -y_4\|=d_{24},\quad \|y_4 -y_5\|=d_{45}. 
\end{equation*}
Then because 
\begin{align*}
&\| y_3-y_2\|=d_{32} =\|x_3 -x_2\|,\quad
\| y_2-y_4\|=d_{24}=\|x_2-x_4\|,\\
&\| y_3-y_5\|=d_{35}=\|x_3-x_5\|,\quad
\| y_5-y_4\|=d_{54}=\|x_5-x_4\| ,\\
&\| y_2-y_5\|=d_{25}< \| x_2 -x_5\| ,\quad
x_5\in\mathrm{conv}(\{ x_3 ,x_4 ,x_2\})
\end{align*}
by \eqref{5-7-prop-II-25-ineq} and \eqref{5-7-prop-II-5-342}, 
Lemma \ref{larger-larger-lemma} implies that 
\begin{equation}\label{5-7-y34-ineq}
\| y_3-y_4\|\leq\| x_3- x_4\|=d_{34}.
\end{equation}
Define $(Y_2 ,d_{Y_2} )$ to be the metric space obtained by gluing $\mathbb{R}^2$ and $P$ 
by identifying $\lbrack y_2 ,y_5\rbrack\subseteq\mathbb{R}^2$ with $\lbrack p_2 ,p_5 \rbrack\subseteq P$. 
Then $(Y_2,d_{Y_2})$ is a $\mathrm{CAT}(0)$ space by Reshetnyak's gluing theorem. 
Define a map $g_2 :V\to Y_2$ by sending $v_i$ to the point in $Y_2$ 
represented by $y_i \in\mathbb{R}^2$ for each $i\in\{ 2,3,4,5\}$, and 
$v_1$ to the point in $Y_2$ represented by $p_1\in P$. 
Then 
\begin{align}
d_{Y_2}(g_2 (v_1 ),g_2 (v_i ))&=\| p_1 -p_i \| =d_{1i}, \label{5-7-II-P1i-eq}\\
d_{Y_2}(g_2 (v_j ),g_2 (v_k ))&=\|y_j -y_k \| =d_{jk} \label{5-7-II-jk-eq}
\end{align}
for any $i\in\{ 2,5\}$ and any $j,k\in\{ 2,3,4,5\}$ with $\{ j,k\}\neq\{ 3,4\}$. 
By \eqref{5-7-y34-ineq}, 
\begin{equation}\label{5-7-II-34-eq}
d_{Y_2}(g_2 (v_3 ),g_2 (v_4 ))=\|y_3 -y_4 \|\leq d_{34}. 
\end{equation} 
It is clear from the definitions of $Y_2$ and $g_2$ that 
\begin{align*}
&\lbrack g_2 (v_2 ) ,g_2 (v_5 )\rbrack\cap\lbrack g_2 (v_1 ) ,g_2 (v_i )\rbrack\neq\emptyset ,\\
&\mathrm{conv}(\{ g_2 (v_1 ),g_2 (v_2 ) ,g_2 (v_5 )\})\cap
\mathrm{conv}(\{ g_2 (v_i ),g_2 (v_2 ) ,g_2 (v_5 )\})
=
\lbrack g_2 (v_2 ) ,g_2 (v_5 )\rbrack
\end{align*}
for each $i\in\{ 3,4\}$, 
and $\mathrm{conv}(\{ g_2 (v_j ),g_2 (v_2 ) ,g_2 (v_5 )\} )$ is isometric to 
a convex subset of the Euclidean plane for each $j\in\{ 1,3,4\}$. 
Therefore, for each $i\in\{ 3,4\}$, Lemma \ref{combined-triangles-lemma} implies that 
\begin{equation}\label{5-7-II-1i-ineq}
d_{Y_2} (g_2 (v_1 ),g_2 (v_i ))\geq d_{1i}
\end{equation}
because 
\begin{align*}
&d_{Y_2} (g_2 (v_5 ),g_2(v_1 ))= d_{51}, \quad 
d_{Y_2} (g_2 (v_1),g_2(v_2 ))= d_{12}, \quad 
d_{Y_2} (g_2 (v_2),g_2(v_i ))= d_{2i}, \\
&d_{Y_2} (g_2 (v_i ),g_2(v_5 ))= d_{i5}, \quad 
d_{Y_2} (g_2 (v_2 ),g_2(v_5 ))= d_{25}
\end{align*}
by \eqref{5-7-II-P1i-eq} and \eqref{5-7-II-jk-eq}. 
By \eqref{5-7-II-P1i-eq}, \eqref{5-7-II-jk-eq}, \eqref{5-7-II-34-eq} and \eqref{5-7-II-1i-ineq}, 
\begin{equation*}
\begin{cases}
d_{Y_2} (g_2 (v_i ),g_2 (v_j ))\leq d_{ij} ,\quad\textrm{if  }\{ v_i,v_j\}\in E, \\
d_{Y_2} (g_2 (v_i ),g_2 (v_j ))\geq d_{ij} ,\quad\textrm{if  }\{ v_i,v_j\}\not\in E
\end{cases}
\end{equation*}
for any $i,j\in\{ 1,2,3,4,5\}$. 
Thus $g_2$ is a map from $V$ to a $\mathrm{CAT}(0)$ space with the desired properties.

\textsc{Case 3}: 
{\em $\{ f(v_2 ), f(v_3 ), f(v_4 ), f(v_5)\}$ is over-distance with respect to $\{  f(v_2 ),f(v_5)\}$}. 
In this case, 
Lemma \ref{four-points-in-R3-long} implies that 
\begin{equation*}
\pi
<
\tilde{\angle}f(v_2 )f(v_3 )f(v_4 )+\tilde{\angle}f(v_4 )f(v_3 )f(v_5 ).
\end{equation*}
or 
\begin{equation*}
\pi
<
\tilde{\angle} f(v_2 )f(v_4 )f(v_3 )+\tilde{\angle}f(v_3 )f(v_4 )f(v_5 ), 
\end{equation*}
By the symmetry of the graph $G^{(5)}_7$, we may assume without loss of generality that  
the former inequality holds. 
Let $Y'_3 =D(f(v_3 );f(v_2 ),f(v_4 ),f(v_5 ))$, and let 
\begin{equation*}
\psi :\{ f(v_3 ), f(v_2 ), f(v_4 ), f(v_5)\}\to Y'_3
\end{equation*}
be the natural inclusion. 
Then $Y'_3$ is a $\mathrm{CAT}(0)$ space, and $\psi$ is an isometric embedding by Lemma \ref{Ldisc-lemma}. 
It also follows from Lemma \ref{Ldisc-lemma} that 
\begin{equation*}
T(f(v_3 ),f(v_2 ),f(v_4 )),\quad
T(f(v_3 ),f(v_4 ),f(v_5 )),\quad
T(f(v_3 ),f(v_5 ),f(v_2 )) 
\end{equation*}
are closed convex subsets of $Y'_3$, all of which are 
isometric to convex subsets of the Euclidean plane. 
Define $(Y_3,d_{Y_3} )$ to be the metric space obtained by gluing $Y'_3$ and $P$ 
by identifying $\lbrack\psi (f(v_2 )),\psi (f(v_5 ))\rbrack\subseteq Y'_3$ with $\lbrack p_2 ,p_5 \rbrack\subseteq P$. 
Then $(Y_3,d_{Y_3})$ is a $\mathrm{CAT}(0)$ space by Reshetnyak's gluing theorem. 
Define a map $g_3 :V\to Y_3$ by sending $v_i$ to the point in $Y_3$ represented by $\psi (f(v_i ))\in Y'_3$ for each $i\in\{ 2,3,4,5\}$, and 
$v_1$ to the point in $Y_3$ represented by $p_1 \in P$. 
Then 
\begin{align}
d_{Y_3}(g_3 (v_1),g_3 (v_i ))&=\| p_1 -p_i \| =d_{1i}, \label{5-7-prop-III-12-15-eq}\\
d_{Y_3}(g_3 (v_j ),g_3 (v_k ))&=d_{Y'_3}(\psi (f(v_j )),\psi (f(v_k )))=d_{jk} \label{5-7-prop-III-jk-eq}
\end{align}
for any $i\in\{ 2,5\}$ and any $j,k\in\{ 2,3,4,5\}$. 
Let $T_1$, $T_2$ and $T_3$ be the images of 
$T(f(v_3 ),f(v_2 ),f(v_4 ))$, $T(f(v_3 ),f(v_4 ),f(v_5 ))$ and $T(f(v_3 ),f(v_5 ),f(v_2 ))$, respectively 
under the natural inclusion of $Y'_3$ into $Y_3$, and 
let $\tilde{P}$ be the image of $P$ under the natural inclusion of $P$ into $Y_3$. 
Then it is clear from the definition of $Y_3$ that 
$T_1$, $T_2$, $T_3$ and $\tilde{P}$ are 
all isometric to convex subsets of the Euclidean plane, and 
\begin{equation*}
T_1 \cap T_3 =\lbrack g_3 (v_3 ),g_3 (v_2 )\rbrack,\quad
T_2 \cap T_3 =\lbrack g_3 (v_3),g_3 (v_5 )\rbrack ,\quad
T_3 \cap\tilde{P}=\lbrack g_3 (v_2),g_3 (v_5 )\rbrack .
\end{equation*}
It is also clear from the definition of $Y_3$ that 
there exist $q_0 ,q_1 \in\lbrack g_3 (v_2 ),g_3 (v_5 )\rbrack$ such that 
\begin{align}
d_{Y_3}(g_3 (v_1 ),g_3 (v_3 ))
&=
d_{Y_3}(g_3 (v_1 ),q_0 )+d_{Y_3}(q_0 ,g_3 (v_3 )),\label{5-7-prop-III-1q0-eq}\\
d_{Y_3}(g_3 (v_1 ),g_3 (v_4 ))
&=
d_{Y_3}(g_3 (v_1 ),q_1 )+d_{Y_3}(q_1 ,g_3 (v_4 )).\label{5-7-prop-III-1q1-eq}
\end{align}
Therefore, \eqref{5-7-prop-III-1q0-eq} and Lemma \ref{combined-triangles-lemma} imply that 
\begin{equation}\label{5-7-prop-III-13-ineq}
d_{Y_3}(g_3 (v_1),g_3 (v_3))\geq d_{13}
\end{equation}
because 
\begin{align*}
&d_{Y_3}(g_3 (v_2),g_3 (v_1))= d_{21}, \quad 
d_{Y_3}(g_3 (v_1),g_3 (v_5))= d_{15}, \quad
d_{Y_3}(g_3 (v_5),g_3 (v_3))= d_{53}, \\ 
&d_{Y_3}(g_3 (v_3),g_3 (v_2))= d_{32}, \quad
d_{Y_3}(g_3 (v_2),g_3 (v_5))= d_{25}
\end{align*}
by \eqref{5-7-prop-III-12-15-eq} and \eqref{5-7-prop-III-jk-eq}. 
Clearly, the point $q_1 \in\lbrack g_3 (v_2 ),g_3 (v_5 )\rbrack$ 
is represented by a point $q'_1 \in\lbrack\psi (f(v_2 )),\psi (f(v_5 ))\rbrack$, and 
by definition of $Y'_3 =D(f(v_3 );f(v_2 ),f(v_4 ),f(v_5 ))$, 
there exists 
$q'_2 \in\lbrack\psi (f(v_3 )),\psi (f(v_2 ))\rbrack\cup\lbrack\psi(f(v_3 )),\psi (f(v_5 ))\rbrack$ 
such that 
\begin{equation}\label{5-7-prop-III-1q-dash2-eq}
d_{Y_3}(q_1 ,g_3 (v_4 ))
=
d_{Y'_3}(q'_1 ,\psi (f(v_4 )))
=
d_{Y'_3}(q'_1 ,q'_2 )+d_{Y'_3}(q'_2 ,\psi (f(v_4 ))).
\end{equation}
It follows from \eqref{5-7-prop-III-1q1-eq} and \eqref{5-7-prop-III-1q-dash2-eq} that 
\begin{equation}\label{5-7-prop-III-jabara-borns-eq}
d_{Y_3}(g_3 (v_1 ),g_3 (v_4 ))
=
d_{Y_3}(g_3 (v_1 ),q_1 )+d_{Y_3}(q_1 ,q_2 )+d_{Y_3}(q_2 ,g_3 (v_4 )), 
\end{equation}
where $q_2 \in Y_3$ is the point represented by $q'_2 \in Y'_3$. 
If $q'_2 \in\lbrack\psi (f(v_3 )),\psi (f(v_2 ))\rbrack$, then 
clearly $q_2 \in\lbrack g_3 (v_3 ),g_3 (v_2 )\rbrack$, 
and therefore \eqref{5-7-prop-III-jabara-borns-eq} and Lemma \ref{jabara-lemma} imply that 
\begin{equation}\label{5-7-prop-III-14-ineq}
d_{Y_3}(g_3 (v_1), g_3 (v_4 ))\geq d_{14}
\end{equation}
because 
\begin{align*}
&d_{Y_3}(g_3 (v_2), g_3 (v_1 ))= d_{21}, \quad
d_{Y_3}(g_3 (v_1), g_3 (v_5 ))= d_{15}, \quad
d_{Y_3}(g_3 (v_5 ), g_3 (v_3 ))= d_{53}, \\
&d_{Y_3}(g_3 (v_3 ), g_3 (v_4 ))= d_{34}, \quad
d_{Y_3}(g_3 (v_4 ), g_3 (v_2 ))= d_{42}, \quad
d_{Y_3}(g_3 (v_2), g_3 (v_5 ))= d_{25}, \\
&d_{Y_3}(g_3 (v_2), g_3 (v_3 ))= d_{23}
\end{align*}
by \eqref{5-7-prop-III-12-15-eq} and \eqref{5-7-prop-III-jk-eq}. 
If $q'_2 \in\lbrack\psi (f(v_3 )),\psi (f(v_5 ))\rbrack$, 
then we obtain \eqref{5-7-prop-III-14-ineq} in the same way. 
By \eqref{5-7-prop-III-12-15-eq}, \eqref{5-7-prop-III-jk-eq}, \eqref{5-7-prop-III-13-ineq} and \eqref{5-7-prop-III-14-ineq}, 
\begin{equation*}
\begin{cases}
d_{Y_3}(g_3 (v_i),g_3 (v_j))=d_{ij} ,\quad\textrm{if  }\{ v_i,v_j\}\in E, \\
d_{Y_3}(g_3 (v_i),g_3 (v_j))\geq d_{ij} ,\quad\textrm{if  }\{ v_i,v_j\}\not\in E
\end{cases}
\end{equation*}
for any $i,j\in\{ 1,2,3,4,5\}$. 
Thus $g_3$ is a map from $V$ to a $\mathrm{CAT}(0)$ space with the desired properties. 

By Proposition \ref{embeddable-TSD-TLD-prop}, 
\textsc{Case 1}, \textsc{Case 2} and 
\textsc{Case 3} exhaust all possibilities. 
\end{proof}

\section{The $G^{(5)}_9 (0)$ condition}\label{5-9-sec}

In this section, we prove that the validity of the $\boxtimes$-inequalities implies the 
$G^{(5)}_9 (0)$ condition. 
First we prove several lemmas.

\begin{lemma}\label{alternative-target-lemma}
Let $(X,d_X )$ be a metric space that satisfies the $\boxtimes$-inequalities, and let 
$p,x,y,z,w\in X$. 
Suppose there exist a complete geodesic space 
with nonnegative Alexandrov curvature $(Z, d_Z )$ and a map $f:\{ p,x,y,z,w\}\to Z$ such that 
\begin{equation*}
d_Z (f(p),f(a))\leq d_X (p,a),\quad
d_Z (f(a),f(b))\geq d_X (a,b)
\end{equation*}
for any $a,b\in\{ x,y,z,w\}$. 
Then there exist a $\mathrm{CAT}(0)$ space $(Y,d_Y )$ and a map $g:\{ p,x,y,z,w\}\to Y$ such that 
\begin{equation*}
d_Y (g(p),g(a))\leq d_X (p,a),\quad
d_Y (g(a),g(b))= d_X (a,b)
\end{equation*}
for any $a,b\in\{ x,y,z,w\}$. 
\end{lemma}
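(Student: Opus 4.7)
The plan is to produce $Y$ by combining the isometric embedding of the four-point subset $\{x,y,z,w\}$ guaranteed by Theorem \ref{four-point-th} with the Kirszbraun-type extension theorem (Theorem \ref{LS-th}) applied to the given map $f$ on the ``auxiliary'' space $Z$. The basic idea is that $Z$ exhibits the four image points already ``spread apart'' enough, so a $1$-Lipschitz map from $Z$ into a $\mathrm{CAT}(0)$ target that witnesses the correct four-point geometry will automatically place $f(p)$ sufficiently close to each $f(a)$.

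First I would note that the subset $\{x,y,z,w\}\subseteq X$ inherits the $\boxtimes$-inequalities from $X$. Hence by Theorem \ref{four-point-th} there exist a $\mathrm{CAT}(0)$ space $Y_0$ and a map $h:\{x,y,z,w\}\to Y_0$ with $d_{Y_0}(h(a),h(b))=d_X(a,b)$ for all $a,b\in\{x,y,z,w\}$. By passing to the metric completion (which preserves the $\mathrm{CAT}(0)$ property) I may assume $Y_0$ is a complete $\mathrm{CAT}(0)$ space; call it $Y$.

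Next, set $S=\{f(x),f(y),f(z),f(w)\}\subseteq Z$ and define $\varphi:S\to Y$ by $\varphi(f(a))=h(a)$. For any $a,b\in\{x,y,z,w\}$,
\begin{equation*}
d_Y(\varphi(f(a)),\varphi(f(b)))=d_Y(h(a),h(b))=d_X(a,b)\leq d_Z(f(a),f(b)),
\end{equation*}
so $\varphi$ is $1$-Lipschitz (this is the only place where the hypothesis $d_Z(f(a),f(b))\geq d_X(a,b)$ on the four points $a,b\in\{x,y,z,w\}$ is used). Since $Z$ is a complete geodesic space with nonnegative Alexandrov curvature and $Y$ is a complete $\mathrm{CAT}(0)$ space, Theorem \ref{LS-th} yields a $1$-Lipschitz extension $\tilde\varphi:Z\to Y$ with $\tilde\varphi(f(a))=h(a)$ for each $a\in\{x,y,z,w\}$.

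Finally I would define $g:\{p,x,y,z,w\}\to Y$ by $g(a)=h(a)$ for $a\in\{x,y,z,w\}$ and $g(p)=\tilde\varphi(f(p))$. The equalities $d_Y(g(a),g(b))=d_X(a,b)$ for $a,b\in\{x,y,z,w\}$ are immediate from the construction, while for each $a\in\{x,y,z,w\}$,
\begin{equation*}
d_Y(g(p),g(a))=d_Y(\tilde\varphi(f(p)),\tilde\varphi(f(a)))\leq d_Z(f(p),f(a))\leq d_X(p,a)
\end{equation*}
by the $1$-Lipschitz property of $\tilde\varphi$ and the hypothesis on $f$. This gives the required $Y$ and $g$. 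There is no substantive obstacle here: the only subtlety to check is that the metric completion of a $\mathrm{CAT}(0)$ space remains $\mathrm{CAT}(0)$ (so that Theorem \ref{LS-th} applies), which is standard.
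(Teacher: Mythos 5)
Your proposal is correct and follows essentially the same route as the paper: embed $\{x,y,z,w\}$ isometrically into a $\mathrm{CAT}(0)$ space via Theorem \ref{four-point-th}, observe that the induced map on $\{f(x),f(y),f(z),f(w)\}$ is $1$-Lipschitz, and extend it by the Lang--Schroeder theorem (Theorem \ref{LS-th}) to place the image of $f(p)$. Your explicit remark about passing to the completion so that Theorem \ref{LS-th} applies is a small point the paper leaves implicit, but the argument is otherwise identical.
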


\begin{proof}
Because $(X,d_X )$ satisfies the $\boxtimes$-inequalities, 
Theorem \ref{four-point-th} implies that there exist a $\mathrm{CAT}(0)$ space $(Y,d_Y )$ and an isometric embedding 
$\varphi :\{ x,y,z,w\}\to Y$. 
Define a map $\psi :\{ f(x),f(y),f(z),f(w)\}\to Y$ by 
$\psi (f(a))=\varphi (a)$. 
Then $\psi$ is $1$-Lipschitz because 
\begin{equation*}
d_Y (\psi (f(a)), \psi (f(b)))
=
d_Y (\varphi (a),\varphi (b))
=
d_X (a,b)
\leq
d_Z (f(a),f(b))
\end{equation*}
for any $a,b\in\{ x,y,z,w\}$. 
Hence Theorem \ref{LS-th} implies that there exists a $1$-Lipschitz map 
$\tilde{\psi}:\{ f(p),f(x),f(y),f(z),f(w)\}\to Y$ such that 
$\tilde{\psi} (f(a))=\psi (f(a))$ 
for every $a\in\{ x,y,z,w\}$. 
Define a map $g: \{ p,x,y,z,w\}\to Y$ by 
$g(a)=\tilde{\psi}(f(a))$. 
Then 
\begin{align*}
d_Y (g(a),g(b))&= d_Y (\psi (f(a)),\psi (f(b)))=d_Y (\varphi (a),\varphi (b))=d_X (a,b), \\
d_Y (g(p),g(a))&=d_Y (\tilde{\psi}(f(p)),\tilde{\psi}(f(a)))\leq d_Z (f(p),f(a))\leq d_X (p,a) 
\end{align*}
for any $a,b\in\{ x,y,z,w\}$, which proves the lemma. 
\end{proof}

\begin{lemma}\label{p-triangle-q-lemma}
Let $(X,d_X )$ be a metric space that satisfies the $\boxtimes$-inequalities. 
Suppose $p,q,x,y,z\in X$ are five distinct points such that 
both $\{ p,x,y,z\}$ and $\{ q,x,y,z\}$ admit isometric embeddings into $\mathbb{R}^3$. 
Then there exist a $\mathrm{CAT}(0)$ space $(Y,d_Y )$ and a map $g:\{ p,q,x,y,z\}\to Y$ such that 
\begin{equation*}
d_Y (g(p),g(q))\geq d_X (p,q),\quad
d_Y (g(x),g(a))\leq d_X (x,a),\quad
d_Y (g(b),g(c))=d_X (b,c)
\end{equation*}
for any $a,b,c\in\{ p,q,y,z\}$ with $\{ b,c\}\neq\{ p,q\}$. 
\end{lemma}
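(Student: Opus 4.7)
The plan is to construct a nonnegative Alexandrov curvature space $Z$ containing isometric images of the five points with the required distance inequalities, then invoke Lemma~\ref{alternative-target-lemma} to transfer the configuration to a $\mathrm{CAT}(0)$ target.

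First, using the hypothesis, fix isometric embeddings $\varphi_p\colon\{p,x,y,z\}\hookrightarrow\mathbb{R}^3$ and $\varphi_q\colon\{q,x,y,z\}\hookrightarrow\mathbb{R}^3$, and arrange them in a common $\mathbb{R}^3$ so that they agree on $\{x,y,z\}$ while $\varphi_p(p)$ and $\varphi_q(q)$ lie on opposite sides of the plane spanned by $\triangle xyz$. Let $T_p, T_q\subset\mathbb{R}^3$ be the associated solid tetrahedra, and let $Z$ be the boundary surface of the bipyramid $T_p\cup T_q$, viewed as the piecewise Euclidean topological $2$-sphere consisting of the six triangles $\triangle pxy, \triangle pyz, \triangle pzx, \triangle qxy, \triangle qyz, \triangle qzx$ glued along matching edges and equipped with the intrinsic length metric. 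When the bipyramid is convex in $\mathbb{R}^3$, Example~\ref{Alexandrov-example} shows that $Z$ is a complete geodesic space of nonnegative Alexandrov curvature. Let $f\colon\{p,q,x,y,z\}\to Z$ be the natural inclusion of vertices; since every pair of distinct vertices except $\{p,q\}$ is joined by an edge of the polyhedron, and edges of a convex polyhedral surface are geodesics, we get $d_Z(f(a),f(b))=d_X(a,b)$ for every such pair, so the inequalities $d_Z(f(x),f(a))\leq d_X(x,a)$ and $d_Z(f(a),f(b))\geq d_X(a,b)$ for $\{a,b\}\neq\{p,q\}$ required by Lemma~\ref{alternative-target-lemma} all hold with equality.

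The remaining step, and the heart of the argument, is to establish $d_Z(f(p),f(q))\geq d_X(p,q)$. A length-minimizing path from $f(p)$ to $f(q)$ on $Z$ crosses one of the three edges $\overline{xy}, \overline{yz}, \overline{zx}$ of $\triangle xyz$ in its interior; unfolding the two adjacent bipyramidal faces flat about that edge straightens the geodesic into a line segment in a planar quadrilateral, whose length equals $d_Z(f(p),f(q))$. Applying Lemma~\ref{quadrangle-lemma} to the 4-point subset $\{p,q,u,v\}\subset X$ (where $\overline{uv}$ is the crossed edge) realized in this unfolded plane — the $\boxtimes$-inequalities on the subset supply the requisite quadrangle hypothesis — delivers the desired lower bound. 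With all hypotheses of Lemma~\ref{alternative-target-lemma} then satisfied, applying that lemma with its ``$p$'' playing the role of our $x$ and its ``$\{x,y,z,w\}$'' playing the role of $\{p,q,y,z\}$ yields the $\mathrm{CAT}(0)$ space $(Y,d_Y)$ and map $g$ with the required distances.

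The main obstacles are twofold: (i) verifying that the bipyramid is convex, so that Example~\ref{Alexandrov-example} applies — when the projections of $\varphi_p(p)$ and $\varphi_q(q)$ onto the plane of $\triangle xyz$ are joined by a segment that escapes $\triangle xyz$, the cone angle at a vertex of $Z$ may exceed $2\pi$ and the basic construction breaks down, in which case one must fall back to a variant (e.g., taking $Z$ to be the boundary of the convex hull $\mathrm{conv}\{\varphi_p(p),\varphi_q(q),x,y,z\}$ and re-tracing the distance argument there); and (ii) ensuring that the geodesic's crossing with the edge is strictly interior so that the intersection hypothesis of Lemma~\ref{quadrangle-lemma} is satisfied, with degenerate cases (crossings at a vertex of $\triangle xyz$, or one of the tetrahedra degenerate) treated separately.
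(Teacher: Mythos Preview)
Your plan coincides with the paper's \textsc{Case 2} (convex bipyramid boundary), and the unfolding argument for $d_Z(f(p),f(q))\geq d_X(p,q)$ is exactly Lemma~\ref{combined-triangles-lemma} in disguise, so that part is fine.  The gap is in obstacle~(i).  Your proposed fallback---taking the boundary of $\mathrm{conv}\{\varphi_p(p),\varphi_q(q),x',y',z'\}$---does not work: when the bipyramid is non-convex, a vertex of the base triangle can lie in the \emph{interior} of that convex hull and disappear from the surface altogether.  Concretely, with $p'=(0,0,1)$, $q'=(0,0,-1)$, $x'=(0.1,0.1,0)$, $y'=(1,0,0)$, $z'=(0,1,0)$ (five points in $\mathbb{R}^3$, which certainly satisfies the $\boxtimes$-inequalities), the segment $[p',q']$ meets the base plane at the origin, which is outside $\triangle x'y'z'$, so the bipyramid is non-convex; but $x'=0.4\,p'+0.4\,q'+0.1\,y'+0.1\,z'$ is a strictly positive convex combination, so $x'$ is interior to $\mathrm{conv}\{p',q',y',z'\}$ and there is no natural image of $x$ on the boundary surface.

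The paper's remedy in the non-convex situation is of a completely different nature: rather than looking for an Alexandrov surface, it glues the two solid tetrahedra $P=\mathrm{conv}\{p',x',y',z'\}$ and $Q=\mathrm{conv}\{q',x',y',z'\}$ along the full closed triangle $R=\mathrm{conv}\{x',y',z'\}$ via Reshetnyak to obtain a $\mathrm{CAT}(0)$ space directly (no appeal to Lemma~\ref{alternative-target-lemma}).  The $pq$-geodesic in this glued space must pass through $R$, and one checks it can be rerouted through a point of $\partial R$ without increasing length, after which Lemma~\ref{combined-triangles-lemma} gives the bound.  A separate argument (doubling the triangle, Example~\ref{double-example}) is needed when both $p'$ and $q'$ land in the plane of $\triangle x'y'z'$, a degenerate case your proposal also does not cover.
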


\begin{proof}
Let $\alpha$ be the plane in $\mathbb{R}^3$ consisting of all points $(t_1 ,t_2 ,t_3 )\in\mathbb{R}^3$ with $t_3=0$. 
Choose $x',y',z'\in\alpha$ such that 
\begin{equation*}
\| x'-y'\| =d_X (x,y),\quad
\| y'-z'\| =d_X (y,z),\quad
\| z'-x'\| =d_X (z,x).
\end{equation*}
Then because both $\{ p,x,y,z\}$ and $\{ q,x,y,z\}$ admit isometric embeddings into $\mathbb{R}^3$, 
there exist points $p'=(p^{(1)},p^{(2)},p^{(3)})$ and $q'=(q^{(1)},q^{(2)},q^{(3)})$ in $\mathbb{R}^3$ 
such that 
\begin{align*}
&\| p'-x' \|=d_{X}(p,x),\quad\| p'-y' \|=d_{X}(p,y),\quad\| p'-z' \|=d_{X}(p,z),\quad p^{(3)}\geq 0,\\
&\| q'-x' \|=d_{X}(q,x),\quad\| q'-y' \|=d_{X}(q,y),\quad\| q'-z' \|=d_{X}(q,z),\quad q^{(3)}\leq 0 .
\end{align*}
Let $R$ be the convex hull of $\{ x',y',z'\}$ in $\mathbb{R}^3$. 
Then $R\subseteq\alpha$, and the triangle 
\begin{equation*}
R'=\lbrack x',y' \rbrack\cup\lbrack y',z' \rbrack\cup\lbrack z',x' \rbrack
\end{equation*}
forms the boundary of $R$ as a subset of $\alpha$. 
Define $P,Q\subseteq\mathbb{R}^3$ by 
\begin{equation*}
P=\mathrm{conv}(\{ p',x',y',z' \}),\quad
Q=\mathrm{conv}(\{ q',x',y',z' \}).
\end{equation*}
We consider three cases.

\textsc{Case 1}: 
{\em $\lbrack p',q'\rbrack\cap\left(\alpha\setminus (R\setminus R' )\right)\neq\emptyset$}. 
Choose $r_0\in\lbrack p',q'\rbrack\cap\left(\alpha\setminus (R\setminus R' )\right)$. 
Equip the subsets $P$ and $Q$ of $\mathbb{R}^3$ with the induced metrics, and regard them as 
disjoint metric spaces. 
Define $(Y_1, d_{Y_1} )$ to be the metric space obtained by gluing $P$ and $Q$ by identifying $R\subseteq P$ with 
$R\subseteq Q$ naturally. 
Then $Y_1$ is a $\mathrm{CAT}(0)$ space by Reshetnyak's gluing theorem, and the natural inclusions of $P$ and $Q$ into $Y_1$ are 
isometric embeddings. 
We denote by $\tilde{P}$ and $\tilde{Q}$ the images of 
$P$ and $Q$, respectively under the natural inclusions into $Y_1$. 
Define a map $g_1 :\{ p,q,x,y,z\}\to Y_1$ by sending $x$, $y$, $z$, $p$ and $q$ 
to the points in $Y_1$ represented by $x',y',z',p'\in P$ and $q'\in Q$, respectively. 
Then 
\begin{equation}\label{p-triangle-q-I-ab-eq}
d_{Y_1}(g_1 (a),g_1 (b))=\| a'-b'\| =d_X (a,b)
\end{equation}
for any $a,b\in\{ p,q,x,y,z\}$ with $\{ a,b\}\neq\{ p,q\}$. 
By definition of $Y_1$, there exists a point $r_1 \in R$ such that 
\begin{equation}\label{p-triangle-q-p-r1-q}
d_{Y_1}(g_1(p ),g_1(q))
=
\| p'-r_1 \| +\| r_1 -q' \|.
\end{equation}
Because $R'$ is the boundary of $R$ as a subset of $\alpha$, 
there exists a point $r_2 \in R'\cap\lbrack r_0 ,r_1 \rbrack$. 
Then $r_2\in\mathrm{conv}(\{ r_1 ,p' ,q' \} )$, and therefore Lemma \ref{triangle-interior-lemma} implies that  
\begin{align*}
d_{Y_1}(g_1(p ),\tilde{r}_2 )+d_{Y_1}(\tilde{r}_2 ,g_1(q))
&=
\| p'-r_2 \| +\| r_2 -q'\| \\
&\leq
\| p'-r_1 \| +\| r_1 -q'\| ,
\end{align*}
where $\tilde{r}_2$ is the point in $Y_1$ represented by $r_2 \in P$ (or $r_2 \in Q$). 
Combining this with \eqref{p-triangle-q-p-r1-q} and the triangle inequality for $Y_1$ yields 
\begin{equation}\label{p-triangle-q-p-r2-q-eq}
d_{Y_1}(g_1(p ),g_1(q))
=
d_{Y_1}(g_1(p ),\tilde{r}_2 )+d_{Y_1}(\tilde{r}_2 ,g_1(q)). 
\end{equation}
If $r_2\in\lbrack x',y' \rbrack$, then $\tilde{r}_2$ clearly lies on the geodesic segment $\lbrack g_1 (x),g_1 (y)\rbrack$ in $Y_1$, 
and therefore 
\eqref{p-triangle-q-I-ab-eq}, \eqref{p-triangle-q-p-r2-q-eq} and Lemma \ref{combined-triangles-lemma} imply that 
\begin{equation}\label{p-triangle-q-I-pq-ineq}
d_{Y_1}(g_1 (p),g_1 (q))
\geq
d_{X}(p,q)
\end{equation}
because $\lbrack g_1 (x),g_1 (y)\rbrack\subseteq\tilde{P}\cup\tilde{Q}$, and $\tilde{P}$ and $\tilde{Q}$ are 
isometric to convex subsets of Euclidean spaces. 
If $r_2 \in\lbrack y' ,z' \rbrack$ or $r_2 \in\lbrack z' ,x' \rbrack$, 
then we obtain \eqref{p-triangle-q-I-pq-ineq} in the same way. 
Thus \eqref{p-triangle-q-I-pq-ineq} always holds in \textsc{Case 1}. 
By \eqref{p-triangle-q-I-ab-eq} and \eqref{p-triangle-q-I-pq-ineq}, 
$g_1$ is a map from $\{ p,q,x,y,z\}$ to a $\mathrm{CAT}(0)$ space with the desired properties.

\textsc{Case 2}: 
{\em $\lbrack p',q'\rbrack\cap\left(\alpha\setminus (R\setminus R' )\right) =\emptyset$ and 
$\{ p',q'\}\not\subseteq\alpha$.} 
In this case, $\lbrack p',q'\rbrack\cap (R\setminus R' )\neq\emptyset$. 
Hence the subset $P\cup Q$ of $\mathbb{R}^3$ is not contained in any plane, and $\lbrack p',q'\rbrack\subseteq P\cup Q$. 
It follows that $P\cup Q$ is a convex subset of $\mathbb{R}^3$, and therefore 
the boundary $S$ of $P\cup Q$ in $\mathbb{R}^3$ equipped with the 
induced length metric $d_S$ is a complete geodesic space 
with nonnegative Alexandrov curvature as we mentioned in Example \ref{Alexandrov-example}. 
Clearly $S$ is the union of six subsets 
$\mathrm{conv}(\{ p',x',y' \} )$, $\mathrm{conv}(\{ p',y',z' \} )$, $\mathrm{conv}(\{ p',z',x' \} )$, 
$\mathrm{conv}(\{ q',x',y' \} )$, $\mathrm{conv}(\{ q',y',z' \} )$ and $\mathrm{conv}(\{ q',z',x' \} )$ 
of $\mathbb{R}^3$. 
On each of these six subsets, 
$d_S$ coincides with the Euclidean metric on $\mathbb{R}^3$. 
In particular, these six subsets are all isometric to convex subsets of the Euclidean plane even as subsets of $(S,d_S )$. 
Define a map $f_2 :\{ p,q,x,y,z\}\to S$ by $f_2 (a)=a'$. 
Then 
\begin{equation}\label{p-triangle-q-II-ab-eq}
d_{S}(f_2 (a),f_2 (b))=\| a'-b'\| =d_X (a,b)
\end{equation}
for any $a,b\in\{ p,q,x,y,z\}$ with $\{ a,b\}\neq\{ p,q\}$. 
Fix a geodesic segment $\Gamma_0$ in $(S,d_S )$ with endpoints $f_2 (p)$ and $f_2 (q)$. 
Then $\Gamma_0$ clearly has a nonempty intersection with the union of three line segments 
$\lbrack x',y'\rbrack\cup\lbrack y',z'\rbrack\cup\lbrack z',x'\rbrack$. 
If $\Gamma_0$ has a nonempty intersection with $\lbrack x' ,y'\rbrack$, 
then \eqref{p-triangle-q-II-ab-eq} and Lemma \ref{combined-triangles-lemma} imply that 
\begin{equation}\label{p-triangle-q-II-pq-ineq}
d_S (f_2 (p),f_2 (q))\geq d_{X}(p,q)
\end{equation}
because $\lbrack x',y' \rbrack =\mathrm{conv}(\{ p',x',y' \} )\cap\mathrm{conv}(\{ q',x',y' \} )$ 
is a geodesic segment even in $(S,d_S )$. 
If $\Gamma_0$ has a nonempty intersection with $\lbrack y' ,z' \rbrack$ or  
$\lbrack z' ,x' \rbrack$, then we obtain \eqref{p-triangle-q-II-pq-ineq} in the same way. 
Thus \eqref{p-triangle-q-II-pq-ineq} always holds in \textsc{Case 2}. 
By \eqref{p-triangle-q-II-ab-eq} and \eqref{p-triangle-q-II-pq-ineq}, the map $f_2$ satisfies that 
\begin{equation*}
d_{S}(f_2 (x),f_2 (a))=d_{X}(x,a),\quad d_{S}(f_2 (a),f_2 (b))\geq d_{X}(a,b)
\end{equation*}
for any $a,b\in\{ p,q,y,z\}$. 
Therefore, Lemma \ref{alternative-target-lemma} implies that there exist 
a $\mathrm{CAT}(0)$ space $(Y_2 ,d_{Y_2})$ and a map $g_2 :\{ p,q,x,y,z\}\to Y_2$ such that 
\begin{equation*}
d_{Y_2} (g_2 (x),g_2 (a))\leq d_X (x,a),\quad
d_{Y_2} (g_2 (a),g_2 (b))= d_X (a,b)
\end{equation*}
for any $a,b\in\{ p,q,y,z\}$. 
Thus $g_2$ is a map from $\{ p,q,x,y,z\}$ to a $\mathrm{CAT}(0)$ space with the desired properties.

\textsc{Case 3}: 
{\em $\lbrack p',q'\rbrack\cap\left(\alpha\setminus (R\setminus R' )\right) =\emptyset$ and $\{ p',q'\}\subseteq\alpha$.} 
In this case, $\{ p',q' \}\subseteq R\setminus R'$, which ensures in particular 
that $x'$, $y'$ and $z'$ are not collinear. 
Let $R_1$ and $R_2$ be two isometric copies of $R$. 
We denote the points in $R_1$ corresponding to $x'$, $y'$, $z'$, $p'$ and $q'$ by 
$x_1$, $y_1$, $z_1$, $p_1$ and $q_1$, respectively, and 
the points in $R_2$ corresponding to $x'$, $y'$, $z'$, $p'$ and $q'$ by 
$x_2$, $y_2$, $z_2$, $p_2$ and $q_2$, respectively. 
Define $(R_0 ,d_{R_0})$ to be the piecewise Euclidean simplicial complex constructed from the two simplices $R_1$ and $R_2$ 
by identifying $\lbrack x_1 ,y_1 \rbrack$ with $\lbrack x_2 ,y_2 \rbrack$, 
$\lbrack y_1 ,z_1 \rbrack$ with $\lbrack y_2 ,z_2 \rbrack$, 
and $\lbrack z_1 ,x_1 \rbrack$ with $\lbrack z_2 ,x_2 \rbrack$. 
In other words, $R_0$ is the piecewise Euclidean simplicial complex obtained by gluing $R_1$ and $R_2$ along their boundaries. 
As we mentioned in Example \ref{double-example}, $R_0$ is a complete geodesic space 
with nonnegative Alexandrov curvature, and the natural inclusions of $R_1$ and $R_2$ into $R_0$ are 
both isometric embeddings. 
In particular, for each $i\in\{ 1,2\}$, the image $\tilde{R}_i$ of $R_i$ under the natural inclusion into $R_0$ 
is isometric to a convex subset of the Euclidean plane. 
Define a map $f_3 :\{ p,q,x,y,z\}\to R_0$ by sending 
$x$, $y$, $z$, $p$ and $q$ to the points in $R_0$ 
represented by $x_1 ,y_1 ,z_1 ,p_1 \in R_1$ and $q_2 \in R_2$, respectively. 
Then 
\begin{equation}\label{p-triangle-q-III-ab-eq}
d_{R_0}(f_3 (a),f_3 (b))=\| a'-b'\| =d_X (a,b)
\end{equation}
for any $a,b\in\{ p,q,x,y,z\}$ with $\{ a,b\}\neq\{ p,q\}$. 
It follows from the definition of $R_0$ that there exists a point $r_3 \in R'$ such that 
\begin{equation*}
d_{R_0}(f_3 (p ),f_3 (q))
=
\| p'-r_3 \| +\| r_3 -q' \|.
\end{equation*}
Hence 
\begin{equation}\label{p-triangle-q-p-r3-q}
d_{R_0}(f_3 (p ),f_3 (q))
=
d_{R_0}(f_3 (p),\tilde{r}_3) +d_{R_0}(\tilde{r}_3 ,f_3 (q)),
\end{equation}
where $\tilde{r}_3$ is the point in $R_0$ represented by the point in $R_1$ (or $R_2$) corresponding to $r_3$. 
Let $\Gamma_1$ be the image of the line segment $\lbrack x_1 ,y_1 \rbrack\subseteq R_1$ (or $\lbrack x_2 ,y_2 \rbrack\subseteq R_2$) 
under the natural inclusion into $R_0$. 
If $r_3 \in\lbrack x',y'\rbrack$, then $\tilde{r}_3 \in\Gamma_1$, and therefore 
\eqref{p-triangle-q-III-ab-eq}, \eqref{p-triangle-q-p-r3-q} and Lemma \ref{combined-triangles-lemma} imply that 
\begin{equation}\label{p-triangle-q-III-pq-ineq}
d_{R_0} (f_3 (p),f_3 (q))\geq d_{X}(p,q)
\end{equation}
because 
it is clear from the definition of $R_0$ that 
$\Gamma_1$ is a geodesic segment in $R_0$ with endpoints $f_3 (x)$ and $f_3 (y)$, 
and $\Gamma_1 \subseteq\tilde{R}_1 \cup\tilde{R}_2$. 
If $r_3 \in\lbrack y',z'\rbrack$ or $r_3 \in\lbrack z',x'\rbrack$, then we obtain \eqref{p-triangle-q-III-pq-ineq} in the same way. 
Thus \eqref{p-triangle-q-III-pq-ineq} always holds in \textsc{Case 3}. 
By \eqref{p-triangle-q-III-ab-eq} and \eqref{p-triangle-q-III-pq-ineq}, 
\begin{equation*}
d_{R_0}(f_3 (x),f_3 (a))=d_{X}(x,a),\quad d_{R_0}(f_3 (a),f_3 (b))\geq d_{X}(a,b)
\end{equation*}
for any $a,b\in\{ p,q,y,z\}$. 
Therefore, Lemma \ref{alternative-target-lemma} implies that there exist 
a $\mathrm{CAT}(0)$ space $(Y_3 ,d_{Y_3})$ and a map $g_3 :\{ p,q,x,y,z\}\to Y_3$ such that 
\begin{equation*}
d_{Y_3} (g_3 (x),g_3 (a))\leq d_X (x,a),\quad
d_{Y_3} (g_3 (a),g_3 (b))= d_X (a,b)
\end{equation*}
for any $a,b\in\{ p,q,y,z\}$. 
Thus $g_3$ is a map from $\{ p,q,x,y,z\}$ to a $\mathrm{CAT}(0)$ space with the desired properties. 

\textsc{Case 1}, \textsc{Case 2} and \textsc{Case 3} exhaust all possibilities. 
\end{proof}

\begin{lemma}\label{five-point-LL-lemma}
Let $(X,d_X )$ be a metric space that satisfies the $\boxtimes$-inequalities. 
Suppose $p,q,x,y,z\in X$ are five distinct points 
such that $\{ p,x,y,z\}$ is over-distance with respect to $\{ p,x\}$, $\{ p,y\}$ or $\{ p,z\}$, and 
$\{ q,x,y,z\}$ is over-distance with respect to $\{ q,x\}$, $\{ q,y\}$ or $\{ q,z\}$. 
Then there exist a $\mathrm{CAT}(0)$ space $(Y,d_Y )$ and a map $g:\{ p,q,x,y,z\}\to Y$ such that 
\begin{equation*}
d_Y (g(p),g(q))\geq d_X (p,q),\quad d_Y (g(a),g(b))=d_X (a,b)
\end{equation*}
for any $a,b\in\{ p,q,x,y,z\}$ with $\{ a,b\}\neq\{ p,q\}$. 
\end{lemma}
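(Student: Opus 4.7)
The plan is to construct, for each of $\{p,x,y,z\}$ and $\{q,x,y,z\}$, a CAT(0) piecewise-Euclidean complex of the $D$-type introduced in Section \ref{TSD-TLD-sec} into which it embeds isometrically, arranged so that both complexes share a common flat Euclidean triangle with the side lengths of $\{x,y,z\}$. I will then glue the two complexes along this triangle via Reshetnyak's theorem to obtain the target CAT(0) space $Y$, and extract the lower bound $d_Y(g(p),g(q)) \ge d_X(p,q)$ from Lemma \ref{jabara-lemma}.

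By Proposition \ref{embeddable-TSD-TLD-prop} the hypothesis on $\{p,x,y,z\}$ forces that it does not isometrically embed into $\mathbb{R}^3$. Combining the given over-distance pair involving $p$ with Lemma \ref{four-points-in-R3-long} and Lemma \ref{Ldisc-lemma}, I obtain a vertex $w_p \in \{x,y,z\}$ and a cyclic ordering of the other two elements of $\{x,y,z\}$ together with $p$ such that $Y_p := D(w_p;\cdot,\cdot,\cdot)$ is a CAT(0) space isometrically containing $\{p,x,y,z\}$; moreover, among its three constituent flat triangles, the triangle $T(w_p,u,v)$ with $\{u,v\} = \{x,y,z\}\setminus\{w_p\}$ is the Euclidean realization of $\{x,y,z\}$ and is a closed convex subset of $Y_p$. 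An analogous construction for $\{q,x,y,z\}$ yields $w_q \in \{x,y,z\}$ and $Y_q := D(w_q;\cdot,\cdot,\cdot)$. By a case analysis on which pair(s) each four-point set is over-distance with respect to, using the freedom to select either of the two central-edge vertices supplied by Lemma \ref{four-points-in-R3-long} and using Corollary \ref{no-ccc-corollarly} to rule out pathological triples of simultaneous over-distance conditions through a common point, I arrange $w_p = w_q =: w$. Then $Y_p$ and $Y_q$ share the common convex subset $T(w,u,v)$, and gluing them along $T(w,u,v)$ via Reshetnyak's gluing theorem produces a CAT(0) space $Y$ together with a natural map $g \colon \{p,q,x,y,z\} \to Y$ satisfying $d_Y(g(a),g(b)) = d_X(a,b)$ for every pair $\{a,b\}$ with $\{a,b\} \neq \{p,q\}$.

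For the remaining inequality $d_X(p,q) \le d_Y(g(p),g(q))$, I apply Lemma \ref{jabara-lemma} with $w$ playing the role of the lemma's central point, our $p$ and $q$ playing the roles of its two endpoints, and $u,v$ playing the roles of its two intermediate vertices. I take $T_1 = T(w,p,u) \subseteq Y_p$, $T_2 = T(w,u,v)$ (the shared flat triangle), and $T_3 = T(w,v,q) \subseteq Y_q$; all three are isometric to flat Euclidean triangles, and the geodesic segments $\Gamma_1 = [g(w),g(u)]$ and $\Gamma_2 = [g(w),g(v)]$ lie in $T_1 \cap T_2$ and $T_2 \cap T_3$ respectively. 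The distance hypotheses of Lemma \ref{jabara-lemma} hold with equality by the isometric embeddings. The existence of decomposition points $q_1 \in \Gamma_1$ and $q_2 \in \Gamma_2$ required by condition~(5) of that lemma follows from a direct analysis of the unique geodesic from $g(p)$ to $g(q)$ in $Y$: it traverses $Y_p$, crosses the shared triangle $T_2$, and continues into $Y_q$, entering $T_2$ from the $Y_p$-side through one of $[g(w),g(u)]$ or $[g(w),g(v)]$ and leaving on the $Y_q$-side through one of the same two edges; in the generic case $T_1, T_3$ are picked so the geodesic enters $T_2$ through $\Gamma_1$ and leaves through $\Gamma_2$, while degenerate cases (entry and exit on the same edge, or passage through $g(w)$) reduce to a direct application of Lemma \ref{combined-triangles-lemma}.

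The main obstacle is obtaining the common central vertex $w \in \{x,y,z\}$ for the two $D$-complexes; this step requires the explicit case analysis alluded to above, and is the technically most delicate ingredient. Once $w$ is fixed, the Reshetnyak gluing and the application of Lemma \ref{jabara-lemma} are relatively mechanical, relying only on the flat-triangle structure already present in the $D$-construction.
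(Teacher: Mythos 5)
Your overall architecture is the same as the paper's: realize each of $\{p,x,y,z\}$ and $\{q,x,y,z\}$ inside a $\mathrm{CAT}(0)$ complex of type $D(\cdot;\cdot,\cdot,\cdot)$ via Lemma \ref{four-points-in-R3-long} and Lemma \ref{Ldisc-lemma}, glue the two complexes along the shared flat triangle realizing $\{x,y,z\}$ by Reshetnyak, and extract $d_X(p,q)\le d_Y(g(p),g(q))$ from Lemma \ref{jabara-lemma}. However, the step you yourself flag as the most delicate one --- arranging a \emph{common} central vertex $w_p=w_q$ for the two complexes --- is a genuine gap, and I do not believe it can be closed. If $\{p,x,y,z\}$ is over-distance with respect to $\{p,a_2\}$, Lemma \ref{four-points-in-R3-long} only guarantees that \emph{one} of the two candidate apexes in $\{x,y,z\}\setminus\{a_2\}$ satisfies the reflex-angle condition $\pi<\tilde{\angle}a_2 a_1 a_3+\tilde{\angle}a_3 a_1 p$ required by Lemma \ref{Ldisc-lemma} (the planar quadrilateral in its proof is generically non-convex at exactly one of the two vertices, so there is no freedom of choice). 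The apex forced for $p$ and the apex forced for $q$ can therefore be distinct elements of $\{x,y,z\}$ --- e.g.\ when each four-point set is over-distance with respect to a single pair and the two forced apexes disagree --- and Corollary \ref{no-ccc-corollarly} only bounds how many pairs through $p$ can be over-distance; it says nothing about which apex admits the angle condition, so it cannot rescue the alignment.

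The paper avoids this entirely: it glues $D(a_1;a_2,a_3,p)$ and $D(b_1;b_2,b_3,q)$ with possibly different central vertices $a_1\neq b_1$, lets the geodesic from $g(p)$ to $g(q)$ cross one crease edge $\lbrack g(a_1),g(a_i)\rbrack$ of the first complex and one crease edge $\lbrack g(b_1),g(b_j)\rbrack$ of the second, and then observes that any two edges of the triangle on $\{x,y,z\}$ share a vertex, so $\{a_1,a_i\}\cap\{b_1,b_j\}\neq\emptyset$. That shared vertex (which need not be $a_1$ or $b_1$) serves as the apex for Lemma \ref{jabara-lemma}, with the middle flat piece taken to be $\mathrm{conv}(\{g(a_1),g(a_i),g(b_1),g(b_j)\})$ inside the shared triangle rather than the whole triangle. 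If you replace your alignment step with this observation, the rest of your argument (the gluing, the equalities off $\{p,q\}$, and the verification of condition $(5)$ of Lemma \ref{jabara-lemma} by tracking where the $p$--$q$ geodesic crosses the creases) goes through essentially as you wrote it.
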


\begin{proof}
By the hypothesis, we can choose $a_1 ,a_2 ,a_3 ,b_1 ,b_2 ,b_3 \in\{ x,y,z\}$ with 
\begin{equation}\label{five-point-LL-lemma-ab-def}
\{ a_1,a_2,a_3\}=\{ b_1,b_2,b_3\}=\{ x,y,z\}
\end{equation}
such that $\{ p,x,y,z\}$ is over-distance with respect to $\{ p,a_2\}$, and 
$\{ q,x,y,z\}$ is over-distance with respect to $\{ q,b_2\}$. 
Then 
Lemma \ref{four-points-in-R3-long} implies that 
$\pi<\tilde{\angle}a_2 a_1 a_3 +\tilde{\angle}a_3 a_1 p$, or 
$\pi<\tilde{\angle}a_2 a_3 a_1 +\tilde{\angle}a_1 a_3 p$, and that 
$\pi<\tilde{\angle}b_2 b_1 b_3 +\tilde{\angle}b_3 b_1 q$, or 
$\pi<\tilde{\angle}b_2 b_3 b_1 +\tilde{\angle}b_1 b_3 q$. 
Therefore, renaming the points if necessary, we may assume further that 
\begin{equation*}
\pi
<
\tilde{\angle}a_2 a_1 a_3 +\tilde{\angle}a_3 a_1 p,\quad
\pi
<
\tilde{\angle}b_2 b_1 b_3 +\tilde{\angle}b_3 b_1 q.
\end{equation*}
Let $Y_1 =D(a_1 ;a_2 ,a_3 ,p)$, and let $Y_2 =D(b_1 ;b_2 ,b_3 ,q)$. 
Suppose $\varphi_1 :\{ p,x,y,z\}\to Y_1$ and $\varphi_2 :\{q,x,y,z\}\to Y_2$ are the natural inclusions. 
Then Lemma \ref{Ldisc-lemma} implies that $Y_1$ and $Y_2$ are $\mathrm{CAT}(0)$ spaces, and 
$\varphi_1$ and $\varphi_2$ are isometric embeddings. 
It also follows from Lemma \ref{Ldisc-lemma} that 
\begin{equation*}
S_1 =T_{Y_1}(a_1 ,a_2 ,a_3 ),\quad
S_2 =T_{Y_1}(a_1 ,a_3 ,p),\quad
S_3 =T_{Y_1}(a_1 ,p,a_2 )
\end{equation*}
are closed convex subsets of $Y_1$, all of which are isometric to convex subsets of the Euclidean plane. 
Similarly, 
\begin{equation*}
T_1 =T_{Y_2}(b_1 ,b_2 ,b_3 ),\quad
T_2 =T_{Y_2}(b_1 ,b_3 ,q),\quad
T_3 =T_{Y_2}(b_1 ,q,a_2)
\end{equation*}
are convex subsets of of $Y_2$, all of which are  isometric to convex subsets of the Euclidean plane. 
By \eqref{five-point-LL-lemma-ab-def}, $S_1$ and $T_1$ are isometric via the isometry 
$h:S_1 \to T_1$ such that 
\begin{equation*}
h(\varphi_1 (x))=\varphi_2 (x),\quad
h(\varphi_1 (y))=\varphi_2 (y),\quad
h(\varphi_1 (z))=\varphi_2 (z). 
\end{equation*}
We define a metric space $(Y,d_Y )$ to be the gluing of $Y_1$ and $Y_2$ along $h$. 
Then $Y$ is a $\mathrm{CAT}(0)$ space by Reshetnyak's gluing theorem, 
and the natural inclusions of $Y_1$ and $Y_2$ into $Y$ are both isometric embeddings. 
In particular, the images $\tilde{S}_1$, $\tilde{S}_2$ and $\tilde{S}_3$ 
of $S_1$, $S_2$ and $S_3$, respectively under the natural inclusion of $Y_1$ into $Y$, 
and the images  
$\tilde{T}_1$, $\tilde{T}_2$ and $\tilde{T}_3$ 
of $T_1$, $T_2$ and $T_3$, respectively 
under the natural inclusion of $Y_2$ into $Y$ 
are all isometric to convex subsets of the Euclidean plane. 
Define a map $g:\{ p,q,x,y,z\}\to Y$ by sending each $a\in\{ p,x,y,z\}$ to the point in $Y$ represented by $\varphi_1 (a)\in Y_1$, and 
$q$ to the point in $Y$ represented by $\varphi_2 (q)\in Y_2$. 
Then clearly 
\begin{equation}\label{five-point-LL-lemma-ab-eq}
d_Y (g(a),g(b))=d_X (a,b)
\end{equation}
for any $a,b\in\{ p,q,x,y,z\}$ with $\{ a,b\}\neq\{ p,q\}$. 
By definition of $Y$, there exists $c_0 \in S_1$ such that 
\begin{equation}\label{five-point-LL-lemma-c0-eq}
d_Y (g(p),g(q))=d_{Y_1}(\varphi_1 (p),c_0 )+d_{Y_2}(h(c_0),\varphi_2 (q )). 
\end{equation}
It is clear from the definitions of $Y_1 =D(a_1 ;a_2 ,a_3 ,p)$ and $Y_2 =D(b_1 ;b_2 ,b_3 ,q)$ that 
there exist $i,j\in\{ 2,3\}$, 
$c_1 \in\lbrack \varphi_1 (a_1 ),\varphi_1 (a_i )\rbrack$ and 
$c_2 \in\lbrack \varphi_2 (b_1 ),\varphi_2 (b_j )\rbrack$ such that 
\begin{align}
d_{Y_1}(\varphi_1 (p),c_0 )
&=
d_{Y_1}(\varphi_1 (p),c_1 )+d_{Y_1}(c_1 ,c_0 ),\label{five-point-LL-lemma-c1-eq}\\
d_{Y_2}(h(c_0 ),\varphi_2 (q))
&=
d_{Y_2}(h(c_0 ),c_2 )+d_{Y_2}(c_2 ,\varphi_2 (q)).\label{five-point-LL-lemma-c2-eq}
\end{align}
It follows from \eqref{five-point-LL-lemma-c0-eq}, \eqref{five-point-LL-lemma-c1-eq}, \eqref{five-point-LL-lemma-c2-eq} 
and the triangle inequality for $Y$ that 
\begin{align}\label{five-point-LL-lemma-pc1c2q}
&d_Y (g(p),g(q))\\
&=
d_{Y_1}(\varphi_1 (p),c_1 )+d_{Y_1}(c_1 ,c_0 )
+d_{Y_2}(h(c_0 ),c_2 )+d_{Y_2}(c_2 ,\varphi_2 (q))\nonumber\\
&=
d_{Y}(g(p),\tilde{c}_1 )+d_{Y}(\tilde{c}_1 ,\tilde{c}_0 )
+d_{Y}(\tilde{c}_0 ,\tilde{c}_2 )+d_{Y}(\tilde{c}_2 ,g(q)) \nonumber\\
&=
d_{Y}(g(p),\tilde{c}_1 )+d_{Y}(\tilde{c}_1 ,\tilde{c}_2 )+d_{Y}(\tilde{c}_2 ,g(q)) ,\nonumber
\end{align}
where $\tilde{c}_0$, $\tilde{c}_1$ and $\tilde{c}_2$ are the points in $Y$ 
represented by $c_0 ,c_1 \in Y_1$ and $c_2 \in Y_2$, respectively. 
Because the geodesic segments $\lbrack g(a_1 ),g(a_i )\rbrack$ and $\lbrack g(b_1 ),g(b_j )\rbrack$ in $Y$ are clearly 
the image of $\lbrack\varphi_1 (a_1 ),\varphi_1 (a_i )\rbrack$ under the natural inclusion of $Y_1$ into $Y$ 
and that of $\lbrack \varphi_2 (b_1 ),\varphi_2 (b_j )\rbrack$ under the natural inclusion of $Y_2$ into $Y$, respectively, 
\begin{equation}\label{five-point-LL-lemma-pic1pic2}
\tilde{c}_1 \in\lbrack g(a_1 ),g(a_i )\rbrack ,\quad
\tilde{c}_2 \in\lbrack g(b_1 ),g(b_j )\rbrack .
\end{equation}
Let 
\begin{equation*}
T'
=
\mathrm{conv}\big(\{ g(a_1 ),g(a_i ),g(b_1 ),g(b_j )\} \big) .
\end{equation*}
Then 
$T'$ is a convex subset of $\tilde{S}_1$, 
and therefore $T'$ is isometric to convex subset of the Euclidean plane. 
Clearly 
\begin{equation}\label{five-point-LL-lemma-segments-in-intersections}
\lbrack g(a_1 ),g(a_i )\rbrack
\subseteq\tilde{S}_i \cap T' ,\quad
\lbrack g(b_1 ),g(b_j )\rbrack
\subseteq T' \cap\tilde{T}_j .
\end{equation}
By \eqref{five-point-LL-lemma-ab-def}, we have 
$\{ a_1 ,a_i \}\cap\{ b_1 ,b_j \}\neq\emptyset$. 
In other words, 
at least one of the following equalities holds: 
\begin{equation*}
a_1 =b_1 ,\quad
a_1 =b_j ,\quad
a_i =b_1 ,\quad
a_i =b_j .
\end{equation*}
If $a_1 =b_1$, then 
\eqref{five-point-LL-lemma-pc1c2q}, \eqref{five-point-LL-lemma-pic1pic2},  
\eqref{five-point-LL-lemma-segments-in-intersections} and 
Lemma \ref{jabara-lemma} imply that 
\begin{equation}\label{five-point-LL-lemma-pq-ineq}
d_{Y} (g(p),g(q))\geq d_{X}(p,q)
\end{equation}
because the subsets $\tilde{S}_i$, $T'$ and $\tilde{T}_j$ 
of $Y$ are all isometric to convex subsets of the Euclidean plane, and 
\begin{align*}
&d_{Y}(g(a_1 ),g(p))=d_{X}(a_1 ,p), \quad d_{Y}(g(p),g(a_i ))= d_{X}(p,a_i ),\\
&d_Y (g(a_i ),g(b_j ))=d_X (a_i ,b_j ),\quad d_{Y}(g(b_j ),g(q))= d_{X}(b_j ,q),\\
&d_{Y}(g(q),g(a_1 ))= d_{X}(q,a_1 ),\quad d_{Y}(g(a_1 ),g(a_i ))=d_{X}(a_1 ,a_i),\\
&d_{Y}(g(a_1 ),g(b_j ))= d_{X}(a_1 ,b_j )
\end{align*}
by \eqref{five-point-LL-lemma-ab-eq}. 
If 
$a_1 =b_j$, $a_i =b_1$ or $a_i =b_j$, then 
we obtain \eqref{five-point-LL-lemma-pq-ineq} in the same way. 
Thus \eqref{five-point-LL-lemma-pq-ineq} always holds. 
By \eqref{five-point-LL-lemma-ab-eq} and \eqref{five-point-LL-lemma-pq-ineq}, 
$g$ is a map from $\{ p,q,x,y,z\}$ to a $\mathrm{CAT}(0)$ space with the desired properties. 
\end{proof}

\begin{lemma}\label{five-point-SS-lemma}
Let $(X,d_X )$ be a metric space that satisfies the $\boxtimes$-inequalities. 
Suppose $p,q,x,y,z\in X$ are five distinct points 
such that $\{ p,x,y,z\}$ is under-distance with respect to $\{ p,y\}$ or $\{ p,z\}$, and 
$\{ q,x,y,z\}$ is under-distance with respect to $\{ q,y\}$ or $\{ q,z\}$. 
Then there exist a $\mathrm{CAT}(0)$ space $(Y,d_Y )$ and a map $g:\{ x,y,z,p,q\}\to Y$ such that 
\begin{equation*}
d_Y (g(p),g(q))\geq d_X (p,q),\quad
d_Y (g(x),g(a))\leq d_X (x,a),\quad
d_Y (g(b),g(c))=d_Y (b,c)
\end{equation*}
for any $a,b,c\in\{ p,q,y,z\}$ with $\{ b,c\}\neq\{ p,q\}$. 
\end{lemma}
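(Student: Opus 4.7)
The plan is to reduce the lemma to Lemma \ref{alternative-target-lemma} by producing a complete geodesic space $(Z,d_Z)$ of nonnegative Alexandrov curvature together with a map $f\colon\{p,q,x,y,z\}\to Z$ such that $d_Z(f(x),f(a))\leq d_X(x,a)$ for every $a\in\{p,q,y,z\}$ and $d_Z(f(a),f(b))\geq d_X(a,b)$ for every $a,b\in\{p,q,y,z\}$. Applying Lemma \ref{alternative-target-lemma} with its role ``$p$'' playing $x$ and its role ``$\{x,y,z,w\}$'' playing $\{p,q,y,z\}$ then yields a $\mathrm{CAT}(0)$ space $(Y,d_Y)$ and a map $g$ with $d_Y(g(x),g(a))\leq d_X(x,a)$ and all remaining pairwise distances equal to the $d_X$-values; in particular $d_Y(g(p),g(q))=d_X(p,q)$, which implies the stated $\geq$.

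By renaming $y$ and $z$ I may assume $\{p,x,y,z\}$ is under-distance with respect to $\{p,y\}$, and I will handle separately the two sub-cases where $\{q,x,y,z\}$ is under-distance with respect to $\{q,y\}$ or $\{q,z\}$. In each sub-case, Corollary \ref{four-points-in-R3-short-corollary} forces, in the appropriate planar realization, a geometric dichotomy: either one of the $p$- or $q$-type vertices lies in the convex hull of the other three, or the ``middle'' vertex $y$ or $z$ does. For each resulting configuration, I construct $Z$ as a piecewise Euclidean simplicial complex obtained by gluing the Euclidean triangles $T_{pyz}$ and $T_{qyz}$ realizing the actual $d_X$-distances on $\{p,y,z\}$ and $\{q,y,z\}$ along their common edge, in the style of Example \ref{double-example}, and placing $f(x)$ at the interior point singled out by the convex-hull conclusion. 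The angle-sum condition at the glued vertices $y$ and $z$ certifies nonnegative Alexandrov curvature, the equalities $d_Z(f(b),f(c))=d_X(b,c)$ for $\{b,c\}\ne\{p,q\}$ hold by construction since each such pair sits inside a single Euclidean triangle of the complex, and Lemma \ref{larger-larger-lemma} (combined with Lemma \ref{combined-triangles-lemma}) delivers the four upper bounds $d_Z(f(x),f(a))\leq d_X(x,a)$.

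The main obstacle will be establishing the last inequality $d_Z(f(p),f(q))\geq d_X(p,q)$. Unrolling the geodesic from $f(p)$ to $f(q)$ across the common edge expresses $d_Z(f(p),f(q))$ as the opposite-side planar realization of that distance compatible with the five distances on $\{p,q,y,z\}\setminus\{\{p,q\}\}$, and this is automatically $\geq d_X(p,q)$ whenever $\{p,q,y,z\}$ is embeddable in $\mathbb{R}^3$ or under-distance with respect to $\{p,q\}$. In the residual scenario where $\{p,q,y,z\}$ is over-distance with respect to $\{p,q\}$, the flat gluing must be replaced by a variant of the $D(x;y,z,w)$ construction of Section \ref{TSD-TLD-sec} that introduces enough angle deficit at $y$ or $z$ to enlarge $d_Z(f(p),f(q))$, and one must simultaneously verify that the resulting complex still has nonnegative Alexandrov curvature and that a valid placement of $f(x)$ still exists; carrying out these simultaneous checks in every combined configuration, guided throughout by the $\boxtimes$-inequality tools of Section \ref{quadrangle-sec} and Section \ref{TSD-TLD-sec}, is the technical heart of the argument.
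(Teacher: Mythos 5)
Your reduction target is right --- the paper also funnels everything through Lemma \ref{alternative-target-lemma} --- but the intermediate space $Z$ you propose does not work, and the gaps are not just technical. First, your construction is inverted relative to what the hypotheses give you. The under-distance assumptions concern the quadruples $\{p,x,y,z\}$ and $\{q,x,y,z\}$, and via Lemma \ref{no-bb-lemma} and Corollary \ref{four-points-in-R3-short-corollary} they control where the comparison points $p'$ and $q'$ sit relative to the triangle $\mathrm{conv}(\{x',y',z'\})$; they say nothing about where $x$ sits relative to triangles on $\{p,y,z\}$ or $\{q,y,z\}$. So in your complex $T_{pyz}\cup_{[y,z]}T_{qyz}$ there is no ``interior point singled out by the convex-hull conclusion'' at which to place $f(x)$, and you give no argument that any point simultaneously satisfies the four inequalities $d_Z(f(x),f(a))\leq d_X(x,a)$ for $a\in\{p,q,y,z\}$ (an intersection of four balls that can be empty); Lemma \ref{larger-larger-lemma} addresses a different configuration and does not supply this. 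Second, two non-isometric triangles glued along a single common edge develop to a possibly non-convex flat quadrilateral, and the angle sums at $y$ and $z$ have no reason to be at most $\pi$; at a reflex vertex nonnegative Alexandrov curvature fails, so Lemma \ref{alternative-target-lemma} (which rests on the Lang--Schroeder theorem) is not applicable to your $Z$. Third, you defer the ``technical heart'' (the over-distance-with-respect-to-$\{p,q\}$ scenario) without resolving it --- although, ironically, that particular bound is the one part of your construction that would be automatic: the $f(p)$-to-$f(q)$ geodesic must cross the glued edge $[f(y),f(z)]$, and since all five distances among $\{p,q,y,z\}$ other than $d(p,q)$ are exact in $Z$, Lemma \ref{combined-triangles-lemma} already gives $d_Z(f(p),f(q))\geq d_X(p,q)$. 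The genuinely hard step is the one you passed over, namely accommodating $x$.

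For contrast, the paper's proof builds its comparison spaces around the triangle on $\{x,y,z\}$, not on $\{p,y,z\}$ and $\{q,y,z\}$: it realizes $x',y',z'$ in the plane, attaches $p'$ and $q'$ by their exact distances to $x$ and to $z$ (or to $x$ and $y$), and uses Corollary \ref{four-points-in-R3-short-corollary} to split into subcases. When $p'$ and $q'$ both land inside $\mathrm{conv}(\{x',y',z'\})$, the double of that single triangle (Example \ref{double-example}) is nonnegatively curved by construction, $x$ is a vertex with exact distances to $y$ and $z$, and the under-distance hypothesis makes the distances from $p'$ and $q'$ to $y'$ overestimates, so Lemma \ref{combined-triangles-lemma} gives the $pq$ bound; in the mixed subcases the plane itself suffices via Lemma \ref{quadrangle-lemma}; and in the degenerate subcase where both convex-hull conclusions fall the other way, Lemma \ref{no-bb-lemma} forces both quadruples to be over-distance with respect to $\{p,x\}$ and $\{q,x\}$ and Lemma \ref{five-point-LL-lemma} takes over. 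You would need to restructure your argument along these lines for the proof to go through.
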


\begin{proof}
Choose $x',y',z' \in\mathbb{R}^2$ such that 
\begin{equation*}
\| x'-y'\| =d_X (x,y),\quad\| y'-z'\| =d_X (y,z),\quad\| z'-x'\| =d_X (z,x). 
\end{equation*}
Suppose $p'\in\mathbb{R}^2$ is a point such that 
\begin{equation*}
\| x'-p'\| =d_X (x,p),\quad\| p'-z'\| =d_X (p,z),
\end{equation*}
and $p'$ is not on the opposite side of $\overleftrightarrow{x'z'}$ from $y'$. 
Suppose $q'_1 \in\mathbb{R}^2$ is a point such that 
\begin{equation*}
\| x'-q'_1 \| =d_X (x,q),\quad\| q'_1 -z'\| =d_X (q,z),
\end{equation*}
and $q'_1$ is not on the opposite side of $\overleftrightarrow{x'z'}$ from $y'$. 
Suppose $q'_2 \in\mathbb{R}^2$ is a point such that 
\begin{equation*}
\| x'-q'_2 \| =d_X (x,q),\quad\| q'_2 -y'\| =d_X (q,y),
\end{equation*}
and $q'_2$ is not on the opposite side of $\overleftrightarrow{x'y'}$ from $z'$. 
Such points $p'$, $q'_1$ and $q'_2$ are uniquely determined 
whenever $x'$, $y'$ and $z'$ are not collinear. 
We consider four cases. 

\textsc{Case 1}: 
{\em $\{ p,x,y,z\}$ is under-distance with respect to $\{ p,y\}$, and $\{ q,x,y,z\}$ is under-distance with respect to $\{ q,y\}$.} 
According to Corollary \ref{four-points-in-R3-short-corollary}, we divide \textsc{Case 1} into the following four subcases. 

\textsc{Subcase 1a}: 
{\em $p'\in\mathrm{conv}(\{ x',y',z'\})$ and $q'_1\in\mathrm{conv}(\{ x',y',z'\})$.} 
In this subcase, 
$x'$, $y'$ and $z'$ are not collinear, because otherwise $x'$, $y'$, $z'$ and $p'$ would be collinear, 
contradicting Lemma \ref{four-points-in-R3-short-lemma}. 
Let $T_1$ and $T_2$ be 
two isometric copies of $\mathrm{conv}(\{ x',y',z'\})$. 
For each $i\in\{ 1,2\}$ and each $c\in\mathrm{conv}(\{ x',y',z'\})$, 
we denote by $\varphi_i (c)$ the point in $T_i$ corresponding to $c$. 
Define $(T,d_T )$ to be the piecewise Euclidean simplicial complex constructed from the two simplices $T_1$ and $T_2$ 
by identifying $\lbrack\varphi_1 (x'),\varphi_1 (y')\rbrack$ with $\lbrack\varphi_2 (x'),\varphi_2 (y')\rbrack$, 
$\lbrack\varphi_1 (y'),\varphi_1 (z')\rbrack$ with $\lbrack\varphi_2 (y'),\varphi_2 (z')\rbrack$, and 
$\lbrack\varphi_1 (z'),\varphi_1 (x')\rbrack$ with $\lbrack\varphi_2 (z'),\varphi_2 (x')\rbrack$. 
In other words, $T$ is the piecewise Euclidean simplicial complex obtained by gluing $T_1$ and $T_2$ along their boundaries. 
As we mentioned in Example \ref{double-example}, 
$T$ is a complete geodesic space with nonnegative Alexandrov curvature, 
and the natural inclusions of $T_1$ and $T_2$ into $T$ are both isometric embeddings. 
In particular, for each $i\in\{ 1,2\}$, 
the image $\tilde{T}_i$ of $T_i$ under the natural inclusion into $T$ is isometric to a convex subset of the Euclidean plane. 
Define a map $f_1 :\{ p,q,x,y,z\}\to T$ by sending 
$x$, $y$, $z$, $p$ and $q$ to the points in $T$ represented by 
$\varphi_1 (x'),\varphi_1 (y'),\varphi_1 (z'),\varphi_1 (p')\in T_1$ and $\varphi_2 (q'_1 )\in T_2$, respectively. 
Then clearly 
\begin{equation}\label{five-point-SS-lemma-f1-ab-ineq}
d_T (f_1 (a), f_1 (b))=d_X (a,b)
\end{equation}
for any $a,b\in\{ p,q,x,y,z\}$ with $\{ a,b\}\not\in\{\{ p,y\} ,\{ q,y\} ,\{ p,q\}\}$. 
By the assumption of \textsc{Case 1}, 
\begin{align}
d_T (f_1 (p), f_1 (y))&=\| p'-y'\| >d_X (p,y),\label{five-point-SS-lemma-f1-py-ineq}\\
d_T (f_1 (q), f_1 (y))&=\| q'_1-y'\| >d_X (q,y). \label{five-point-SS-lemma-f1-qy-ineq}
\end{align}
It follows from the definition of $T$ that there exists 
$c_0 \in\lbrack x',y'\rbrack\cup\lbrack y',z'\rbrack\cup\lbrack z',x'\rbrack$ 
such that 
\begin{equation*}
d_T (f_1 (p),f_1 (q))
=
\| p'-c_0 \| +\| c_0 -q'_1 \| .
\end{equation*}
Hence 
\begin{equation}\label{five-point-SS-lemma-f1-pc0q-eq}
d_T (f_1 (p),f_1 (q))
=
d_T (f_1 (p),\tilde{c}_0 )+d_T (\tilde{c}_0 ,f_1 (q) ), 
\end{equation}
where $\tilde{c}_0$ is the point in $T$ represented by $\varphi_1 (c_0 )\in T_1$ (or $\varphi_2 (c_0)\in T_2$). 
Let $\Gamma_0 \subseteq T$ be the image of $\lbrack \varphi_1 (x'),\varphi_1 (y')\rbrack$ 
under the natural inclusion of $T_1$ into $T$, which clearly coincides with 
the image of $\lbrack \varphi_2 (x'),\varphi_2 (y')\rbrack$ under the natural inclusion of $T_2$ into $T$.  
Then it is clear from the definition of $T$ that $\Gamma_0$ is a geodesic segment in $T$ with 
endpoints $f_1 (x)$ and $f_1 (y)$, and $\Gamma_0 \subseteq\tilde{T}_1 \cap\tilde{T}_2$. 
If $c_0 \in\lbrack x',y'\rbrack$, then $\tilde{c}_0 \in\Gamma_0$, and therefore 
\eqref{five-point-SS-lemma-f1-pc0q-eq} and Lemma \ref{combined-triangles-lemma} imply that  
\begin{equation}\label{five-point-SS-lemma-f1-pq-ineq}
d_{T} (f_1 (p ),f_1 (q))\geq d_X (p,q)
\end{equation}
because 
\begin{align*}
&d_{T}(f_1 (x),f_1 (p ))=d_{X}(x,p),\quad 
d_{T}(f_1 (p),f_1 (y))>d_{X}(p,y),\\
&d_{T}(f_1 (y),f_1 (q))>d_{X}(y,q),\quad
d_{T}(f_1 (q),f_1 (x))=d_{X}(q,x),\\
&d_{T}(f_1 (x),f_1 (y))=d_{X}(x,y)
\end{align*}
by \eqref{five-point-SS-lemma-f1-ab-ineq}, \eqref{five-point-SS-lemma-f1-py-ineq} and \eqref{five-point-SS-lemma-f1-qy-ineq}. 
If $c_0 \in\lbrack y',z'\rbrack$ or $c_0 \in\lbrack z',x'\rbrack$, then 
we obtain \eqref{five-point-SS-lemma-f1-pq-ineq} in the same way. 
Thus \eqref{five-point-SS-lemma-f1-pq-ineq} always holds in \textsc{Subcase 1a}. 
By \eqref{five-point-SS-lemma-f1-ab-ineq}, \eqref{five-point-SS-lemma-f1-py-ineq}, \eqref{five-point-SS-lemma-f1-qy-ineq} and 
\eqref{five-point-SS-lemma-f1-pq-ineq}, 
\begin{equation*}
d_{T}(f_1 (x),f_1 (a))=d_X (x,a),\quad
d_{T}(f_1 (a),f_1 (b))\geq d_X (a,b).
\end{equation*}
for any $a,b\in\{ p,q,y,z\}$. 
Therefore, Lemma \ref{alternative-target-lemma} implies that 
there exist a $\mathrm{CAT}(0)$ space $(Y_1 ,d_{Y_1})$ and a map $g_1 :\{ p,q,x,y,z\}\to Y_1$ such that 
\begin{equation*}
d_{Y_1}(g_1 (x),g_1 (a))\leq d_X (x,a),\quad
d_{Y_1}(g_1 (a),g_1 (b))= d_X (a,b)
\end{equation*}
for any $a,b\in\{ p,q,y,z\}$. 
Thus $g_1$ is a map from $\{ p,q,x,y,z\}$ to a $\mathrm{CAT}(0)$ space with 
the desired properties. 

\textsc{Subcase 1b}: 
{\em $p'\in\mathrm{conv}(\{ x',y',z'\})$ and $y'\in\mathrm{conv}(\{ q'_1 ,x' ,z'\})$.} 
In this subcase, we define a map 
$f_2 :\{p,q,x,y,z\}\to\mathbb{R}^2$ by 
\begin{equation*}
f_2 (x)=x',\quad f_2 (y)=y',\quad f_2 (z)=z' ,\quad f_2 (p)=p',\quad f_2 (q)=q'_1 .
\end{equation*}
Then 
\begin{equation}\label{five-point-SS-lemma-f2-ab-ineq}
\| f_2 (a)-f_2 (b) \| =d_X (a,b)
\end{equation}
for any $a,b\in\{ p,q,x,y,z\}$ with 
$\{ a,b\}\not\in\{\{ p,y\} ,\{ q,y\} ,\{ p,q\}\}$. 
By the assumption of \textsc{Case 1}, 
\begin{align}
\| f_2 (p)-f_2 (y) \| &=\| p'-y'\| >d_X (p,y),\label{five-point-SS-lemma-f2-py-ineq}\\
\| f_2 (q)-f_2 (y) \| &=\| q'_1-y'\| >d_X (q,y). \label{five-point-SS-lemma-f2-qy-ineq}
\end{align}
It follows from the assumption of \textsc{Subcase 1b} that 
the line segment $\lbrack f_2 (p),f_2 (q)\rbrack$ has a nonempty intersection with 
$\lbrack f_2 (x),f_2 (y)\rbrack$ or $\lbrack f_2 (y),f_2 (z)\rbrack$. 
If $\lbrack f_2 (p),f_2 (q)\rbrack$ has a nonempty intersection with $\lbrack f_2 (x),f_2 (y)\rbrack$, 
then Lemma \ref{quadrangle-lemma} implies that 
\begin{equation}\label{five-point-SS-lemma-f2-pq-ineq}
\| f_2 (p )-f_2 (q)\|\geq d_X (p,q)
\end{equation}
because 
\begin{align*}
\| f_2 (x)-f_2 (p )\| &=d_{X}(x,p),\quad 
\| f_2 (p)-f_2 (y)\| >d_{X}(p,y),\\
\| f_2 (y)-f_2 (q)\| &>d_{X}(y,q),\quad 
\| f_2 (q)-f_2 (x)\| =d_{X}(q,x), \\
\| f_2 (x)-f_2 (y)\| &=d_{X}(x,y)
\end{align*}
by \eqref{five-point-SS-lemma-f2-ab-ineq}, \eqref{five-point-SS-lemma-f2-py-ineq} and \eqref{five-point-SS-lemma-f2-qy-ineq}. 
If $\lbrack f_2 (p),f_2 (q)\rbrack$ has a nonempty intersection with $\lbrack f_2 (y),f_2 (z)\rbrack$, 
then we obtain \eqref{five-point-SS-lemma-f2-pq-ineq} in the same way. 
Thus \eqref{five-point-SS-lemma-f2-pq-ineq} always holds in \textsc{Subcase 1b}. 
By \eqref{five-point-SS-lemma-f2-ab-ineq}, \eqref{five-point-SS-lemma-f2-py-ineq}, 
\eqref{five-point-SS-lemma-f2-qy-ineq} and \eqref{five-point-SS-lemma-f2-pq-ineq}, 
\begin{equation*}
\|f_2 (x)-f_2 (a)\|=d_X (x,a),\quad
\|f_2 (a)-f_2 (b)\|\geq d_X (a,b)
\end{equation*}
for any $a,b\in\{ p,q,y,z\}$. 
Therefore, because $\mathbb{R}^2$ is a complete geodesic space 
with nonnegative Alexandrov curvature, 
Lemma \ref{alternative-target-lemma} implies that 
there exist a $\mathrm{CAT}(0)$ space $(Y_2 ,d_{Y_2})$ and a map $g_2 :\{ p,q,x,y,z\}\to Y_2$ such that 
\begin{equation*}
d_{Y_2}(g_2 (x),g_2 (a))\leq d_X (x,a),\quad
d_{Y_2}(g_2 (a),g_2 (b))= d_X (a,b)
\end{equation*}
for any $a,b\in\{ p,q,y,z\}$. 
Thus $g_2$ is a map from $\{ p,q,x,y,z\}$ to a $\mathrm{CAT}(0)$ space with 
the desired properties. 

\textsc{Subcase 1c}: 
{\em $y'\in\mathrm{conv}(\{ p' ,x',z'\})$ and $q'_1\in\mathrm{conv}(\{ x',y',z'\})$.} 
In this subcase, 
the existence of a map from $\{ p,q,x,y,z\}$ to a $\mathrm{CAT}(0)$ space with 
the desired properties is proved in exactly the same way as in \textsc{Subcase 1b}. 

\textsc{Subcase 1d}: 
{\em $y'\in\mathrm{conv}(\{ p',x',z'\})$ and $y'\in\mathrm{conv}(\{ q'_1 ,x' ,z'\})$.} 
In this subcase, Corollary \ref{four-points-in-R3-short-corollary} implies that 
\begin{equation}\label{five-point-SS-lemma-I-iv-p-outside}
p'\not\in\mathrm{conv}(\{ x',y',z'\}). 
\end{equation}
It follows that $\{ p,x,y,z\}$ is not under-distance with respect to $\{ p,x\}$, because 
otherwise Lemma \ref{no-bb-lemma} and the assumption that 
$\{ p,x,y,z,w\}$ is under-distance with respect to $\{ p,y\}$ 
would imply that $p'\in\mathrm{conv}(\{ x',y',z'\})$, 
contradicting \eqref{five-point-SS-lemma-I-iv-p-outside}. 
Hence $\{ p,x,y,z\}$ is over-distance with respect to $\{ p,x\}$ by Proposition \ref{embeddable-TSD-TLD-prop}. 
Similarly, $\{ q,x,y,z\}$ is over-distance with respect to $\{ q,x\}$. 
Therefore, Lemma \ref{five-point-LL-lemma} implies that 
there exist a $\mathrm{CAT}(0)$ space $(Y_3,d_{Y_3} )$ and a map $g_3 :\{ p,q,x,y,z\}\to Y_3$ such that 
\begin{equation*}
d_{Y_3} (g_3 (p),g_3 (q))\geq d_X (p,q),\quad d_{Y_3} (g_3 (a),g_3 (b))=d_X (a,b)
\end{equation*}
for any $a,b\in\{ p,q,x,y,z\}$ with $\{ a,b\}\neq\{ p,q\}$. 
Thus $g_3$ is a map from $\{ p,q,x,y,z\}$ to a $\mathrm{CAT}(0)$ space with the desired properties. 

By Corollary \ref{four-points-in-R3-short-corollary}, the above four subcases exhaust all possibilities in \textsc{Case 1}. 

\textsc{Case 2}: 
{\em $\{ p,x,y,z\}$ is under-distance with respect to $\{ p,y\}$ and $\{ q,x,y,z\}$ is under-distance with respect to $\{ q,z\}$.} 
According to Corollary \ref{four-points-in-R3-short-corollary}, we divide \textsc{Case 2} into the following four 
subcases. 

\textsc{Subcase 2a}: 
{\em $p'\in\mathrm{conv}(\{ x',y',z'\})$ and $q'_2\in\mathrm{conv}(\{ x',y',z'\})$.} 
In this subcase, $x'$, $y'$ and $z'$ are not collinear, because otherwise $x'$, $y'$, $z'$ and $p'$ would be collinear, 
contradicting Lemma \ref{four-points-in-R3-short-lemma}. 
Let $T_1$, $T_2$, $\varphi_1$, $\varphi_2$ and $(T,d_T )$ be as in \textsc{Subcase 1a}. 
Define a map $f_4 :\{ p,q,x,y,z\}\to T$ by sending 
$x$, $y$, $z$, $p$ and $q$ to the points in $T$ 
represented by $\varphi_1 (x'),\varphi_1 (y'),\varphi_1 (z'),\varphi_1 (p') \in T_1$ and $\varphi_2 (q'_2 )\in T_2$, respectively. 
Then a similar argument as in \textsc{Subcase 1a} yields 
\begin{equation*}
d_{T}(f_4 (x),f_4 (a))=d_X (x,a),\quad
d_{T}(f_4 (a),f_4 (b))\geq d_X (a,b)
\end{equation*}
for any $a,b\in\{ p,q,y,z\}$. 
Therefore, because $T$ is a complete geodesic space 
with nonnegative Alexandrov curvature, 
Lemma \ref{alternative-target-lemma} implies that 
there exist a $\mathrm{CAT}(0)$ space $(Y_4 ,d_{Y_4})$ and a map $g_4 :\{ p,q,x,y,z\}\to Y_4$ such that 
\begin{equation*}
d_{Y_4}(g_4 (x),g_4 (a))\leq d_X (x,a),\quad
d_{Y_4}(g_4 (a),g_4 (b))= d_X (a,b)
\end{equation*}
for any $a,b\in\{ p,q,y,z\}$. 
Thus $g_4$ is a map from $\{ p,q,x,y,z\}$ to a $\mathrm{CAT}(0)$ space with the desired properties.

\textsc{Subcase 2b}: 
{\em $p'\in\mathrm{conv}(\{ x',y',z'\})$ and $z'\in\mathrm{conv}(\{ q'_2 ,x',y' \})$.} 
In this subcase, we define a map 
$f_5 :\{ p,q,x,y,z\}\to\mathbb{R}^2$ by 
\begin{equation*}
f_5 (x)=x',\quad f_5 (y)=y',\quad f_5 (z)=z' ,\quad f_5 (p)=p',\quad f_5 (q)=q'_2 .
\end{equation*}
Then a similar argument as in \textsc{Subcase 1b} implies that 
\begin{equation*}
\| f_5 (x)-f_5 (a)\|=d_X (x,a),\quad
\| f_5 (a)-f_5 (b)\|\geq d_X (a,b)
\end{equation*}
for any $a,b\in\{ p,q,y,z\}$. 
Therefore, because $\mathbb{R}^2$ is a complete geodesic space 
with nonnegative Alexandrov curvature, 
Lemma \ref{alternative-target-lemma} implies that 
there exist a $\mathrm{CAT}(0)$ space $(Y_5 ,d_{Y_5})$ and a map $g_5 :\{ p,q,x,y,z\}\to Y_5$ such that 
\begin{equation*}
d_{Y_5}(g_5 (x),g_5 (a))\leq d_X (x,a),\quad
d_{Y_5}(g_5 (a),g_5 (b))= d_X (a,b)
\end{equation*}
for any $a,b\in\{ p,q,y,z\}$. 
Thus $g_5$ is a map from $\{ p,q,x,y,z\}$ to a $\mathrm{CAT}(0)$ space with the desired properties.

\textsc{Subcase 2c}: 
{\em $y'\in\mathrm{conv}(\{ x',z',p'\})$ and $q'_2\in\mathrm{conv}(\{ x',y',z'\})$.} 
In this subcase, 
the existence of a map from $\{ p,q,x,y,z\}$ to a $\mathrm{CAT}(0)$ space with 
the desired properties is proved in exactly the same way as in \textsc{Subcase 2b}.

\textsc{Subcase 2d}: 
{\em $y'\in\mathrm{conv}(\{ p' ,x',z' \})$ and $z'\in\mathrm{conv}(\{ q'_2 ,x',y' \})$.} 
In this subcase, it follows from 
the same argument as in \textsc{Subcase 1d} that  
$\{ p,x,y,z\}$ is over-distance with respect to $\{ p,x\}$, and $\{ q,x,y,z\}$ is over-distance with respect to $\{ q,x\}$. 
Therefore, Lemma \ref{five-point-LL-lemma} implies that 
there exist a $\mathrm{CAT}(0)$ space $(Y_6,d_{Y_6} )$ and a map $g_6 :\{ p,q,x,y,z\}\to Y_6$ such that 
\begin{equation*}
d_{Y_6} (g_6 (p),g_6 (q))\geq d_X (p,q),\quad d_{Y_6} (g_6 (a),g_6 (b))=d_X (a,b)
\end{equation*}
for any $a,b\in\{ p,q,x,y,z\}$ with $\{ a,b\}\neq\{ p,q\}$. 
Thus $g_6$ is a map from $\{ p,q,x,y,z\}$ to a $\mathrm{CAT}(0)$ space with the desired properties. 

By Corollary \ref{four-points-in-R3-short-corollary}, the above four subcases exhaust all possibilities in \textsc{Case 2}.

\textsc{Case 3}: 
{\em $\{ p,x,y,z\}$ is under-distance with respect to $\{ p,z\}$, and $\{ q,x,y,z\}$ is under-distance with respect to $\{ q,y\}$.} 
In this case, 
the existence of a map from $\{ p,q,x,y,z\}$ to a $\mathrm{CAT}(0)$ space 
with the desired properties is proved in exactly the same way as in \textsc{Case 2}. 

\textsc{Case 4}: 
{\em $\{ p,x,y,z\}$ is under-distance with respect to $\{ p,z\}$, and $\{ q,x,y,z\}$ is under-distance with respect to $\{ q,z\}$.} 
In this case, 
the existence of a map from $\{ p,q,x,y,z\}$ to a $\mathrm{CAT}(0)$ space 
with the desired properties is proved in exactly the same way as in \textsc{Case 1}. 

\textsc{Case 1}, \textsc{Case 2}, \textsc{Case 3} and \textsc{Case 4} exhaust all possibilities. 
\end{proof}

\begin{lemma}\label{embeddable-nonembeddable-lemma}
Let $(X,d_X )$ be a metric space that satisfies the $\boxtimes$-inequalities. 
Suppose $p,q,x,y,z\in X$ are five distinct points such that 
$\{ p,x,y,z\}$ admits an isometric embedding into $\mathbb{R}^3$, and 
$\{ q,x,y,z\}$ does not admit an isometric embedding into $\mathbb{R}^3$. 
Then there exist a $\mathrm{CAT}(0)$ space $(Y,d_Y )$ and a map $g:\{ p,q,x,y,z\}\to Y$ such that 
\begin{equation*}
d_Y (g(p),g(q))\geq d_X (p,q),\quad
d_Y (g(x),g(a))\leq d_X (x,a),\quad
d_Y (g(b),g(c))=d_Y (b,c)
\end{equation*}
for any $a,b,c\in\{ p,q,y,z\}$ with $\{ b,c\}\neq\{ p,q\}$. 
\end{lemma}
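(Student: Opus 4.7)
The plan is to apply Lemma \ref{alternative-target-lemma} with the roles of $p$ and $\{x,y,z,w\}$ in that lemma played respectively by $x$ and $\{p,q,y,z\}$ in the present setting. Once this is set up, the conclusion of that lemma will supply the target $\mathrm{CAT}(0)$ space $Y$ and map $g$ with equality $d_Y(g(b),g(c))=d_X(b,c)$ for every pair $b,c\in\{p,q,y,z\}$, which in particular yields the required lower bound $d_Y(g(p),g(q))\ge d_X(p,q)$. Thus it suffices to build a complete geodesic space $Z$ of nonnegative Alexandrov curvature and a map $f\colon\{p,q,x,y,z\}\to Z$ satisfying $d_Z(f(x),f(a))\le d_X(x,a)$ for every $a\in\{p,q,y,z\}$ and $d_Z(f(a),f(b))\ge d_X(a,b)$ for every $a,b\in\{p,q,y,z\}$. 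Building such a $Z$ is the bulk of the proof.

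I would split the construction of $Z$ into cases by applying Proposition \ref{embeddable-TSD-TLD-prop} to $\{q,x,y,z\}$ with $\{q,x\}$ as the excluded pair. Since this four-point set does not embed in $\mathbb{R}^3$, it is either over-distance or under-distance with respect to $\{q,x\}$, giving two mutually exclusive cases. In the over-distance case, the defining property supplies points $q',x',y',z'\in\mathbb{R}^3$ with the five distances $d_X(q,y)$, $d_X(q,z)$, $d_X(y,z)$, $d_X(x,y)$, $d_X(x,z)$ preserved exactly and $\|q'-x'\|\le d_X(q,x)$. Together with an isometric realization $p',x',y',z'\in\mathbb{R}^3$ of $\{p,x,y,z\}$, which exists by hypothesis, I would place $p'$ and $q'$ on opposite sides of the plane through $\{x',y',z'\}$ and imitate the three-case construction of Lemma \ref{p-triangle-q-lemma}: direct Reshetnyak gluing of the two tetrahedra along the common triangle, the boundary of the convex hull of $P\cup Q$ in $\mathbb{R}^3$, or a doubled planar triangle in the spirit of Example \ref{double-example}. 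The lower bound $d_Z(f(p),f(q))\ge d_X(p,q)$ would then be verified by applying Lemma \ref{combined-triangles-lemma} (or Lemma \ref{jabara-lemma}, in the subcases where the relevant geodesic traverses two faces) to the quadrilateral $(p,y,q,z)$, whose four sides $py,yq,qz,zp$ are all isometrically preserved in $Z$. Choosing this quadrilateral rather than one containing the side $xq$ is essential, since it sidesteps the failure of the hypothesis $d_X(x,q)\le d_Z(f(x),f(q))$ required by Lemma \ref{combined-triangles-lemma}.

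In the under-distance case, no $\mathbb{R}^3$-realization of the five spoke-and-triangle distances can satisfy $\|q'-x'\|\le d_X(q,x)$, so a direct $\mathbb{R}^3$-based construction is impossible. By Corollary \ref{four-points-in-R3-short-corollary}, after placing $q'$ on the same side of $\overleftrightarrow{y'z'}$ as $x'$ in a planar realization of the five preserved distances, either $q'$ lies in $\mathrm{conv}(\{x',y',z'\})$ or $x'$ lies in $\mathrm{conv}(\{q',y',z'\})$. I would exploit this planar structure, together with the further constraint of Lemma \ref{no-bb-lemma}, to build a two-dimensional piecewise Euclidean gadget containing $\{q,x,y,z\}$ with the artificially short distance $d_X(q,x)$ realized through a doubled-triangle construction as in Example \ref{double-example}, which is a complete Alexandrov space of nonnegative curvature. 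I would then glue this gadget along the common triangle $\{x,y,z\}$ with the $\mathbb{R}^3$-tetrahedron carrying $\{p,x,y,z\}$; the lower bound $d_Z(f(p),f(q))\ge d_X(p,q)$ is again certified by Lemma \ref{combined-triangles-lemma} applied to $(p,y,q,z)$, whose four sides remain isometrically preserved.

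The principal obstacle I anticipate is the under-distance case, where producing a nonnegatively curved Alexandrov gadget accommodating the short distance $d_X(q,x)$ together with the long distances $d_X(q,y),d_X(q,z)$ requires careful use of the planar structure forced by Corollary \ref{four-points-in-R3-short-corollary}, and verifying that the subsequent gluing with the $\mathbb{R}^3$-tetrahedron $\{p,x,y,z\}$ yields the desired Alexandrov curvature and distance inequalities is the technical heart of the argument. Once $Z$ is constructed in both cases, Lemma \ref{alternative-target-lemma} completes the proof.
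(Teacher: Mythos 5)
Your reduction to Lemma \ref{alternative-target-lemma} with distinguished point $x$ is the right frame (it is exactly how the paper closes its Case 2), but your case split on the over/under-distance status of the pair $\{q,x\}$ is the wrong axis, and both branches have gaps that trace back to it. The pair $\{q,x\}$ is the one pair whose distance is \emph{allowed} to shrink in the conclusion; what actually dictates which gadget can be built is the status of the pairs $\{q,y\}$ and $\{q,z\}$, whose distances must come out exact. The paper splits accordingly: if $\{q,x,y,z\}$ is over-distance with respect to $\{q,y\}$ or $\{q,z\}$, then the two-dimensional complex $D(x;y,z,q)$ of Lemma \ref{Ldisc-lemma} embeds all four points \emph{isometrically} (including the edge $qx$), and gluing it to $\mathbb{R}^3$ along $\mathrm{conv}(\{x',y',z'\})$ makes every quadrilateral application of Lemma \ref{combined-triangles-lemma} legitimate; if it is under-distance with respect to both $\{q,y\}$ and $\{q,z\}$, Lemma \ref{no-bb-lemma} puts $q'$ inside the triangle with $qx$ and $qz$ exact and only $qy$ stretched, which is precisely the sign pattern Lemma \ref{alternative-target-lemma} can absorb.

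Concretely: in your over-distance-w.r.t.-$\{q,x\}$ branch, the realization of $\{q,x,y,z\}$ has $\|q'-x'\|<d_X(q,x)$, and after gluing the two tetrahedra along $R=\mathrm{conv}(\{x',y',z'\})$ the geodesic from $p'$ to $q'$ crosses $R$ at a point you do not control; when (as in Case 1 of Lemma \ref{p-triangle-q-lemma}) that crossing point is pushed to the boundary of $R$ it may land on $\lbrack x',y'\rbrack$ or $\lbrack x',z'\rbrack$, forcing a quadrilateral containing the side $xq$, whose hypothesis $d_X(x,q)\leq d_Z(f(x),f(q))$ fails. You cannot ``choose'' the quadrilateral $(p,y,q,z)$; it is only available when the geodesic happens to meet $\lbrack y',z'\rbrack$. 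In your under-distance-w.r.t.-$\{q,x\}$ branch the problem is worse: every realization preserving the five distances $qy,qz,xy,yz,zx$ has $\|q'-x'\|>d_X(q,x)$, so the gadget handed to Lemma \ref{alternative-target-lemma} violates its requirement $d_Z(f(x),f(q))\leq d_X(x,q)$, and the doubled-triangle construction does not repair this because $x'$ is a vertex of the shared boundary, so its distance to any point of either sheet is the unshortened planar distance. (Your appeal to Lemma \ref{no-bb-lemma} is also unavailable here, since that lemma needs under-distance with respect to \emph{two} pairs.) The obstacle you flag as the ``technical heart'' is in fact not surmountable along this route; the fix is to re-run the case analysis on $\{q,y\}$ and $\{q,z\}$ as the paper does.
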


\begin{proof}
Let $\varphi_0 :\{ p,x,y,z\}\to\mathbb{R}^3$ be an isometric embedding, and 
let 
\begin{equation*}
T_0 =\mathrm{conv} (\{\varphi_0 (x),\varphi_0 (y),\varphi_0 (z) \} ).
\end{equation*}
We consider two cases.

\textsc{Case 1}: 
{\em $\{ q,x,y,z\}$ is over-distance with respect to $\{ q,y\}$ or $\{ q,z\}$.} 
In this case, we may assume without loss of generality that $\{ q,x,y,z\}$ is over-distance with respect to $\{ q,y\}$. 
Then Lemma \ref{four-points-in-R3-long} implies that 
$\pi <\tilde{\angle}yxz+\tilde{\angle}zxq$ or $\pi <\tilde{\angle}yzx+\tilde{\angle}xzq$. 
We may assume further without loss of generality that the former inequality holds. 
Let $Y'_1 =D(x;y,z,q)$, and let $\varphi_1 :\{ x,y,z,q\}\to Y'_1$ be the natural inclusion. 
Then $Y'_1$ is a $\mathrm{CAT}(0)$ space, and $\varphi_1$ is an isometric embedding by Lemma \ref{Ldisc-lemma}. 
We set  
\begin{equation*}
T_1 =T_{Y'_1}(x,y,z),\quad
T_2 =T_{Y'_1}(x,z,q),\quad
T_2 =T_{Y'_1}(x,q,y)
\end{equation*}
By Lemma \ref{Ldisc-lemma}, $T_1$, $T_2$ and $T_3$ are closed convex subsets of $Y'_1$, 
all of which are isometric to convex subsets of the Euclidean plane. 
It also follows from Lemma \ref{Ldisc-lemma} that there exists an isometry $h_1 :T_1 \to T_0$ such that 
\begin{equation*}
h_1 (\varphi_1 (x))=\varphi_0 (x),\quad
h_1 (\varphi_1 (y))=\varphi_0 (y),\quad
h_1 (\varphi_1 (z))=\varphi_0 (z).
\end{equation*}
Define a metric space $(Y_1 ,d_{Y_1} )$ to be the gluing of $Y'_1$ and $\mathbb{R}^3$ along $h_1$. 
Then $Y_1$ is a $\mathrm{CAT}(0)$ space 
by Reshetnyak's gluing theorem, 
and the natural inclusions of $Y'_1$ and $\mathbb{R}^3$ into $Y_1$ are both isometric embeddings. 
In particular, for each $i\in\{ 1,2,3\}$, 
the image $\tilde{T}_i$ of $T_i$ under the natural inclusion of $Y'_1$ into $Y_1$ is isometric to a convex subset of 
the Euclidean plane. 
Define a map $g_1 :\{ p,q,x,y,z\}\to Y_1$ by sending 
$x$, $y$, $z$, $p$ and $q$ to the points in $Y_1$ represented by $\varphi_0 (x),\varphi_0 (y), \varphi_0 (z), \varphi_0 (p)\in\mathbb{R}^3$ and 
$\varphi_1 (q)\in Y'_1$, respectively. 
Then 
\begin{align}
d_{Y_1} (g_1 (q),g_1 (a))
&=
d_{Y'_1 }(\varphi_1 (q),\varphi_1 (a))=d_X (q,a)\label{embeddable-nonembeddable-lemma-qc-eq}\\
d_{Y_1} (g_1 (b),g_1 (c))
&=
\|\varphi_0 (b)-\varphi_0 (c)\| =d_X (b,c),\label{embeddable-nonembeddable-lemma-ab-eq}
\end{align}
for any $a\in\{ x,y,z\}$ and any $b,c\in\{ p,x,y,z\}$. 
It follows from the definition of $Y_1$ that there exists $r_0 \in T_1$ such that 
\begin{equation}\label{embeddable-nonembeddable-lemma-r0-eq}
d_{Y_1} (g_1 (p),g_1 (q))=\|\varphi_0 (p)-h_1 (r_0 )\|+d_{Y'_1}(r_0 ,\varphi_1 (q )). 
\end{equation}
It is clear from the definition of $Y'_1 =D(x;y,z,q)$ that 
there exists 
\begin{equation*}
r_1 \in\lbrack\varphi_1 (x),\varphi_1 (y)\rbrack\cup\lbrack\varphi_1 (x),\varphi_1 (z)\rbrack
\end{equation*}
such that 
\begin{equation}\label{embeddable-nonembeddable-lemma-r1-eq}
d_{Y'_1}(r_0 ,\varphi_1 (q ))
=
d_{Y'_1}(r_0 ,r_1 )+d_{Y'_1}(r_1 ,\varphi_1 (q)).
\end{equation}
By \eqref{embeddable-nonembeddable-lemma-r0-eq}, \eqref{embeddable-nonembeddable-lemma-r1-eq} and 
the triangle inequality for $Y_1$, 
\begin{align}\label{embeddable-nonembeddable-lemma-prq}
d_{Y_1} (g_1 (p),g_1 (q))
&=
\|\varphi_0 (p)-h_1 (r_0 )\|+d_{Y'_1}(r_0 ,r_1 )+d_{Y'_1}(r_1 ,\varphi_1 (q)) \\
&=
d_{Y_1}(g_1 (p),\tilde{r}_0 )
+d_{Y_1}(\tilde{r}_0 ,\tilde{r}_1 )+d_{Y_1}(\tilde{r}_1 ,g_1 (q))\nonumber\\
&=
d_{Y_1}(g_1 (p),\tilde{r}_1 )+d_{Y_1}(\tilde{r}_1 ,g_1 (q)),\nonumber
\end{align}
where $\tilde{r}_0$ and $\tilde{r}_1$ are the points in $Y_1$ represented by $r_0 \in Y'_1$ and $r_1 \in Y'_1$, respectively. 
Let $\tilde{R}$ be the image of $\mathbb{R}^3$ under the natural inclusion of $\mathbb{R}^3$ into $Y_1$. 
If $r_1 \in\lbrack\varphi_1 (x),\varphi_1 (y)\rbrack$, then 
$\tilde{r}_1$ lies on the geodesic segment $\lbrack g_1 (x),g_1 (y)\rbrack$ in $Y_1$ clearly, and therefore 
\eqref{embeddable-nonembeddable-lemma-qc-eq}, 
\eqref{embeddable-nonembeddable-lemma-ab-eq}, 
\eqref{embeddable-nonembeddable-lemma-prq} and Lemma \ref{combined-triangles-lemma} imply that 
\begin{equation}\label{embeddable-nonembeddable-lemma-pq-ineq}
d_{Y_1}(g_1 (p),g_1 (q))\geq d_{X}(p,q)
\end{equation}
because $\lbrack g_1 (x),g_1 (y)\rbrack\subseteq\tilde{R}\cap\tilde{T}_2$. 
If $r_1 \in\lbrack\varphi_1 (x),\varphi_1 (z)\rbrack$, then 
we obtain \eqref{embeddable-nonembeddable-lemma-pq-ineq} in the same way. 
Thus \eqref{embeddable-nonembeddable-lemma-pq-ineq} always holds in \textsc{Case 1}. 
By \eqref{embeddable-nonembeddable-lemma-qc-eq}, \eqref{embeddable-nonembeddable-lemma-ab-eq}, 
and \eqref{embeddable-nonembeddable-lemma-pq-ineq}, 
$g_1$ is a map from $\{ p,q,x,y,z\}$ to a $\mathrm{CAT}(0)$ space with the desired properties.

\textsc{Case 2}: 
{\em $\{ q,x,y,z\}$ is under-distance with respect to $\{ q,y\}$ and $\{ q,z\}$.} 
In this case, we choose $q',x',y',z'\in\mathbb{R}^2$ such that 
\begin{align*}
&\| x'-y'\| =d_X (x,y),\quad\| y'-z'\| =d_X (y,z),\quad\| z'-x'\| =d_X (z,x),\\
&\| x'-q'\| =d_X (x,q),\quad\| q'-z'\| =d_X (q,z),
\end{align*}
and $q'$ is not on the opposite side of $\overleftrightarrow{x'z'}$ from $y'$. 
Then Lemma \ref{no-bb-lemma}, Corollary \ref{xyz-non-collinear-corollary} and the assumption of \textsc{Case 2} imply that 
$q'\in\mathrm{conv}(\{ x',y',z'\} )$, and that $x'$, $y'$ and $z'$ are not collinear. 
Set $T_4 =\mathrm{conv}(\{ x',y',z'\} )$. 
Then there exists an isometry $h_2 :T_4 \to T_0$ 
such that 
\begin{equation*}
h_2 (x' )=\varphi_0 (x),\quad
h_2 (y' )=\varphi_0 (y),\quad
h_2 (z' )=\varphi_0 (z).
\end{equation*}
We divide \textsc{Case 2} into three subcases. 

\textsc{Subcase 2a}: 
{\em $\varphi_0 (p)$, $\varphi_0 (x)$, $\varphi_0 (y)$ and $\varphi_0 (z)$ are not coplanar.} 
Let $S$ be the boundary of $\mathrm{conv}(\{\varphi_0 (p),\varphi_0 (x),\varphi_0 (y),\varphi_0 (z)\})$ in $\mathbb{R}^3$ equipped with the 
induced length metric $d_S$. 
As we mentioned in Example \ref{Alexandrov-example}, $(S,d_S )$ is a complete geodesic space 
with nonnegative Alexandrov curvature. 
Clearly $S$ is the union of four subsets 
$\mathrm{conv}(\{\varphi_0 (p) ,\varphi_0 (x),\varphi_0 (y)\} )$, $\mathrm{conv}(\{\varphi_0 (p) ,\varphi_0 (y),\varphi_0 (z)\} )$, 
$\mathrm{conv}(\{\varphi_0 (p) ,\varphi_0 (z),\varphi_0 (x)\} )$ and $T_0$ of $\mathbb{R}^3$. 
On each of these four subsets, $d_S$ coincides with the Euclidean metric on $\mathbb{R}^3$. 
In particular, these four subsets are all isometric to convex subsets of the Euclidean plane even as subsets of $(S,d_S )$. 
Define a map $f_2 :\{ p,q,x,y,z\}\to S$ by sending 
each $a\in\{ p,x,y,z\}$ to $\varphi_0 (a)$, and $q$ to $h_2 (q' )$. 
Then 
\begin{align}
d_S (f_2 (q),f_2 (a))&=\| q' -a' \| =d_X (q,a)\label{embeddable-nonembeddable-lemma-2a-qc-eq}\\
d_S (f_2 (b),f_2 (c)) &=\|\varphi_0 (b)-\varphi_0 (c)\| =d_X (b,c)\label{embeddable-nonembeddable-lemma-2a-ab-eq}
\end{align}
for any $a\in\{ x,z\}$ and $b,c\in\{ p,x,y,z\}$. 
By the assumption of \textsc{Case 2}, 
\begin{equation}\label{embeddable-nonembeddable-lemma-2a-qy-ineq}
d_S (f_2 (q),f_2 (y))=\| q' -y' \| >d_X (q,y). 
\end{equation}
Fix a geodesic segment $\Gamma_0$ in $(S,d_S )$ with endpoints $f_2 (p)$ and $f_2 (q)$. 
Clearly 
$\Gamma_0$ has a nonempty intersection with the line segment 
$\lbrack\varphi_0 (x),\varphi_0 (y)\rbrack$, 
$\lbrack\varphi_0 (y),\varphi_0 (z)\rbrack$ or 
$\lbrack\varphi_0 (z),\varphi_0 (x)\rbrack$. 
If $\Gamma_0$ has a nonempty intersection with $\lbrack\varphi_0 (x),\varphi_0 (y)\rbrack$, 
then Lemma \ref{combined-triangles-lemma} implies that 
\begin{equation}\label{embeddable-nonembeddable-lemma-2a-pq-ineq}
d_S (f_2 (p),f_2 (q))\geq d_{X}(p,q)
\end{equation}
because $\lbrack\varphi_0 (x),\varphi_0 (y)\rbrack =\mathrm{conv}(\{\varphi_0 (p) ,\varphi_0 (x),\varphi_0 (y)\} )\cap T_0$ is 
a geodesic segment even in $(S,d_S )$ with endpoints $f_2 (x)$ and $f_2 (y)$, and 
\begin{align*}
&d_{S}(f_2 (x),f_2 (p))=d_{X}(x,p), \quad d_{S}(f_2 (p),f_2 (y))= d_{X}(p,y),\\
&d_{S}(f_2 (y),f_2 (q))> d_{X}(y,q),\quad d_{S}(f_2 (q),f_2 (x))=d_{X}(q,x),\\
&d_{S}(f_2 (x),f_2 (y))=d_{X}(x,y)
\end{align*}
by \eqref{embeddable-nonembeddable-lemma-2a-qc-eq}, \eqref{embeddable-nonembeddable-lemma-2a-ab-eq} and 
\eqref{embeddable-nonembeddable-lemma-2a-qy-ineq}. 
If $\Gamma_0$ has a nonempty intersection with 
$\lbrack\varphi_0 (y),\varphi_0 (z)\rbrack$ or $\lbrack\varphi_0 (z),\varphi_0 (x)\rbrack$, then 
we obtain \eqref{embeddable-nonembeddable-lemma-2a-pq-ineq} in the same way. 
Thus \eqref{embeddable-nonembeddable-lemma-2a-pq-ineq} always holds in \textsc{Subcase 2a}. 
By \eqref{embeddable-nonembeddable-lemma-2a-qc-eq}, \eqref{embeddable-nonembeddable-lemma-2a-ab-eq}, 
\eqref{embeddable-nonembeddable-lemma-2a-qy-ineq} and 
\eqref{embeddable-nonembeddable-lemma-2a-pq-ineq}, $f_2$ satisfies 
\begin{equation*}
d_{S}(f_2 (x),f_2 (a))=d_{X}(x,a),\quad
d_{S}(f_2 (a),f_2 (b))\geq d_{X}(a,b)
\end{equation*}
for any $a,b\in\{ p,q,y,z\}$. 
Therefore, Lemma \ref{alternative-target-lemma} implies that there exist 
a $\mathrm{CAT}(0)$ space $(Y_2 ,d_{Y_2})$ and a map $g_2 :\{ p,q,x,y,z\}\to Y_2$ such that 
\begin{equation*}
d_{Y_2} (g_2 (x),g_2 (a))\leq d_X (x,a),\quad
d_{Y_2} (g_2 (a),g_2 (b))= d_X (a,b)
\end{equation*}
for any $a,b\in\{ p,q,y,z\}$. 
Thus $g_2$ is a map from $\{ p,q,x,y,z\}$ to a $\mathrm{CAT}(0)$ space with the desired properties.

\textsc{Subcase 2b}: 
{\em $\varphi_0 (p)$, $\varphi_0 (x)$, $\varphi_0 (y)$ and $\varphi_0 (z)$ are coplanar, and $\varphi_0 (p)\in T_0$.} 
In this case, we define $(T,d_T )$ to be the piecewise Euclidean simplicial complex constructed from two simplices $T_0$ and $T_4$ 
by identifying $\lbrack\varphi_0 (x),\varphi_0 (y)\rbrack$ with $\lbrack x' ,y' \rbrack$, 
$\lbrack\varphi_0 (y),\varphi_0 (z)\rbrack$ with $\lbrack y' ,z' \rbrack$, and 
$\lbrack\varphi_0 (z),\varphi_0 (x)\rbrack$ with $\lbrack z' ,x' \rbrack$. 
In other words, $T$ is the piecewise Euclidean simplicial complex obtained by gluing $T_0$ and $T_4$ along their boundaries. 
As we mentioned in Example \ref{double-example}, 
$T$ is a complete geodesic space with nonnegative Alexandrov curvature, 
and the natural inclusions of $T_0$ and $T_4$ into $T$ are both isometric embeddings. 
Define a map $f_3 :\{ p,q,x,y,z\}\to T$ by sending 
$x$, $y$, $z$, $p$ and $q$ to the points in $T$ 
represented by $\varphi_0 (x),\varphi_0 (y),\varphi_0 (z),\varphi_0 (p)\in T_0$ and $q'\in T_4$, respectively. 
Then a similar argument as in \textsc{Subcase 1a} in the proof of Lemma \ref{five-point-SS-lemma} 
yields 
\begin{equation*}
d_{T}(f_3 (x),f_3 (a))=d_X (x,a),\quad
d_{T}(f_3 (a),f_3 (b))\geq d_X (a,b)
\end{equation*}
for any $a,b\in\{ p,q,y,z\}$. 
Therefore, Lemma \ref{alternative-target-lemma} implies that 
there exist a $\mathrm{CAT}(0)$ space $(Y_3 ,d_{Y_3})$ and a map $g_3 :\{ p,q,x,y,z\}\to Y_3$ such that 
\begin{equation*}
d_{Y_3}(g_3 (x),g_3 (a))\leq d_X (x,a),\quad
d_{Y_3}(g_3 (a),g_3 (b))= d_X (a,b)
\end{equation*}
for any $a,b\in\{ p,q,y,z\}$. 
Thus $g_3$ is a map from $\{ p,q,x,y,z\}$ to a $\mathrm{CAT}(0)$ space with the desired properties.

\textsc{Subcase 2c}: 
{\em $\varphi_0 (p)$, $\varphi_0 (x)$, $\varphi_0 (y)$ and $\varphi_0 (z)$ are coplanar, and $\varphi_0 (p)\not\in T_0$.} 
Let $\alpha$ be the plane in $\mathbb{R}^3$ through $\varphi_0 (x)$, $\varphi_0 (y)$, $\varphi_0 (z)$ and $\varphi_0 (p)$.  
Define a map $f_4 :\{ p,q,x,y,z\}\to\alpha$ by sending each $a\in\{ p,x,y,z\}$ to $\varphi_0 (a)$, and 
$q$ to $h_2 (q' )$. 
Then 
\begin{align}
\| f_4 (q)-f_4 (a)\|&=\| q' -a' \| =d_X (q,a)\label{embeddable-nonembeddable-lemma-2c-qc-eq}\\
\| f_4 (b)-f_4 (c))\|&=\|\varphi_0 (b)-\varphi_0 (c)\| =d_X (b,c)\label{embeddable-nonembeddable-lemma-2c-ab-eq}
\end{align}
for any $a\in\{ x,z\}$ and any $b,c\in\{ p,x,y,z\}$. 
By the assumption of \textsc{Case 2}, 
\begin{equation}\label{embeddable-nonembeddable-lemma-2c-qy-ineq}
\| f_4 (q)-f_4 (y)\|=\| q' -y' \| >d_X (q,y). 
\end{equation}
Because 
$f_4 (q)\in T_0$ and $f_4 (p)\in\alpha\setminus T_0$, 
$\lbrack f_4 (p),f_4 (q)\rbrack$ has a nonempty intersection with 
$\lbrack f_4 (x),f_4 (y)\rbrack$, $\lbrack f_4 (y),f_4 (z)\rbrack$ or $\lbrack f_4 (z),f_4 (x)\rbrack$. 
If $\lbrack f_4 (p),f_4 (q)\rbrack$ 
has a nonempty intersection with $\lbrack f_4 (x),f_4 (y)\rbrack$, 
then Lemma \ref{quadrangle-lemma} implies that 
\begin{equation}\label{embeddable-nonembeddable-lemma-2c-pq-ineq}
\| f_4 (p)-f_4 (q)\|\geq d_{X}(p,q)
\end{equation}
because 
\begin{align*}
&\| f_4 (x)-f_4 (p)\| =d_{X}(x,p),\quad \| f_4 (p)-f_4 (y)\|= d_{X}(p,y),\\
&\| f_4 (y)-f_4 (q)\| >d_{X}(y,q),\quad \| f_4 (q)-f_4 (x)\|=d_{X}(q,x), \\
&\| f_4 (x)-f_4 (y)\| =d_{X}(x,y)
\end{align*}
by \eqref{embeddable-nonembeddable-lemma-2c-qc-eq}, 
\eqref{embeddable-nonembeddable-lemma-2c-ab-eq} and \eqref{embeddable-nonembeddable-lemma-2c-qy-ineq}. 
If $\lbrack f_4 (p),f_4 (q)\rbrack$ has a nonempty intersection with $\lbrack f_4 (y),f_4 (z)\rbrack$ or $\lbrack f_4 (z),f_4 (x)\rbrack$, 
then we obtain \eqref{embeddable-nonembeddable-lemma-2c-pq-ineq} in the same way. 
Thus \eqref{embeddable-nonembeddable-lemma-2c-pq-ineq} always holds in \textsc{Subcase 2c}. 
By \eqref{embeddable-nonembeddable-lemma-2c-qc-eq}, \eqref{embeddable-nonembeddable-lemma-2c-ab-eq}, 
\eqref{embeddable-nonembeddable-lemma-2c-qy-ineq} 
and \eqref{embeddable-nonembeddable-lemma-2c-pq-ineq}, 
\begin{equation*}
\| f_4 (x)-f_4 (a)\| =d_{X}(x,a),\quad
\| f_4 (a)-f_4 (b)\|\geq d_{X}(a,b)
\end{equation*}
for any $a,b\in\{ p,q,y,z\}$. 
Therefore, because $\alpha$ is a complete geodesic space 
with nonnegative Alexandrov curvature, 
Lemma \ref{alternative-target-lemma} implies that there exist 
a $\mathrm{CAT}(0)$ space $(Y_4 ,d_{Y_4})$ and a map $g_4 :\{ p,q,x,y,z\}\to Y_4$ such that 
\begin{equation*}
d_{Y_4} (g_4 (x),g_4 (a))\leq d_X (x,a),\quad
d_{Y_4} (g_4 (a),g_4 (b))= d_X (a,b)
\end{equation*}
for any $a,b\in\{ p,q,y,z\}$. 
Thus $g_4$ is a map from $\{ p,q,x,y,z\}$ to a $\mathrm{CAT}(0)$ space with the desired properties. 

The above three subcases clearly exhaust all possibilities in \textsc{Case 2}. 
By Proposition \ref{embeddable-TSD-TLD-prop}, \textsc{Case 1} and \textsc{Case 2} exhaust 
all possibilities. 
\end{proof}

Using the facts that we have proved so far, 
we now prove the following proposition.

\begin{proposition}\label{5-9-prop}
If a metric space $X$ satisfies the $\boxtimes$-inequalities, 
then $X$ satisfies the $G^{(5)}_9 (0)$ condition. 
\end{proposition}

\begin{figure}[htbp]
\setlength{\unitlength}{1mm}
\begin{minipage}{0.15\hsize}
\centering
\begin{picture}(12,14)
\put(-4.5,0){$v_3$}
\put(-4.5,5){$v_2$}
\put(7.5,12){$v_1$}
\put(13,0){$v_4$}
\put(13,5){$v_5$}
\put(1,1){\circle*{2}}
\put(11,1){\circle*{2}}
\put(1,6){\circle*{2}}
\put(11,6){\circle*{2}}
\put(6,11){\circle*{2}}
\put(1,1){\line(1,0){10}}
\put(1,1){\line(0,1){5}}
\put(1,1){\line(1,2){5}}
\put(11,1){\line(-1,2){5}}
\put(11,1){\line(0,1){5}}
\put(11,6){\line(-1,1){5}}
\put(11,6){\line(-1,0){10}}
\put(6,11){\line(-1,-1){5}}
\end{picture}
\end{minipage}
\caption{}\label{fig:5-9-graph}
\end{figure}

\begin{proof}
Let $(X,d_X )$ be a metric space that satisfies the $\boxtimes$-inequalities. 
Let $V$ and $E$ be the vertex set and the edge set of $G^{(5)}_9 (0)$, respectively. 
We set 
\begin{align*}
V&=\{ v_1 ,v_2 ,v_3 ,v_4,v_5\} ,\\
E&=\{ \{ v_1,v_2\},\{ v_1,v_3\},\{ v_1,v_4\}, \{ v_1,v_5\},\{ v_2,v_3\},\{ v_3,v_4\},\{ v_4,v_5\},\{ v_5,v_2\}\} ,
\end{align*}
as shown in \textsc{Figure} \ref{fig:5-9-graph}. 
Fix a map $f:V\to X$, and set 
\begin{equation*}
x_i =f (v_i ),\quad
d_{ij}=d_X (f(v_i) ,f(v_j ))
\end{equation*}
for any $i,j\in\{ 1,2,3,4,5\}$. 
By Theorem \ref{four-point-th}, 
if $d_{ij}=0$ for some $i,j\in\{ 1,2,3,4,5\}$ with $i\neq j$, then there exist a $\mathrm{CAT}(0)$ space $(Y_0 ,d_{Y_0})$ and 
a map $g:V\to Y_0$ such that 
$d_{Y_0}(g_0 (v_i ),g_0 (v_j ))=d_{ij}$ for any $i,j\in\{ 1,2,3,4,5\}$. 
Therefore, we assume that $d_{ij}>0$ 
for any $i,j\in\{ 1,2,3,4,5\}$ with $i\neq j$. 
We define $V' ,V'_1 ,V'_2 \subseteq X$ by 
\begin{equation*}
V' =\{ x_1 ,x_2 ,x_3 ,x_4 ,x_5 \} ,\quad
V'_1 =\{ x_2 ,x_1 ,x_3 ,x_5 \} ,\quad
V'_2 =\{ x_4 ,x_1 ,x_3 ,x_5 \} .
\end{equation*}
We consider three cases.

\textsc{Case 1}: 
{\em Both $V'_1$ and $V'_2$ admit isometric embeddings into $\mathbb{R}^3$.} 
In this case, Lemma \ref{p-triangle-q-lemma} implies that 
there exist a $\mathrm{CAT}(0)$ space $(Y_1 ,d_{Y_1})$ and a map 
$g'_1 :V' \to Y_1$ such that 
\begin{equation*}
d_{Y_1}(g'_1 (x_2 ),g'_1 (x_4 ))\geq d_{24},\quad
d_{Y_1}(g'_1 (x_1 ),g'_1 (x_i ))\leq d_{1i},\quad 
d_{Y_1}(g'_1 (x_j ),g'_1 (x_k )) =d_{jk}
\end{equation*}
for any $i,j,k\in\{ 2,3,4,5\}$ with $\{ j,k\}\neq\{ 2,4\}$. 
Define a map $g_1 :V\to Y_1$ by 
\begin{equation*}
g_1 (v_i )=g'_1 (x_i )
\end{equation*}
for each $i\in\{ 1,2,3,4,5\}$. 
Then 
\begin{equation*}
\begin{cases}
d_{Y_1} (g_1 (v_i ),g_1 (v_j ))\leq d_{ij},\quad &\textrm{if  }\{ v_i,v_j\}\in E, \\
d_{Y_1} (g_1 (v_i ),g_1 (v_j ))\geq d_{ij},\quad &\textrm{if  }\{ v_i,v_j\}\not\in E \\
\end{cases}
\end{equation*}
for any $i,j\in\{ 1,2,3,4,5\}$. 
Thus $g_1$ is a map from $V$ to a $\mathrm{CAT}(0)$ space with the desired properties.

\textsc{Case 2}: 
{\em $V'_1$ admits an isometric embedding into $\mathbb{R}^3$, and $V'_2$ does not, or vice versa.} 
In this case, Lemma \ref{embeddable-nonembeddable-lemma} implies that 
there exist a $\mathrm{CAT}(0)$ space $(Y_2 ,d_{Y_2})$ and a map 
$g'_2 :V' \to Y_2$ such that 
\begin{equation*}
d_{Y_2}(g'_2 (x_2 ),g'_2 (x_4 ))\geq d_{24},\quad
d_{Y_2}(g'_2 (x_1 ),g'_2 (x_i ))\leq d_{1i},\quad
d_{Y_2}(g'_2 (x_j ),g'_2 (x_k ))=d_{jk}
\end{equation*}
for any $i,j,k\in\{ 2,3,4,5\}$ with $\{ j,k\}\neq\{ 2,4\}$.  
Define $g_2 :V\to Y_2$ by 
\begin{equation*}
g_2 (v_i )=g'_2 (x_i )
\end{equation*}
for each $i\in\{ 1,2,3,4,5\}$. 
Then 
\begin{equation*}
\begin{cases}
d_{Y_2} (g_2 (v_i ),g_2 (v_j ))\leq d_{ij},\quad &\textrm{if  }\{ v_i,v_j\}\in E, \\
d_{Y_2} (g_2 (v_i ),g_2 (v_j ))\geq d_{ij},\quad &\textrm{if  }\{ v_i,v_j\}\not\in E \\
\end{cases}
\end{equation*}
for any $i,j\in\{ 1,2,3,4,5\}$. 
Thus $g_2$ is a map from $V$ to a $\mathrm{CAT}(0)$ space with the desired properties.

\textsc{Case 3}: 
{\em Neither $V'_1$ nor $V'_2$ 
admits an isometric embedding into $\mathbb{R}^3$.} 
We divide \textsc{Case 3} into four subcases. 

\textsc{Subcase 3a}: 
{\em $V'_1$ is over-distance with respect to $\{ x_2 ,x_3\}$ or $\{ x_2 ,x_5\}$, 
and $V'_2$ is over-distance with respect to $\{ x_4 ,x_3\}$ or $\{ x_4 ,x_5\}$.} 
In this case, Lemma \ref{five-point-LL-lemma} implies that 
there exist a $\mathrm{CAT}(0)$ space $(Y_3 ,d_{Y_3})$ and a map 
$g'_3 :V' \to Y_3$ such that 
\begin{equation*}
d_{Y_3} (g'_3 (x_2 ),g'_3 (x_4 ))\geq d_{24},\quad
d_{Y_3} (g'_3 (x_i ),g'_3 (x_j ))=d_{ij}
\end{equation*}
for any $i,j\in\{ 1,2,3,4,5\}$ with $\{ i,j\}\neq\{ 2,4\}$. 
Define $g_3 :V\to Y_3$ by 
\begin{equation*}
g_3 (v_i )=g'_3 (x_i )
\end{equation*}
for each $i\in\{ 1,2,3,4,5\}$. 
Then 
\begin{equation*}
\begin{cases}
d_{Y_3} (g_3 (v_i ),g_3 (v_j ))= d_{ij},\quad &\textrm{if  }\{ v_i,v_j\}\in E, \\
d_{Y_3} (g_3 (v_i ),g_3 (v_j ))\geq d_{ij},\quad &\textrm{if  }\{ v_i,v_j\}\not\in E \\
\end{cases}
\end{equation*}
for any $i,j\in\{ 1,2,3,4,5\}$. 
Thus $g_3$ is a map from $V$ to a $\mathrm{CAT}(0)$ space with the desired properties.

\textsc{Subcase 3b}: 
{\em $V'_1$ is under-distance with respect to $\{ x_2 ,x_3\}$ or $\{ x_2 ,x_5\}$, 
and $V'_2$ is under-distance with respect to $\{ x_4 ,x_3\}$ or $\{ x_4 ,x_5\}$.} 
In this case, Lemma \ref{five-point-SS-lemma} implies that 
there exist a $\mathrm{CAT}(0)$ space $(Y_4 ,d_{Y_4})$ and a map 
$g'_4 :V' \to Y_4$ such that 
\begin{equation*}
d_{Y_4}(g'_4 (x_2 ),g'_4 (x_4 ))\geq d_{24},\quad
d_{Y_4}(g'_4 (x_1 ),g'_4 (x_i ))\leq d_{1i},\quad
d_{Y_4}(g'_4 (x_j ),g'_4 (x_k ))=d_{jk}
\end{equation*}
for any $i,j,k\in\{ 2,3,4,5\}$ with $\{ j,k\}\neq\{ 2,4\}$.  
Define $g_4 :V\to Y_4$ by 
\begin{equation*}
g_4 (v_i )=g'_4 (x_i )
\end{equation*}
for each $i\in\{ 1,2,3,4,5\}$. 
Then 
\begin{equation*}
\begin{cases}
d_{Y_4} (g_4 (v_i ),g_4 (v_j ))\leq d_{ij},\quad &\textrm{if  }\{ v_i,v_j\}\in E, \\
d_{Y_4} (g_4 (v_i ),g_4 (v_j ))\geq d_{ij},\quad &\textrm{if  }\{ v_i,v_j\}\not\in E \\
\end{cases}
\end{equation*}
for any $i,j\in\{ 1,2,3,4,5\}$. 
Thus $g_4$ is a map from $V$ to a $\mathrm{CAT}(0)$ space with the desired properties.

\textsc{Subcase 3c}: 
{\em $V'_1$ is under-distance with respect to $\{ x_2 ,x_3 \}$ and $\{ x_2 ,x_5 \}$, 
and $V'_2$ is over-distance with respect to $\{ x_4 ,x_3 \}$ and $\{ x_4 ,x_5 \}$.} 
In this case, 
if neither $\{ x_3 ,x_1 ,x_2 ,x_4 \}$ nor $\{ x_5 ,x_1 ,x_2 ,x_4 \}$ admits an isometric embedding 
into $\mathbb{R}^3$, then Corollary \ref{no-double-aa-cc-coro} implies that 
$\{ x_3 ,x_1 ,x_2 ,x_4 \}$ is over-distance with respect to $\{ x_3 ,x_2\}$ or $\{ x_3 ,x_4\}$, 
and $\{ x_5 ,x_1 ,x_2 ,x_4 \}$ is over-distance with respect to $\{ x_5 ,x_2\}$ or $\{ x_5 ,x_4\}$, 
and therefore the existence of a map from $V$ to a $\mathrm{CAT}(0)$ space with the desired properties 
is proved in exactly the same way as in \textsc{Subcase 3a}. 
If $\{ x_3 ,x_1 ,x_2 ,x_4 \}$ or $\{ x_5 ,x_1 ,x_2 ,x_4 \}$ embeds 
isometrically into $\mathbb{R}^3$, then 
the existence of a map from $V$ to a $\mathrm{CAT}(0)$ space with the desired properties 
is proved in exactly the same way as in \textsc{Case 1} and \textsc{Case 2}. 

\textsc{Subcase 3d}: 
{\em $V'_1$ is over-distance with respect to $\{ x_2 ,x_3 \}$ and $\{ x_2 ,x_5 \}$, 
and $V'_2$ is under-distance with respect to $\{ x_4 ,x_3 \}$ and $\{ x_4 ,x_5 \}$.} 
In this case, the 
existence of a map from $V$ to a $\mathrm{CAT}(0)$ space with the desired properties is proved in exactly the same way as in \textsc{Subcase 3c}.

By Proposition \ref{embeddable-TSD-TLD-prop}, 
the above four subcases exhaust all possibilities in \textsc{Case 3}. 
\textsc{Case 1}, \textsc{Case 2} and \textsc{Case 3} exhaust all possibilities. 
\end{proof}

We have proved that a metric space $X$ satisfies the $G(0)$ condition for 
every graph $G$ containing at most five vertices whenever $X$ satisfies the $\boxtimes$-inequalities, 
which implies Theorem \ref{main-th} by Proposition \ref{G(0)-prop}. 

\begin{proof}[Proof of Theorem \ref{main-th}]
It follows from Propositions 
\ref{5-disconnected-prop}, \ref{5-deg1-prop}, \ref{5-1-prop}, \ref{5-2-prop}, \ref{5-3-5-prop}, \ref{5-4-6-prop}, \ref{5-8-10-11-prop}
\ref{5-7-prop} and \ref{5-9-prop} that 
a metric space $X$ satisfies the $G(0)$ condition for every graph $G$ that contains at most five vertices whenever 
$X$ satisfies the $\boxtimes$-inequalities. 
Therefore, Proposition \ref{G(0)-prop} implies that 
a metric space $X$ containing at most five points admits an isometric embedding into a $\mathrm{CAT}(0)$ space 
whenever $X$ satisfies the $\boxtimes$-inequalities. 
Conversely, if a metric space $X$ containing at most five points admits 
an isometric embedding into a $\mathrm{CAT}(0)$ space, 
then $X$ satisfies the $\boxtimes$-inequalities because every $\mathrm{CAT}(0)$ space 
satisfies the $\boxtimes$-inequalities. 
\end{proof}

\medskip
\begin{acknowledgement}
The author would like to thank Yu Kitabeppu, Takefumi Kondo, Masato Mimura, Shin Nayatani and Hiroshi Tamaru for 
helpful discussions and a number of valuable comments. 
\end{acknowledgement}

\end{document}